\DeclareRobustCommand{\greektext}{%
  \fontencoding{LGR}\selectfont\def\encodingdefault{LGR}}
\DeclareRobustCommand{\textgreek}[1]{\leavevmode{\greektext #1}}
\numberwithin{equation}{section}
\numberwithin{figure}{section}
\theoremstyle{plain}
\newtheorem{thm}{\protect\theoremname}[section]
  \theoremstyle{definition}
  \newtheorem{example}[thm]{\protect\examplename}
  \theoremstyle{remark}
  \newtheorem{rem}[thm]{\protect\remarkname}
  \theoremstyle{plain}
  \newtheorem{cor}[thm]{\protect\corollaryname}
  \theoremstyle{definition}
  \newtheorem{defn}[thm]{\protect\definitionname}
  \theoremstyle{plain}
  \newtheorem{prop}[thm]{\protect\propositionname}
  \theoremstyle{plain}
  \newtheorem{lem}[thm]{\protect\lemmaname}
  \theoremstyle{plain}
  \newtheorem{conjecture}[thm]{\protect\conjecturename}
\def\makebbb#1{
    \expandafter\gdef\csname#1\endcsname{
        \ensuremath{\Bbb{#1}}}
}\makebbb{R}\makebbb{N}\makebbb{Z}\makebbb{C}\makebbb{H}\makebbb{E}\makebbb{H}\makebbb{P}\makebbb{B}\makebbb{Q}\makebbb{E}
  \providecommand{\conjecturename}{Conjecture}
  \providecommand{\corollaryname}{Corollary}
  \providecommand{\definitionname}{Definition}
  \providecommand{\examplename}{Example}
  \providecommand{\lemmaname}{Lemma}
  \providecommand{\propositionname}{Proposition}
  \providecommand{\remarkname}{Remark}
\providecommand{\theoremname}{Theorem}
\begin{document}

\title{statistical mechanics of interpolation nodes, pluripotential theory
and complex geometry}

\author{Robert J. Berman}
\begin{abstract}
This is mainly a survey, explaining how the probabilistic (statistical
mechanical) construction of Kähler-Einstein metrics on compact complex
manifolds, introduced in a series of works by the author, naturally
arises from classical approximation and interpolation problems in
$\C^{n}.$ A fair amount of background material is included. Along
the way, the results are generalized to the non-compact setting of
$\C^{n}.$ This yields a probabilistic construction of Kähler solutions
to Einstein's equations in $\C^{n},$ with cosmological constant $-\beta,$
from a gas of interpolation nodes in equilibrium at positive inverse
temperature $\beta.$ In the infinite temperature limit, $\beta\rightarrow0,$
solutions to the Calabi-Yau equation are obtained. In the opposite
zero-temperature the results may be interpreted as ``transcendental''
analogs of classical asymptotics for orthogonal polynomials, with
the inverse temperature $\beta$ playing the role of the degree of
a polynomial. 
\end{abstract}

\address{Robert J. Berman, Mathematical Sciences, Chalmers University of Technology
and the University of Gothenburg, SE-412 96 Göteborg, Sweden}

\email{robertb@chalmers.se}

\keywords{pluricomplex Green functions, complex Monge-Ampère operators, point
processes, large deviations. MSC 2010 subject classifications: 32U35,
32W20, 60G55, 60F10 }
\maketitle

\section{Introduction}

The main purpose of the present work is to explain how the probabilistic
(statistical mechanical) construction of Kähler-Einstein metrics on
compact complex manifolds - which is the subject of the series of
works \cite{berm8 comma 5,berm8,berm6,berm11} - naturally arises
from classical approximation and interpolation problems in $\C^{n}.$
In particular, the connection to Siciak's conjecture \cite{si2} (proved
in \cite{b-b-w}) about the convergence of Fekete points towards the
pluripotential equilibrium measure is stressed. 

In a nutshell, Fekete points may be defined as optimal interpolation
nodes. As will be explained below, from the statistical mechanical
point of view they are minimal energy configurations, which arise
at zero-temperature. When the temperature is increased (twisted) Kähler-Einstein
metrics emerge. See \cite{berm5} for different motivations coming
from physics, in particular emergent gravity and quantum gravity and
\cite{berm8 comma 5} for motivations coming from algebraic geometry.
We also provide sufficent background from both complex geometry and
probability in order to explain the proofs of the main results and
to put the results into a general context. This work is thus mainly
expository. But compared to the original papers more emphasize will
be put on the notion of Gamma-convergence. 

We also take the opportunity to show how the results in the compact
setting can be used to obtain new results in the non-compact setting
of $\C^{n}.$ This leads, among other things, to ``transcendental''
analogs of classical asymptotics for orthogonal polynomial, with the
inverse temperature $\beta$ playing the role of the degree of a polynomial.
The paper is concluded with an outlook section where open problems
are discussed. 

The main results may be summarized as follows in the non-compact setting
of $\C^{n}.$ Assume given a smooth function $\phi$ in $\C^{n}$
with sufficent growth at infinity (super logarithmic) and a positive
number $\beta.$ We will introduce a probabilistic construction of
solutions $\omega_{\beta}$ to Einstein's equation 
\[
\mbox{Ric\,}\ensuremath{\omega+\beta\omega=\tau_{\beta,\phi}}
\]
 for a Kähler metric $\omega$ on $\C^{n},$ where $\tau_{\beta,\phi}$
is the symmetric tensor
\[
\tau_{\beta,\phi}=\beta\omega^{\phi},
\]
 with $\omega^{\phi}$ denoting the complex Hessian of the given function
$\phi.$ The solution $\omega_{\beta}$ will be constructed by an
explicit sampling procedure involving ``random interpolation nodes''
in $\C^{n}$ with $\phi$ plaiyng the role of a ``weight function''.
From a statistical mechanical point of view the role of the interpolation
nodes is played by interacting particles in thermal equilibrium. The
parameter $\beta$ corresponds to the inverse temperature and $\phi$
to an exterior confining potential. This can be seen as a higher-dimensional
generalization of the classical principle that interpolation nodes
in $\C$ tend to behave as repulsive electric charges. However, in
the present higher dimensional setting the corresponding physical
model is provided by the theory of general relativity, rather than
electrostatics. Indeed, the metric $\omega_{\beta}$ is a solution
to Einstein's equation on $\C^{n}$ (with Euclidean signature) with
cosmological constant $-\beta$ and prescribed (trace-reversed) stress-energy
tensor $T_{\beta,\phi}$ (the solution $\omega_{\beta}$ is uniquely
determined by condition that its Kähler potential $\psi_{\beta}$
has logarithmic growth at infinity). From the complex geometric point
of view such metrics are \emph{twisted Kähler-Einstein metrics}, which
play an important role in current complex geometry in connection to
the Yau-Tian-Donaldson conjecture, where the role of $\C^{n}$ is
played by a compact complex manifold (see the survey \cite{do}).

We also show that in the ``zero temperature limit'' $(\beta\rightarrow\infty)$
the Kähler potential $\psi_{\beta}$ of $\omega_{\beta}$ converges
towards Siciaks weighted extremal function, which is the potential
of the weighted pluripotential equilibrium measure. In the opposite
``infinite temperature limit'' $(\beta\rightarrow0)$ we instead
recover a solution to the Calabi-Yau equation in $\C^{n}.$ Finally,
the tropicalization of the negative temperature regime is discussed,
where Kähler-Einstein metrics on toric varieties arise as $\beta$
approaches a certain critical negative inverse temperature. Relations
to Optimal Transport theory are also briefly mentioned. 

\subsection{Organization}

We start in Section \ref{sec:A-birds-eye} by giving a bird's-eye
view, providing motivation from interpolation theory and stating the
main results from \cite{b-b-w,berm8}. The new results are given in
Theorems \ref{thm:conv in law in C^n intro}, Theorem \ref{thm:zero temp limit singular case intro}
and Theorem \ref{thm:inf temp intro}. In the following two sections
we zoom in on the details, providing background from complex geometry
and probability. The proofs of the main results are then explained
in Sections \ref{sec:Proofs-for-the limit N}, \ref{sec:Proofs-for-the limit beta}.
Finally, in Section \ref{sec:Outlook-and-open} an outlook on open
problems is provided. In the appendix the present setting is compared
with the classical Curie-Weiss model for magnetization in spin systems.

\subsection{Acknowledgements}

Thanks to Slawomir Kolodziej for the invitation to contribute to the
upcoming volume in Annales Polonici Mathematici, in memory of Jozef
Siciak, which prompted the present work. It is also a pleasure to
thank Sebastien Boucksom, David Witt-Nyström, Vincent Guedj and Ahmed
Zeriahi for the stimulating collaborations \cite{b-b,b-b-w,bbgz},
which paved the way for the probabilistic approach and Mingchen Xia
for comments on a draft of the present manuscript. This work was supported
by grants from the ERC and the KAW foundation. 

\section{\label{sec:A-birds-eye}A bird's-eye view}

\subsection{The classical one-dimensional setting }

A central problem in approximation theory is to find good points (``nodes'')
for interpolating polynomials of large degree, on a given compact
subset $K$ in $\C^{n}.$ We start by recalling the classical one-dimensional
case of this problem, which has been studied extensively both from
a theoretical, as well as a numerical point of view with classical
contributions from Gauss, Fekete, Fejer, Szegö, Chebishev and many
others. We thus assume given a compact subset $K$ of $\C.$ Denote
by $\mathcal{P}_{k}(\C)$ the complex vector space of all polynomials
on $\C$ of degree at most $k.$ This space has dimension 
\[
N_{k}:=\dim\mathcal{P}_{k}(\C)=k+1
\]
A given configuration $(z_{1},...z_{N_{k}})$ of $N_{k}$ points on
$K$ determines an evaluation map 
\begin{equation}
\text{ev}_{(z_{1},...,z_{N_{k}})}:\,\,\mathcal{P}_{k}(\C)\rightarrow\C^{N_{k}},\,\,\,\,p_{k}\mapsto(p_{k}(z_{1}),...,p_{k}(z_{N_{k}}))\label{eq:evalu map}
\end{equation}
The worst case scenario is when the determinant of this map vanishes.
Indeed, then the evaluation map cannot be inverted, which means that
there are values in $\C^{N_{k}},$ which cannot be interpolated by
any polynomial $p_{k}$ of degree at most $k.$ But even if the determinant
does not vanish a generic configuration of points typically leads
to unstable interpolation for large degree polynomial. This is illustrated
by the case when $K$ is an interval in $\R\subset\C$ and the points
are equidistant. Interpolation then results in polynomials that oscillate
wildly between the interpolation points, especially close to the boundary
(Runge's phenomenen). Accordingly, a natural candidate for good interpolation
points are configurations of points $(z_{1},...z_{N_{k}})$ on $K$
\emph{maximizing} the absolute value of the determinant of the evaluation
map \ref{eq:evalu map}. Such points are called \emph{Fekete points}
for $K.$ Picking a base $e_{1},...e_{N_{k}}$ in the vector space
$\mathcal{P}_{k}(\C)$ the determinant of the evaluation map \ref{eq:evalu map}
becomes
\begin{equation}
D(z_{1},...,z_{N_{k}}):=\det(e_{i}(z_{j}))_{i,j\leq N_{k}},\label{eq:Vandermonde det in terms of e i}
\end{equation}
 which is called the \emph{Vandermonde determinant} in the standard
case when $e_{i}(z):=z^{i}$ is taken as the monomial base. Since
the set $K$ is assumed compact it always admits Fekete points, for
any given degree $k.$ Finding explicit Fekete points is, in general,
an intractable problem which has only been solved for very particular
examples of $K$ (se below). But their asymptotic distribution as
$k\rightarrow\infty$ is, in general, described by the following classical
result (see \cite{s-t} for a proof):
\begin{thm}
\label{thm:fekete on compac in C}Assume that $K$ is compact subset
of $\C$ which is non-polar. Let $(z_{1},...z_{N_{k}})\in K^{N}$
be a sequence of Fekete points on $K.$ Then 
\[
\frac{1}{N_{k}}\sum_{i=1}^{N_{k}}\delta_{z_{i}}\rightarrow\mu_{K}
\]
 in the weak topology of measures on $K,$ where $\mu_{K}$ is the
potential theoretic equilibrium measure of $K.$ In other words, $K$
is the unique minimizer of the logarithmic energy, i.e. of the following
functional, defined on the space $\mathcal{M}_{1}(K)$ of all probability
measures on $K:$
\begin{equation}
E(\mu)=-\frac{1}{2}\int_{\C\times\C}\log|z-w|^{2}\mu(z)\otimes\mu(w)\label{eq:def of log energy of mu}
\end{equation}
\end{thm}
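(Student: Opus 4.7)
The plan is to cast the problem as a Gamma-convergence result for the discrete logarithmic energy, in line with the emphasis on Gamma-convergence mentioned in the introduction. Taking the monomial basis $e_i(z)=z^{i-1}$, the Vandermonde determinant \eqref{eq:Vandermonde det in terms of e i} satisfies $|D(z_1,\ldots,z_N)|^2=\prod_{i\neq j}|z_i-z_j|$, so that Fekete configurations on $K$ are precisely the minimizers on $K^N$ of the discrete energy
\[
E_N(z_1,\ldots,z_N):=-\frac{1}{N(N-1)}\sum_{i\neq j}\log|z_i-z_j|,
\]
which is the natural discretization of $E$ from \eqref{eq:def of log energy of mu} on the empirical measure $\mu_N:=N^{-1}\sum_i\delta_{z_i}$, with the diagonal removed.

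The heart of the argument is to establish the Gamma-convergence of $E_N$ to $E$ on $\mathcal{M}_1(K)$. For the liminf inequality, given a sequence of configurations with $\mu_N\to\mu$ weakly, I would truncate the kernel as $k_M(z,w):=\min(-\log|z-w|,M)$, which is bounded and continuous; then weak convergence $\mu_N\otimes\mu_N\to\mu\otimes\mu$ yields $\int k_M\,d\mu_N\otimes\mu_N\to\int k_M\,d\mu\otimes\mu$, while the $N$ diagonal terms in $E_N$ cost at most $M/(N-1)\to 0$. Sending $M\to\infty$ and using monotone convergence gives $\liminf_N E_N\geq E(\mu)$. For the limsup inequality, a recovery sequence for any $\mu$ with $E(\mu)<\infty$ can be produced either constructively by a quantile discretization (partitioning $K$ into $N$ cells of equal $\mu$-mass and picking one point from each) or probabilistically by sampling i.i.d.~points from $\mu$ and invoking a second-moment/law-of-large-numbers argument applied to the truncated kernel.

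With Gamma-convergence in hand, the conclusion is standard. Weak compactness of $\mathcal{M}_1(K)$ ensures that the empirical measures of a Fekete sequence have weak accumulation points. The minimality of Fekete points, together with the Gamma-convergence, forces each such accumulation point to be a minimizer of $E$ on $\mathcal{M}_1(K)$. Non-polarity of $K$ guarantees that the minimum is finite, and the strict convexity of $E$ on the affine space of probability measures, which can be proved via the Fourier/subharmonic representation showing that $-\log|z-w|$ is positive definite on signed measures of total mass zero, gives uniqueness of the minimizer. Hence every weak accumulation point coincides with $\mu_K$, which is exactly the asserted convergence.

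The main technical obstacle is the logarithmic singularity of the kernel, which makes $E$ only lower semicontinuous on $\mathcal{M}_1(K)$ and forbids a direct passage to the limit under weak convergence alone; the truncation argument above is what tames this singularity in both the liminf and the limsup inequalities. The non-polarity hypothesis on $K$ is exactly what makes the problem non-degenerate, ensuring $E(\mu_K)<\infty$ and allowing the minimum to be identified with a genuine probability measure.
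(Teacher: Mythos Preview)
Your proposal is correct and follows essentially the same approach as the paper's sketch: the paper also starts from the factorization of the Vandermonde determinant, identifies Fekete points with minimizers of the discrete logarithmic energy, and then explicitly says that ``a truncation argument can be used to essentially reduce the proof \ldots\ to the case when $g$ is continuous.'' Your framing in terms of Gamma-convergence is precisely the upgrade the paper advertises later (formula \eqref{eq:Gamma conv towards E intro}), and your treatment of the recovery sequence and of uniqueness via strict convexity fills in details that the paper's sketch leaves implicit.
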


The starting point of the proof is the observation that $D(z_{1},...,z_{N_{k}})$
may be factorized as 
\[
D(z_{1},...,z_{N_{k}})=\prod_{i<j}(z_{i}-z_{j})
\]
This follows directly from the fact that the determinant $D(z_{1},...,z_{N_{k}})$
vanishes if two points coincide and restricts to a polynomial of degree
$k$ on $\C$ when all but one arguments are frozen. Replacing the
maximizing problem for $|D|$ with the equivalent minimization problem
for $-\log|D|^{2}$ thus reveals that the Fekete points on $K$ are
precisely the minimiers of the discrete (normalized) logarithmic energy
\begin{equation}
E^{(N)}(z_{1},...z_{N}):=-\frac{1}{N(N-1)}\log|D|^{2}=-\frac{1}{N(N-1)}\frac{1}{2}\sum_{i\neq j}g(z_{i},w_{j}),\label{eq:E and D in terms of g intro}
\end{equation}
 with a repulsive logarithmic pair interaction potential: 
\[
g(z,w):=-\log|z-w|^{2}
\]

In physical terms this is the (normalized) interaction energy betweem
$N$ identical particles interacting by the pair interaction potential
$g(z,w),$ defined by Green function of the Laplacian on $\C$(with
the divergent self-interactions removed). The minimizers thus represent
$N$ equally charged particles confined to $K$ and in electrostatic
equilibrium. If the pair interaction $g$ were continuous it would
follow that the corresponding energies are related by

\[
E_{g}^{(N)}(z_{1},...z_{N})=E_{g}(\frac{1}{N}\sum_{i=1}^{N}\delta_{z_{i}})+o(1)
\]
 as $N\rightarrow\infty,$ which implies Theorem \ref{thm:fekete on compac in C}
in the continuous setting. Even if the previous relation drastically
fails when $g$ is logarithmic (since the right hand side is alway
divergent) a truncation argument can be used to essentially reduce
the proof of Theorem \ref{thm:fekete on compac in C} to the case
when $g$ is continuous. 
\begin{example}
When $K$ is a circle the Fekete points are precisely the configurations
consisting of equidistant points and their large $k-$limit $\mu_{K}$
is thus the unique invariant probality measure on the circle. In this
case the space $\mathcal{P}_{k}(\C)$ identifies with Fourier series
with ``band-limited'' frequences is $[0,k],$ For $K=[-1,1]\Subset\R\subset\C$
the Fekete points are precisely the Gauss-Lobatoo points (which may
be realized as the zeroes of Jacobi polynomials) and $\mu_{K}$ is
the arcsine distribution $\pi^{-1}(1-|x|^{2})^{-1/2}dx$ \cite{fe}. 
\end{example}

The case when $K$ is a circle illustrates a striking phenomen: even
if $E^{(N)}$ has, in general, a multitude of minimizers in $K^{N}$
the functional $E(\mu)$ always has a unique minimizer. The equidistribution
result in Theorem \ref{thm:fekete on compac in C} applies more generally
to \emph{asymptotic Fekete points} on $K,$ i.e. configuration of
points on $X$ such that $E^{(N_{k})}(z_{1},...z_{N_{k}})$ is equal
to the minimum value of $E^{(N_{k})},$ up to an error tending to
zero as $k\rightarrow\infty.$ This rather flexible notion enpasses
many classical configuration of points such a\emph{s Lesbegue points}
(aka \emph{optimal interpolation nodes, }as they minimize the $L^{\infty}-$
operator norm of the evaluation map) and zeroes of degree $k$ polynomials
$p_{k}$ which are orthogonal with respect to a measure with support
$K$ (e.g. zeros of Chebishev polynomials when $K=[-1.1]).$ 

From a physical point of view the upshot of Theorem \ref{thm:fekete on compac in C}
and its proof is that \emph{nearly optimal interpolation behave as
repulsive charged particles in (or close to) electrostatic equilibrium. }

\subsubsection{The equilibrium potential as an envelope}

Before turning to the higher dimensional setting we recall some alternative
expressions for the logarithmic energy $E(\mu)$ and its minimizer
$\mu_{K}.$ First, if follows directly from the defininition that 

\begin{equation}
E(\mu)=-\frac{1}{2}\int_{\C}\psi_{\mu}\mu,\,\,\,\psi_{\mu}(z):=\int_{\C}\log|z-w|^{2}\mu(w)\label{eq:greens formul for E in plane}
\end{equation}
where $\psi_{\mu}$ is a \emph{potential} for the measure $\mu,$
i.e. $\psi_{\mu}$ is as a solution of the (normalized) Laplace equation
in $\C:$
\begin{equation}
\frac{1}{4\pi}\Delta\psi=\mu\label{eq:Lapace equation}
\end{equation}
The Laplace equation only determines the solution up to an additive
constnat and the particular solution $\psi_{\mu}$ above is singled
out by the normalizing condition that $\psi(z)-\log|z|\rightarrow0$
as $|z|\rightarrow\infty.$

A potential $\psi_{K}$ for the equilibrium measure$\mu_{K}$ of $K:$
\begin{equation}
\mu_{K}=\frac{1}{4\pi}\Delta\psi_{K}\label{eq:mu K in terms of phi K in C}
\end{equation}
 can be obtained by a classical envelope construction (due to Perron):
\begin{equation}
\psi_{K}(z)=\sup\{\psi:\,\,\,\psi\,\text{sh\,on\,\ensuremath{\C,}}\psi(z)=\log|z|^{2}+O(1)\,\,|z|\gg1\text{,\,}\,\psi\leq0\,\,\text{on\,\ensuremath{K,}}\}\label{eq:psi K is env when n=00003D1}
\end{equation}
 where we recall that a function $\psi$ in $L_{loc}^{1}$ is said
to be \emph{subharmonic (sh)} if it is (strongly) upper semi-continuous
and $\Delta\psi\geq0,$ i.e. the distributional Laplacian of $\psi$
defines a measure on $\C.$ By construction the potential $\psi_{K}$
integrates to zero against $\mu_{K}$ and thus has a different normalization
than $\psi_{\mu_{K}}.$ The envelope representation above can be deduced
from the sub-differential (Euler-Lagrange) condition $\partial E(\mu)\geq0$
at the minimizer $\mu_{K}$ combined with the domination principle
for the Laplacian. 

\subsubsection{The weighted setting and elliptic Fekete points on the two-sphere}

We also recall that the previous setup naturally extends to the more
general weighted setting where the compact set $K$ is replaced by
a \emph{weight function }$\phi,$ i.e. a lsc function on $\C$ assumed
finite and continuous on a non-polar subset of $\C.$ Then the point-wise
norm of a degree $k$ polynomial is replaced by the weighted point-wise
norm: 
\[
|p_{k}(z)|_{k\phi}^{2}:=|p_{k}(z)|^{2}e^{-k\phi(z)}
\]
and the corresponding interpolation problem amounts to finding good
points for interpolating degree $k$ polynomials, making interpolation
stable with respect to the weighted sup-norms. Accordingly, the point-wise
norm of the Vandermonde determinant is replaced by the weighted norm:
\begin{equation}
|D(z_{1},...,z_{N})|_{k\phi}^{2}:=|D(z_{1},...,z_{N})|^{2}e^{-k\phi(z_{1})}\cdots e^{-k\phi(z_{1})}\label{eq:def of weighted norm of Vand}
\end{equation}
In the case when $\phi$ is the indicator function of a compact set
$K$ the weighted setting thus reduces to the previous setting. Accordingly,
the corresponding weighted Fekete points are the minimizers of 
\[
E_{\phi}^{(N)}(z_{1},...z_{N_{k}}):=E^{(N)}(z_{1},...z_{N})+N^{-1}\sum_{i=1}^{N}\phi(z_{i}),
\]
 showing that the weight function $\phi$ plays the role of an exterior
potential, from the physics point of view. In order to ensure the
existence of weighted Feketete points the weight $\phi$ is assumed
to have super logarithmic growth:
\begin{equation}
\phi(z)\geq\log|z|^{2}+O(1),\,\,\,|z|\gg1\label{eq:super log growth}
\end{equation}
In the critical case where equality holds above the support of the
corresponding equlibrium measure $\mu_{\phi}$ may be non-compact,
in general. The model case is when the weight is given by 
\[
\phi_{FS}(z):=\log(1+|z|^{2}),
\]
 that we shall refer to as the \emph{Fubini-Study weight }(with an
eye towards the higher dimensional setting considered below). Then
$E_{\phi}^{(N)}(z_{1},...z_{N})$ identifies, under sterographic projection,
with the standard logarithmic energy on the two-sphere, which is invariant
under the action of the isometry group $O(3)$ of the two-sphere.
By symmetry the corresponding equilibrium measure $\mu_{\phi_{FS}},$
emerging when $N\rightarrow\infty,$ thus identifies with the standard
$O(3)-$ invariant probability measure on the two-sphere. But the
problem of locating the corresponding minimizers (called\emph{ elliptic
Fekete points}) is extremely challenging for \emph{a finite} $N.$
For example, Smale asked in the year of 2000, in his list of mathematical
problems for the next century \cite[Problem 7]{sm}, for an algorithm
which can locate configuration of points with almost minimal energy
$E_{\phi}^{(N)}$ (in a polymomial time, tolarating an error of the
order $O(N^{-2}\log N).$ 

\subsection{\label{subsec:Fekete-points-in}Fekete points in $\C^{n}$ and the
pluripotential equilibrium measure}

Let now $K$ be a compact subset of $\C^{n}$ and denote by $\mathcal{P}_{k}(\C^{n})$
the space of all polynomials on $\C^{n}$ with total degree at most
$k.$ This is a complex vector space of dimension 
\begin{equation}
N_{k}:=\dim\mathcal{P}_{k}(\C^{n})=\frac{k^{n}}{n!}+o(k^{n})\label{eq:def of N k intro}
\end{equation}
Fekete points for $K$ are defined, precisely as in the one-dimensional
setting above, as maximizers of the absolute value of the determinant
of the corresponding evaluation map from $\mathcal{P}_{k}(\C^{n})$
to $\C^{N_{k}}$ (such points are also called \emph{extremal points}
or \emph{extremal systems} in the literature). Fixing a base $e_{j}$
in $\mathcal{P}_{k}(\C^{n})$ consisting of multinomials we will use
the same notation $D^{(N_{k})}(z_{1},...z_{N_{k}})$ for the higher
dimensional incarnation of the Vandermonde determinant, defined by
formula \ref{eq:Vandermonde det in terms of e i}.

Given a general configuration $(z_{1},...z_{N})$ of $N$ points we
will denote by $\delta_{N}$ the corresponding discrete probability
measure on $\C^{n},$ called the\emph{ empirical measure:}

\[
\delta_{N}:=\frac{1}{N}\sum_{i=1}^{N}\delta_{z_{i}}.
\]
 In the light of Theorem \ref{thm:fekete on compac in C} it is natural
to ask 
\begin{enumerate}
\item Does the sequence of empirical measure $\delta_{N_{k}}$ corresponding
to a sequence $(z_{1},...,z_{N})$ of Fekete points on $K$ admit
a \emph{unique} weak limit $\mu_{K}$ in the space $\mathcal{M}_{1}(K)$
of probability measures on $K?$
\item If so, is there a direct analytic characterization of $\mu_{K}$? 
\end{enumerate}
A suitable generalization to $\C^{n}$ of the equilibrium measure
appearing in Theorem \ref{thm:fekete on compac in C} would be a natural
analytic candidate for $\mu_{K}.$ However, for various reasons the
naive generalization to $\C^{n}$ of the logarithmic energy $E(\mu),$
defined by formula \ref{eq:def of log energy of mu}, is not the right
object to look at in the higher dimensional setting of $\C^{n}.$
In the seventies Siciak \cite{si0,si1} and Zaharjuta \cite{z} instead
introduced a natural higher dimensional generalization of the equilibrium
potential $\psi_{K},$ by generalizing the envelope expression \ref{eq:psi K is env when n=00003D1}:
\[
\psi_{K}(z)=\sup_{\psi\in\mathcal{L}(\C^{n})}\{\psi(z)\,:\,\psi\leq0\,\,\text{on\,\ensuremath{K}}\},
\]
 where $\mathcal{L}(\C^{n})$ denotes the Lelong class, i.e. the class
of all plurisubharmonic (psh) functions $\psi$ on $\C^{n}$ with
sublogarithmic growth: 
\begin{equation}
\mathcal{L}(\C^{n}):=\left\{ \psi:\,\,\,\text{\ensuremath{\psi}\,\ensuremath{\text{psh},\,\,\,}}\psi(z)\leq\log|z|^{2}+O(1),\,\,|z|\gg1\right\} .\label{eq:def of L intro}
\end{equation}
The subclass of all psh $\psi$ with \emph{logarithmic growth} is
is denoted by $\mathcal{L}_{+}(\C^{n}),$ i.e. $\psi$ is psh and
\begin{equation}
\psi(z)=\log|z|^{2}+O(1)\,\,|z|\gg1\label{eq:log growth intro}
\end{equation}
 We recall that a function $\psi$ on $\C^{n}$ is \emph{plurisubharmonic
(psh)} if it is usc and subharmonic along complex lines. These functions
form the backbone of\emph{ pluripotential theory} and appear naturally
in polynomial approximation theory. Indeed, if $p_{k}(z)$ is a degree
$k$ polynomial on $\C^{n},$ then $k^{-1}\log|p_{k}(z)|$ is psh
and in the Lelong class $\mathcal{L}(\C^{n}).$ 

The function $\psi_{K}$ is bounded precily when $K$ is non-pluripolar,
i.e not locally contained in the $-\infty-$set of a psh function.
We will then identify $\psi_{K}$ with its upper-semi continuous regularization,
which defines an element in $\mathcal{L}_{+}(\C^{n}).$ The \emph{pluripotential
equilibrium measure} of a compact set $K\subset\C^{n},$ assumed non-pluripolar,
is then defined by generalizing formula \ref{eq:mu K in terms of phi K in C}:
\[
\mu_{K}:=MA(\psi_{K}),
\]
 where $MA$ denotes the complex Monge-Ampère operator, which is the
fully-nonlinear operator generalization the linear Laplace operator
to pluripotential theory: 
\[
MA(\psi):=(\frac{i}{2\pi}\partial\bar{\partial}\psi)^{n}=\frac{1}{\pi^{n}}\det(\frac{\partial^{2}\psi}{\partial z_{i}\partial\bar{z}_{j}})d\lambda,
\]
when $\psi$ is smooth and the extension to locally bounded psh is
due to Bedford-Taylor (see Section \ref{subsec:The-local-setting}).
The measure $\mu_{K}$ is supported on $K$ (as follows from the maximum/domination
principle) and defines a probability measure on $K.$ With this pluripotential
generalization of the classical equilibrium measure in hand it was
conjectured by Siciak \cite{si2} that the analog of Theorem \ref{thm:fekete on compac in C}
holds for any non-pluricompact subset of $K$ \cite{si2}. This was
confirmed in \cite{b-b-w}:
\begin{thm}
\label{thm:Fekete points C n intro}Assume that $K$ is a compact
subset of $\C^{n}$ which is non-pluripolar. Let $(z_{1},...z_{N_{k}})\in K^{N_{k}}$
be a sequence of Fekete points on $K.$ Then the corresponding empirical
measures $\delta_{N}$ converge, as $k\rightarrow\infty,$ to the
pluripotential equilibrium measure $\mu_{K}$ of $K,$ in the weak
topology of measures on $K.$ 
\end{thm}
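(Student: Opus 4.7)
The key difficulty relative to the one-dimensional case of Theorem~\ref{thm:fekete on compac in C} is that the higher-dimensional Vandermonde determinant $D^{(N_k)}$ does \emph{not} factorize as a product of pair interactions; the reduction to the two-body potential $-\log|z-w|^2$ is unavailable. My plan is therefore to argue directly at the level of empirical measures, using a $\Gamma$-convergence principle for the many-body functional
\[
E^{(N_k)}(z_1,\ldots,z_{N_k}) := -\frac{1}{kN_k}\log |D^{(N_k)}(z_1,\ldots,z_{N_k})|^2,
\]
viewed as a functional on $\mathcal{M}_1(K)$ via the empirical-measure map. The goal is to identify a $\Gamma$-limit $\mathcal{E}_K:\mathcal{M}_1(K)\to\R\cup\{+\infty\}$ whose \emph{unique} minimizer is $\mu_K=MA(\psi_K)$; since $\mathcal{M}_1(K)$ is compact, equicoercivity is automatic and the standard $\Gamma$-convergence principle then forces the empirical measures of any sequence of Fekete configurations (minimizers of $E^{(N_k)}$) to converge weakly to $\mu_K$.

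The natural candidate for $\mathcal{E}_K$ is the pluripotential-theoretic analog of the logarithmic energy, defined as the Legendre dual of the envelope functional $\phi\mapsto\int_K\phi\,MA(\psi_{K,\phi})$, where $\psi_{K,\phi}$ denotes the weighted extremal function obtained by adapting the envelope \ref{eq:psi K is env when n=00003D1} to a continuous weight $\phi$ on $K$. By construction the Euler--Lagrange equation of $\mathcal{E}_K$ on $\mathcal{M}_1(K)$ is the complex Monge--Amp\`ere equation $\mu=MA(\psi_K)$, identifying $\mu_K$ as a critical point. Uniqueness of this critical point reduces to the Bedford--Taylor comparison/domination principle for the Monge--Amp\`ere operator, applied within $\mathcal{L}_+(\C^n)$.

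The $\Gamma$-convergence itself splits into two inequalities. For the \emph{lower bound}, I would fix an $L^2$-orthonormal basis $(s_j)$ of $\mathcal{P}_k(\C^n)$ to rewrite $-\frac{1}{kN_k}\log|D^{(N_k)}|^2$ as the integral of $-\frac{1}{k}\log\rho_k$ against the empirical measure, where $\rho_k$ is the associated Bergman kernel on the diagonal; Bergman-kernel submean-value estimates together with semicontinuity of the envelope then produce the required $\liminf$ bound by $\mathcal{E}_K$. For the \emph{upper bound}, I would build a recovery sequence: given $\mu\in\mathcal{M}_1(K)$, estimate the average $\int|D^{(N_k)}|^2\,d\mu^{\otimes N_k}$ via the Gram matrix of $(s_j)$ in $L^2(\mu)$, read off the asymptotic leading order from a transfinite-diameter / volume asymptotic of the unit ball in $\mathcal{P}_k(\C^n)$, and then extract a near-maximizing configuration by a Chebyshev-type concentration argument.

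The hardest step is the recovery-sequence inequality. In dimension one the factorization of $D$ permitted a direct truncation reduction to the continuous logarithmic energy; here one must instead trade the deterministic maximization on $K^{N_k}$ for a sampling against $\mu^{\otimes N_k}$, and then transfer the resulting $L^2$-level asymptotics into sharp pointwise asymptotics along some specific configuration. This is precisely the point at which Bergman-kernel asymptotics, the Zaharjuta--Siciak transfinite-diameter analysis, and the envelope description of $\psi_K$ must be combined, and it is the step carried out in \cite{b-b-w}.
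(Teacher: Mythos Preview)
Your overall architecture is right and matches the paper: one shows that the functionals $E^{(N_k)}$, pushed forward to $\mathcal{M}_1(K)$ via the empirical-measure map, $\Gamma$-converge to the pluricomplex energy $E$, whose unique minimizer is $\mu_K$; convergence of Fekete configurations then follows from the basic $\Gamma$-convergence principle on the compact space $\mathcal{M}_1(K)$. Where you diverge from the paper is in \emph{how} the two $\Gamma$-inequalities are obtained, and your lower-bound step contains a genuine error.

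The paper does \emph{not} attack the $\liminf$ and recovery-sequence inequalities separately. Instead it uses a duality criterion (Proposition~\ref{prop:crit for gamma conv}): if the Legendre--Fenchel transforms $(E^{(N_k)})^*$ converge pointwise on $C(X)$ to a Gateaux-differentiable functional $f$, then $E^{(N_k)}\to f^*$ in the $\Gamma$-sense. Here $-(E^{(N_k)})^*(-u)$ is exactly the weighted Fekete value $\mathcal{F}_{k,L^\infty}[u]$ (the normalized $\log$ of the sup of the weighted Vandermonde). The convergence $\mathcal{F}_{k,L^\infty}[u]\to\mathcal{E}(P_\omega u)$ is \cite[Thm~A]{b-b}, proved by first replacing $u$ by $P_\omega u$ (an equality, since $k^{-1}\log|s_k|^2$ is $\omega$-psh), then passing from $L^\infty$ to $L^2$ via the submean property, and finally differentiating $\mathcal{F}_{k,L^2}$ in $u$ to get the Bergman measure, whose Bouche--Tian asymptotics yield $MA_\omega$. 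The Gateaux differentiability of $\mathcal{E}\circ P_\omega$ is \cite[Thm~B]{b-b}, proved via the orthogonality relation~\eqref{eq:og relation}. The recovery sequence is then produced abstractly by a Br{\o}ndsted--Rockafellar argument, with no explicit construction.

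Your proposed lower bound does not work as written. You claim that for an $L^2$-orthonormal basis one can \emph{rewrite} $-\frac{1}{kN_k}\log|D^{(N_k)}|^2$ as $\int(-\frac{1}{k}\log\rho_k)\,d\delta_N$. This is false: $|D^{(N_k)}(z_1,\ldots,z_{N_k})|^2=\det(K_k(z_i,z_j))$, which is not $\prod_i K_k(z_i,z_i)$. Hadamard's inequality gives only the one-sided bound $|D^{(N_k)}|^2\le\prod_i\rho_k(z_i)$, hence $E^{(N_k)}\ge\int(-\frac{1}{k}\log\rho_k)\,d\delta_N$. But even granting this and the convergence $\frac{1}{k}\log\rho_k\to\psi_\phi$ for a fixed Bernstein--Markov weighted measure, you obtain only $\liminf E^{(N_k)}\ge-\int\psi_\phi\,d\mu+\text{const}$, which is a single linear functional of $\mu$, not the supremum over all $\psi\in\mathcal{L}$ that defines $E(\mu)$. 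To recover the full pluricomplex energy you would have to vary the weight and take a supremum, which is exactly the Legendre--Fenchel transform---so you are driven back to the paper's dual route anyway. Your upper-bound idea (Gram matrix in $L^2(\mu)$) is closer to the mark and is essentially the $\mathcal{F}_{k,L^2}$ functional, but note that the paper never needs to ``extract a near-maximizing configuration by a Chebyshev-type concentration argument'': the recovery sequence comes for free from the abstract duality criterion once differentiability of the limit is known.
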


The proof of the theorem in the higher dimensional setting is very
different from the proof in the classical one-dimensional setting,
as the Vandermonde determinant cannot be factorized when $n>1.$ This
means that there is no tractable expression for the corresponding
higher-dimensional analog of the discrete logarithmic energy:
\begin{equation}
E^{(N_{k})}(z_{1},...z_{N_{k}}):=-\frac{1}{N_{k}k}\log\left|D^{(N_{k})}(z_{1},...z_{N_{k}})\right|^{2}\label{eq:Def of E N C n intro}
\end{equation}
that we shall refer as the \emph{determinantal energy.} Still, the
proof in \cite{b-b-w}, which builds on \cite{b-b}, naturally leads
to a notion of\emph{ pluricomplex energy} $E(\mu)$ (further studied
in \cite{bbgz}). In fact, as shown in\cite{berm8}, the proof in
\cite{b-b-w} can be upgraded to give the more precise result
\begin{equation}
E^{(N_{k})}\rightarrow E,\,\,\,k\rightarrow\infty\label{eq:Gamma conv towards E intro}
\end{equation}
 in the sense of Gamma-convergence on the space $\mathcal{M}_{1}(K)$
of probability measures on $K,$ where $E$ is a functional on $\mathcal{M}_{1}(K),$
whose unique minimizer is the equilibrium measure $\mu_{K}$ (see
\cite{s1} for a different proof of the Gamma-convergence in the one-dimensional
case). This notion of convergence, introduced by Di Georgi, plays
a prominent role in the calculus of variations. Its definition ensures
that any minimizer of $E^{(N_{k})}$ converges to the unique minimizer
of $E.$ The functional $E$ coincides with the \emph{pluricomplex
energy} introduced in \cite{bbgz}. It may be explicitely expressed
in terms of the \emph{potential }$\psi_{\mu}$ of the measure $\mu$
i.e. the unique (modulo constants) solution of the complex \emph{Monge-Ampère
equation }
\begin{equation}
MA(\psi)=\mu,\,\,\,\,\psi\in\mathcal{L}_{+}(\C^{n})\label{eq:calabi-yau eq intro}
\end{equation}
 known as the \emph{Calabi-Yau equation} in the complex (Kähler) geometry
literature (generalizing the Laplace equation \ref{eq:Lapace equation}.
In fact, if $\mu$ is sufficently regular then $-\psi_{\mu}$ can
be viewed as the differential (gradient) of $E(\mu)$ (see Section
\ref{subsec:The-pluricomplex-energy}).
\begin{example}
In the case when $K$ is a symmetric convex body in $\R^{n}\Subset\C^{n}$
the equilibrium measure $\mu_{K}$ can be computed explicitely \cite{b-t3}.
Comparing with the one-dimensional case reveals that its density is
comparable to inverse of the squar-root of the distance $d(x,\partial K)$
to the boundary. 
\end{example}

The proof of Theorem \ref{thm:Fekete points C n intro} in \cite{b-b-w}
is given in the general complex geometric setting of a big line bundle
$L$ over a compact complex manifold $X.$ Then the role of the space
$\mathcal{P}_{k}(\C^{n})$ is played by the space $H^{0}(X,L^{\otimes k})$
of global holomorphic sections and the role of the weight function
$\phi$ is played by a (possibly singular) metric on the line bundle
$L.$ From this point of view Theorem \ref{thm:Fekete points C n intro}
appears as the special case when $L\rightarrow X$ is the hyperplane
line bundle $\mathcal{O}(1)\rightarrow\P^{n}$ over the $n-$dimensional
complex projective space $\P^{n},$ compactifying $\C^{n}.$ In this
case the corresponding metric on the restricted line bundle $L\rightarrow K$
is the trivial one, but the more general weighted version of Theorem
\ref{thm:Fekete points C n intro} also follows by using a general
metric $\phi$ on $L.$ In Section \ref{sec:Proofs-for-the limit N}
we will explain the proof in the technically simpler case where $L$
is a positive line bundle, i.e. $(X,L)$ is a \emph{polarized manifold,}
which is enough for the application to $\C^{n}$ (see Theorem \ref{thm:Gamma conv polarized}).
But the case when $L$ is big allows one, in particular, to also cover
the case when $X$ is a singular variety.
\begin{example}
Another concrete application of Theorem \ref{thm:Gamma conv polarized}
is to the case when $K$ is taken as a compact real algebraic variety
in $\R^{n+1}$ and $\mathcal{P}_{k}$ as the restriction to $K$ of
the space of all polynomials on $\R^{n+1}$ of total degree at most
$k.$ Then the analog of Theorem \ref{thm:Fekete points C n intro}
follows from Theorem \ref{thm:Gamma conv polarized}, applied to the
complexification $X$ in $\P^{n+1}$ of $K.$ For example, $K$ may
be taken as the unit-sphere $S^{n}\Subset\R^{n+1}.$ Then $\mathcal{P}_{k}$
is the space of all sperical polynomials of degree at most $k,$ i.e.
the space of all spherical harmonics band-limited to $[0,k].$ The
corresponding Fekete points are often refererred to as \emph{extremal
systems} (see \cite{s-w} where numerical approximations are computed
on the two-sphere, $n=2,$ when $k\leq191,$ which corresponds to
$N_{k}=36864$ points). By symmetry the pluripotential equilibrium
measure $\mu_{K}$ is the $O(n+1)$ invariant probability measure
on the sphere. 
\end{example}

An interesting feature of the proof of the general version of Theorem
\ref{thm:Fekete points C n intro}, for a polarized complex manifold
$(X,L)$ (Theorem \ref{thm:Gamma conv polarized}), is that it essentially
reduces the proof of Theorem \ref{thm:Fekete points C n intro} for
a compact subset $K$ of $\C^{n}$ to the weighted setting where $K$
is replaced by a smooth weight $\phi$ on $\C^{n}$ with logarithmic
growth (i.e. equality holds in formula \ref{eq:super log growth}).
Then the analog of $\psi_{K}$ is the weighted extremal function $\psi_{\phi}$
defined by the upper semi-continuous regularization of the envelope
\begin{equation}
\psi_{\phi}(z):=\sup_{\psi\in\mathcal{L}(\C^{n})}\{\psi(z)\,:\,\psi\leq\phi\,\,\text{on\,\ensuremath{\C^{n}}}\}\label{eq:def of psi phi intro}
\end{equation}
 and the correponding weighted equilibrium measure is the probability
measure defined as 
\begin{equation}
\mu_{\phi}:=MA(\psi_{\phi})\label{eq:def of MA phi intro}
\end{equation}
More generally, Theorem \ref{thm:Gamma conv polarized} applies to
the general setting where one is given a closed non-pluripolar subset
$K$ of $\C^{n}$ and a continuous weight function $\phi$ on $K$
with logarithmic growth. Then the corresponding extremal function
$\psi_{(K,\phi)}$ is defined by
\begin{equation}
\psi_{(K,\phi)}(z):=\sup_{\psi\in\mathcal{L}(\C^{n})}\{\psi(z)\,:\,\psi\leq\phi\,\,\text{on\,\ensuremath{K}}\}\label{eq:def of psi K phi}
\end{equation}

\subsection{\label{subsec:Statistical-mechanics-and}Statistical mechanics and
the emergence of twisted Kähler-Einstein metrics at positive temperature}

Comparing with the one-dimensional situation it is tempting to think
of Fekete points on a compact subset $K$ of $\C^{n}$ (and more generally,
nearly optimal interpolation nodes) as interacting particles confined
to $K,$ forming a microscopic equilibrium state. In \cite{berm8,berm11}
this analogy is pushed further by introducing temperature (and thus
randomness) into the picture. To explain this let us first recall
the general statistical mechanical setup. Consider a system of $N$
identical particles on a space $X$ interacting by a microscopic interaction
energy $H^{(N)}(x_{1},..x_{N})$ (the\emph{ Hamiltonian}) on $X^{N}$
assumed symmetric (as the particles are identical) and lower semi-continuous.
Given a measure $dV$ on $X$ the distribution of particles, in thermal
equilibrium at a fixed temperature $\beta^{-1}\in]0,-\infty[,$ is
described by the corresponding \emph{Gibbs measure}. This is the symmetric
probability measure on $X^{N}$ defined by 
\begin{equation}
\mu_{\beta}^{(N)}:=\frac{1}{Z_{N,\beta}}e^{-\beta H^{(N)}(x_{1},...,x_{N})}dV^{\otimes N},\,\,\,\,Z_{N,\beta}:=\int_{X^{N}}e^{-\beta H^{(N)}(x_{1},...,x_{N})}dV^{\otimes N}\label{eq:def of Gibbs measure intro}
\end{equation}
where the normalizing constant $Z_{N,\beta}$ is called the \emph{partition
function} and is assumed to be finite (which is automatic if $X$
is compact, as $H^{(N)}$ is assumed lsc and hence bounded from below).
The probability space $(X^{N},\mu_{\beta}^{(N)})$ is called the\emph{
canonical ensemble} in the physics literature. The empirical measure
\[
\delta_{N}:=\frac{1}{N}\sum_{i=1}^{N}\delta_{x_{i}}:\,\,\,X^{N}\rightarrow\mathcal{M}_{1}(X)
\]
 can now be viewed as a random measure, i.e. a random variable on
$(X^{N},\mu_{\beta}^{(N)})$ taking values in the space of probability
measures on $X.$ Heuristic reasoning, inspired by mean field theory
in physics suggests the following ``Mean Field Approximation'':
the random measures $\delta_{N}$ converge in law towards a deterministic
limit $\mu_{\beta},$ in the many particle limit $N\rightarrow\infty$
(i.e. the random measure $\delta_{N}$ is well approximated by $\mu_{\beta}$
) under the following assumptions: 
\begin{itemize}
\item The microscopic ``energy per particle''
\[
E^{(N)}(x_{1},...x_{N}):=\frac{1}{N}H^{(N)}(x_{1},...,x_{N})
\]
converges, in the many particule limit $N\rightarrow\infty,$ towards
a ``macroscopic energy'' $E(\mu)$ on the space $\mathcal{M}_{1}(X)$
in an apprioate sense (to be specified!)
\item The corresponding macroscopic\emph{ free energy functional $F_{\beta}$
on $\mathcal{M}_{1}(X)$ }admits a unique minimizer, to wit $\mu_{\beta}:$
\[
F_{\beta}(\mu):=E(\mu)-\beta^{-1}S_{dV}(\mu),
\]
 where $S_{dV}(\mu)$ is the ``physical entropy'' of $\mu$ relative
to $dV$ (see Remark \ref{rem:The-entropy-is}). 
\end{itemize}
In order to make the ``Mean Field Approximation'' into a mathematically
precise statement the meaning of the convergence in the first point
above needs to be specified (see Section \ref{subsec:Mean-field-approximations}
for further discussion). The main two classes of cases where the Mean
Field Approximation has been rigorously established is in the
\begin{itemize}
\item ``Regular case'': $X$ is compact and $E^{(N)}$ is continuous,
uniformly wrt $N$ \cite{e-h-t}.
\item ``Linear case'': $E^{(N)}$ is a sum of the form \ref{eq:E and D in terms of g intro}
with a lsc pair interaction potentail $g\in L^{1}(X^{2},dV^{\otimes2})$
\cite{k2,clmp} 
\end{itemize}
In the latter case a trunctation argument can be employed to essentially
reduce the problem to the ``regular case'' (a similar argument applies
to a finite sum of $m-$point interactions \cite{berm10}). However,
in the present setting $E^{(N)}$ (as defined by formula \ref{eq:Def of E N C n intro})
is both very non-linear (when $n>1)$ and singular. What saves the
situation is that $E^{(N)}$ is superharmonic so that the following
key result in \cite{berm8} can be applied:
\begin{thm}
\label{thm:LDP subharm intro} Assume that $X$ is a compact Riemannian
manifold without boundary and that the sequence $\frac{1}{N}H^{(N)}(x_{1},...,x_{N})$
Gamma converges to a functional $E(\mu)$ on $\mathcal{M}_{1}(X).$
If moreover, $H^{(N)}$ is uniformly quasi-superharmonic, i.e. there
exists a constant $C$ such that for all $N$
\[
\Delta_{x_{i}}H^{(N)}(x_{1},x_{2},...x_{N})\leq C\,\,\,\,X^{N},
\]
 then the laws of the random measures $\delta_{N}$ on $(X^{N},\mu_{\beta}^{(N)})$
satisfy a Large Deviation Principle (LDP) with speed $\beta N$ and
rate functional $F_{\beta}(\mu)-C_{\beta},$ where 
\[
C_{\beta}=\inf_{\mathcal{M}_{1}(X)}F_{\beta}
\]
In particular, if $E$ is convex and $\beta>0$ then $\delta_{N}$
converges in law towards the unique minimizer $\mu_{\beta}$ of $F_{\beta}.$
\end{thm}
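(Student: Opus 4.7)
The plan is to establish the LDP by proving matching upper and lower bounds on $(\beta N)^{-1} \log \mathbb{P}_\beta^{(N)}(\delta_N \in A)$ for closed/open $A \subset \mathcal{M}_1(X)$, where $\mathbb{P}_\beta^{(N)}$ denotes the pushforward of $\mu_\beta^{(N)}$ under the empirical measure map. Writing
\[
\mathbb{P}_\beta^{(N)}(\delta_N \in A) = Z_{N,\beta}^{-1} \int_{\delta_N^{-1}(A)} e^{-\beta H^{(N)}}\, dV^{\otimes N},
\]
and recalling that classical Sanov's theorem furnishes an LDP for the reference product measure $dV^{\otimes N}$ with rate function $-S_{dV}(\mu)$, the theorem is essentially a Varadhan-type statement that the Boltzmann factor $e^{-\beta H^{(N)}}$ contributes the $-\beta E(\mu)$ term to the rate. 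Classical Varadhan's lemma does not apply directly because $H^{(N)}$ is only lower semicontinuous, typically with logarithmic diagonal singularities; the Gamma-convergence and uniform quasi-superharmonicity hypotheses are precisely what replace continuity.

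For the upper bound, cover a closed set $F \subset \mathcal{M}_1(X)$ by finitely many small balls $B(\mu_i,\epsilon)$. On each preimage $\delta_N^{-1}(B(\mu_i,\epsilon))$ the Gamma-liminf of $H^{(N)}/N$ yields $H^{(N)}/N \geq E(\mu_i) - o_\epsilon(1)$ uniformly. Pairing this with the Sanov upper bound $dV^{\otimes N}(\delta_N^{-1}(B(\mu_i,\epsilon))) \leq \exp(N S_{dV}(\mu_i) + o_\epsilon(N) + o(N))$ gives
\[
\frac{1}{\beta N}\log \int_{\delta_N^{-1}(B(\mu_i,\epsilon))} e^{-\beta H^{(N)}}\, dV^{\otimes N} \leq -F_\beta(\mu_i) + o_\epsilon(1).
\]
Summing over the finite cover, passing to the $\limsup$, and letting $\epsilon\to 0$ delivers the LDP upper bound after dividing by $Z_{N,\beta}$ (whose asymptotics come from the analogous free-energy estimate applied with $A=\mathcal{M}_1(X)$).

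The lower bound is the main obstacle. Given $\mu_0$ in an open $U$ with $F_\beta(\mu_0)<\infty$, approximate $\mu_0$ by a measure with smooth positive density $f_0$ with respect to $dV$. Rewrite
\[
\int_{\delta_N^{-1}(U)} e^{-\beta H^{(N)}} dV^{\otimes N} = \int_{\delta_N^{-1}(U)} e^{-\beta H^{(N)} - \sum_i \log f_0(x_i)}\, d\mu_0^{\otimes N}
\]
and apply Jensen's inequality to the probability measure $p_N^{-1} \mathbf{1}_{\delta_N^{-1}(U)}\,d\mu_0^{\otimes N}$, where $p_N = \mu_0^{\otimes N}(\delta_N^{-1}(U)) \to 1$ by the law of large numbers. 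Using LLN again for $\frac{1}{N}\sum_i \log f_0(x_i) \to -S_{dV}(\mu_0)$, the lower bound reduces to the key energy estimate
\[
\limsup_{N\to\infty} \frac{1}{N}\, \mathbb{E}_{\mu_0^{\otimes N}}\bigl[H^{(N)}\bigr] \leq E(\mu_0),
\]
complementary to the Gamma-liminf, which is where the uniform quasi-superharmonicity becomes decisive.

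This final estimate is the heart of the proof. After subtracting a suitable quadratic correction in each variable, $H^{(N)}$ becomes genuinely superharmonic in each $x_i$, so the one-sided mean-value inequality bounds its average over any small coordinate ball above by its value at the center, plus an error controlled by $C$ times the squared radius. Combining this with a Gamma-limsup recovery configuration $x^{*(N)}$, for which $H^{(N)}(x^{*(N)})/N \to E(\mu_0)$, and then propagating the ball-average bound through a variable-by-variable heat-semigroup regularization of $\mu_0^{\otimes N}$ (with a radius chosen so the smoothed product measure approximates $\mu_0^{\otimes N}$ in the weak sense), yields the required upper bound on the expected energy. Assembling the two bounds completes the LDP. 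Finally, when $E$ is convex the functional $F_\beta = E - \beta^{-1} S_{dV}$ is strictly convex on its finiteness domain, as the relative entropy is, and hence admits a unique minimizer $\mu_\beta$; the LDP then concentrates the laws of $\delta_N$ on this point, giving the stated convergence in law.
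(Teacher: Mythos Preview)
Your upper bound argument is essentially the paper's: Gamma-liminf combined with Sanov on small balls, then a finite covering. No issue there.

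The lower bound is where your proposal diverges from the paper, and where it has a genuine gap. You reduce the lower bound, via Jensen against the conditioned product measure, to the ``mean energy upper bound''
\[
\limsup_{N\to\infty}\frac{1}{N}\int_{X^{N}}H^{(N)}\,d\mu_0^{\otimes N}\ \le\ E(\mu_0).
\]
This is exactly the Messer--Spohn/Gibbs-variational route discussed in Section~\ref{subsec:Mean-field-approximations} of the paper, and the paper explicitly records (see the discussion around formula~\eqref{eq:conv of mean energies} and the conjectures in Section~\ref{subsec:A-Green's-formula}) that this mean-energy convergence is \emph{open} in the present setting. Your sketch for deducing it from quasi-superharmonicity does not close: the submean inequality applied coordinate-by-coordinate around a recovery configuration $x^{*(N)}$ controls only the average of $H^{(N)}$ over the product of small balls $\prod_i B_r(x_i^{*})$, and running the heat semigroup on $\mu_0^{\otimes N}$ moves the inequality in the wrong direction (heat flow \emph{decreases} the integral of a superharmonic function, so you get $\int H^{(N)}d(\mu_0^t)^{\otimes N}\le \int H^{(N)}d\mu_0^{\otimes N}+CNt$, not the reverse). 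There is no step in your outline that passes from the ball-measure $\bigotimes_i\mathrm{Unif}(B_r(x_i^{*}))$, which is a product of \emph{different} measures, to the i.i.d.\ product $\mu_0^{\otimes N}$.

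The paper's proof avoids the mean-energy estimate entirely. Instead it establishes a direct submean inequality for the integral of $e^{-\beta N E^{(N)}}$ over a Wasserstein ball $B_\epsilon(\delta_N(x^{(N)}))$ in $\mathcal{M}_1(X)$, comparing it to the sup over a smaller ball $B_{\epsilon^2}$ (inequality~\eqref{eq:suffiecnt estimate}). The point is that $X^N/S_N$, with $N^{-1}$ times the product Riemannian metric, embeds isometrically into $(\mathcal{M}_1(X),d_{W_2})$ via $\delta_N$; the quasi-superharmonicity hypothesis becomes $\Delta_{g_N}E^{(N)}\ge -\lambda$, and the Ricci curvature of $g_N$ is uniformly bounded below by a constant times the dimension. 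One then invokes a Li--Schoen type submean inequality for quasi-subharmonic functions on Riemannian orbifolds \cite[Thm~2.1]{berm8}, whose crucial feature is that the distortion factor grows only \emph{subexponentially} in the dimension $nN$. Combined with the recovery sequence from Gamma-convergence and Sanov on the smaller ball, this yields the lower bound directly, without ever integrating $H^{(N)}$ against a product measure.
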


The general notion of a LDP is recalled in Section \ref{subsec:The-notion-of}.
But loosely speaking, the LDP implies that the risk that the empirical
measure $\delta_{N}$ deviates from the minimizer $\mu_{\beta}$ is
exponentiall small, as $N\rightarrow\infty,$ which implies convergence
in law towards $\mu_{\beta}.$ The idea of the proof of the previous
theorem is to apply geometric analysis and comparison geometry to
the configuration space of $N$ points on $X,$ viewed as a Riemannian
orbifold (see Section \ref{sec:Proofs-for-the limit N}). 

\subsubsection{Random interpolation nodes}

In the present setting, where $N$ is taken as the sequence $N_{k}$
of integers defined by formula \ref{eq:def of N k intro}, we take
the Hamiltonian to be 
\begin{equation}
H^{(N_{k})}(z_{1},...z_{N_{k}})=-\frac{1}{k}\log\left|D^{(N_{k})}(z_{1},...z_{N_{k}})\right|_{k\phi}^{2}\label{eq:def of H N weighted intro}
\end{equation}
for a given weight function $\phi$ on $\C^{n},$ assumed continuous
(which, from the statistical mechanics point of view, plays the role
of an exterior potential, just as in the one-dimensional case discussed
above). Fixing also a continuous volume form $dV$ on $\C^{n}$ the
corresponding Gibbs measure is thus given by the following propobability
measure on $(\C^{n})^{N}:$
\begin{equation}
\mu_{\beta}^{(N_{k})}=\frac{1}{Z_{N,\beta}}|D^{(N_{k})}(z_{1},...,z_{N_{k}})|_{k\phi}^{2\beta/k}dV^{\otimes N},\label{eq:prod meas mubetaN intro}
\end{equation}
(which, as explained in Section \ref{eq:det section as a determinant},
can be viewed as a deformation of a \emph{determinantal random point
process}). Denoting by $d\lambda$ Lesbesgue measure on $\C^{n}$
we will say that the given ``weighted measure'' $(\phi,dV)$ is\emph{
admissble wrt $\beta$} if there exists $\epsilon>0$ such that
\begin{equation}
\phi-\beta^{-1}\log(dV/d\lambda)\geq\left(1+n/\beta+\epsilon\right)\log|z|^{2}\,\,\,|z|\gg1\label{eq:adm}
\end{equation}
and $\phi$ has at most iterated exponential growth, i.e. there exists
a positive constant $C$ and a positive integer $m$ such that
\[
\phi(z)\leq Ce^{(m)}(z),
\]
 where $e^{(m)}(t)$ denotes the $m$ times iterated exponential function.
The first assumption ensures that $Z_{N,\beta}$ is finite. The second
one is of a technical nature and can, most likely, be removed (but
it is satisifed for any reasonable $\phi).$ In particular, if $dV=d\lambda$
and $\phi$ has strictly super-logarithmic growth, i.e. there exists
$\epsilon>0$ such that
\begin{equation}
\phi\geq(1+\epsilon)\log|z|^{2}+O(1),\,\,\,|z|\gg1,\label{eq:strictly super log growth}
\end{equation}
then the first assumption is satisfied, for $\beta$ sufficently large.
Note that we might as well have assumed that $dV=d\lambda$ since
only the combination $e^{-\beta\phi}dV$ appears in the definition
of $\mu_{\beta}^{(N_{k})}.$ However, when investigating the ``infinite
temperature limit'' $(\beta\rightarrow0)$ it will be necessary to
consider the case when $dV$ is a probability measure (see Section
\ref{subsec:The-zero-and intro}). 

In view of the connection to interpolation nodes we will refer a random
configuration $(x_{1},...,x_{N})$ in the probability space $(\C^{n},\mu_{\beta}^{(N_{k})})$
as \emph{``$N_{k}$ random interpolation nodes at inverse temperature
$\beta".$} Since $H^{(N_{k})}(z_{1},...z_{N_{k}})$ is strongly\emph{
repulsive }in the sense that $H^{(N_{k})}(z_{1},...z_{N_{k}})\rightarrow\infty$
if two points merge, random interpolation nodes behave as repulsive
particles.

Using a compactification argument to replace the non-compact space
$\C^{n}$ with the complex projective space $\P^{n}$ the following
stochastic analog of Theorem \ref{thm:conv in law in C^n intro} will
be deduced from Theorem \ref{thm:LDP subharm intro}:
\begin{thm}
\label{thm:conv in law in C^n intro}Given a weighted measure $(\phi,dV)$
which is admissble with respect to $\beta,$ the random measures $\delta_{N_{k}}$
on the probability spaces $((\C^{n})^{N},\mu_{\beta}^{(N_{k})})$
converge in law , as $N_{k}\rightarrow\infty,$ to the unique minimizer
$\mu_{\beta}$ of the following free energy type functional on the
space $\mathcal{M}_{1}(\C^{n})$ of probability measures on $\C^{n}:$
\[
F_{\phi,\beta}(\mu):=E_{\phi}(\mu)-\beta^{-1}S_{dV}(\mu),
\]
 where $E_{\phi}(\mu)$ is the weighted pluricomplex energy of $\mu$
(given by $E(\mu)+\int\phi\mu$ when $\mu$ has compact support).
Equivalently,
\begin{equation}
\mu_{\beta}=e^{\beta(\psi_{\beta}-\phi)}dV\label{eq:mu beta in terms of psi beta in thm intro}
\end{equation}
where $\psi_{\beta}$ is the unique solution of the complex Monge-Ampere
equation 
\begin{equation}
MA(\psi)=e^{\beta(\psi-\phi)}dV\label{eq:ma eq with beta intro}
\end{equation}
 for a function $\psi\in\mathcal{L}_{+}(\C^{n}).$ Moreover, 
\begin{equation}
-\lim_{k\rightarrow\infty}\frac{1}{kN_{k}}\log\int_{\C^{nN_{k}}}|D(z_{1},...,z_{N_{k}})|_{k\phi}^{2\beta/k}dV^{\otimes N}=\inf_{\mu\in\mathcal{M}_{1}(\C^{n})}F_{\phi,\beta}(\mu):=C_{\beta}\label{eq:asymptotics of integral D in thm intro}
\end{equation}
More precisely, the laws of the random measures $\delta_{N}$ satisfy
a Large Deviation Principle (LDP) on $\mathcal{M}_{1}(\C^{n})$ with
speed $\beta N$ and rate functional $F_{\phi,\beta}(\mu)-C_{\phi,\beta}.$
\end{thm}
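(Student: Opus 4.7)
The plan is to invoke Theorem \ref{thm:LDP subharm intro} after compactifying $\C^{n}$ to the complex projective space $\P^{n}=\C^{n}\cup H_{\infty}$. Under this compactification, the space $\mathcal{P}_{k}(\C^{n})$ of polynomials of degree at most $k$ is canonically identified with $H^{0}(\P^{n},\mathcal{O}(k))$, and the weighted norm $|D^{(N_{k})}|_{k\phi}^{2}$ is reinterpreted as the pointwise squared norm of a global holomorphic section of a line bundle on $(\P^{n})^{N_{k}}$ with respect to a (possibly singular) Hermitian metric on $\mathcal{O}(1)$ induced by $\phi$. The admissibility hypothesis \ref{eq:adm} encodes the fact that, compared with a smooth reference metric on $\mathcal{O}(1)$ (say Fubini--Study), $\phi$ has enough extra growth near $H_{\infty}$ to force the partition function to be finite and to make empirical measures charging a neighborhood of $H_{\infty}$ exponentially unlikely at the scale $\beta N_{k}$.

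First I would transfer the entire setup to $\P^{n}$: extend $\phi$ and $dV$ to $\P^{n}$ by prescribing their behavior near $H_{\infty}$ through \ref{eq:adm}, and define a Hamiltonian $\tilde{H}^{(N_{k})}$ and Gibbs measure $\tilde{\mu}_{\beta}^{(N_{k})}$ on $(\P^{n})^{N_{k}}$. Since $H_{\infty}$ has Lebesgue measure zero, $\tilde{\mu}_{\beta}^{(N_{k})}$ is supported on $(\C^{n})^{N_{k}}$ and coincides there with the original $\mu_{\beta}^{(N_{k})}$. It thus suffices to establish the LDP for the laws of $\delta_{N_{k}}$ under $\tilde{\mu}_{\beta}^{(N_{k})}$ on $\mathcal{M}_{1}(\P^{n})$ and then verify that the rate function is infinite on measures charging $H_{\infty}$ and agrees with $F_{\phi,\beta}$ on probability measures supported in $\C^{n}$.

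The core step is to verify the two hypotheses of Theorem \ref{thm:LDP subharm intro} for $\tilde{H}^{(N_{k})}$ on $X=\P^{n}$. The Gamma-convergence $\tfrac{1}{N_{k}}\tilde{H}^{(N_{k})}\to E_{\phi}$ on $\mathcal{M}_{1}(\P^{n})$ is the weighted polarized-manifold incarnation of \ref{eq:Gamma conv towards E intro} applied to $(X,L)=(\P^{n},\mathcal{O}(1))$ with the singular metric determined by $\phi$, which is the content of Theorem \ref{thm:Gamma conv polarized}. Uniform quasi-superharmonicity $\Delta_{z_{i}}\tilde{H}^{(N_{k})}\leq C$ is the local input: away from the diagonal $\log|D^{(N_{k})}|^{2}$ is pluriharmonic in each variable, while a uniform lower bound $i\partial\bar{\partial}\phi\geq -C\omega_{FS}$ (which can be arranged near $H_{\infty}$ by modifying $\phi$ outside a large ball without affecting the LDP on $\C^{n}$) produces the required Laplacian upper bound uniformly in $N$.

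The minimizer is then identified as follows. Strict convexity of the pluricomplex energy $E_{\phi}$ along geodesics in the finite-energy class of probability measures, established in \cite{bbgz}, forces $F_{\phi,\beta}$ to be strictly convex for $\beta>0$ and hence to possess at most one minimizer $\mu_{\beta}$. The Euler--Lagrange equation at $\mu_{\beta}$ reads $-\psi_{\mu_{\beta}}+\phi+\beta^{-1}\log(d\mu_{\beta}/dV)=\mathrm{const}$, and substituting $\mu_{\beta}=MA(\psi_{\mu_{\beta}})$ gives the complex Monge--Ampere equation \ref{eq:ma eq with beta intro} once the constant is absorbed into the normalization of $\psi\in\mathcal{L}_{+}(\C^{n})$; the partition function asymptotics \ref{eq:asymptotics of integral D in thm intro} then follow by Varadhan's integral lemma. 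The main obstacle I anticipate is precisely the passage between $\C^{n}$ and $\P^{n}$: one must show that the rate functional inherited from the compactified LDP is genuinely $F_{\phi,\beta}$ rather than a strictly smaller quantity, and this is where \ref{eq:adm} must be used decisively to rule out mass escape to $H_{\infty}$ at the exponential scale $\beta N_{k}$.
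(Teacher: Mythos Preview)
Your overall strategy---compactify to $\P^{n}$, invoke the abstract LDP for quasi-superharmonic Hamiltonians, and identify the rate functional---is the paper's strategy as well. But the execution diverges at exactly the point you flag as ``the main obstacle,'' and your proposed fix does not close the gap.

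The issue is that Theorem~\ref{thm:LDP subharm intro} requires a smooth volume form on a compact Riemannian manifold and a Hamiltonian built from a \emph{continuous} metric on $L$. Under compactification, neither the extension of $dV$ nor the metric determined by a general admissible $\phi$ is continuous across $H_{\infty}$: the admissibility condition \ref{eq:adm} forces $\phi-\phi_{FS}$ to blow up there. Your remedy---``modify $\phi$ outside a large ball without affecting the LDP on $\C^{n}$''---is precisely the step that needs proof, and it is not automatic. Changing $\phi$ at infinity changes both the Hamiltonian and the reference measure, and one must show that the partition-function asymptotics are insensitive to this truncation. The paper does this in two stages. First (its Step~1) it treats the special case $\phi=C_{\beta}\phi_{FS}+u$ with $u\in C(\P^{n})$ by absorbing the excess growth into a singular reference measure $dV_{s}=e^{-v}dV_{FS}$ with $v$ having \emph{analytic singularities} along $H_{\infty}$, and then applies the extension of the polarized-manifold LDP (Theorem~\ref{thm:LDP for temp deform on polar}) that allows such $dV_{s}$. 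Second (Steps~2--3), for general $\phi$ it proves the partition-function asymptotics \ref{eq:asympto for free energy in pf} by monotone approximation from below by weights of the special form and from above by $\phi_{R}=\min\{\phi,R\phi_{FS}\}$, controlling the error via Jensen's inequality and a uniform bound on the one-point correlation measures $(\mu_{R}^{(N)})_{1}\le Ce^{-\phi_{R-1}}dV_{s}$ (this bound uses compactness of $PSH(\P^{n},\omega_{FS})_{0}$ and is where the iterated-exponential growth hypothesis enters). The LDP then follows from G\"artner--Ellis, not from a direct application of Theorem~\ref{thm:LDP subharm intro}.

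Two smaller points. Your appeal to Theorem~\ref{thm:Gamma conv polarized} for the Gamma-convergence presupposes a continuous metric on $\mathcal{O}(1)$, which again only covers the Step~1 case. And uniqueness of the minimizer comes from convexity of $E_{\phi}$ together with \emph{strict} convexity of the entropy term, not from strict convexity of $E_{\phi}$ itself.
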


Note that the deterministic setting, exposed in the previous section,
 appears (formally) in the ``zero temperature case'', i.e. when
$\beta=\infty,$ since $\mu_{\beta}^{(N)}$ then localizes on the
set of minima of $H^{(N)},$ i.e. on the maximizers of the weighted
Vandermonde determinant. One motivation for increasing the temperature,
i.e. introducing randomness into the picture, comes from Monte Carlo
methods, as used extensively in scientific computing \cite{liu} (see
\cite{c-l-e-h,l-n,l-n-a} for the logarithmic case of elliptic Fekete
points on the two-sphere). These are stochastic optimization methods
that can be used for locating approximate minima of a given function
$H^{(N)},$ by fixing a small temperature, i.e. a large $\beta$ (the
corresponding Gibbs measure typically arises as the stationary measure
for the stochastic gradient flow of $H^{(N)},$ as explained in Section
\ref{subsec:Dynamics}).

Theorem \ref{thm:conv in law in C^n intro} applies (and so does Theorem
\ref{thm:LDP subharm intro}) to the more general situation when the
inverse temperature $\beta_{N}$ is allowed to dependent on $N$ as
long as its large-$N$ limit exists:
\begin{equation}
\lim_{N\rightarrow\infty}\beta_{N}:=\beta\in[0,\infty]\label{eq:lim beta N is beta}
\end{equation}
In fact, the proof is considerably simpler when $\beta=\infty$ and
the result then applies more generally to measures $dV$ satisfying
a Bernstein-Markov property (see formula \ref{eq:BM prop intro} below).
This leads to three different regimes (phases):
\begin{itemize}
\item The zero-temperature regime: $\beta=\infty$ (then $\mu_{\beta}$
minimizes the energy $E_{\phi})$
\item The positive temperature regime: $\beta\in]0,\infty[$ (then $\mu_{\beta}$
minimizes the free energy $F_{\beta,\phi}$
\item The infinite temperature regime: $\beta=0$ (then $\mu_{\beta}$ maximizes
entropy $S_{dV})$
\end{itemize}
\begin{example}
The standard quadratic weight $\phi(z):=|z|^{2}$ is admissble for
any $\beta$ when $dV=d\lambda.$ This is equivalent to the non-weighted
setting $\phi=0,$ but with $dV$ taken as a Gaussian measure. Thus,
for $\beta\in]0,\infty[$ the measure $\mu_{\beta}$ interpolates
between the Gaussian measure at $\beta=0$ and the normalized Lesbegue
measure on the unit-ball $B\subset\C^{n}$ at $\beta=\infty$ (compare
Theorem \ref{thm:inf temp intro}). This ``cross-over'' behaviour
has recently been studied when $n=1$ in \cite{b-c-f,a-b}.
\end{example}

\begin{rem}
Establishing the LDP in Theorem \ref{thm:conv in law in C^n intro}
for a fixed weight $\phi$ is, in fact, equivalent to establishing
the logarithmic asymptotics of the corresponding partition functions
for\emph{ all }weights $\phi$ (formula \ref{eq:asymptotics of integral D in thm intro});
see Step 2 in the proof of Theorem \ref{thm:conv in law in C^n intro}
. In other words, establishing the LDP in question is equivalent to
computing the leading logarithmic asymptotics of the weighted $L^{p_{k}}-$norms
of the Vandermonde determinant $D^{(N_{k})},$ as $k\rightarrow\infty$
with $p_{k}k/2\rightarrow\beta.$ Interestingly, in the case when
$n=1$ such $L^{p}-$norms have been computed explicitely for all
fixed $N$ and $p$ for some particular weighted measures $(\phi,dV);$
see the survey \cite{f-o}, explaining the links to the Selberg integral,
random matrix theory and representation theory.
\end{rem}

\subsubsection{\label{subsec:Complex-geometry}Complex geometry}

The main motivation for increasing the temperature does not come from
interpolation theory, but rather from complex geometry (and then the
temperature need not be small). Indeed, when $\phi$ is smooth, the
function $\psi_{\beta}$ determined by the measure $\mu_{\beta}$
by
\begin{equation}
\psi_{\beta}:=\beta^{-1}\log\frac{\mu_{\beta}}{dVe^{-\phi}}\label{eq:def of psi beta intro}
\end{equation}
 (appearing in formula \ref{eq:mu beta in terms of psi beta in thm intro})
is the Kähler potential of a Kähler metric 
\[
\omega_{\beta}:=\frac{i}{2\pi}\partial\bar{\partial}\psi_{\beta}
\]
which defines a twisted Kähler-Einstein metric on $\C^{n}:$ 
\begin{equation}
\mbox{Ric\,}\ensuremath{\omega_{\beta}+\beta\omega_{\beta}=\tau},\label{eq:twisted ke eq intro}
\end{equation}
where the twisting form $\tau$ is explicetly expressed is terms of
the given data $(\beta,\phi,dV)$ (see Example \ref{exa:The-Ricci-curvature}).

Kähler-Einstein metics (i.e. the case when $\eta=0)$ and their twisted
generalization play a key role in current Kähler geometry, in particular
in connection to the Yau-Tian-Donaldson conjecture \cite{do}. Thus
Theorem \ref{thm:conv in law in C^n intro} furnishes a probabilistic
approach for constructing solutions $\omega_{\beta}$ to the twisted
Kähler-Einstein equation \ref{eq:twisted ke eq intro}, by first sampling
configurations of $N$ points with respect to the probability measure
on $(\C^{n})^{N}$ defined by formula \ref{eq:prod meas mubetaN intro}
in order to obtain $\mu_{\beta}$ as $N\rightarrow\infty$ (with overwhelming
probability) and then taking the logarithm to obtain the Kähler potential
$\psi_{\beta}$ defined by formula \ref{eq:def of psi beta intro}.
Moreover, the rate functional $F_{\beta}$ may in this context be
identified with the twisted K-energy functional in Kähler geometry
\cite{berm6}.

The convergence in law in Theorem \ref{thm:conv in law in C^n intro}
implies, in particular, that the expectations $\E(\delta_{N_{k}})$
of the empirical measure $\delta_{N_{k}}$converge weakly towards
$\mu_{\beta},$ as measures on $\C^{n}.$ By symmetry $\E(\delta_{N_{k}})$
may be explicitely expressed as the measure 
\[
\nu_{k}:=\frac{\int_{(\C^{n})^{N-1}}|D(\cdot,z_{2}...,z_{N_{k}})|_{k\phi}^{2\beta/k}dV^{\otimes N}}{\int_{(\C^{n})^{N}}|D(z_{1},z_{2}...,z_{N_{k}})|_{k\phi}^{2\beta/k}dV^{\otimes N}}
\]
on $\C^{n}.$ Theorem \ref{thm:conv in law in C^n intro} thus implies
the following convergence of the explicit sequence $\nu_{k}$ of measures
on $\C^{n}$ and its logarithms: 
\begin{cor}
\label{cor:conv of one pt correl measr etc intro}Given a weighted
measure $(\phi,dV)$ which is admissble with respect to $\beta,$
the sequence of measures converge in the weak topology of measures
on $\C^{n}$ towards $\mu_{\beta}.$ As a consequence, the corresponding
sequence of Kähler potentials 
\[
\psi_{\beta}^{(N_{k})}:=\frac{1}{\beta}\log\frac{\nu_{k}}{dVe^{-\phi}}
\]
 converge in $L_{loc}^{1}(\C^{n})$ towards the Kähler potential $\psi_{\beta}$
of a twisted Kähler-Einstein metric $\omega_{\beta}$ (solving the
equation \ref{eq:twisted ke eq intro}).
\end{cor}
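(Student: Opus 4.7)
The plan is to deduce both statements from Theorem~\ref{thm:conv in law in C^n intro}, exploiting the structure of the one-point marginal density of the Gibbs measure.

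The weak convergence $\nu_k \to \mu_\beta$ is immediate from convergence in law. By symmetry of the Gibbs measure \eqref{eq:prod meas mubetaN intro}, $\nu_k = \E(\delta_{N_k})$, and for any bounded continuous $f$ on $\C^n$ the random variable $\langle \delta_{N_k}, f\rangle$ is bounded by $\|f\|_\infty$ and converges in law to the constant $\langle \mu_\beta, f\rangle$; bounded convergence then yields $\langle \nu_k, f\rangle \to \langle \mu_\beta, f\rangle$. The admissibility condition \eqref{eq:adm} supplies tightness at infinity, extending this to continuous test functions with controlled growth.

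For the $L^1_{loc}$ convergence of the K\"ahler potentials I would first exhibit a plurisubharmonic structure on $\psi_\beta^{(N_k)}$. Writing the marginal explicitly,
\[
\nu_k(z) = \frac{e^{-\beta\phi(z)}\,K_{N,\beta}(z)}{Z_{N,\beta}}\,dV(z),\quad K_{N,\beta}(z) := \int_{(\C^n)^{N-1}} |D^{(N_k)}(z,z_2,\ldots,z_N)|^{2\beta/k}\prod_{i=2}^{N} e^{-\beta\phi(z_i)}\,dV(z_i),
\]
one observes that $D^{(N_k)}(z,\cdot)$ is, for each fixed $(z_2,\ldots,z_N)$, a polynomial in $z$ of degree at most $k$. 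Hence $\tfrac{1}{\beta}\log K_{N,\beta}(z)$ is plurisubharmonic in $z$ by the Berndtsson-type subharmonicity of logs of weighted $L^p$-norms of holomorphic families (Bergman-kernel interpretation for $p=2$, dual sup-of-psh representation for $p\geq 1$). The polynomial-degree bound $|D(z,\cdot)|\lesssim(1+|z|)^k$ yields the upper estimate $\tfrac{1}{\beta}\log K_{N,\beta}(z)\le \log(1+|z|^2)+O(1)$, while the partition-function asymptotic \eqref{eq:asymptotics of integral D in thm intro} pins down the additive constant $-\tfrac{1}{\beta}\log Z_{N,\beta}$. Modulo a smooth correction by $\phi$, the family $\{\psi_\beta^{(N_k)}\}$ then lies in a bounded subset of the Lelong class $\mathcal{L}(\C^n)$, hence is relatively compact in $L^1_{loc}(\C^n)$ by standard compactness of psh families.

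Any subsequential $L^1_{loc}$-limit $\psi_\infty$ is identified with $\psi_\beta$ via the density relation: combining the uniform upper bound with almost-everywhere convergence on a subsequence, dominated convergence on compact subsets gives $\nu_{k_j}\to c\cdot e^{\beta(\psi_\infty-\phi)}dV$ weakly, and comparison with the weak limit $\mu_\beta=e^{\beta(\psi_\beta-\phi)}dV$ (formula \eqref{eq:mu beta in terms of psi beta in thm intro}) together with the probability-mass normalization forces $c=1$ and $\psi_\infty=\psi_\beta$. Uniqueness of the subsequential limit then upgrades to convergence of the full sequence. The main obstacle will be producing the \emph{uniform} upper bound $\tfrac{1}{\beta}\log K_{N,\beta}(z)-\tfrac{1}{\beta}\log Z_{N,\beta}\le \log(1+|z|^2)+C$ with $C$ independent of $N_k$: the naive polynomial-degree estimate on $|D|$ carries $N_k$-dependent prefactors that must be absorbed into the partition-function asymptotics, which in turn requires a sharp Christoffel--Darboux-type lower bound on $K_{N,\beta}$ matched against \eqref{eq:asymptotics of integral D in thm intro}.
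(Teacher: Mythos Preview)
Your overall strategy is correct and is essentially what the paper has in mind (the corollary is stated without a separate proof, as an immediate consequence of the theorem). The weak convergence of $\nu_k=\E(\delta_{N_k})$ is exactly as you say. For the $L^1_{loc}$ convergence, the route via plurisubharmonicity of $\psi_\beta^{(N_k)}$, compactness in the Lelong class, and identification of subsequential limits through the density relation is the natural one.

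Where your write-up takes an unnecessary detour is the ``main obstacle'' you single out. You propose bounding $K_{N,\beta}$ and $Z_{N,\beta}$ separately and then matching the two via partition-function asymptotics and a Christoffel--Darboux-type lower bound. The paper's proof of Theorem~\ref{thm:conv in law in C^n intro} (see the argument around the claims \eqref{eq:ineq for one point cor with R}--\eqref{eq:claim 2}) gives a much cleaner way: for each fixed $(z_2,\dots,z_N)$ set $\psi(z):=k^{-1}\log|D(z,z_2,\dots,z_N)|^2\in\mathcal{L}(\C^n)$, and observe that
\[
\frac{|D(z,\cdot)|^{2\beta/k}}{\displaystyle\int_{\C^n}|D(w,\cdot)|^{2\beta/k}e^{-\beta\phi(w)}\,dV(w)}
=\frac{e^{\beta\psi(z)}}{\displaystyle\int_{\C^n}e^{\beta(\psi-\phi)}\,dV}
\le\sup_{\psi\in\mathcal{L}(\C^n)}\frac{e^{\beta\psi(z)}}{\displaystyle\int_{\C^n}e^{\beta(\psi-\phi)}\,dV}.
\]
Averaging over $(z_2,\dots,z_N)$ with the induced probability weight, the left side becomes exactly $K_{N,\beta}(z)/Z_{N,\beta}=e^{\beta\psi_\beta^{(N_k)}(z)}$, while the right side is independent of $N$. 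Normalizing $\psi-\phi_{FS}$ to have supremum zero and invoking the $L^1$-compactness of $PSH(\P^n,\omega_{FS})_0$ (Proposition~\ref{prop:omega psh compact appr}) shows the right side is bounded by $C\,e^{\beta\phi_{FS}(z)}$. This gives $\psi_\beta^{(N_k)}\le\phi_{FS}+C$ uniformly in $k$ with no bookkeeping of $N_k$-dependent prefactors.

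One small cleanup: the extra constant $c$ in your identification step is superfluous. Once you know $\psi_\beta^{(N_{k_j})}\to\psi_\infty$ in $L^1_{loc}$ together with the uniform bound above, testing $\nu_{k_j}=e^{\beta(\psi_\beta^{(N_{k_j})}-\phi)}dV$ against compactly supported continuous functions and comparing with the already-established weak limit $\mu_\beta=e^{\beta(\psi_\beta-\phi)}dV$ forces $\psi_\infty=\psi_\beta$ a.e.\ directly, without any normalization issue.
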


As will be explained in Section \ref{subsec:Comparison-with-determinantal}
the convergence of $\psi_{\beta}^{(N_{k})}$ towards $\psi_{\beta}$
may be interpreted as a ``positive temperature analogy'' of a classical
convergence result of Siciak.

The existence of solutions to Monge-Ampère equations of the form \ref{eq:ma eq with beta intro}
on a compact Kähler manifold $X$ was first established by Aubin \cite{au}
and Yau \cite{y}, when $\beta>0.$ The case $\beta=0$ (i.e. the
Calabi-Yau equation) was settled in Yau's solution of the Calabi conjecture
\cite{y}. Interestingly, the minus the ``inverse temperature''
$-\beta$ coincides with the deformation parameter in Aubin's continuity
method for constructing solutions to the equation \ref{eq:ma eq with beta intro}
on a compact Kähler manifold $X$ for $\beta$\emph{ negative \cite{au}.
}This is motivated by the Yau-Tian-Donaldson conjecture concerning
the existence of Kähler-Einstein metrics with \emph{positive} Ricci
curvature, which very recently has been settled for smooth compact
Fano manifolds $X$ (see the survey \cite{do}). From a statistical
mechanical point of view the case $\beta<0$ introduces an additional
fourth phase, complementing the three phases described above. In practise,
switching the sign of $\beta$ equivalently corresponds to switching
the sign of the corresponding interaction energy, thus replacing the
repulsive interaction energy $E^{(N)}$ with the opposite \emph{attractive}
energy 
\begin{equation}
E_{attr}^{(N)}(z_{1},...,z_{N}):=\frac{1}{N_{k}k}\log\left|D^{(N_{k})}(z_{1},...z_{N_{k}})\right|^{2}\label{eq:attr E N}
\end{equation}
 at positive inverse temperature $\gamma=-\beta.$ This attractive
setting appears to be extremely challenging (see Section \ref{subsec:The-case-of-neg}).

\subsection{\label{subsec:The-zero-and intro}The zero and infinite temperature
limits and the Calabi-Yau equation}

Let us come back to the original ``zero temperature regime'' $(\beta=\infty)$
and explain how it reappears when $\beta\rightarrow\infty.$ Since
the measure $\mu_{\beta},$ emerging in the large $N-$limit described
in Theorem \ref{thm:conv in law in C^n intro}, is a minimizer of
the free energy functional $F_{\beta,\phi}$ it is not hard to see
that $\mu_{\beta}$ converges, as $\beta\rightarrow0,$ to the minimizer
of $E_{\phi},$ which coincides with weighted equilibrium measure
$\mu_{\phi}$ (formula \ref{eq:def of MA phi intro}) when $\beta\rightarrow\infty.$
Moreover, the corresponding result also holds on the level of potentials.
In other words, the corresponding Kähler potentials $\psi_{\beta}$
(formula \ref{eq:def of psi phi intro}) converge to the corresponding
extremal function $\psi_{\phi},$ which is $C^{1,1}-$smooth when
$\phi$ is smooth (but not $C^{2}-$smooth, in general). More precisely: 
\begin{thm}
Assume that $\phi$ has strictly super-logarithmic growth (formula
\ref{eq:strictly super log growth}), that$\phi$ is in $C_{loc}^{2}(\C^{n})$
and that $dV$ has a positive density in $C_{loc}^{2}(\C^{n}).$ Then
$\psi_{\phi}$ is in $C_{loc}^{1,1}(\C^{n})$ and
\[
\lim_{\beta\rightarrow\infty}\psi_{\beta}=\psi_{\phi}
\]
in the Hölder space $C_{loc}^{1,\alpha}(\C^{n})$ for any $\alpha<1.$ 
\end{thm}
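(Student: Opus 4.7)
The plan is to prove the theorem in three stages: first establish $L^1_{loc}$ convergence of $\psi_\beta$ to $\psi_\phi$ via the variational characterization from Theorem \ref{thm:conv in law in C^n intro}; second, derive uniform (in $\beta$) $C^{1,1}_{loc}$ bounds on $\psi_\beta$; third, combine these to obtain the $C^{1,\alpha}_{loc}$ convergence and the $C^{1,1}_{loc}$ regularity of $\psi_\phi$ as a by-product.

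\textbf{Step 1 ($L^1_{loc}$ convergence).} The family $\{\psi_\beta\}$ lies in $\mathcal{L}_+(\C^n)$ with logarithmic growth pinned down by the MA equation, hence is relatively compact in $L^1_{loc}$. Let $\psi_\infty$ be a subsequential limit. For the upper bound, use that $MA(\psi_\beta)=e^{\beta(\psi_\beta-\phi)}dV$ is a probability measure: Jensen's inequality on any small ball $B$ gives
\[
\frac{1}{|B|_{dV}}\int_B(\psi_\beta-\phi)\,dV \leq -\frac{\log|B|_{dV}}{\beta}.
\]
Combining this with the sub-mean value inequality for the psh function $\psi_\beta$ and the continuity of $\phi$, optimizing the radius in terms of $\beta$, yields $\sup_K(\psi_\beta-\phi)\to 0$ on compacts $K$. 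Since $\psi_\beta-\epsilon_\beta\in\mathcal{L}$ satisfies $\psi_\beta-\epsilon_\beta\leq\phi$, the envelope definition \ref{eq:def of psi phi intro} forces $\psi_\beta\leq\psi_\phi+\epsilon_\beta$, hence $\psi_\infty\leq\psi_\phi$. For the matching lower bound, apply Theorem \ref{thm:conv in law in C^n intro}: $\mu_\beta:=MA(\psi_\beta)$ is the unique minimizer of $F_{\beta,\phi}=E_\phi-\beta^{-1}S_{dV}$. As $\beta\to\infty$ the entropy term disappears and Gamma-convergence of $F_{\beta,\phi}$ to $E_\phi$ gives $\mu_\beta\rightharpoonup\mu_\phi=MA(\psi_\phi)$. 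Continuity of the Monge-Amp\`ere operator along $L^1_{loc}$-convergent sequences with uniform growth (Bedford-Taylor) yields $MA(\psi_\infty)=\mu_\phi=MA(\psi_\phi)$. Combined with $\psi_\infty\leq\psi_\phi$ and both in $\mathcal{L}_+$, the domination principle forces $\psi_\infty=\psi_\phi$. Uniqueness of the subsequential limit gives full $L^1_{loc}$ convergence.

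\textbf{Step 2 (Uniform $C^{1,1}_{loc}$ bounds).} This is the analytic heart. Write the MA equation in coordinates as
\[
\log\det(\partial_i\bar\partial_j\psi_\beta) = \beta(\psi_\beta-\phi)+\log(dV/d\lambda)+\mathrm{const}.
\]
Apply the classical Aubin-Yau maximum principle argument to the quantity $H_\beta := e^{-A\psi_\beta}(n+\Delta\psi_\beta)$, where $A$ is a large constant chosen to absorb the negative curvature-type contributions of $\phi$ and $\log(dV/d\lambda)$ on compacts (finite by the $C^2_{loc}$ assumption). At the maximum point of $H_\beta$ the $\beta$-dependent term that appears from $\Delta_\omega\log\det$ is $\beta\Delta_\omega(\psi_\beta-\phi)$; the psh term $\beta\Delta_\omega\psi_\beta\geq 0$ has a favorable sign and the term $\beta\Delta_\omega\phi$ is controlled by the $C^2$ bound on $\phi$ combined with $n+\Delta\psi_\beta$ at the max point (the trace inequality for positive matrices), giving $\sup H_\beta\leq C_K$ with $C_K$ independent of $\beta$. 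Since $\C^n$ is noncompact, the argument is localized via a cutoff exploiting the strictly super-logarithmic growth of $\phi$ (which forces $\psi_\beta-\phi\to-\infty$ at infinity uniformly in $\beta$, so the max of $H_\beta$ is attained in the interior of a large fixed ball). This yields $\Delta\psi_\beta\leq C_K$ on any compact $K$, independently of $\beta$, hence uniform $C^{1,1}_{loc}$ bounds together with $\partial\bar\partial\psi_\beta\geq 0$.

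\textbf{Step 3 ($C^{1,\alpha}_{loc}$ convergence).} The uniform $C^{1,1}_{loc}$ bound of Step 2, via Arzel\`a-Ascoli, renders $\{\psi_\beta\}$ relatively compact in $C^{1,\alpha}_{loc}$ for every $\alpha<1$. Combined with the uniqueness of the $L^1_{loc}$ limit from Step 1, the full family converges to $\psi_\phi$ in $C^{1,\alpha}_{loc}$, and $\psi_\phi\in C^{1,1}_{loc}(\C^n)$ as the locally weak-$\ast$ limit of a $C^{1,1}_{loc}$-bounded family.

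\textbf{Main obstacle.} Step 2 is the hard part: the right-hand side of the MA equation contains $\psi_\beta$ itself multiplied by the large parameter $\beta$, so a naive application of the maximum principle produces dominant $\beta$-terms. The key is to organize the computation so that all $\beta$-terms with bad sign are absorbed either by the psh positivity $\beta\Delta\psi_\beta\geq 0$ or by the $C^2$ control on $\phi$ combined with the factor $n+\Delta\psi_\beta$ at the critical point. A secondary technical point is the noncompactness of $\C^n$: the cutoff must be chosen compatibly with the strictly super-logarithmic growth of $\phi$ so that the constants in the estimate do not deteriorate as $\beta\to\infty$.
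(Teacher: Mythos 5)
The paper itself gives no proof here beyond the remark that the result follows by a ``straight-forward modification'' of the argument of \cite{berm11} for polarized compact manifolds, so your reconstruction should be judged against what that proof does. Your overall architecture --- identify the $L^1_{loc}$ limit, prove a $\beta$-uniform $C^{1,1}_{loc}$ a priori estimate by the Aubin--Yau maximum-principle argument (exploiting the favorable sign of the $\beta\Delta_{\omega_{\psi_\beta}}\psi_\beta$ contribution), then pass to $C^{1,\alpha}_{loc}$ by Arzel\`a--Ascoli --- is exactly the strategy of \cite{berm11}, and your analysis of the $\beta$-term in Step 2 correctly identifies the cancellation that makes the estimate uniform in $\beta$.

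There is, however, a genuine gap in Step 1. You invoke ``continuity of the Monge--Amp\`ere operator along $L^1_{loc}$-convergent sequences with uniform growth (Bedford--Taylor)'' to conclude $MA(\psi_\infty)=\mu_\phi$. No such continuity statement exists: the Bedford--Taylor theory gives continuity of $MA$ along \emph{monotone} (decreasing, or increasing a.e.) sequences of locally bounded psh functions, or along sequences converging in capacity, but $MA$ is well known to be discontinuous along $L^1_{loc}$-convergent, uniformly bounded sequences of psh functions (this is a classical counterexample). Even granting the uniform $C^{1,1}_{loc}$ bound from Step 2, weak-$*$ $L^\infty_{loc}$ convergence of $\partial_i\bar\partial_j\psi_\beta$ does not pass to the determinant, so the gap is not closed by Step 2 either. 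The clean way to complete Step 1 is to avoid the MA-operator altogether on the lower-bound side: either argue directly by a barrier/comparison principle as in \cite{berm11} (using the $C^{1,1}$ regularity of the envelope, which gives $MA(\psi_\phi)\le C$ a.e. and hence $\psi_\phi - C'/\beta$ is a subsolution of the $\beta$-equation up to error $O(1/\beta)$), or use the variational route of the paper's own Theorem \ref{thm:zero temp in sing case polar}: $\psi_\beta$ minimizes the functional $\mathcal{J}_\beta=-\mathcal{E}+\mathcal{L}_\beta$, the normalization $\mathcal{L}_\beta(\psi_\beta)=0$ together with \ref{eq:L beta in terms of sup} forces $\mathcal{E}(\psi_\beta)\to\mathcal{E}(\psi_\phi)$, and then the strict monotonicity of $\mathcal{E}$ on $\mathcal{L}_+(\C^n)$ combined with $\psi_\infty\le\psi_\phi$ (your upper bound) forces $\psi_\infty=\psi_\phi$. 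A secondary remark: the localization in Step 2 (so that the maximum of the test quantity sits in a fixed compact set independently of $\beta$) is precisely the ``straight-forward modification'' the paper alludes to, and while your sketch is plausible, it does require either a cutoff built into the test function or an argument that the auxiliary function decays outside the (compact, $\beta$-uniform) support of the densities; you acknowledge the issue but leave it as a heuristic.
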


The result follows from a straight-forward modification of the proof
in the setting of polarized manifolds \cite{berm11}. The $C^{1,1}-$regularity
implies that
\[
\mu_{\phi}=1_{S_{\phi}}MA(\phi),
\]
which typically has a sharp discontinuity across boundary points of
the support $S_{\phi},$ which is a compact subset of $\C^{n}$ \cite{berm 1 komma 1}.
This should be contrasted with the fact that for any finite $\beta$
the support of the Monge-Ampère measure $MA(\psi_{\beta})$ is all
of $\C^{n}.$ Thus, from a statistical mechanical point of view, the
``zero-temperature limit'' $\beta\rightarrow\infty$ is reminiscent
of a second order ``gas-liquid'' phase transition. 

More generally, one can consider the situation where the volume form
$dV$ is replaced by a general measure on $\C^{n},$ that we shall
still denote by $dV.$ We denote by $K$ the support of $dV,$ which,
for simplicity will be assumed to be compact. From the pluripotential
point of view a natural condition is that the measure $dV$ is\emph{
determining} for $(K,\phi),$ i.e. 
\[
\psi\leq\phi\,\,\text{almost everywhere wrt }dV\implies\psi\leq\phi\,\,\text{on\,\ensuremath{K}}
\]
The following result appears to be new, also when $n=1;$ see \cite{berm14},
where the setting Coulomb gases is considered (the notion of convergence
in energy, coinciding with Sobolev $H^{1}-$convergence when $n=1,$
is recalled in Section \ref{sec:Proofs-for-the limit beta}). 
\begin{thm}
\label{thm:zero temp limit singular case intro}Let $\phi$ be a continuous
function on $\C^{n}$ and assume that $dV$ does not charge pluripolar
subsets and is determining wrt $(K,\phi).$ Then, as $\beta\rightarrow\infty,$
\[
\psi_{\beta}\rightarrow\psi_{(K,\phi)}
\]
 in energy. Moreover, if $dV$ is determining wrt $(K,\phi)$ for
any $\phi\in C(K),$ then 
\[
F_{\beta}:=E-\beta^{-1}S_{dV}\rightarrow E
\]
in the sense of Gamma-convergence on $\mathcal{M}_{1}(K).$ \footnote{Conversely, if the Gamma-convergence holds, then $dV$ is determining
wrt $(K,\phi)$ for any $\phi\in C(K),$ as shown in \cite{berm14}.}
\end{thm}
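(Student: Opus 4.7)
My plan is to first prove the Gamma-convergence in the second half of the theorem, and then deduce the convergence in energy of the potentials in the first half as a consequence of the convergence of minimizers. The Gamma-convergence breaks into the two usual inequalities. The $\liminf$ bound is routine: if $\mu_\beta \to \mu$ weakly in $\mathcal{M}_1(K)$, then the lower semicontinuity of the pluricomplex energy $E$ (as recalled in Section~\ref{sec:Proofs-for-the limit N}) combined with the non-negativity of the relative entropy $-S_{dV}(\mu_\beta) = \mathrm{KL}(\mu_\beta \,\|\, dV) \geq 0$ (after normalizing $dV$ to a probability measure, which is harmless since only $e^{-\beta\phi}dV$ enters the Gibbs measure) yields
\[
\liminf_{\beta \to \infty} F_\beta(\mu_\beta) \;\geq\; \liminf_{\beta \to \infty} E(\mu_\beta) \;\geq\; E(\mu).
\]

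The recovery sequence for the $\limsup$ bound is the heart of the argument. Given $\mu \in \mathcal{M}_1(K)$ with $E(\mu) < \infty$, I aim to construct a sequence $\mu_j \in \mathcal{M}_1(K)$ such that $\mu_j \ll dV$ with bounded density (so $S_{dV}(\mu_j)$ is finite), $\mu_j \to \mu$ weakly, and $E(\mu_j) \to E(\mu)$; a diagonal choice $\mu_\beta := \mu_{j(\beta)}$ with $j(\beta)\to\infty$ slowly enough then makes $\beta^{-1}S_{dV}(\mu_\beta)\to 0$, producing $F_\beta(\mu_\beta)\to E(\mu)$. I would build $\mu_j$ on the potential side: write $\mu = MA(\psi_\mu)$ for a pluricomplex potential $\psi_\mu \in \mathcal{L}_+(\C^n)$, regularize by a decreasing sequence $\psi_{\mu,j}$ of bounded psh functions (Demailly-type approximation), and pass to the extremal envelopes $\tilde\psi_{\mu,j} := \psi_{(K,\psi_{\mu,j})}$ defined by (\ref{eq:def of psi K phi}) in order to force the Monge-Amp\`ere measure to be supported in $K$. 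The determining hypothesis applied to each of the continuous weights $\psi_{\mu,j}|_K$ is precisely what allows one to upgrade weak convergence of these envelopes to convergence in pluricomplex energy. A further truncation argument, leveraging that $dV$ does not charge pluripolar sets, would then yield approximating measures $\mu_j$ absolutely continuous with respect to $dV$ with bounded density.

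For Part~1 of the theorem, the same argument applied to the shifted functional $F_{\phi,\beta} = E_\phi - \beta^{-1}S_{dV}$ gives Gamma-convergence $F_{\phi,\beta} \to E_\phi$ on $\mathcal{M}_1(K)$. By the fundamental property of Gamma-convergence, the unique minimizers $\mu_\beta$ of $F_{\phi,\beta}$ (existing by Theorem~\ref{thm:conv in law in C^n intro}) converge weakly to the unique minimizer $\mu_{(K,\phi)} = MA(\psi_{(K,\phi)})$ of $E_\phi$ on $\mathcal{M}_1(K)$, with $F_{\phi,\beta}(\mu_\beta) \to E_\phi(\mu_{(K,\phi)})$. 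Since $-\beta^{-1}S_{dV}(\mu_\beta) \geq 0$, this forces $E_\phi(\mu_\beta) \to E_\phi(\mu_{(K,\phi)})$, namely convergence in pluricomplex energy at the level of measures, which is equivalent (by the pluripotential calculus developed in \cite{bbgz}) to convergence in energy of the Monge-Amp\`ere potentials $\psi_\beta \to \psi_{(K,\phi)}$.

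The main obstacle I expect is the recovery-sequence construction: translating the determining hypothesis — a statement about pointwise majorization of psh functions — into a quantitative density-in-energy result for measures absolutely continuous with respect to $dV$. The converse implication noted in the footnote (Gamma-convergence implies determining for every $\phi \in C(K)$) suggests that this recovery step is essentially equivalent to the determining hypothesis itself, so there is no shortcut; the technical work necessarily lies in the delicate interplay between the envelope construction on potentials, the continuity properties of $E$ along regularizing sequences, and the non-charging-of-pluripolar-sets property of $dV$.
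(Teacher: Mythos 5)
Your plan inverts the paper's logical structure, and that inversion is exactly where the difficulty lies. You propose the "primal" route: prove the Gamma-convergence of $F_\beta\to E$ directly on $\mathcal{M}_1(K)$, with the recovery sequence constructed by regularizing potentials, taking envelopes $\psi_{(K,\psi_{\mu,j})}$, and truncating; then read off the convergence of minimizers and of their potentials. The paper proceeds dually: it first proves the potential convergence $\psi_\beta\to\psi_{(K,\phi)}$ directly, by exhibiting $\varphi_\beta$ as the minimizer of the functional $\mathcal{J}_\beta(\varphi)=-\mathcal{E}(\varphi)+\beta^{-1}\log\int e^{\beta\varphi}\mu_0$ and using the sandwich $\sup_K\varphi - C/\beta - \epsilon \leq \mathcal{L}_\beta(\varphi)\leq \sup_K\varphi + C/\beta$ (which is where the determining hypothesis enters, via the ``transcendental Bernstein--Markov'' estimate of \cite[Thm 1.14]{b-b-w}); it then obtains the Gamma-convergence of $F_\beta$ by showing that the Legendre--Fenchel transforms $-\inf\mathcal{J}_{\beta,u}$ converge for every $u\in C(K)$ to the Gateaux-differentiable limit $-\mathcal{E}\circ P_{(K,\omega)}(-\cdot)$ and invoking Prop.~\ref{prop:crit for gamma conv}. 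That proposition manufactures the recovery sequence abstractly (via a Br\o ndsted--Rockafellar approximation), so the paper never has to build one explicitly. The corollary following the theorem — existence of measures $\mu_j\ll dV$ with $\mu_j\to\mu$ weakly and $E(\mu_j)\to E(\mu)$ — is then a \emph{consequence} of the Gamma-convergence, not an ingredient in its proof.

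Your recovery-sequence sketch is therefore the crux, and as it stands it has a real gap. Given $\mu$ with $E(\mu)<\infty$, the envelope measures $MA(\psi_{(K,\psi_{\mu,j})})$ are supported in $K$, but there is no reason for them to be absolutely continuous with respect to an arbitrary non-pluripolar measure $dV$, let alone to have bounded density; the contact-set formula $MA(P u)=\mathbf{1}_{\{Pu=u\}}MA(u)$ gives a Lebesgue-a.c.\ measure when $u$ is smooth, but $dV$ here is a general measure and may, e.g., be singular with respect to Lebesgue measure. The phrase ``a further truncation argument\ldots would then yield approximating measures $\mu_j$ absolutely continuous wrt $dV$ with bounded density'' is precisely the hard step, and it is not filled in; nor is it clear how the determining hypothesis — a statement about $\sup_K$ of psh functions — enters your truncation to produce convergence in energy. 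You flag this yourself as the main obstacle, and indeed it is essentially equivalent to the statement you are trying to prove, which is why the paper routes around it through duality. A secondary, more cosmetic issue: your deduction of the first claim from the second uses Gamma-convergence, whose hypothesis is ``determining for all $\phi\in C(K)$,'' whereas the first claim assumes determining only for one $\phi$; you would have to observe that weak convergence of minimizers needs a recovery sequence only at the single limit point $\mu_{(K,\phi)}$, and then check that the single-$\phi$ determining hypothesis suffices there. The liminf inequality and the diagonal-in-$\beta$ trick are fine (after normalizing $dV$ to total mass one, which costs only $\beta^{-1}\log dV(K)\to 0$), so the proposal is sound up to the missing recovery step, but that step is the whole theorem.
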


As pointed out in Section \ref{subsec:Comparison-with-determinantal}
the convergence of $\psi_{\beta}$ towards Siciak's weighted extremal
function $\psi_{(K,\phi)}$ (formula \ref{eq:def of psi K phi}) can
be interpreted as a transcendental analog of a classical result of
Siciak \cite{si1} and its $L^{2}-$version in \cite{b-sh}.

The Gamma-convergence in the previous theorem may be equivalently
formulated as the following approximation property of independent
interest (see \cite{berm14}):
\begin{cor}
Let $\mu_{0}$ be a measure on $\C^{n}$ with compact support $K$
and assume that that $\mu_{0}$ does not charge pluripolar subsets
and is determining for $(K,\phi)$ for any $\phi\in C(K).$ Then,
for any given probability measure $\mu$ on $K$ there exists a sequence
$\mu_{j}$ of probability measures, absolutely continuous wrt $\mu_{0},$
converging weakly to $\mu$ and with the property that $E(\mu_{j})\rightarrow E(\mu).$ 
\end{cor}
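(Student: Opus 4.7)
The plan is to derive the corollary directly from the Gamma-convergence statement in Theorem \ref{thm:zero temp limit singular case intro}, which is applicable since the hypotheses on $\mu_0$ are exactly those required (determining for $(K,\phi)$ for every $\phi\in C(K)$, no charge on pluripolar sets). Taking $dV=\mu_0$, this yields $F_{\beta}:=E-\beta^{-1}S_{\mu_0}\rightarrow E$ in the sense of Gamma-convergence on $\mathcal{M}_1(K)$.

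First, I would dispose of the trivial case $E(\mu)=+\infty$ by exhibiting, by a standard partition argument, a sequence $\mu_j\ll\mu_0$ converging weakly to $\mu$: cover $K$ by a fine partition $\{A_i^{(j)}\}$ of diameter tending to $0$, and set $\mu_j=\sum_{i}\mu(A_i^{(j)})\mu_0|_{A_i^{(j)}}/\mu_0(A_i^{(j)})$ (this is well-defined on those cells of positive $\mu_0$-mass, of which there are enough since $\mathrm{supp}\,\mu_0=K$). Lower semicontinuity of $E$ on $\mathcal{M}_1(K)$ then forces $E(\mu_j)\rightarrow E(\mu)=+\infty$.

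Assume now $E(\mu)<\infty$. Applying the lim-sup (recovery sequence) half of the Gamma-convergence, I obtain $\beta_j\rightarrow\infty$ and probability measures $\mu_j\rightarrow\mu$ weakly with
\[
\limsup_{j\rightarrow\infty}\bigl(E(\mu_j)-\beta_j^{-1}S_{\mu_0}(\mu_j)\bigr)\leq E(\mu).
\]
Since the right-hand side is finite, for large $j$ the term $-\beta_j^{-1}S_{\mu_0}(\mu_j)$ is finite, and by the convention $S_{\mu_0}(\mu)=-\infty$ when $\mu\not\ll\mu_0$, this forces $\mu_j\ll\mu_0$, as required.

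It remains to upgrade weak convergence $\mu_j\rightarrow\mu$ to energy convergence $E(\mu_j)\rightarrow E(\mu)$. The lim-inf is automatic since $E$ is lower semicontinuous on $\mathcal{M}_1(K)$. For the matching lim-sup, I would use the elementary entropy upper bound: if $M:=\mu_0(K)<\infty$ (which holds since $K$ is compact), then Jensen's inequality applied to $x\log x$ on $(K,\mu_0/M)$ gives $S_{\mu_0}(\nu)\leq\log M$ for every probability measure $\nu\ll\mu_0$. Consequently $\beta_j^{-1}S_{\mu_0}(\mu_j)\leq\beta_j^{-1}\log M\rightarrow0$, and combining with the recovery-sequence inequality yields
\[
\limsup_{j\rightarrow\infty}E(\mu_j)\leq E(\mu)+\limsup_{j\rightarrow\infty}\beta_j^{-1}S_{\mu_0}(\mu_j)\leq E(\mu),
\]
so that $E(\mu_j)\rightarrow E(\mu)$. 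The only substantive input is the Gamma-convergence itself; the rest is bookkeeping with lsc of $E$ and the one-sided Jensen bound on entropy, so no serious obstacle arises beyond invoking the previously stated theorem.
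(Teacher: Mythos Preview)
Your proof is correct and follows exactly the route the paper indicates: the paper does not spell out a proof but simply states that the corollary is an equivalent reformulation of the Gamma-convergence in Theorem \ref{thm:zero temp limit singular case intro} (referring to \cite{berm14}), and you have supplied the straightforward details of the implication Gamma-convergence $\Rightarrow$ approximation property, using the recovery sequence together with the lower bound $D_{\mu_0}\geq -\log\mu_0(K)$ and the lower semicontinuity of $E$. The separate treatment of the case $E(\mu)=+\infty$ via a partition is standard; the only care needed is to arrange the cells so that $\mu(A_i)>0$ forces $\mu_0(A_i)>0$, which is easily done since $\mathrm{supp}\,\mu_0=K$ (e.g.\ use cells that are relatively open in $K$, or approximate $\mu$ first by finite convex combinations of Dirac masses at points of $K$ and then replace each $\delta_x$ by $\mu_0|_{B(x,r)}/\mu_0(B(x,r))$).
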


\begin{rem}
In the case when $n=1$ (or more generally, in the ``linear setting''
discussed in Section \ref{eq:mean field eq in heur}) this type of
approximation property is used as a hypothesis to derive large deviation
results for Gibbs measures in the ``zero-temperature regime'', where
$\beta_{N}\rightarrow\infty$ \cite{c-g-z,d-l-r,berm10,gz}. These
connections are discussed in \cite{berm14}. We note that when $\mu_{0}$
is supported in $\R^{n}$ $\mu_{0}$ is determining for $(K,0)$ iff
it is determining for $(K,\phi)$ for all $\phi\in C(K)$ (using that,
by the Stone-Weierstrass theorem, $-\phi$ is the uniform limit of
$\log|p_{k}|^{2}$ for some $p_{k}\in\mathcal{P}_{k}(\C^{n})).$
\end{rem}

In the opposite ``infinite temperature limit'', where $\beta\rightarrow0,$
one has to assume that $dV$ is a probability measure. Then, under
appropriate growth and regularity assumptions on $dV,$ it follows
from well-known results that $\psi_{\beta}$ converges to a particular
solution $\psi_{0}$ of the Calabi-Yau equation \ref{eq:calabi-yau eq intro}
with $\mu=dV$ (see \cite{e-g-z}). In Section \ref{sec:Proofs-for-the limit N}
we will show that the convergence holds in great generality, only
assuming that the probability measure $dV$ has finite weighted pluricomplex
energy $E_{\phi}:$
\begin{thm}
\label{thm:inf temp intro}Let $dV$ be a probability measure on $\C^{n}$
and fix a continuous function $\phi$ on $\C^{n}$ with super logarithmic
growth \ref{eq:super log growth}. Assume that $dV$ has finite weighted
pluricomplex energy, $E_{\phi}(dV)<\infty.$ Then the functions $\psi_{\beta}$
converge to $\psi_{0}$ in energy, as $\beta\rightarrow0,$ where
$\psi_{0}$ is the unique finite energy solution in $\mathcal{L}(\C^{n})$
to 
\[
MA(\psi)=dV
\]
satisfying the normalization condition 
\begin{equation}
\int_{X}(\psi_{0}-\phi)dV=0\label{eq:norm cond in thm zero}
\end{equation}
\end{thm}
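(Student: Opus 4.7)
The plan is to exploit that, by Theorem~\ref{thm:conv in law in C^n intro}, the measure $\mu_\beta := MA(\psi_\beta) = e^{\beta(\psi_\beta-\phi)}dV$ is the unique minimizer on $\mathcal{M}_1(\C^n)$ of the free energy $F_{\phi,\beta}(\mu) = E_\phi(\mu) - \beta^{-1}S_{dV}(\mu) = E_\phi(\mu)+\beta^{-1}H(\mu\,|\,dV)$, where $H(\cdot\,|\,dV)\ge 0$ denotes relative entropy. Since $dV$ is itself a probability measure with $H(dV\,|\,dV)=0$ and $E_\phi(dV)<\infty$ by hypothesis, testing against $dV$ yields the fundamental a priori bound
$$F_{\phi,\beta}(\mu_\beta) \le F_{\phi,\beta}(dV) = E_\phi(dV).$$
Using that $E_\phi$ is bounded below on $\mathcal{M}_1(\C^n)$ by its minimum value $E_\phi(\mu_\phi)$ (attained at the weighted equilibrium measure), this splits into the uniform energy bound $E_\phi(\mu_\beta)\le E_\phi(dV)$ and the quantitative entropy estimate $H(\mu_\beta\,|\,dV) \le \beta\,(E_\phi(dV)-E_\phi(\mu_\phi))$, which tends to $0$ as $\beta\to 0$.

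By Pinsker's inequality, $\mu_\beta\to dV$ in total variation and a fortiori weakly. The uniform energy bound places the family $\{\psi_\beta\}$, after appropriate normalization, in a bounded subset of the finite-energy class of $\mathcal{L}_+(\C^n)$, which is compact in $L^1_{\mathrm{loc}}$. After extracting a subsequence, $\psi_{\beta_j}\to\psi_\infty\in\mathcal{L}_+(\C^n)$ with finite energy, and continuity of the complex Monge-Amp\`ere operator on the finite-energy class identifies $MA(\psi_\infty)=\lim MA(\psi_{\beta_j})=dV$. Lower semicontinuity of $E_\phi$ combined with the upper bound $E_\phi(\mu_\beta)\le E_\phi(dV)$ then forces $E_\phi(\mu_{\beta_j})\to E_\phi(dV)$, which is precisely the defining property of convergence in energy of $\psi_{\beta_j}\to\psi_\infty$.

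To identify the limit and thereby obtain full (not merely subsequential) convergence, the normalization $\int(\psi_0-\phi)\,dV=0$ must be read off in the limit. Computing $H(\mu_\beta\,|\,dV) = \int\log(d\mu_\beta/dV)\,d\mu_\beta = \beta\int(\psi_\beta-\phi)\,d\mu_\beta$ shows that $\int(\psi_\beta-\phi)\,d\mu_\beta = \beta^{-1}H(\mu_\beta\,|\,dV)\to 0$. Combined with the convergence in energy of $\psi_{\beta_j}$ and with $\mu_{\beta_j}\to dV$, standard continuity results for pairings in the finite-energy class yield $\int(\psi_\infty-\phi)\,dV=0$. Since a finite-energy solution of $MA(\psi)=dV$ in $\mathcal{L}_+(\C^n)$ subject to this normalization is unique (by the domination/comparison principle for the Monge-Amp\`ere operator), one has $\psi_\infty=\psi_0$, and the full family converges in energy.

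The main obstacle is technical and concerns the non-compactness of $\C^n$: the weighted-energy machinery---lower semicontinuity of $E_\phi$, compactness of its sublevel sets, continuity of $MA$ under energy convergence, uniqueness of normalized finite-energy Calabi-Yau solutions---is most cleanly developed in the compact K\"ahler setting of \cite{bbgz,berm11}. The natural strategy is to compactify to $\P^n$ by identifying a Lelong-class potential with a positively curved singular metric on $\mathcal{O}(1)$ via the Fubini-Study reference, so that the compact-setting results apply off the shelf. The delicate points are (i) verifying that the hypothesis $E_\phi(dV)<\infty$, coupled with the super-logarithmic growth of $\phi$, is enough on $\C^n$ to extract a finite-energy limit while ruling out mass escape to infinity, and (ii) justifying the passage to the limit in the pairing $\int(\psi_\beta-\phi)\,d\mu_\beta$ even though $\psi_\beta$ and $\phi$ are themselves unbounded functions.
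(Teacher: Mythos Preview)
Your approach is correct and is essentially the Legendre-dual of the paper's argument. The paper works on the potential side, minimizing the functional $\mathcal{J}_\beta(\varphi) = -\mathcal{E}(\varphi) + \beta^{-1}\log\int e^{\beta(\varphi-u_0)}\mu_0$ over $\omega$-psh functions (after compactifying to $\P^n$), and uses the chain of inequalities $-E_\omega(\mu_0) \le \mathcal{J}_0(\varphi_\beta) \le \mathcal{J}_\beta(\varphi_\beta) \le \mathcal{J}_\beta(\varphi) \le \mathcal{J}_0(\varphi) + O(\beta)$ together with a coercivity estimate to extract a limit; the entropy step $\beta^{-1}D_{\mu_0}(\mu_\beta) \to 0$ is then recovered a posteriori from the duality $-\mathcal{J}_\beta(\varphi_\beta) = F_\beta(\mu_\beta)$. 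You work directly on the measure side, testing $F_\beta$ against $dV$, and your use of Pinsker's inequality to obtain $\mu_\beta \to dV$ weakly is a genuine shortcut that the paper does not take: the paper never establishes this convergence directly, but instead gets $MA(\varphi_\beta) \to MA(\varphi_0)$ only as a byproduct of the potential-side convergence in energy. Both routes arrive at the same identity $\beta^{-1}H(\mu_\beta\,|\,dV) = \int(\psi_\beta - \phi)\,d\mu_\beta$ to pin down the normalization, and both ultimately rely on the compactification and the finite-energy machinery of \cite{bbgz}. The technical points you flag at the end (mass escape, passage to the limit in unbounded pairings) are precisely those the paper handles by first proving a version on a compact K\"ahler manifold $(X,\omega)$ with an auxiliary lsc weight $u_0$, and then specializing via $X=\P^n$, $u_0=\phi-\phi_{FS}$.
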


\subsection{\label{subsec:Comparison-with-determinantal}Comparison with determinantal
point processes, orthogonal polynomials and Bergman kernels}

Consider now the case when the inverse temperature $\beta$ is taken
to depend on the number of points $N_{k}$ as 
\[
\beta_{N_{k}}=k,
\]
which means that the density of the probability measure $\mu_{\beta_{N_{k}}}^{(N_{k})}$
(formula \ref{eq:prod meas mubetaN intro}) is a squared Vandermonde
determinant. In this case $\mu_{\beta_{N_{k}}}^{(N_{k})}$ may be
alternatively expressed as 
\[
\mu_{\beta_{N_{k}}}^{(N_{k})}=\frac{1}{N!}\det\left(\mathcal{K}_{k}(x_{i},x_{j})\right)_{i,j\leq N_{k}}dV^{\otimes N_{k}},
\]
 where $\mathcal{K}_{k}(z,w)=K_{k}(z,w)e^{-k\phi(z)}e^{-k\phi(w)}$
and $K_{k}$ is the\emph{ Bergman reproducing kernel} of the Hilbert
space space $\mathcal{P}_{k}(\C^{n}),$ endowed with the the scalar
product defined by the weighted $L^{2}-$norm 
\[
\left\Vert p_{k}\right\Vert _{L^{2}(\phi,dV)}:=\left(\int_{\C^{n}}|p_{k}|^{2}e^{-k\phi}dV\right)^{1/2}
\]
 Concretely, 

\[
K_{k}(z,w):=\sum_{i=1}^{N_{k}}p_{i}^{(k)}(z)\overline{p_{i}^{(k)}(w)},
\]
in terms of a fixed orthonormal base $\{p_{i}^{(k)}\}_{i=1}^{N_{k}}$
of polynomials in the Hilbert space$\mathcal{P}_{k}(\C^{n}).$ In
probabilistic terminology this means that the probability measure
$\mu_{\beta_{N_{k}}}^{(N_{k})}$ then defines a \emph{determinantal
random point process with $N_{k}$ particles on $\C^{n}$ }(see Section
\ref{subsec:Determinantal-point-processes} for background on determinantal
point processes). From a statistical mechanical point this case falls
into the ``zero temperature regime'' (see formula \ref{eq:lim beta N is beta})
and, as shown in \cite{berm 1 komma 5}, the convergence in law in
Theorem \ref{thm:conv in law in C^n intro} then holds more generally
as soon as the weighted measure $(dV,\phi)$ satisfies a weighted
Bernstein-Markov property with respect to the weighted set $(K,\phi).$
This means that for any given $\epsilon>0$ there exists a constant
$C$ such that 
\begin{equation}
\sup_{K}|p_{k}|^{2}e^{-k\phi}\leq Ce^{\epsilon k}\left\Vert p_{k}\right\Vert _{L^{2}(dV,\phi)}\label{eq:BM prop intro}
\end{equation}
 for all $p_{k}\in\mathcal{P}_{k}(\C^{n})$ and positive integers
$k$ (see \cite{b-l-p-w} for a survey of the Bernstein-Markov-property).
Moreover, defining $\psi_{\beta_{N_{k}}}^{(N_{k})}$ as in Corollary
\ref{cor:conv of one pt correl measr etc intro}, we then have

\begin{equation}
\psi_{\beta_{N_{k}}}^{(N_{k})}\rightarrow\psi_{(K,\phi)},\,\,\,\,k\rightarrow\infty\label{eq:conve of psi beta N}
\end{equation}
in $L^{\infty}(\C^{n}),$ where we recall that $\psi_{(K,\phi)}$
denotes Siciak's weighted extremal function for the weighted set $(K,\phi)$
(formula \ref{eq:def of psi K phi}). Since, under appropriate conditions,
\[
\psi_{\beta}\rightarrow\psi_{(K,\phi)},\,\,\,\,\beta\rightarrow\infty
\]
(by Theorem \ref{thm:zero temp limit singular case intro}) the convergence
\ref{eq:conve of psi beta N} can be interpreted as zero-temperature
analog of Corollary \ref{cor:conv of one pt correl measr etc intro}.
To see that the convergence \ref{eq:conve of psi beta N} holds first
observe that $\psi_{\beta_{N_{k}}}^{(N_{k})}$ may be re-expressed
as
\begin{equation}
\psi_{\beta_{N_{k}}}^{(N_{k})}:=k^{-1}\log K_{k}(z,z)=k^{-1}\log\sup\frac{|p_{k}|^{2}}{\left\Vert p_{k}\right\Vert _{L^{2}(\phi,dV)}^{2}}\label{eq:psi k as log K}
\end{equation}
(see Section \ref{subsec:Determinantal-point-processes}) which converges
towards $\psi_{(K,\phi)}$ in $L^{\infty}(\C^{n}),$ as first shown
in a different probabilistic setting of random polynomials in \cite{b-sh}.
Indeed, by the assumed weighted Bernstein-Markov-property of $(dV,\phi)$
we may as well replace the $L^{2}-$norm in the right hand side of
formula \ref{eq:psi k as log K} with an $L^{\infty}-$norm on $K.$
The $L^{\infty}-$convergence towards Siciak's extremal function then
follows from a classical result of Siciak \cite{si1}. 
\begin{rem}
In the classical setting of orthogonal polynomials on $\R,$ where
$K_{k}(z,z)$ is usually called the Christoffel-Darboux kernel, asymptotics
of $k^{-1}\log K_{k}(z,z)$ are closely related to the notion of ``$k$
th root asymptotics of orthonormal polynomials'' \cite{st-t}. Moreover,
when $\phi=0$ and $K$ is regular in the sense of potential theory,
the Bernstein-Markov-condition on $dV$ is equivalent to the condition
that$dV$ has regular asymptotic distribution in the sense of \cite[Chapter 4]{st-t}.
The case $n=1$ also naturally appears in Random Matrix Theory, as
recalled in Section \ref{subsec:Determinantal-point-processes}.
\end{rem}

The discussion above suggests viewing the function $\psi_{\beta},$
solving the Monge-Ampère equation \ref{eq:ma eq with beta intro},
as a transcendental analog of $k^{-1}\log K_{k}(z,z),$ with $\beta$
playing the role of the degree $k.$ Accordingly, the asymptotics
in Theorem \ref{thm:zero temp limit singular case intro} can be viewed
as a transcendental analog of the classical $k$ th root asymptotics
for orthogonal polynomials (this point of view is developed from a
complex geometric point of view in \cite{berm11}, where\textbf{ $k^{-1}\log K_{k}(z,z)$}
appears as a Bergman metric). This interpretation is reinforced by
the result in \cite{b-b-w} showing that the condition that $(dV,\phi)$
is determining for $(K,\phi),$ appearing in Theorem \ref{thm:zero temp limit singular case intro}
is equivalent to the following ``transcendental'' Bernstein-Markov-property
of $(dV,\phi):$ for any given $\epsilon>0$ there exists a constant
$C>0$ such that
\[
\sup_{K}e^{\beta(\psi-\phi)}e\leq Ce^{\epsilon\beta}\int e^{\beta(\psi-\phi)}dV
\]
 for any $\psi\in\mathcal{L}(\C^{n})$ and $\beta>0$ (which clearly
implies the ordinary Bernstein-Markov-property \ref{eq:BM prop intro}).
\begin{rem}
When $dV$ is supported in $[0,1]\subset\C$ (and $\phi=0)$ the condition
that $dV$ is determining coincides with \emph{Ullman's criterion}
for a measure $dV$ on $\R$ to have regular asymptotic distribution
(discussed in \cite[Chapter 4]{st-t}).
\end{rem}

\subsection{\label{subsec:Tropicalization-at-negative}Tropicalization at negative
temperature, toric geometry and optimal transport}

In complex geometry the process of tropicalization can sometimes to
be invoked to replace an elusive complex geometric problem by a more
tractable convex geometric one (see, for example, \cite{i-m} for
application to enumerative problems in algebraic geometry). In \cite{ber-o,berm13}
this philosophy is applied to the challenging \emph{negative }temperature
regime of the present setting, $\beta<0$ (discussed in the end of
Section \ref{subsec:Complex-geometry} above and in Section \ref{subsec:The-case-of-neg}).
The motivation comes from the Yau-Tian-Donaldson conjecture and, in
particular, the construction of Kähler-Einstein metrics with \emph{positive}
Ricci curvature on toric varieties, as explained in \cite{berm13}.
Here we will focus on the connections to the present setting. 

We first recall that, from an algebraic point of view, tropicalization
amounts to replacing the ring $\C$ with the tropical semi-ring over
$\R,$ where addition of $a$ and $b$ is defined by $\max\{a,b\}$
and multiplication by ordinary addition. Accordingly, a polynomial
$p(z)$ on $\C^{n}$ (or more, generally, a Laurent polynomial on
$\C^{*n},$ is replaced by a piece-wise affine convex function $\phi(x)$
on $\R^{n}:$ 
\begin{equation}
p(z)=\sum_{m}c_{m}z^{m}\mapsto\max_{m}x\cdot m\label{eq:p maps to pl}
\end{equation}
(up to an additive constant) where the max ranges over all $m\in\Z^{n}$
such that $c_{m}\neq0.$ Analytically, tropicalization can be viewed
as a the limit of the complex geometric setting, where $\C^{*n}$
is collapsed onto $\R^{n}$ through the following map: 
\begin{equation}
\C^{*n}\rightarrow\R^{n},\,\,\,z\mapsto x:=\mbox{Log}(z):=(\log(|z_{1}|^{2}),...,\log(|z_{n}|^{2})).\label{eq:Log map intro}
\end{equation}
Indeed, the association \ref{eq:p maps to pl} amounts to replacing
the plurisubharmonic functions $\psi(z)=\log|p|^{2}$ on $\C^{*n}$
by the convex function $\psi_{trop}(x)$ on $\R^{n}$ defined by
\begin{equation}
\psi_{trop}(x):=\lim_{c\rightarrow\infty}c^{-1}\psi(z^{m})/2\label{eq:trop as limit of scaling of psi}
\end{equation}
Expanding the Vandermonde determinant as a sum over permutations $\sigma\in S_{N}$
reveals that the tropicalization of the attractive determinantal interaction
energy $E_{attr}^{(N_{k})}$ on $\C^{nN_{k}},$ appearing in formula
\ref{eq:attr E N}, is the piece-wise affine convex function $E_{trop}^{(N_{k})}$
on $\R^{nN_{k}}$ defined by
\begin{equation}
E_{trop}^{(N_{k})}(x_{1},...,x_{N_{k}}):=\frac{1}{N}\max\sum_{\sigma\in S_{N_{k}}}x_{1}\cdot p_{\sigma(1)}+\cdots+x_{N}\cdot p_{\sigma(N_{k})},\label{eq:def of tropical E}
\end{equation}
 where $p_{1},...,p_{N}$ are are points of $P\cap(k^{-1}\Z)^{n},$
where $P$ denotes the convex polytope $P\Subset\R^{n}$ defined by
the unit-simplex. 

\subsubsection{The setting of a convex body and toric varieties}

Before proceeding to defining the corresponding tropicalization of
the Gibbs measure defined by \ref{eq:prod meas mubetaN intro}, it
will be more natural to consider a more general class of Gibbs measures
on $(\C^{*n})^{N_{k}},$ associated to a given convex body $P$ in
$\R^{n}.$ Up to a harmless translation we may as well assume that
$0$ is contained in the interior of $P.$ The setup in the previous
sections may then be generalized by replacing the space $\mathcal{P}_{k}(\C^{n})$
by the space $\mathcal{P}_{kP}(\C^{*n})$ of Laurent polynomials on
the complex torus $\C^{*n},$ spanned by all multinomials $z^{m}(=z_{1}^{m_{1}}\cdots z_{n}^{m_{n}})$
with exponents $m\in kP\cap\Z^{n}.$ Thus the dimension $N_{k}$ of
$\mathcal{P}_{kP}(\C^{*n})$ coincides with the number of integer
points of the convex body $P.$ Similarly, the Lelong class $\mathcal{L}(\C^{n})$
is replaced by the space $\mathcal{L}_{P}(\C^{*n})$ of all psh functions
$\psi$ on $\C^{*n}$ satisfying the following growth condition:
\[
\psi_{P}\leq\text{Log \ensuremath{^{*}\phi_{P}+O(1),\,\,\,\phi_{P}(x)=\sup_{p\in P}x\cdot p}}
\]
(the subspace $\mathcal{L}_{P,+}(\C^{*n})$ is defined by requiring
that equality holds above).
\begin{rem}
\label{rem:toric}When $P$ is a rational polytope this more general
setting corresponds to replacing $\P^{n}$ with a toric variety $X_{P}$
compactifying $\C^{*n}$ (compare Section \ref{subsec:Compactification-of-}).
The case of a general convex body $P$ was introduced in \cite{berm 1 komma 1}
in the context of Bergman kernel asymptotics. See aso \cite{b-b-l}
for a systematic study of the class $\mathcal{L}_{P}(\C^{*n})$ in
connection to Fekete points.
\end{rem}

Denoting by $dV_{\C^{*n}}$ the standard $\C^{*n}-$invariant volume
form on $\C^{*n}$ it is tempting to try to define a probability measure
on $(\C^{*n})^{N_{k}}$ as the the normalization of the following
measure on $(\C^{*n})^{N_{k}}$ 
\begin{equation}
|D^{(N_{k})}(z_{1},...,z_{N_{k}})|^{2\beta/k}dV_{\C^{*n}}^{\otimes N}\label{eq:non-normal vandermonde measure on torus}
\end{equation}
 where $D^{(N_{k})}$ now denotes the Vandermonde determinant associated
to the space $\mathcal{P}_{kP}(\C^{*n}).$ Heuristically, i.e. assuming
that the analog of Theorem \ref{thm:conv in law in C^n intro} holds
also for $\beta$ negative, one would expect that the large $N-$limit
is described by solutions $\psi_{\beta}$ on $\C^{*n}$ to the equation
\begin{equation}
MA(\psi)=e^{\beta\psi}dV_{\C^{*n}}^{\otimes N}\label{eq:ke eq for psi on torus}
\end{equation}
Geometrically, this means that $\psi_{\beta}$ is the Kähler potential
of a Kähler-Einstein metric $\omega_{\beta}$ with constant Ricci
curvature: 
\begin{equation}
\mbox{Ric\,}\ensuremath{\omega_{\beta}=-\beta\omega_{\beta}}.\label{eq:ke eq for omega beta on torus}
\end{equation}
 Since we have assumed that $\beta$ is negative this corresponds
to constant \emph{positive }Ricci curvature. Moreover, by a scaling
argument one can assume that $\beta=-1.$ 

\subsubsection{The tropical Gibbs measure}

Unfortunately, the Vandermonde measure in formula \ref{eq:non-normal vandermonde measure on torus}
always has infinite mass, i.e. the corresponding normalization constant
$Z_{N,\beta}$ is infinite, as follows from symmetry considerations.
Indeed, the measure is invariant (modulo a multiplicative constant)
under the diagonal action of $\C^{*n}$ on $(\C^{*n})^{N_{k}}.$ Since
$\C^{*n}$ is non-compact this implies that $Z_{N,\beta}$ is infinite.
This is a reflection of the fact that the group $\C^{*n}$ acts on
the solution space to the Kähler-Einstein equation \ref{eq:ke eq for psi on torus}.
The symmetry in question may be broken by turning to the weighted
setting, i.e. by replacing $|D^{(N_{k})}|$ with the weighted Vandermonde
determinant $|D^{(N_{k})}|_{k\phi},$ for a given continuous function
$\phi(z)$ on $\C^{*n}.$ We then consider the corresponding measure
$\mu_{\beta}^{(N)}$ of the form \ref{eq:prod meas mubetaN intro},
for an appropriate volume form $dV$ on $\C^{*n}.$ Motivated by Kähler-Einstein
geometry it is natural to take

\begin{equation}
dV=e^{-\phi}dV_{\C^{*n}},\label{eq:volume form with phi in Lelong}
\end{equation}
and assume that $\phi\in\mathcal{L}_{P,+}(\C^{*n}).$ This ensures
that one gets backs the Vandermonde measure, formula \ref{eq:non-normal vandermonde measure on torus},
when $\beta=-1$ and that $Z_{N,\beta}$ is finite for $\beta$ sufficiently
close to $0.$ Thus the weighted setting provides a regularization
of the original divergent setting. The corresponding probability measure
$\mu_{\beta}^{(N)}$ can, equivalently, be viewed as the Gibbs measure
at unit-temperature associated to $N$ times the interaction energy
\[
E_{\beta}^{(N_{k})}(z_{1},...z_{N_{k}}):=-\beta E_{attr}^{(N_{k})}(z_{1},...z_{N})+(1+\beta)\sum_{i=1}^{N_{k}}\phi(z_{i})
\]
 and volume form $dV_{\C^{*n}}^{\otimes N},$ i.e. 
\[
\mu_{\beta}^{(N)}=\frac{1}{Z_{N,\beta}}e^{-NE_{\beta}^{(N_{k})}}dV_{\C^{*n}}^{\otimes N}.
\]
 In view of formula \ref{eq:trop as limit of scaling of psi} and
\ref{eq:def of tropical E}, the tropicalization of $E_{\gamma}^{(N_{k})}$
is given by 
\[
E_{\beta,trop}^{(N_{k})}(x_{1},...x_{N_{k}}):=-\beta E_{trop}^{(N_{k})}(x_{1},...,x_{N_{k}})+(1+\beta)\sum_{i=1}^{N_{k}}\phi_{P}(x_{i})
\]

Finally, one is thus led to define the tropicalization of the Gibbs
measure $\mu_{\beta}^{(N)}$ on $(\C^{*n})^{N_{k}},$ associated to
a given convex body $P,$ as the following Gibbs measure on $(\R^{n})^{N_{k}}:$
\[
\mu_{\beta,trop}^{(N)}:=\frac{1}{Z_{N_{k},\beta}}e^{-NE_{\beta,trop}^{(N_{k})}}dx^{\otimes N_{k}},
\]

The following result \cite{berm13} shows that the convergence in
Theorem \ref{thm:conv in law in C^n intro} can be extended to the
negative temperature regime, down to a critical negative inverse temperature
$\beta_{c}(=-R_{P}),$ if the original complex problem is replaced
by its tropical analogue. {[}See also the ArXiv version of \cite{ber-o},
where the proof of the result originally appeared{]}.
\begin{thm}
\label{thm:tropical}Let $P$ be a convex body in $\R^{n}$ containing
$0$ in its interior. Then, 
\begin{itemize}
\item For $k$ sufficiently large, the corresponding Gibbs measure $\mu_{\beta,trop}^{(N_{k})}$
is a well-defined probability measure, i.e. $Z_{N_{k},\beta}<\infty$
iff $\beta<-R_{P},$ where $R_{P}\in]0,1]$ is the following invariant
of $P:$ 
\begin{equation}
R_{P}:=\frac{\left\Vert q\right\Vert }{\left\Vert q-b_{P}\right\Vert },\label{eq:inv R of conv bod}
\end{equation}
 where $q$ is the point in $\partial P$ where the line segment starting
at the barycenter $b_{P}$ of $P$ and passing through $0$ meets
$\partial P.$ 
\item If $\beta>-R_{P},$ then the random measure $\delta_{N_{k}}$ on $((\R^{n})^{N_{k}},\mu_{\beta,trop}^{(N_{k})})$
converges in law, as $N\rightarrow\infty,$ towards $\mu_{\beta}=e^{\beta(u_{\beta}-\phi_{P})}e^{-\phi_{p}}dx,$
where $u_{\beta}(x)$ is the unique convex solution to the following
real Monge-Ampère equation on $\R^{n}$ 
\begin{equation}
\det(\nabla^{2}u)=e^{\beta(u-\phi_{P})}e^{-\phi_{p}}dx\label{eq:real ma eq with gamma}
\end{equation}
 subject to the ``second boundary condition'' 
\begin{equation}
\overline{(\nabla u)(\R^{n})}=P\label{eq:sec bound con}
\end{equation}
\item If $R_{P}=1,$ i.e. if $0$ is the barycenter of $P,$ then $u_{\beta}$
converges uniformly on $\R^{n},$ as $\beta\rightarrow-1,$ to a smooth
solution $u_{1}$ of the equation 
\begin{equation}
\det(\nabla^{2}u)=e^{-u},\label{eq:real ma eq ke}
\end{equation}
 subject to \ref{eq:sec bound con}.
\end{itemize}
\end{thm}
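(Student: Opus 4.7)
The plan is to reduce the tropical statement to a real-convex analog of the LDP framework of Theorem \ref{thm:LDP subharm intro} and then to analyze the resulting free energy functional on $\mathcal{M}_{1}(\R^{n})$ by means of optimal transport and convex analysis. The tropicalization replaces psh functions by convex functions on $\R^{n}$, and the complex Monge--Amp\`ere operator by the real Monge--Amp\`ere operator $\det\nabla^{2}u$; this is what makes the attractive (negative $\beta$) regime tractable here.

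First I would interpret $E_{trop}^{(N_{k})}$ as a discrete Kantorovich functional. Expanding the max over $\sigma\in S_{N_{k}}$ in \ref{eq:def of tropical E} as an optimal assignment between the points $x_{i}$ and the lattice exponents $p_{i}\in P\cap (k^{-1}\Z)^{n}$ (with cost $-x\cdot p$) shows that $E_{trop}^{(N_{k})}$ depends only on the empirical measure (to leading order) and Gamma-converges, as $k\to\infty$, to the continuum Kantorovich cost
\[
\mathcal{E}(\mu)\,:=\,\sup_{\pi}\int x\cdot y\,d\pi(x,y),
\]
where $\pi$ runs over transport plans with first marginal $\mu$ and second marginal the normalized Lebesgue measure on $P$. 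This is convex and lower semicontinuous on $\mathcal{M}_{1}(\R^{n})$ with finite first moment. A real/convex adaptation of Theorem \ref{thm:LDP subharm intro} (note that $E_{trop}^{(N_{k})}$ is convex, hence certainly quasi-superharmonic, in each $x_{i}$) then yields an LDP for $\delta_{N_{k}}$ on $((\R^{n})^{N_{k}},\mu_{\beta,trop}^{(N_{k})})$ with speed $N_{k}$ and rate functional
\[
F_{\beta}(\mu)\,=\,-\beta\,\mathcal{E}(\mu)+(1+\beta)\!\int\phi_{P}\,d\mu-S_{dx}(\mu)-C_{\beta},
\]
from which convergence in law follows as soon as the minimizer is unique.

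Next I would pin down the threshold $\beta_{c}=-R_{P}$ and the PDE characterization. The translation identity $E_{trop}^{(N_{k})}(x_{1}+a,\ldots,x_{N_{k}}+a)=E_{trop}^{(N_{k})}(x)+a\cdot b_{P}^{(k)}$ (using $\sum p_{\sigma(i)}=\sum p_{i}$) and the positive 1-homogeneity of $\phi_{P}$ show that the cost of translating a configuration by $ta_{0}$ scales like $t\,[\,-\beta\,a_{0}\cdot b_{P}+(1+\beta)\phi_{P}(a_{0})\,]$ after passing to the mean-field functional. Demanding positivity for \emph{every} direction $a_{0}$ is equivalent, after optimization, to $\beta>-R_{P}$: the worst direction is parallel to the segment from $b_{P}$ through $0$ to $\partial P$, and the threshold is precisely the ratio \ref{eq:inv R of conv bod}. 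For $\beta$ above this threshold, $F_{\beta}$ is coercive and strictly convex in appropriate coordinates, so it has a unique minimizer $\mu_{\beta}$; writing the KKT condition coming from Kantorovich duality exhibits the convex potential $u_{\beta}$ with $\overline{\nabla u_{\beta}(\R^{n})}\subset P$ and recovers the real Monge--Amp\`ere equation \ref{eq:real ma eq with gamma} with second boundary condition \ref{eq:sec bound con}. The boundary condition is automatic from the optimal-transport interpretation (the gradient of a convex OT potential lands in the target support).

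Finally, for the limit $\beta\to-1$ under $R_{P}=1$ (i.e.\ $0=b_{P}$), the equation at $\beta=-1$ is \ref{eq:real ma eq ke}, for which existence, uniqueness (up to the $\R^{n}$-translation symmetry, killed by the weighting) and smoothness of the convex solution with second boundary condition $\overline{\nabla u(\R^{n})}=P$ are available by the Caffarelli--Urbas--Wang theory of the real MA equation with prescribed image. Combining those a priori estimates with the variational stability from the previous step (uniform $C^{0}$ bounds on $u_{\beta}$ via coercivity of $F_{\beta}$ as $\beta\uparrow-1$) yields uniform convergence $u_{\beta}\to u_{-1}$, upgraded to $C^{\infty}_{loc}$ by standard elliptic bootstrap.

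I expect step three---the identification of the sharp integrability threshold $\beta_{c}=-R_{P}$---to be the main obstacle. The ``bad'' directions in $(\R^{n})^{N_{k}}$ along which the partition function might diverge are not only the diagonal translations; one must rule out more subtle non-diagonal escapes, and show that the optimal escape direction is indeed the one captured by the geometric invariant $R_{P}$. Relatedly, making the convex-setting LDP of step two rigorous on the non-compact $\R^{n}$ (Theorem \ref{thm:LDP subharm intro} is stated on compact Riemannian manifolds) requires either a tailored compactification of $\R^{n}$ compatible with $P$, or a direct truncation argument controlled by $\phi_{P}$.
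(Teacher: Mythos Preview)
Your optimal-transport interpretation of $E_{trop}^{(N_k)}$ matches Section~\ref{subsec:Optimal-transport}: the tropical energy is minus a discrete Kantorovich cost, and the paper records that $\delta_{N_k}\to\mu$ weakly with first-moment convergence forces $-E_{trop}^{(N_k)}\to C(\mu,\nu_P)$. Your translation argument for the threshold $-R_P$ and the appeal to real Monge--Amp\`ere regularity for the limit $\beta\to-1$ are also along the lines the paper indicates (the proof itself is deferred to \cite{berm13,ber-o}).

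There is, however, a genuine gap in your LDP step. You write that $E_{trop}^{(N_k)}$ is ``convex, hence certainly quasi-superharmonic''; this implication is backwards. Convex functions are \emph{sub}harmonic (distributional Laplacian $\ge 0$), and for a piecewise-linear convex function that Laplacian is a singular positive measure concentrated on the walls of linearity, so the hypothesis $\Delta_{x_i}H^{(N)}\le C$ of Theorem~\ref{thm:LDP subharm intro} is violated, not verified. The Hamiltonian $N E_{\beta,trop}^{(N_k)}$ is convex on $(\R^n)^{N_k}$ for $\beta\in(-1,0)$, and the sub-mean-inequality machinery of Theorem~\ref{thm:LDP subharm intro} is simply the wrong tool. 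What makes the tropical negative-temperature regime tractable is not a superharmonicity substitute but the \emph{continuity} you already exhibited in step one: $E_{trop}^{(N_k)}$ is Lipschitz and passes to the limit under weak-plus-first-moment convergence of empirical measures, which places the problem in (a non-compact variant of) the ``regular case'' of Section~\ref{subsec:Mean-field-approximations} (cf.\ \ref{eq:reg prop}), where the sign of $\beta$ is immaterial. The LDP should therefore come from Sanov plus this continuity, with tightness supplied by the confining term $(1+\beta)\sum\phi_P(x_i)$ precisely when $\beta>-R_P$. This distinction is also why the untropicalized complex case at $\beta<0$ remains open (Section~\ref{subsec:The-case-of-neg}): there the Hamiltonian is genuinely singular and no such continuity is available.
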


The pull-back $\psi$ to $\C^{*n},$ under the map Log of any solution
$u$ of the equation \ref{eq:real ma eq ke} solves the Kähler-Einstein
equation \ref{eq:ke eq for psi on torus} and is in $\mathcal{L}_{P}(\C^{*n}).$
Moreover, when $P$ is a rational polytope, the last property ensures
that $\omega$ extends to define a Kähler-Einstein metric on the toric
(log) Fano variety $X_{P}$ compactifying $\C^{*n}.$ By the toric
case of the Yau-Tian-Donaldson conjecture $X_{P}$ admits a Kähler-Einstein
metric iff $b_{P}=0.$ Thus Theorem \ref{thm:tropical} provides a
probabilistic construction of Kähler-Einstein metrics on toric Fano
varieties, when such metrics exist. 

For a general convex body $P$ it follows from \cite{ber-ber} that
the equation \ref{eq:real ma eq ke} admits a solution iff $b_{P}=0.$
Moreover, the solution is unique modulo the action of $\R^{n}$ by
translations. In fact, the previous theorem holds more generally when
$\phi_{P}$ is replaced by a given convex weight function $\phi$
such that $\phi=\phi_{P}+O(1).$ When $b_{p}=0$ the corresponding
limit of $u_{\beta},$ as $\beta\rightarrow-1,$ singles out a particular
solution of the equation \ref{eq:real ma eq ke}, depending on the
choice of weight function $\phi.$ From the point of view of statistical
mechanics this can be interpreted as an instance of \emph{spontaneous
symmetry breaking }(see the appendix). Moreover, the critical inverse
temperature $-R_{P}$ is a tropical (and higher dimensional analog)
of the critical negative temperature of the Coulomb gas in $\C,$
exhibited in \cite{k2,clmp}. 

\subsubsection{\label{subsec:Optimal-transport}Optimal transport formulation}

The tropicalized setting may be formulated in terms of optimal transport
theory. Indeed, setting 

\[
\mu_{0}=N^{-1}\sum_{i=1}^{N}\delta_{x_{i}},\,\,\,\mu_{1}:=N^{-1}\sum_{i=1}^{N}\delta_{p_{i}},
\]
the tropicalization $E_{trop}^{(N)}(x_{1},...,x_{N})$ of the attractive
determinantal interaction energy $E_{attr}(z_{1},...,z_{N})$ (formula
\ref{subsec:Optimal-transport}) coincides with minus the minimal
cost $C(\mu_{0},\mu_{1})$ of transportation between the probability
measures $\mu_{0}$ and $\mu_{1},$ with respect to the standard quadratic
cost function $c(x,y):=-x\cdot y:$ 

\begin{equation}
C(\mu_{0},\mu_{1}):=\inf_{\gamma\in\Pi(\mu_{0},\mu_{1})}C[\gamma]\,\,\,C[\gamma]:=\int c\gamma\label{eq:def of optimal cost}
\end{equation}
where the minimum runs over all transport plans $\gamma,$ i.e. all
probability measures on $\R^{n}\times\R^{n}$ with marginals $\mu_{0}$
and $\mu_{1},$ respectively. It follows readily from this interpretation
of $E_{trop}^{(N)}$ that, denoting by $\nu_{P}$ the uniform probability
measure on $P,$ 
\[
N^{-1}\sum_{i=1}^{N}\delta_{x_{i}}\rightarrow\mu\implies-E_{trop}^{(N)}(x_{1},...x_{N})\rightarrow C(\mu,\nu_{P}),
\]
 if the convergence towards $\mu$ holds weakly and moreover the first
moments converge. The convergence towards $C(\mu,\nu_{P})$ is a tropical
analog of the Gamma-convergence of $-E_{attr}^{(N)}(z_{1},...,z_{N})$
towards $E(\mu)$ (formula \ref{eq:Gamma conv towards E intro}).
In fact, when $\mu$ is invariant under the torus $T^{n}$ the minimal
cost $C(\mu,\nu_{P})$ coincides with the (non-weighted) pluricomplex
energy $E(\mu),$ as observed in \cite{berm7} (see also \cite{ber-ber}).
This is a reflection of the, essentially well-known fact \cite{berm12},
that a convex function $u$ on $\R^{n}$ solves the real Monge-Ampère
equation 
\[
\det(\nabla^{2}u)=\mu,
\]
 (which is the infinite temperature case, $\beta=0,$ of the equation\ref{eq:real ma eq with gamma})
iff the corresponding gradient map 
\[
T:\,\,x\mapsto\nabla u(x)
\]
 is the optimal map $T$ transporting $\mu$ to the uniform measure
$\nu_{P}$ on the convex body $P.$ This means that $\gamma:=(I\times T)_{*}\mu$
realizes the infimum in formula \ref{eq:def of optimal cost}.

\subsection{Further references and developments}

Before zooming in on the details in the account above, let us mention
some other related developments.

\subsubsection{Fekete points}

The rate of convergence for the Fekete points, as $N\rightarrow\infty$
has been made quantitative in \cite{l-o,d-m-n1}. For example, in
the case when $(X,L)$ is a polarized manifold and $L$ is endowed
with a smooth and strictly positively curved metric, the following
sharp rate is established in \cite{l-o} when $K=X$:
\begin{equation}
d_{1}(\delta_{N},\mu_{K})\sim k^{-1/2},\label{eq:quant rate}
\end{equation}
 where $d_{1}$ denotes the Wasserstein $L^{1}-$distance on $\mathcal{M}_{1}(X).$
Generalizations to less regular metrics and with $K$ an appropriate
proper compact subset are given in \cite{d-m-n1}. Recently, the problem
of numerical simulation of approximate Fekete points has been studied;
see \cite{b-d-s-v} and references therein. 

\subsubsection{Large deviations for Vandermonde determinants}

In the case $\beta=\infty$ asymptotics of integrals of Vandermonde
determinants and large deviations in $\C^{n}$ have also been obtained
in \cite{b-l}. The relevance of the Bernstein-Markov condition on
the reference measure $dV$ in this context originates in \cite{b-l0}.
In the case of a polarized manifold $(X,L)$ quantitative rates of
convergences for the corresponding empirical measures have been obtained
in \cite{l-o,d-n}. For example, as shown in \cite{l-o}, when $(X,L)$
is a polarized manifold and $L$ is endowed with a smooth and strictly
positively curved metric a rate as in formula \ref{eq:quant rate}
holds when $d_{1}(\delta_{N},\mu_{K})$ is replaced by the corresponding
expectations $\E d_{1}(\delta_{N},\mu_{K}).$ Similar results hold
if a quantitative version of the Bernstein-Markov property is assumed
\cite{d-n} (which, for example, is satisfied if $K$ is a smooth
totally real submanifold). It would be interesting establish quantitative
rates in the case when $\beta<\infty.$ In another direction, in the
case when $(X,L)$ is an abelian variety (i.e $X$ is a compact complex
torus) a real analog of Theorem \ref{thm:LDP for temp deform on polar},
which holds also for negative $\beta,$ has been established in \cite{hu}.

\subsubsection{The limits $\beta\rightarrow0$ and $\beta\rightarrow\infty$}

When $dV$ is a continuous volume form, parametrized complex Monge-Ampère
equations of the form \ref{eq:MA eq with beta in section proof beta}
have previously been used in \cite{e-g-z} to give a new proof of
the existence of a continuous solutions of the corresponding Calabi-Yau
equation. Briefly, the solution $\varphi_{\beta}$ for $\beta>0$
is first obtained as the sup of viscosity subsolutions. Then it is
shown that $\varphi_{\beta}$ converges to a solution of the Calabi-Yau
equation, when $\beta\rightarrow0.$ In the present setting the role
of viscosity subsolutions is, loosely speaking, played by the explicit
Kähler potentials in Corollary \ref{cor:conv of one pt correl measr etc intro}.
This probabilistic approach thus appears to be very different from
the viscosity approach \footnote{On the other hand, it is somewhat reminiscent of the ``vanishing
viscosity method'' for constructing viscosity solutions $v$ to a
PDE as a limit $v_{\epsilon}$ of equations obtained by adding a regularizing
diffusion term, proportional to a small ``viscosity parameter''
$\epsilon$ (whose role here is played by $N_{k}^{-1}$). } Incidentally, as shown in \cite{glz}, the method of regularizing
psh envelopes $Pu$ by solutions $\varphi_{\beta}$ in the opposite
``zero-temperature limit'' $\beta\rightarrow\infty$ (Theorems \ref{thm:zero temp limit singular case intro},
\ref{thm:zero temp in sing case polar}) can be put to use in the
viscosity theory of \cite{glz}. For example, a key result in \cite{glz}
says that if $u_{1}$ and $u_{2}$ are two super solutions, then so
is the psh-envelope $P(\min\{u_{1},u_{2}\}).$ This is shown by generalizing
the convergence of $\varphi_{\beta}$ in Theorem \ref{thm:zero temp in sing case polar}
to a very irregular setting where $u$ is merely assumed to a bounded
Borel function and $dV$ a general measure not charging pluripolar
subsets. The method of regularizing envelopes by solutions of Monge-Ampère
type equations has also been extended and applied in various other
directions \cite{l-ng,d-r,to,c-z}. As observed in \cite{ru} the
equation \ref{eq:ma eq in thm temp deformed text} can also be obtained
from the Ricci flow via a backwards Euler discretization. Accordingly,
the corresponding continuity path is called the Ricci continuity path
in \cite{jmr}, where it (or rather its ``conical'' generalization)
plays a crucial role in the construction of Kähler-Einstein metrics
with edge/cone singularities, by deforming the trivial solution at
$\beta=\infty$ (for $\varphi_{\infty}$ a given Kähler potential)
to a Kähler-Einstein metric at $\beta=\pm1.$ 

\section{\label{sec:Complex-geometric-setup}Complex geometric setup}

In this section we provide some background from complex geometry and
pluripotential theory (see the books \cite{dembook,g-z3} for further
general background).

\subsection{\label{subsec:The-local-setting}The local setting }

Setting $z:=x+iy\in\C^{n}$ the space $\Omega^{1}(\C^{n})$ of all
complex one-forms on $\C^{n}$ decomposes as a sum
\begin{equation}
\Omega^{1}(\C^{n})=\Omega^{1,0}(\C^{n})+\Omega^{0,1}(\C^{n}),\label{eq:decomp of Omega one}
\end{equation}
of the two subspaces spanned by $\{dz_{i}\}$ and $\{d\bar{z}_{i}\},$
respectively. This induces a decomposition of the exterior algebra
of all complex differential forms $\Omega^{\cdot}(\C^{n})$ into forms
of bidegree $(p,q),$ where $p\leq n$ and $q\leq n.$ Accordingly,
the exterior derivative $d$ decomposes as $d=\partial+\bar{\partial},$
where 

\[
\partial\phi:=\sum_{i=1}^{n}\frac{\partial\phi}{\partial z_{i}}dz_{i},\,\,\,\frac{\partial}{\partial z_{i}}:=(\frac{\partial}{\partial x_{i}}-i\frac{\partial}{\partial y_{i}})/2,
\]
 and taking its complex conjugate defines the $(0,1)-$form $\bar{\partial}\phi.$
In particular, 
\begin{equation}
\omega^{\phi}:=\frac{i}{2\pi}\partial\bar{\partial}\phi=\frac{i}{2\pi}\sum_{i,j\leq n}\frac{\partial^{2}\phi}{\partial z_{i}\partial\bar{z}_{j}}dz_{i}\wedge d\bar{z}_{j},\label{eq:omega phi}
\end{equation}
 defines a real $(1,1)-$form (the normalization above turns out to
be convenient). Such a smooth form $\omega$ is said to be \emph{positive}
(Kähler), written as $\omega\geq0$ $(\omega>0),$ if the corresponding
Hermitian matrix is semi-positive (positive definite) at any point.

If $\phi$ is smooth, then $\phi$ is said to be \emph{plurisubharmonic
(psh)} if $\omega^{\phi}\geq0.$ A general function $\phi\in L_{loc}^{1}$
is said to be psh if it is strongly upper semi-continuous and $\omega^{\phi}\geq0$
holds in the weak sense of currents. More generally, given a real
smooth $(1,1)-$form $\omega$ a function $\varphi$ is said to be\emph{
$\omega-$psh }if 
\begin{equation}
\omega_{\varphi}:=\omega+\frac{i}{2\pi}\partial\bar{\partial}\varphi\geq0\label{eq:def of omega var phi}
\end{equation}
If $\omega:=\omega^{\phi_{0}}$ this means that $\phi$ is psh iff
$\varphi:=\phi-\phi_{0}$ is $\omega-$psh. 

For $\phi$ psh and smooth the Monge-Ampère measure is the measure
defined by
\[
MA(\phi):=(\omega^{\phi})^{n},
\]
 in terms of exterior products. The extension to locally bounded psh
$\phi$ was introduced by Bedford-Taylor. The extension may be characterized
by the property that it is continuous under decreasing limits of psh
locally bounded functions $\phi_{j}.$

\subsection{\label{subsec:Polarized-compact-manifolds}Polarized compact manifolds
$(X,L)$ and metrics on $L$}

The local complex analytic notions above naturally extend to the global
setting where $\C^{n}$ is replaced by a complex manifold (using that
the decomposition \ref{eq:decomp of Omega one} is invariant under
a holomorphic change of coordinates). However, if $X$ is compact,
then all psh functions $\phi$ on $X$ are constant (by the maximum
principle). Instead the role of a (say, smooth) psh function $\phi$
on $\C^{n}$ is played by a positively curved metric on a\emph{ line
bundle} $L\rightarrow X$ (using additive notations for metrics).
To briefly explain this standard complex geometric framework, first
recall that a line bundle $L$ over a complex manifold $X$ is, by
definition, a complex manifold (called the total space of $L)$ with
a surjective holomorphic map 
\[
\pi:\,L\rightarrow X
\]
such that the fibers $L_{x}$ of $\pi$ are one-dimensional complex
vector spaces and such that $\pi$ is locally trivial. In other words,
any point $x\in X$ admits a neighborhood $U$ and an isomorphism
(i.e. a $\C^{*}-$equivariant biholomorphic map) between $L_{|U}$
and $U\times\C,$ making the following diagram commutative:
\[
\begin{array}{ccc}
L & \longleftrightarrow & U\times\C\\
\searrow &  & \swarrow\\
 & U
\end{array}
\]
Fixing such an isomorphism holomorphic sections of $L\rightarrow U$
may be identified with holomorphic functions on $U.$ In particular,
the function $1$ over $U$ corresponds to a non-vanishing holomorphic
section $e_{U}$ of $L\rightarrow U.$ Now, a \emph{smooth (Hermitian)
metric} $\left\Vert \cdot\right\Vert $ on the line bundle $L$ is,
by definition, a smooth family of Hermitian metrics on the one-dimensional
complex subspaces $L_{x}.$ Given a covering $U_{i}$ of $X$ and
trivializations $e_{U_{i}}$ of $L\rightarrow U_{i}$ a metric $\left\Vert \cdot\right\Vert $
on $L$ may be represented by the following family of local functions
$\phi_{U_{i}}$ on $U_{i}:$ 
\[
\left\Vert e_{U_{i}}\right\Vert ^{2}=e^{-\phi_{U_{i}}}
\]
(accordingly a metric on $L$ is often, in additive notation, denoted
by the symbol $\phi,$ which is called a \emph{weight}). The functions
$\phi_{U_{i}}$ do not agree on overlaps, in general. But the (normalized)
\emph{curvature form} $\omega$ of the metric $\left\Vert \cdot\right\Vert $
is a well-defined closed two-form on $X,$ locally defined by 
\[
\omega_{|U_{i}}:=\omega^{\phi_{U_{i}}}
\]
\begin{example}
If $X$ is a complex manifold then the \emph{trivial line bundle}
$L_{0}\rightarrow X$ is $X\times\C$ with its natural projection
to $X.$ Thus the function $1$ on $X$ defines a global holomorphic
trivialization of $L_{0}.$ The trivial metric $\left\Vert \cdot\right\Vert $
on $L_{0}$ is defined by $\left\Vert 1\right\Vert :=1,$ i.e. the
corresponding weight function $\phi$ vanishes identically on $X.$
Accordingly, so does its curvature form.
\end{example}

\begin{defn}
A polarized manifold $(X,L)$ is a compact complex manifold $X$ endowed
with a positive line bundle, i.e. a line bundle $L$ which admits
some smooth metric with strictly positive curvature form. 

According to Kodaira's embedding theorem any polarized manifold $X$
is projective, i.e. it may be realized as a complex (algebraic) submanifold
of projective space (see Remark \ref{rem:Kodaira}).
\end{defn}

Singular metrics on $L$ may be defined in a similar way. In particular,
a singular metric is said to have \emph{positive curvature} if the
local weights $\phi_{U_{i}}$ are psh, i.e. the corresponding curvature
form $\omega$ defines a positive $(1,1)-$current on $X.$ The difference
between the weights of two different metrics is always a globally
well-defined function on $X$ (as a consequence, the curvature currents
of any two metrics on $L$ are cohomologous and represent the first
Chern class $c_{1}(L)\in H^{2}(X,\Z)).$
\begin{example}
\label{exa:The-Ricci-curvature}The (normalized) \emph{Ricci curvature
}of a Kähler metric $\omega$ on a complex manifold $X$ (identified
with a Riemannian metric) may be expressed as 
\begin{equation}
\mbox{Ric\,}\ensuremath{\omega=-\frac{i}{2\pi}\partial\bar{\partial}\log\left(\omega^{n}/d\lambda\right),}\label{eq:ricci curv}
\end{equation}
where $d\lambda$ denotes Lebesgue measure, expressed in terms of
local holomorphic coordinates $z_{1},...,z_{n}.$ Denoting by $K_{X}$
the canonical line bundle of $X,$ i.e. the top exterior power of
the holomorphic cotangent bundle $T^{*}X,$ this means that $-\mbox{Ric\,}\ensuremath{\omega}$
coincides with the curvature of the metric $\left\Vert \cdot\right\Vert $
on the line bundle $K_{X}$ induced by the volume form $\omega^{n}.$
Indeed, $e:=dz_{1}\wedge...\wedge dz_{n}$ is a local trivialization
of $K_{X}$ and $\left\Vert e\right\Vert ^{2}:=c_{n}\omega^{n}/e\wedge\bar{e},$
for a suitable constant $c_{n}$. By formula \ref{eq:ricci curv},
if $\psi_{\beta}$ is a smooth psh solution of the Monge-Ampère equation
\ref{eq:ma eq with beta intro}, then the corresponding Kähler metric
$\omega_{\beta}(:=\omega^{\psi_{\beta}})$ satisfies the twisted Kähler-Einstein
equation \ref{eq:twisted ke eq intro} with 
\[
\tau:=\beta\frac{i}{2\pi}\partial\bar{\partial}\phi-\frac{i}{2\pi}\partial\bar{\partial}\log\frac{dV}{d\lambda}
\]
(note that the last term vanishes if $dV$ is equal to Lebesgue measure
$d\lambda).$ 
\end{example}

\begin{rem}
Given a metric $\left\Vert \cdot\right\Vert $ on $L$ we will use
the same notation $\left\Vert \cdot\right\Vert $ for the induced
metric on the tensor products of $L$ over $X,$ obtained by imposing
that $\left\Vert \cdot\right\Vert $ be multiplicative. In particular,
if $\phi$ is a local weight for $\left\Vert \cdot\right\Vert $ (defined
wrt the local trivialization $s_{U})$ then $k\phi$ is a local weight
for the $k$ th tensor product of $L$ (defined wrt the local trivialization
$s_{U}^{\otimes k}).$ This motivates using the additive notation
$kL$ for tensor products. More generally, we will use the same notation
$\left\Vert \cdot\right\Vert $ for the induced metrics on the line
bundles $(kL)^{\boxtimes N}$ over the $N-$fold products $X^{N}.$ 
\end{rem}

\subsubsection{\label{subsec:Holomorphic-sections-of}Holomorphic sections of $L$}

The complex vector space of all global holomorphic sections of $L\rightarrow X$
will be denoted by $H^{0}(X,L).$ The line bundle $L$ is said to
be \emph{big} if there exists a positive number $V$ such that
\[
N_{k}:=H^{0}(X,kL)=\frac{V}{n!}k^{n}+o(k^{n}),
\]
 using additive notation $kL$ for tensor products of line bundles.
In particular, any positive line bundle $L$ is big (by the Hilbert-Samuel
theorem). The results in \cite{b-b-w,berm8 comma 5}  apply to general
big line bundles, but here we will only consider the simpler case
when $L$ is positive, i.e. the case of a polarized manifold $(X,L)$
( as in  \cite{berm6,berm8}).

There is natural correspondence between the space $H^{0}(X,L)$ and
the space of all 1-homogeneous holomorphic functions on the complex
manifold defined by the total space of the dual line bundle $L^{*}\rightarrow X,$
endowed with its standard $\C^{*}-$action. Indeed, if $s_{k}\in H^{0}(X,kL)$
and $w\in L^{*}.$ Then 
\[
S_{k}(w):=\left\langle s_{k},w^{\otimes k}\right\rangle ,
\]
 is a well-defined $k-$homogeneous function on $L^{*}$(since $s_{k}$
takes values in the $k$ th tensor power of $L)$$.$ 

Similarly, the space of metrics $\left\Vert \cdot\right\Vert $ on
$L$ may be identified with the space of all positively 1-homogeneous
non-negative functions $N$ on the total space of $L^{*}\rightarrow X.$
The curvature form of $\left\Vert \cdot\right\Vert $ is (strictly)
positive on $X$ iff $N^{2}$ is (strictly) psh on $L^{*}-\{0\}.$
Indeed, fixing local holomorphic coordinates $z$ on $X$ and a local
trivialization of $L$ induces local holomorphic coordinates $(z,w)$
on $L^{*}$ such that 
\[
N^{2}(z,w)=|w|^{2}e^{\phi(z)}
\]

\subsection{\label{subsec:Transcendental-polarized-manifol} $\omega-$psh functions
on Kähler manifolds}

Let $(X,L)$ be a polarized manifold. Fixing a reference metric $\left\Vert \cdot\right\Vert _{0}$
and setting 
\begin{equation}
\left\Vert \cdot\right\Vert _{\varphi}:=\left\Vert \cdot\right\Vert _{0}e^{-\varphi/2}\label{eq:change of metric def}
\end{equation}
 the space all metrics $\left\Vert \cdot\right\Vert $ on $L$ with
positive curvature current may be identified with the space $PSH(X,\omega)$
of all $\omega-$psh functions $\varphi$ on $X.$ The correspondence
is made so that $\omega_{\varphi}$ (formula \ref{eq:def of omega var phi})
is the normalized curvature of the corresponding metric $\left\Vert \cdot\right\Vert _{\varphi}$
on $L.$
\begin{example}
\label{exa:hol section becomes psh}If $s\in H^{0}(X,kL)$ and $\left\Vert \cdot\right\Vert $
is a metric on $L$ with curvature form $\omega,$ then the function
$k^{-1}\log\left\Vert s\right\Vert ^{2}$ is in $PSH(X,\omega),$
since $\log|s(z)|^{2}$ defines a singular metric on $L$ with positive
curvature current. Concretely, locally representing $s=s_{U}(z)e_{U}$
over $U\subset X,$ where $e_{U}$ is a trivializing holomorphic section
of $L\rightarrow U$ and $s_{U}(z)$ is a local holomorphic function,
we have that $\log|s_{U}(z)|^{2}$ is psh and hence $\log\left\Vert s\right\Vert ^{2}=\log\left(|s_{U}(z)|^{2}e^{-k\phi_{U}}\right)$
is $k\omega-$psh.

Given a real closed $(1,1)-$ form $\omega$ the space $PSH(X,\omega)$
can be defined without any reference to line bundles. In fact, $\omega$
arises as the normalized curvature form of a metric on a line bundle
over $X$ iff $\omega$ has integral periods (i.e. it defines a class
in the De Rham cohomology $H^{2}(X,\Z)).$ Accordingly, from an analytic
point of view it is more naturally to consider the more general setting
of a of a pair $(X,\omega)$ consisting of a compact complex manifold
$X$ and real closed $(1,1)-$ form $\omega,$ with continuous potentials
on $X,$ which is cohomologous to a Kähler form. In other words, 
\[
\omega=\omega_{0}+\frac{i}{2\pi}\partial\bar{\partial}u
\]
 for some Kähler form $\omega_{0}$ on $X$ and continuous function
$u$ on $X.$ 

We recall the following fundamental properties of the space $PSH(X,\omega)$
\cite{g-z}.
\end{example}

\begin{prop}
\label{prop:omega psh compact appr}Assume that $X$ is compact and
that $\omega$ is cohomologous to a Kähler form. 
\begin{itemize}
\item (Compactness) The subspace of $PSH(X,\omega)$ consisting of all functions
$\varphi$ normalized so that either $\int_{X}\varphi dV=0$ (for
a given volume form $dV$ on $X)$ or $\sup_{X}\varphi=0$ is compact
with respect to the $L^{1}-$topology.
\item (Approximation) Any element $\varphi\in PSH(X,\omega)$ can be written
as a decreasing limit of Kähler potentials $\varphi_{j},$ i.e. smooth
functions in $PHS(X,\omega)$ such that $\omega_{\varphi}$ is a Kähler
form (i.e. $\omega_{\varphi}>0).$ 
\end{itemize}
\begin{proof}
The compactness follows from embedding $PSH(X,\omega)$ into the compact
space $\mathcal{M}_{1}(X)$ (by mapping $\varphi$ to the Laplace
type measure $\omega_{\varphi}\wedge\tau^{n-1},$ for a fixed Kähler
form $\tau$ on $X).$ As for the approximation result it is a special
case of Demailly's general approximation results for positive currents.
Interestingly, the ``zero-temperature'' limit $\beta\rightarrow\infty$
studied in Section \ref{sec:Proofs-for-the limit beta} provides an
alternative ``PDE-proof'' of Demailly's approximation result for
$PSH(X,\omega).$ To see this first take a sequence $u_{j}\in C^{\infty}(X)$
decreasing to $\varphi.$ By monotonicity, $P_{\omega}(u_{j})$ decreases
$\varphi$ and hence it is enough to verify that $P_{\omega}(u)$
can be uniformly approximated by Kähler potentials when $u$ i smooth.
But, as shown in \cite{berm11}, $P_{\omega}(u)$ is the uniform limit
of the functions $\varphi_{\beta}$ solving the corresponding Monge-Ampère
equation \ref{eq:MA eq with beta in section proof beta} (for a fixed
volume form $dV)$ and $\varphi_{\beta}$ is smooth by \cite{au,y}.
As discussed in \cite{berm11} this regularization scheme can by viewed
as a transcendental analog of the well-known Bergman kernel approach
to regularization used in case of a polarized manifold $(X,L).$ 
\end{proof}
\end{prop}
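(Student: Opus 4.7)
My plan for the compactness is to reduce to the classical $L^1_{loc}$-compactness of plurisubharmonic functions. Since $\omega$ is cohomologous to a K\"ahler form, after replacing $\varphi$ by $\varphi + u$ for a continuous function $u$ one may assume $\omega$ itself is K\"ahler, so locally on any coordinate ball $U \Subset X$ one has $\omega = \frac{i}{2\pi}\partial\bar{\partial}\chi$ for a smooth potential $\chi$, and an $\omega$-psh function $\varphi$ corresponds to the genuinely psh function $\chi + \varphi$. The first step is to show that the two normalizations differ by a uniform constant, i.e.\ $|\sup_X \varphi - \int_X \varphi \, dV| \leq C$ for every $\varphi \in PSH(X,\omega)$; this follows from covering $X$ by finitely many coordinate balls and applying the sub-mean value inequality for psh functions on each. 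The second step is local $L^1$-precompactness: on each chart $\chi + \varphi$ is psh and uniformly bounded above (using the first step), hence lies in an $L^1_{loc}$-precompact family by the classical $L^1$-compactness of families of psh functions bounded above. Closedness under $L^1$-limits is routine: positivity of currents $\omega_{\varphi_j} \geq 0$ passes to distributional limits, so the usc regularization of any $L^1$-limit is again in $PSH(X,\omega)$.

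For the approximation I would follow Demailly's Bergman kernel method. In the rational case, for $N$ large enough the class $N[\omega]$ is represented by a positive line bundle $L^N$, and $\varphi$ becomes the weight of a singular metric on $L^N$. Then the weighted Bergman kernels
\[
\varphi_k(z) := \frac{1}{kN} \log \sup\left\{ |s(z)|^2 : s \in H^0(X, kNL),\ \int_X |s|^2 e^{-kN\varphi}\, dV \leq 1 \right\}
\]
are smooth, strictly $\omega$-psh by construction, and converge to $\varphi$ in $L^1$ by combining Ohsawa--Takegoshi extension (for the lower bound $\varphi_k \geq \varphi - O(\log k / k)$) with the sub-mean value inequality (for the matching upper bound). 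Since the raw $\varphi_k$ are not monotone in $k$, to extract a decreasing sequence I would invoke the PDE route indicated in the author's proof: for smooth $u \geq \varphi$ the solutions $\varphi_\beta$ of $MA(\varphi_\beta) = e^{\beta(\varphi_\beta - u)} dV$ are smooth and strictly $\omega$-psh by Aubin--Yau, and decrease to $P_\omega(u)$ as $\beta \to \infty$; taking a sequence of smooth $u_j \searrow \varphi$ and diagonalizing then yields the required decreasing approximation. For general (transcendental) $[\omega]$ one must additionally perturb the cohomology class, taking care to preserve monotonicity in the limit.

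The main obstacle is clearly the approximation part. The compactness is soft and essentially reduces, via local charts, to standard facts about psh functions. The approximation by smooth strictly $\omega$-psh functions is a deep theorem requiring either H\"ormander $L^2$-estimates and Ohsawa--Takegoshi extension (to build the Bergman approximants) or the existence theory for complex Monge--Amp\`ere equations. The subtle point is monotonicity: the natural Bergman approximants are not decreasing, so one must either perform a Richberg-type $\max$-regularization or exploit the built-in monotonicity of the Monge--Amp\`ere continuity method as $\beta \to \infty$, and the transcendental case requires an additional perturbation of the reference form $\omega$.
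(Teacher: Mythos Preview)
Your proposal is essentially correct, but it takes a somewhat different route from the paper on the compactness part and adds unnecessary detours on the approximation part.

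For compactness, the paper gives a one-line argument: map $\varphi \mapsto \omega_\varphi \wedge \tau^{n-1}$ (for a fixed K\"ahler form $\tau$), which embeds the normalized slice of $PSH(X,\omega)$ into the compact space $\mathcal{M}_1(X)$. Your approach via local charts and classical $L^1_{loc}$-compactness of psh functions is perfectly valid and is in fact the standard textbook route (it is essentially how the cited reference \cite{g-z} proves it). The measure-embedding argument is slicker but both buy the same conclusion.

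For approximation, the paper's argument is exactly the PDE route you describe in your second paragraph: take smooth $u_j \searrow \varphi$, observe $P_\omega(u_j) \searrow \varphi$ by monotonicity of $P_\omega$, and then uniformly approximate each $P_\omega(u_j)$ by the Aubin--Yau solutions $\varphi_\beta$ of $MA_\omega(\varphi)=e^{\beta(\varphi-u_j)}dV$. Your preliminary discussion of Bergman kernels and the rational/transcendental dichotomy is superfluous here: the PDE method works directly in any K\"ahler class, with no need to perturb $[\omega]$, so you can drop that entire layer. One point to watch: you assert that $\varphi_\beta$ \emph{decreases} to $P_\omega(u)$ as $\beta\to\infty$, but the paper (and \cite{berm11}) only claims \emph{uniform} convergence. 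Monotonicity in $\beta$ is not obvious from the equation. This is harmless for the final statement, since uniform convergence of $\varphi_{j,\beta}$ to $P_\omega(u_j)$ combined with $P_\omega(u_j)\searrow\varphi$ lets you diagonalize after adding small constants $\delta_j\searrow 0$ to force monotonicity---but you should say so rather than claim an unestablished monotonicity.
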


\subsection{\label{subsec:The-non-pluripolar-Monge-Amp=0000E8re}The non-pluripolar
Monge-Ampère measure and the functional $\mathcal{E}(\varphi)$ }

Let $(X,L)$ be a polarized compact manifold and fix a smooth metric$\left\Vert \cdot\right\Vert $
on $L$ with curvature form $\omega.$ Then we can, as explained above,
identify (singular) positively curved metrics $\phi$ on $L$ with
$\omega-$psh functions $\varphi$ on $L.$ For a smooth $\omega-$psh
function the (normalized) Monge-Ampère measure of $\varphi$ (with
respect to $\omega)$ is defined as 
\begin{equation}
MA_{\omega}(\varphi):=\omega_{\varphi}^{n}/V\label{eq:def of ma meas text}
\end{equation}
 i.e. as the top exterior power of the corresponding curvature form
divided by the volume $V$ of the class $[\omega].$ More generally,
by the local pluripotential theory of Bedford-Taylor, the expression
in formula \ref{eq:def of ma meas text} makes locally sense for any
bounded $\omega-$psh function and defines a probability measure on
$X$ (which does not put charge on pluripolar subsets, i.e. sets which
are locally contained in the $-\infty-$locus of a psh function).
The corresponding Monge-Ampère measure $MA$ is continuous with respect
to decreasing sequences of bounded $\theta-$psh functions. In general,
following \cite{begz}, for any $\varphi\in PSH(X,\theta)$ we will
denote by $MA_{\omega}(\varphi)$ the \emph{non-pluripolar Monge-Ampère
measure,} which is a globally well-defined measure on $X$ not charging
pluripolar subsets. The measure $MA_{\omega}(\varphi)$ is defined
by replacing the ordinary wedge products with the so called non-pluripolar
products introduced in \cite{begz}) and satisfies 
\[
\int_{X}MA_{\omega}(\varphi)\leq1
\]

The Monge-Ampère measure $MA_{\omega},$ viewed as a one-form on the
space $PSH(X,\omega)\cap L^{\infty}$ is exact and we shall denote
by $\mathcal{E}$ the corresponding primitive. In other words, 

\begin{equation}
d\mathcal{E}_{|\varphi}=MA_{\omega}(\varphi),\label{eq:def of energyfunc as primitive}
\end{equation}
(in the sense that $d\mathcal{E}(\varphi+tu)/dt=\int_{X}MA(\varphi)u/V$
at $t=0).$ The functional $\mathcal{E}$ on $PSH(X,\omega)\cap L^{\infty}$
is only defined up to an additive constant that we shall fix by imposing
the normalization condition 
\begin{equation}
\mathcal{E}(\varphi_{(X,\omega)})=0,\label{eq:norm cond on energy}
\end{equation}
where $\varphi_{(X,\omega)}=0$ when $\omega\geq0$ and in general
it is defined as a global extremal function in the section below (formula
\ref{eq:def of global siciak for X}). Occasionally, we we will use
the notation $\mathcal{E}_{\omega}$ to indicate the dependence on
$\omega$ in the definition of the functional $\mathcal{E}.$ Integrating
the defining relation \ref{eq:def of energyfunc as primitive} along
a line segment in $PSH(X,\omega)\cap L^{\infty}$ reveals that 
\begin{equation}
\mathcal{E}(\varphi)=\frac{1}{(n+1)V}\int_{X}\varphi\sum_{j=0}^{n}\omega^{j}\wedge\omega_{\varphi}^{n-j}\label{eq:expl form for beu e}
\end{equation}
 in the case when $\omega\geq0$ (but the explicit formula for $\mathcal{E}$
will not really be used in the sequel). We will also denote by $\mathcal{E}$
the smallest upper semi-continuous extension of $\mathcal{E}$ to
all of $PSH(X,\omega),$ introduced in \cite{bbgz}, and write 
\[
\mathcal{E}^{1}(X,\omega):=\left\{ \varphi\in PSH(X,\omega):\,\,\mathcal{E}(\varphi)>-\infty\right\} ,
\]
 which is called the space of all functions on $X$ with \emph{finite
energy. }\footnote{We will adopt the notation from \cite{berm6}, which is different
from the one in\cite{bbgz}.}\emph{ }The Monge-Ampère measure of any $\varphi\in\mathcal{E}^{1}(X,\omega)$
defines a probability measure on $X.$

\subsection{\label{subsec:The-PSH-projection-}The PSH-projection $P_{\omega}$
and the global Siciak extremal function }

Next, we recall the definition of the operator $P_{\omega}$ introduced
in \cite{b-b}. Given $u\in C(X)$ we set 
\begin{equation}
(P_{\omega}u)(x):=\sup_{PSH(X,\omega)}\{\varphi(x):\,\,\,\varphi\leq u\,\,\text{on \ensuremath{X}}\}\label{eq:def of proj operator in khler case}
\end{equation}
which defines an operator 
\[
P_{\omega}:\,C(X)\rightarrow PSH(X,\omega_{0})
\]
from $C^{0}(X)$ to the space $PSH(X,\omega_{0})$ of all $\omega-$psh
functions on $X,$ i.e. all upper semi-continuous functions $\varphi$
in $L^{1}(X)$ such that $\omega_{\varphi}\geq0$ in the sense of
currents. 

Now the global Siciak extremal function of $(X,\omega)$ may be defined
by 
\begin{equation}
\varphi_{(X,\omega)}:=P_{\omega}(0).\label{eq:def of global siciak for X}
\end{equation}
 More generally, if $K$ is a locally non-pluripolar subset of $X,$
then the global Siciak extremal function of $(K,\omega)$ is defined
as the upper upper-semicontinuous regularization of $P_{(K,\omega)}(0),$
where $P_{(K,\omega)}(u)$ is defined as in formula \ref{eq:def of proj operator in khler case},
but only demanding that $\varphi\leq u$ on $K.$ The assumption on
$K$ ensures that $P_{(K,\omega)}(u)$ is a well-defined finite function
and hence $\omega-$psh (see \cite{g-z} where the global Kähler analogs
of Siciak's extremal function in $\C^{n}$ were first introduced).
\begin{rem}
Since we have assumed that $(X,L)$ is polarized (or more generally,
that $\omega$ is cohomologous to a Kähler form) the operator $P_{\omega}$
preserves $C(X)$ (as follows from Prop \ref{prop:omega psh compact appr}).
Hence, it defines a (non-linear) projection operator from $C^{0}(X)$
onto $PSH(X,\omega)\cap C^{0}(X).$ Anyway, for the present purposes
it is enough to observe that $P_{\omega}$ preserves $L^{\infty},$
as follows directly from the definition. 

The following important ``orthogonality relation'' holds for the
projection operator $P_{\omega}:$ 
\begin{equation}
\int_{X}(u-P_{\omega}u)MA_{\omega}(P_{\omega}u)=0\label{eq:og relation}
\end{equation}
 Indeed, since $P_{\omega}u\leq u,$ this is equivalent to saying
that $P_{\omega}u=u$ almost everywhere wrt the measure $MA_{\omega}(P_{\omega}u),$
which in turn follows from a ``balayage''/maximum principle type
argument (see \cite{b-b}).
\end{rem}

\subsection{\label{subsec:The-pluricomplex-energy}The pluricomplex energy $E_{\omega}(\mu)$
on polarized manifolds}

Following \cite{bbgz} the\emph{ pluricomplex energy} $E_{\omega}(\mu)$
of a probability measure $\mu$ is defined by
\begin{equation}
E_{\omega}(\mu):=\sup_{\varphi\in\mathcal{E}^{1}(X,\omega)}\mathcal{E}(\varphi)-\left\langle \varphi,\mu\right\rangle ,\label{eq:def of e as sup}
\end{equation}
which is automatically lsc and convex on $\mathcal{M}_{1}(X)$ (by
the approximation property in Prop \ref{prop:omega psh compact appr}
the sup may equivalently be taken over $PSH(X,\omega)\cap C(X)$ or
$PSH(X,\omega)\cap C^{\infty}(X)).$ 

As recalled in the following theorem the sup defining $E_{\omega}$
is in fact attained, if $E_{\omega}(\mu)<\infty$: 
\begin{equation}
E_{\omega}(\mu):=\mathcal{E}(\varphi_{\mu})-\left\langle \varphi_{\mu},\mu\right\rangle \label{eq:energy in terms of pot}
\end{equation}
for a unique function $\varphi_{\mu}\in\mathcal{E}^{1}(X,\theta)/\R.$
Moreover 
\begin{equation}
MA(\varphi_{\mu})=\mu.\label{eq:potential of meas}
\end{equation}
\begin{thm}
\label{thm:var sol of ma}\cite{bbgz} The following is equivalent
for a probability measure $\mu$ on $X:$ 

\begin{itemize}
\item $E_{\omega}(\mu)<\infty$
\item \textup{$\left\langle \varphi,\mu\right\rangle <\infty$ for all $\varphi\in\mathcal{E}^{1}(X,\omega)$}
\item $\mu$ has a potential $\varphi_{\mu}\in\mathcal{E}^{1}(X,\omega$),
i.e. equation \ref{eq:potential of meas} holds
\end{itemize}
Moreover, $\varphi_{\mu}$ is uniquely determined mod $\R,$ i.e.
up to an additive constant and can be characterized as the function
maximizing the functional whose sup defines $E_{\omega}(\mu)$ (formula
\ref{eq:def of e as sup}). 
\end{thm}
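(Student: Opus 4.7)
The plan is to prove the three equivalences by the direct method of the calculus of variations applied to the concave functional
\[
F_\mu(\varphi) := \mathcal{E}(\varphi) - \langle \varphi, \mu \rangle
\]
on $\mathcal{E}^1(X,\omega)$. Since $F_\mu$ is invariant under the translation action $\varphi \mapsto \varphi + c$ (because $\mu$ is a probability measure) and $-\inf F_\mu \leq E_\omega(\mu)$ essentially by definition (formula \ref{eq:def of e as sup}), it is natural to work on the sup-normalized slice $\{\varphi \in PSH(X,\omega) : \sup_X \varphi = 0\}$, which is $L^1$-compact by Proposition \ref{prop:omega psh compact appr}. On this slice $\mathcal{E}$ is upper semicontinuous (by the definition of its extension from \cite{bbgz}) and $\varphi \mapsto \langle \varphi, \mu\rangle$ is $L^1$-lower semicontinuous, obtained as the increasing limit of integrals against bounded regularisations (again via Proposition \ref{prop:omega psh compact appr}); hence $F_\mu$ is usc on the slice.

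For (1) $\Rightarrow$ (3), assume $E_\omega(\mu) < \infty$, so $F_\mu$ is bounded above on the compact slice. A maximising sequence $\varphi_j$ admits an $L^1$-limit $\varphi_\mu$, which realises the sup by upper semicontinuity, and in particular satisfies $\mathcal{E}(\varphi_\mu) > -\infty$, so $\varphi_\mu \in \mathcal{E}^1(X,\omega)$. To extract the Euler--Lagrange equation $MA_\omega(\varphi_\mu) = \mu$, test against perturbations $\psi \in PSH(X,\omega)\cap L^\infty$: the segment $(1-t)\varphi_\mu + t\psi$ lies in $\mathcal{E}^1$ for $t\in[0,1]$, and the concavity of $\mathcal{E}$ along this segment combined with maximality of $\varphi_\mu$ yields
\[
\int_X (\psi - \varphi_\mu)\, MA_\omega(\varphi_\mu) \leq \int_X (\psi - \varphi_\mu)\, d\mu.
\]
Approximating $\varphi_\mu$ itself from below by its canonical truncations $\varphi_\mu^j := \max(\varphi_\mu, -j) \in PSH(X,\omega) \cap L^\infty$, plugging $\varphi_\mu^j$ in for $\psi$, and letting $j \to \infty$ reverses the inequality. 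The main obstacle here is precisely this passage: one needs continuity of the non-pluripolar Monge--Ampère under decreasing limits with uniformly bounded energy, together with finiteness of $\int \varphi_\mu\, MA(\varphi_\mu)$ for $\varphi_\mu \in \mathcal{E}^1$, both of which are supplied by the non-pluripolar calculus of \cite{begz,bbgz}. Testing against all bounded $\omega$-psh functions (and then against arbitrary continuous test functions by density/regularisation) gives $MA_\omega(\varphi_\mu) = \mu$ as measures on $X$.

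For the reverse directions, once $\varphi_\mu \in \mathcal{E}^1$ with $MA_\omega(\varphi_\mu) = \mu$ is in hand, concavity of $\mathcal{E}$ together with its defining relation $d\mathcal{E}_{|\varphi_\mu} = MA_\omega(\varphi_\mu)$ gives for every $\psi \in \mathcal{E}^1$
\[
\mathcal{E}(\psi) - \mathcal{E}(\varphi_\mu) \leq \int_X (\psi - \varphi_\mu)\, d\mu.
\]
Rearranged as $F_\mu(\psi) \leq F_\mu(\varphi_\mu)$, this shows simultaneously that $\varphi_\mu$ realises the sup in $E_\omega(\mu)$ (so $E_\omega(\mu) < \infty$, proving (3) $\Rightarrow$ (1) and the ``moreover'' characterisation) and, rearranged differently, that $\langle \psi, \mu\rangle \geq \mathcal{E}(\psi) - \mathcal{E}(\varphi_\mu) + \langle \varphi_\mu, \mu\rangle > -\infty$ since $\langle \varphi_\mu, \mu\rangle = \int \varphi_\mu\, MA_\omega(\varphi_\mu)$ is finite by finite energy (proving (3) $\Rightarrow$ (2)). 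Uniqueness of $\varphi_\mu$ modulo constants follows from the strict concavity of $\mathcal{E}$ on the sup-normalised slice, which in turn reduces to the domination principle for the non-pluripolar Monge--Ampère operator in $\mathcal{E}^1$. Finally, (2) $\Rightarrow$ (1) can be obtained by a Baire/Banach--Steinhaus argument on the complete metric space $\mathcal{E}^1/\mathbb{R}$ with its natural strong (energy) topology: hypothesis (2) makes the lsc functional $\varphi \mapsto -\langle \varphi, \mu\rangle$ pointwise finite on a Baire space, hence bounded on some energy ball, whence the supremum in $E_\omega(\mu)$ is finite.
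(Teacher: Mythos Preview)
The paper does not supply its own proof of this theorem; it is quoted from \cite{bbgz}. Your variational outline for $(1)\Rightarrow(3)$ and $(3)\Rightarrow(1),(2)$ follows the strategy of \cite{bbgz}, with the heavy lifting (the Euler--Lagrange step in $\mathcal{E}^1$, finiteness of $\int\varphi_\mu\,MA_\omega(\varphi_\mu)$, uniqueness via the domination principle) correctly deferred to the non-pluripolar calculus there. One technical slip worth flagging: on the sup-normalized slice the map $\varphi\mapsto\langle\varphi,\mu\rangle$ is \emph{upper} semicontinuous for the $L^1$-topology (Hartogs gives $\limsup_j\varphi_j\leq\varphi$ off a pluripolar set, then reverse Fatou using $\varphi_j\leq 0$), not lower semicontinuous as you assert. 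Hence $F_\mu$ is a sum of a usc and an lsc term and is not obviously usc; the direct-method step needs an extra input (in \cite{bbgz} one first arranges the maximizing sequence to have uniformly bounded $\mathcal{E}$, and on such energy sublevel sets $\langle\cdot,\mu\rangle$ is actually continuous when $\mu$ has finite energy).

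The genuine gap is $(2)\Rightarrow(1)$. Even granting completeness of $(\mathcal{E}^1/\R,d_1)$ (a result of Darvas, posterior to \cite{bbgz}) and lower semicontinuity of $\varphi\mapsto-\langle\varphi,\mu\rangle$ in that topology, Baire only yields a bound on \emph{some} $d_1$-ball $B(\varphi_0,r)$. Since $\mathcal{E}^1$ carries no linear structure there is no automatic passage to a global bound: convex-interpolating an arbitrary $\psi$ toward $\varphi_0$ and using linearity of $\langle\cdot,\mu\rangle$ yields at best $-\langle\psi,\mu\rangle\leq C'\,d_1(\psi,\varphi_0)+C'$, i.e.\ linear growth in $|\mathcal{E}(\psi)|$ with an \emph{uncontrolled} constant $C'$. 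That is exactly not strong enough to conclude $\sup_\psi\bigl(\mathcal{E}(\psi)-\langle\psi,\mu\rangle\bigr)<\infty$, which requires the coefficient in front of $|\mathcal{E}(\psi)|$ to be at most $1$. The argument in \cite{bbgz} proceeds by a different route, exploiting the specific polynomial structure of $\mathcal{E}$ and the energy comparison estimates between mixed Monge--Amp\`ere integrals rather than abstract Baire category.
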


In particular, if $\mu$ charges a pluripolar subset then $E_{\omega}(\mu)=\infty.$
The previous theorem implies the following result \cite[Prop 2.7]{berm6}:
\begin{prop}
\label{prop:sub-gradient}Assume that $\mu\in\mathcal{M}_{1}(X)$
has finite energy, i.e. $E_{\omega}(\mu)<\infty.$ Then $-\varphi_{\mu}$
is a sub-gradient for the functional $E_{\omega}$ at $\mu,$ i.e.
\[
E_{\omega}(\nu)\geq E_{\omega}(\mu)-\left\langle \nu-\mu,\varphi_{\mu}\right\rangle ,
\]
 for any $\nu\in\mathcal{M}_{1}(X).$ As a consequence, if $\mu_{t}$
is an affine curve in $\mathcal{M}_{1}(X),$ defined for $t\in]-\epsilon,\epsilon[$
and such that $\mu_{0}=\mu,$ then 
\[
\frac{dE_{\omega}(\mu_{t})}{dt}_{|t=0}=-\left\langle \mu_{1}-\mu,\varphi_{\mu}\right\rangle 
\]
\end{prop}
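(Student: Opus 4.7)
The proof naturally splits in two parts, both leaning on Theorem \ref{thm:var sol of ma}, which guarantees that the supremum in \ref{eq:def of e as sup} defining $E_{\omega}(\mu)$ is attained (uniquely modulo constants) at the potential $\varphi_{\mu}$.

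The sub-gradient inequality is a one-line consequence of the variational definition. Since $\varphi_{\mu}\in\mathcal{E}^{1}(X,\omega)$ is an admissible competitor in the supremum defining $E_{\omega}(\nu)$, one has
\[
E_{\omega}(\nu)\;\geq\;\mathcal{E}(\varphi_{\mu})-\langle\varphi_{\mu},\nu\rangle\;=\;\bigl[\mathcal{E}(\varphi_{\mu})-\langle\varphi_{\mu},\mu\rangle\bigr]-\langle\nu-\mu,\varphi_{\mu}\rangle\;=\;E_{\omega}(\mu)-\langle\nu-\mu,\varphi_{\mu}\rangle,
\]
where the middle equality uses the attainment statement in Theorem \ref{thm:var sol of ma}. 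The choice of additive normalization of $\varphi_{\mu}$ is immaterial because $\nu-\mu$ has total mass zero.

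For the derivative along the affine curve $\mu_{t}=\mu_{0}+t(\mu_{1}-\mu_{0})$, I would squeeze the difference quotient using two applications of the sub-gradient inequality just established. Applied at $\mu_{0}$ with test measure $\mu_{t}$ it gives, for $t>0$,
\[
\frac{E_{\omega}(\mu_{t})-E_{\omega}(\mu_{0})}{t}\;\geq\;-\langle\mu_{1}-\mu_{0},\varphi_{\mu_{0}}\rangle,
\]
with the reversed inequality for $t<0$. Applied at $\mu_{t}$ with test measure $\mu_{0}$ it yields the complementary bound
\[
\frac{E_{\omega}(\mu_{t})-E_{\omega}(\mu_{0})}{t}\;\leq\;-\langle\mu_{1}-\mu_{0},\varphi_{\mu_{t}}\rangle\qquad(t>0),
\]
again with the opposite inequality for $t<0$. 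The statement of the proposition thus reduces to the continuity
\[
\lim_{t\to 0}\langle\mu_{1}-\mu_{0},\varphi_{\mu_{t}}\rangle=\langle\mu_{1}-\mu_{0},\varphi_{\mu_{0}}\rangle.
\]

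This continuity is the main obstacle. My plan is to fix a normalization (say $\sup_{X}\varphi_{\mu_{t}}=0$) and to invoke the $L^{1}$-compactness of normalized $\omega$-psh functions from Proposition \ref{prop:omega psh compact appr}. Any $L^{1}$-cluster point $\varphi_{*}$ of $\varphi_{\mu_{t}}$ as $t\to 0$ lies in $PSH(X,\omega)$, and upper semicontinuity of $\mathcal{E}$ combined with the weak convergence $\mu_{t}\to\mu_{0}$ forces $\varphi_{*}$ to again attain the supremum defining $E_{\omega}(\mu_{0})$; by the uniqueness clause of Theorem \ref{thm:var sol of ma} one has $\varphi_{*}=\varphi_{\mu_{0}}$, which identifies the unique cluster point and yields $L^{1}$-convergence along the full net. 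The genuinely delicate step will then be to push this convergence through the pairing with the signed measure $\mu_{1}-\mu_{0}$, since $\omega$-psh functions are only bounded above and $L^{1}$-convergence alone is not sufficient; here one must invoke the finite-energy estimates of \cite{bbgz}, ensuring that $\varphi\mapsto\langle\varphi,\mu_{i}\rangle$ extends continuously to sublevel sets of $\mathcal{E}$ inside $\mathcal{E}^{1}(X,\omega)$.
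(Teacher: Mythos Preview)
The paper does not actually prove this proposition in the text; it simply records it as a consequence of Theorem~\ref{thm:var sol of ma} and cites \cite[Prop~2.7]{berm6} for details. Your proof of the sub-gradient inequality is correct and is exactly the standard one-line argument from the variational characterization. Your outline for the derivative formula---squeezing the difference quotient between sub-gradients at $\mu_0$ and at $\mu_t$, then establishing continuity of $t\mapsto\langle\mu_1-\mu_0,\varphi_{\mu_t}\rangle$ at $t=0$---is also the approach taken in the cited reference.

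One refinement worth making explicit: for the final step, $L^1$-convergence of $\varphi_{\mu_t}$ is not by itself enough; you need convergence \emph{in energy}, i.e.\ additionally $\mathcal{E}(\varphi_{\mu_t})\to\mathcal{E}(\varphi_{\mu_0})$. This comes essentially for free: the function $t\mapsto E_\omega(\mu_t)$ is convex and (implicitly) finite on the open interval, hence continuous there, and combining this with the attainment identity $E_\omega(\mu_t)=\mathcal{E}(\varphi_{\mu_t})-\langle\varphi_{\mu_t},\mu_t\rangle$ and the upper semicontinuity of $\mathcal{E}$ forces both pieces to converge. Once convergence in energy is in hand, the continuity of the pairing with the finite-energy measures $\mu_0,\mu_1$ is indeed supplied by \cite{bbgz}, as you anticipate.
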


\begin{rem}
\label{rem:E on volume forms}By the previous proposition the differential
of the restriction of the functional $E_{\omega}(\mu)$ to the subspace
$\mathcal{P}^{\infty}(X)$ of volume forms in $\mathcal{M}_{1}(X)$
may be represented by $\varphi_{\mu}:$ 
\[
dE_{\omega}(\mu)=-\varphi_{\mu}
\]
(in this special case the formula follows by standard Legendre duality,
since formula \ref{eq:energy in terms of pot} expresses $E(\mu)$
as the Legendre transform of the functional$-\mathcal{E}(-u)).$ One
can then recover $E_{\omega}$ on all of $\mathcal{M}_{1}(X)$ as
the greatest lower semi-continuous from $\mathcal{P}^{\infty}(X).$
When $\omega\geq0$ the following explicit formula for $E_{\omega}$
on $\mathcal{P}^{\infty}(X)$ follows from formula \ref{eq:expl form for beu e}
(and integration by parts):
\begin{equation}
E_{\omega}(\mu)=\frac{1}{V}\sum_{j=0}^{n-1}\frac{1}{j+2}\int d\varphi_{\mu}\wedge d^{c}\varphi_{\mu}\wedge\frac{\omega_{\varphi_{\mu}}^{j}}{j!}\wedge\frac{\omega^{n-1-j}}{(n-1-j)!},\label{eq:energy in terms of potential}
\end{equation}
 Moreover, $E_{\omega}(\mu)$ coincides with $(I_{\omega}-J_{\omega})(\varphi_{\mu}),$
in terms of Aubin's functionals $I$ and $J$ \cite{au2}.
\end{rem}

\subsection{\label{subsec:Compactification-of-}Compactification of $\C^{n}$
by $\P^{n}$}

We recall that the $n-$dimensional \emph{complex projective space
\[
\P^{n}:=\C^{n+1}-\{0\}/\C^{*})
\]
}comes with a tautological line bundle whose total space, with the
zero-section deleted, is $\C^{n+1}-\{0\}:$ the line over a point
$[Z_{0}:....:Z_{m}]\in\P^{n}$ is simply the line $\C(Z_{0},..,Z_{n}).$
The dual of the tautological line bundle is called the \emph{hyperplane
line bundle }and is usually denoted by $\mathcal{O}(1).$ The notation
reflects the fact that the space $H^{0}(\P^{n},\mathcal{O}(1))$ of
holomorphic sections of $\mathcal{O}(1)\rightarrow\P^{n}$ may be
identified with the space of $1-$homogeneous holomorphic functions
$F$ on $\C^{n+1}$ (by the correspondence in Section \ref{subsec:Holomorphic-sections-of}).
In particular, the homogeneous coordinates $Z_{i}$ on $\P^{n}$ define
holomorphic sections of $\mathcal{O}(1)\rightarrow\P^{n}.$ Similarly,
the metrics $\left\Vert \cdot\right\Vert $ on $\mathcal{O}(1)$ may
be identified with positively $1-$homogeneous non-negative functions
$N$ on $\C^{n+1}$ (assumed strictly positive on $\C^{n+1}-\{0\}).$
Accordingly,
\[
\left\Vert F\right\Vert =\frac{|F|}{N}
\]
descends from $\C^{n+1}$ to a well-defined function on $\P^{n+1}.$
In particular, the norm $N$ defined by the Euclidean norm on $\C^{n+1}$
induces a metric on $\mathcal{O}(1),$ called the \emph{Fubini-Study
metric, }that we shall denote by $\left\Vert \cdot\right\Vert _{FS}.$
Its curvature form $\omega_{FS}$ defines a Kähler form on $\P^{n}$
(as follows from the fact that the squared Euclidean norm $N^{2}$
defines a smooth and strictly plurisubharmonic function on $\C^{n\text{+1 }}).$
Thus $(\P^{n},\mathcal{O}(1))$ is a polarized manifold.

Following standard practice we will identify $\C^{n}$ with the open
subset 
\[
U_{0}:=\P^{n}-H_{0},\,\,\,H_{0}:=\{Z_{0}=0\},
\]
 i.e. with the image of the embedding 
\[
\C^{n}\rightarrow\P^{n},\,\,\,z\mapsto[1:z]
\]

Over $U_{0}$ the line bundle $\mathcal{O}(1)$ is trivialized by
$Z_{0}$ (since it is non-vanishing precisely on $U_{0}).$ Accordingly,
over $U_{0}$ the Fubini-Study metric $\left\Vert \cdot\right\Vert _{FS}$
is represented by the weight 
\[
\phi_{FS}(z):=-\log\left\Vert Z_{0}\right\Vert _{FS}^{2}:=-\log\frac{|Z_{0}|^{2}}{|Z_{0}|^{2}+...+|Z_{n}|^{2}}=\log(1+|z|^{2})
\]
 defining a smooth metric with strictly positive curvature form $\omega_{FS}.$ 

The space $H^{0}(\P^{n},k\mathcal{O}(1))$ may be naturally identified
with the space of all homogeneous polynomials $P_{k}$ of degree $k$
in $\C^{n+1}$ (by the correspondence in Section \ref{subsec:Holomorphic-sections-of})
and
\[
\left\Vert P_{k}\right\Vert ^{2}([Z])=\frac{|P_{k}(Z)|}{N(Z)^{k}}
\]

Hence, ``homogenization'' establishes a one-to-one correspondence
\[
\mathcal{P}_{k}(\C^{n})\longleftrightarrow(H^{0}(\P^{n},k\mathcal{O}(1)),\,\,\,p_{k}(z_{1},...z_{n})=P_{k}(1,z_{1},...,z_{n})
\]
 between the space $\mathcal{P}_{k}(\C^{n})$ of all polynomials $p_{k}$
on $\C^{k}$ of degree at most $k$ and $H^{0}(\P^{n},k\mathcal{O}(1)).$
In particular, over $U_{0}(=\C^{n})$ we can write 
\begin{equation}
P_{k}=p_{k}Z_{0}^{\otimes k}.\label{eq:P k in terms of p k}
\end{equation}
\begin{lem}
\label{lem:correspond funct metric in Cn}Setting 
\[
\phi(z):=-\log\left\Vert Z_{0}\right\Vert ^{2}
\]

yields a correspondence between 
\begin{itemize}
\item metrics $\left\Vert \cdot\right\Vert $ on $\mathcal{O}(1)\rightarrow\P^{n}$
with positive curvature current and functions $\phi(z)$ in the Lelong
class $\mathcal{L}(\C^{n})$ (formula \ref{eq:def of L intro})
\item usc metrics $\left\Vert \cdot\right\Vert $ on $\mathcal{O}(1)\rightarrow\P^{n}$
(i.e. the local weights are lsc) and lsc functions $\phi(z)$ on $\C^{n}$
with super logarithmic growth, i.e. $\phi(z)\geq\log(1+|z|^{2})+O(1).$
\end{itemize}
\end{lem}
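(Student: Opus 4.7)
The plan is to reduce everything to local computations on the two types of charts $U_{0} := \{Z_{0}\neq 0\} \cong \C^{n}$ and $U_{i} := \{Z_{i}\neq 0\}$ for $i\neq 0$. On $U_{0}$ the section $Z_{0}$ trivializes $\mathcal{O}(1)$, so a metric $\|\cdot\|$ on $\mathcal{O}(1)$, restricted to $U_{0}$, is recorded exactly by its local weight $\phi(z) := -\log\|Z_{0}\|^{2}([1:z])$. The local regularity equivalences are then immediate: $\|\cdot\|$ has positive curvature current iff all local weights are psh iff $\phi$ is psh on $\C^{n}$, and $\|\cdot\|^{2}$ is usc iff all local weights are lsc iff $\phi$ is lsc on $\C^{n}$. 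What genuinely requires work is the translation between globality on $\P^{n}$ and the prescribed growth of $\phi$ at infinity.

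For the direction ``metric implies growth of $\phi$'', I would compare with the Fubini-Study metric $\|\cdot\|_{FS}$, whose weight on $U_{0}$ is $\phi_{FS}(z)=\log(1+|z|^{2}) = \log|z|^{2}+O(1)$. Since $\|\cdot\|$ and $\|\cdot\|_{FS}$ are two metrics on the \emph{same} line bundle, the log-ratio $u := \phi-\phi_{FS}$ computed in any trivialization defines one globally well-defined function on $\P^{n}$. Because $\phi_{FS}$ is smooth, $u$ inherits the semi-continuity of the local weights of $\|\cdot\|$: usc in the positive-curvature case, lsc in the usc-metric case. Compactness of $\P^{n}$ then forces $u$ to be bounded above (respectively below), and restricting to $U_{0}$ yields $\phi(z)\leq \log|z|^{2}+O(1)$ (i.e.\ $\phi\in\mathcal{L}(\C^{n})$) in the first case and $\phi(z)\geq \log|z|^{2}+O(1)$ in the second.

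For the converse, given $\phi$ on $\C^{n}$ in the appropriate class I would set $\|Z_{0}\|^{2} := e^{-\phi}$ on $U_{0}$ and transport it to each $U_{i}$ via the forced transition relation $\|Z_{i}\|^{2} = |z_{i}|^{2}e^{-\phi(z)}$ on $U_{0}\cap U_{i}$. In affine coordinates $w_{j} := Z_{j}/Z_{i}$ on $U_{i}$ (so $w_{0}=1/z_{i}$ and $z_{j}=w_{j}/w_{0}$ for $j\neq i,0$), the candidate local weight $\phi_{i}(w) = \phi(z)+\log|w_{0}|^{2}$ rewrites, using $\log|z|^{2} = -\log|w_{0}|^{2} + \log\bigl(1+\sum_{j\neq i,0}|w_{j}|^{2}\bigr)$, as
\[
\phi_{i}(w) \;=\; \log\bigl(1+\textstyle\sum_{j\neq i,0}|w_{j}|^{2}\bigr) \;+\; \bigl(\phi(z)-\log|z|^{2}\bigr).
\]
The assumed growth condition on $\phi$ is exactly what is needed to control $\phi_{i}$ near the hyperplane $\{w_{0}=0\}=H_{0}\cap U_{i}$: in the psh/Lelong-class case, $\phi_{i}$ is psh on $U_{i}\setminus\{w_{0}=0\}$ and locally bounded above near $\{w_{0}=0\}$, so the Riemann extension theorem for psh functions across an analytic hypersurface yields a unique psh extension to all of $U_{i}$; in the lsc/super-logarithmic case, lower-boundedness together with lsc regularization produces the global usc metric. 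Cocycle compatibility on $U_{i}\cap U_{j}$ is automatic since it already holds on the dense open subset $U_{0}\cap U_{i}\cap U_{j}$ by construction. The main, and essentially only, non-formal step is the extension across $H_{0}$, and this is precisely where the upper bound in the definition of the Lelong class is used in an essential way.
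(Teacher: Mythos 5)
Your proof is correct and follows essentially the same route as the paper's: both reduce to comparing against the Fubini--Study weight, observing that $u=\phi-\phi_{FS}$ is usc (resp.\ lsc) and bounded above (resp.\ below) on $\C^{n}=\P^{n}\setminus H_{0}$, and then extending across $H_{0}$ using the classical removal theorem for psh functions bounded above across a pluripolar set (resp.\ the greatest lsc extension). Your version is somewhat more explicit in spelling out the chart-by-chart gluing, but the key ideas — the Fubini--Study comparison and the extension-across-$H_{0}$ step, with the growth condition supplying exactly the needed boundedness — coincide with the paper's.
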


\begin{proof}
Let us start with the second point. Consider a lsc function $\phi(z)$
on $\C^{n}$ satisfying the growth property in the lemma. This means
that $u(z):=\phi(z)-\phi_{FS}(z)$ is a lsc function on $U:=\P^{n}-H_{0}$
which is uniformly bounded from below. Now, any lsc function $u$
on a subset $U$ in a topological space admits, if $u$ is bounded
from below on $U,$ a canonical lsc extension $\bar{u}$ to the closure
$\bar{U},$ namely the greatest lsc extension of $u.$ The corresponding
metric on $\mathcal{O}(1)\rightarrow\P^{n}$ is then defined as $\left\Vert \cdot\right\Vert _{u}:=\left\Vert \cdot\right\Vert _{FS}e^{-\bar{u}/2},$
which proves the first point. The proof of the first point is similar,
using that a function $u$ is $\omega-$psh and bounded from above
on $X-A,$ where $A$ is a pluripolar subset, iff the smallest usc
of $u$ to $X$ is $\omega-$psh. This follows directly from the corresponding
classical local property of psh functions.
\end{proof}
The correspondence above is made so that 
\[
|p_{k}(z)|^{2}e^{-k\phi}=\left\Vert P_{k}(1,z)\right\Vert ^{2},
\]
as follows directly from formula \ref{eq:P k in terms of p k}.
\begin{rem}
\label{rem:Kodaira}If $X$ is a complex submanifold of $\P^{m}$
(which, by Chow's theorem, equivalently means that $X$ is a projective
non-singular algebraic variety), then $(X,\mathcal{O}_{X}(1))$ is
a polarized manifold, where $\mathcal{O}(1)_{X}$ denotes the restriction
of $\mathcal{O}(1)\rightarrow\P^{m}$ to $X.$ Indeed, the restriction
to $\mathcal{O}_{X}(1)$ of the Fubini-Study metric on $\mathcal{O}(1)$
has strictly positive curvature. Conversely, by the Kodaira embedding
theorem, if $(X,L)$ is a polarized manifold, then after perhaps replacing
$L$ by a large tensor power, $X$ may be holomorphically embedded
in a projective space $\P^{m}$ in such a way that $L$ gets identified
with $\mathcal{O}(1)_{X}$ and $H^{0}(X,kL)$ identifies with the
restriction to $X$ of the space of all $k-$homogeneous polynomials
over $\P^{m}.$ This means that a line bundle is positive iff it is
\emph{ample,} in the algebro-geometric sense.
\end{rem}

\subsubsection{\label{subsec:The-(non-)weighted-pluricomplex}The (non-)weighted
pluricomplex energy $E$ in $\C^{n}$}

Let $\mu$ be a probability measure on $\C^{n}$ such that 
\begin{equation}
\int_{\C^{n}}\log(1+|z|^{2})\mu<\infty\label{eq:integrability condition}
\end{equation}
We define the\emph{ non-weighted pluricomplex energy} $E(\mu)$ by
\begin{equation}
E(\mu)=\sup_{\psi\in\mathcal{E}^{1}(\C^{n})}\left(\mathcal{E}(\psi)-\mathcal{E}(\psi_{T^{n}})-\int_{\C^{n}}\psi\mu\right),\label{eq:def of E mu in text}
\end{equation}
 where we have identified $\mathcal{E}$ with a functional on the
Lelong class $\mathcal{L}(\C^{n}),$ using the correspondence in Lemma
\ref{lem:correspond funct metric in Cn}. We note that the extremal
function $\psi_{T^{n}}$ is explicitly given by
\[
\psi_{T^{n}}=\log\max_{i=1,...,n}\left\{ 1,|z_{i}|^{2}\right\} .
\]
Alternatively, we could directly have defined $\mathcal{E}$ (up to
an additive constant) on $\mathcal{L}(\C^{n})$ by requiring that
\[
d\mathcal{E}_{|\psi}=(dd^{c}\psi)^{n}
\]
when $\psi$ is psh, locally bounded and with logarithmic growth (note
that the definition \ref{eq:def of E mu in text} is independent of
the choice of additive constant for $\mathcal{E}$).

Assume now that $\mu\in\mathcal{M}_{1}(\C^{n})$ has compact support.
Embedd $\C^{n}$ in $\P^{n}$ as above and fix an auxiliary smooth
metric $\left\Vert \cdot\right\Vert $ on $\mathcal{O}(1)\rightarrow\P^{n}$
coinciding with the trivial metric on a bounded neighborhood $U$
of the support of $\mu.$ In particular, the curvature form $\omega$
vanishes on $U.$ 
\begin{lem}
\label{lem:non-weighted energ}Assume that $\mu\in\mathcal{M}_{1}(\C^{n})$
has compact support. The non-weighted pluricomplex energy $E(\mu)$
may be expressed as 
\[
E(\mu):=E_{\omega}(\mu)-\mathcal{E}_{\omega}(P_{T^{n}}0)
\]
In particular, if $K$ is a subset of the unit-polydisc, then $\omega$
can be taken as $\omega_{\psi_{T^{n}}}$ and then 
\[
E(\mu):=E_{\omega_{T^{n}}}(\mu),
\]
Moreover, $E(\mu)$ coincides with the classical energy \ref{eq:def of log energy of mu}
when $n=1.$ 
\end{lem}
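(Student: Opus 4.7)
The plan is to exploit the bijection from Lemma \ref{lem:correspond funct metric in Cn}: denoting the local weight of $\omega$ on $\C^n$ by $\phi_\omega$, the map $\varphi \mapsto \psi := \phi_\omega + \varphi$ identifies $PSH(\P^n,\omega)\cap L^\infty$ with a subclass of $\mathcal{L}(\C^n)$, and on $\C^n$ one has $\omega_\varphi = dd^c\psi$ in the sense of currents. Since the auxiliary $\omega$ is chosen with $\phi_\omega \equiv 0$ on $U \supset \operatorname{supp}\mu$, one also has $\int\varphi\,d\mu = \int\psi\,d\mu$ for every probability measure $\mu$ supported in $U$.

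The key step is to show that under this correspondence the two primitives of the Monge--Amp\`ere one-form differ by a constant depending only on $\omega$:
\[
\mathcal{E}_\omega(\varphi) - \mathcal{E}(\psi) = C(\omega).
\]
To establish this I would polarize. Integrating $d\mathcal{E}_\omega$ along the affine segment $\varphi_t := (1-t)\varphi_0 + t\varphi_1$ and expanding $\omega_{\varphi_t}^n$ multilinearly yields the standard cocycle
\[
\mathcal{E}_\omega(\varphi_1) - \mathcal{E}_\omega(\varphi_0) = \tfrac{1}{n+1}\sum_{j=0}^n \int_{\P^n}(\varphi_1-\varphi_0)\,\omega_{\varphi_1}^j \wedge \omega_{\varphi_0}^{n-j},
\]
and an analogous identity holds on $\mathcal{L}_+(\C^n)$ with $\omega_{\varphi_i}$ replaced by $dd^c\psi_i$ and the integral taken over $\C^n$. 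Since $\varphi_1-\varphi_0 = \psi_1-\psi_0$, the mixed products agree on $\C^n$, and for $\psi_i \in \mathcal{L}_+(\C^n)$ no Monge--Amp\`ere mass escapes to $H_0 = \P^n \setminus \C^n$, the two right-hand sides coincide, proving the cocycle.

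Combining this with the definitions and $\int\varphi\,d\mu = \int\psi\,d\mu$ gives
\[
E_\omega(\mu) = \sup_\psi\bigl(\mathcal{E}(\psi) - \langle\psi,\mu\rangle\bigr) + C(\omega), \qquad E(\mu) = E_\omega(\mu) - C(\omega) - \mathcal{E}(\psi_{T^n}).
\]
It remains to identify $C(\omega) + \mathcal{E}(\psi_{T^n})$ with $\mathcal{E}_\omega(P_{T^n} 0)$. Since $T^n \subset U$ and $\phi_\omega = 0$ on $T^n$, the envelope conditions $\{\psi \in \mathcal{L}(\C^n) : \psi \leq 0 \text{ on } T^n\}$ and $\{\varphi \in PSH(\P^n,\omega) : \varphi \leq 0 \text{ on } T^n\}$ are in bijective correspondence under $\psi = \phi_\omega + \varphi$, so that after usc regularization $\psi_{T^n} = \phi_\omega + P_{T^n} 0$; applying the cocycle to this pair gives $\mathcal{E}_\omega(P_{T^n} 0) = C(\omega) + \mathcal{E}(\psi_{T^n})$, yielding the first identity.

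The second assertion is the specialization $K \subset T^n$, $\omega := \omega_{\psi_{T^n}}$: then $\phi_\omega = \psi_{T^n}$, so $P_{T^n} 0 = 0$ and $\mathcal{E}_\omega(0) = 0$ by \eqref{eq:norm cond on energy}. For $n=1$, the extremal $\psi_\mu$ in the sup defining $E(\mu)$ satisfies $\tfrac{1}{\pi} dd^c\psi = \mu$, which by the logarithmic normalization gives $\psi_\mu(z) = \int\log|z-w|^2\,d\mu(w)$; substituting into $E(\mu) = \mathcal{E}(\psi_\mu) - \mathcal{E}(\psi_{T^n}) - \int\psi_\mu\,d\mu$ and computing $\mathcal{E}(\psi_\mu)$ via integration by parts against $dd^c\psi_\mu$ reduces the expression to the classical logarithmic energy \eqref{eq:def of log energy of mu} up to the additive constant $\mathcal{E}(\psi_{T^n})$. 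The main obstacle is the bookkeeping in the cocycle identity on the Lelong class, specifically the control of mass at infinity: for $\psi \in \mathcal{L}(\C^n) \setminus \mathcal{L}_+(\C^n)$ part of the Monge--Amp\`ere mass of $\omega_\varphi^n$ can concentrate on $H_0$, but this is handled by approximation through $\mathcal{L}_+(\C^n)$ together with the lower semicontinuity of both $\mathcal{E}$ and $E_\omega$.
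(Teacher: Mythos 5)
Your argument follows the same route as the paper's: pass through the correspondence $\psi=\phi_{\omega}+\varphi$ of Lemma \ref{lem:correspond funct metric in Cn}, use $\phi_{\omega}\equiv 0$ near $\operatorname{supp}\mu$ so the linear terms agree, and observe that $\mathcal{E}$ and $\mathcal{E}_{\omega}$ differ only by a constant under that correspondence. What you call the ``cocycle'' step is exactly what the paper compresses into ``the first formula follows directly from making the identifications,'' and your polarization/mass-at-infinity discussion (restrict first to $\mathcal{L}_{+}(\C^{n})$, then use lower semicontinuity) is a correct, and useful, unpacking of that glossed step; you also correctly note that one must take $U\supset T^{n}$ (equivalently $\phi_{\omega}=0$ on $T^{n}$) for $\psi_{T^{n}}-\phi_{\omega}$ to coincide with $P_{T^{n}}0$, a hypothesis the paper leaves implicit. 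The one place where you genuinely diverge is the $n=1$ identification with the classical logarithmic energy: you propose to compute $\mathcal{E}(\psi_{\mu})$ explicitly by integration by parts and then chase the resulting constants, whereas the paper sidesteps that bookkeeping by first observing that the logarithmic energy agrees with $E_{\omega}(\mu)$ (hence with $E(\mu)$) up to an overall additive constant, and then fixing that constant to be zero by testing against $\mu=d\theta$ on the unit circle. Both work, but the paper's normalization trick is cleaner and less error-prone, since in your version the constant $\mathcal{E}(\psi_{T^{n}})$ must be seen to cancel exactly, which is precisely the kind of bookkeeping the $\mu=d\theta$ evaluation avoids.
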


\begin{proof}
The first formula follows directly from making the identifications
in Section \ref{subsec:Compactification-of-} and by rewriting 
\[
\int_{\C^{n}}\psi\mu=\int_{\C^{n}}\left(\psi-\psi_{T^{n}}\right)\mu+\int_{\C^{n}}\psi_{T^{n}}\mu,
\]
 where the last term is finite, by assumption. Moreover, when $n=1$
we can express the classical logarithmic energy as
\[
-\frac{1}{2}\int_{\C}\log|z-w|^{2}\mu(w)\otimes\mu(w)=-\frac{1}{2}\int_{\C}\psi_{\mu}\mu,\,\,\,\psi_{\mu}(z)=\int_{\C}\left(\log|z-w|^{2}-\phi(z)-\phi(w)\right)\mu
\]
 where $dd^{c}\psi_{\mu}+\omega=\mu,$  using that $\omega=dd^{c}\phi$
with $\phi=0$ close to the support of $\mu.$ Hence, the logarithmic
energy coincides with $E_{\omega}(\mu)$ up to an overall additive
constant. This means that it also coincides with the non-weighted
pluricomplex energy $E(\mu)$ up to an overall constant $C.$ Finally,
the constant $C$ can be seen to be equal to $0$ by taking $\mu=d\theta,$
the uniform measure on $S^{1}.$ 
\end{proof}
Given a continuous function $\phi$ on $\C^{n}$ with super logarithmic
growth (formula \ref{prop:omega psh compact appr}), we consider the
corresponding \emph{weighted pluricomplex energy }$E_{\phi}$ on $\mathcal{M}_{1}(\C^{n}),$ 

\[
E_{\phi}(\mu)=E_{\omega_{T^{n}}}(\mu)+\int(\phi-\psi_{T^{n}})\mu
\]
This is well-defined since both terms above take values in $]-\infty,\infty].$
In particular, if $\mu$ satisfies the intractability property \ref{eq:integrability condition},
then we can decompose
\[
E_{\phi}(\mu):=E(\mu)+\int\phi\mu.
\]
\begin{rem}
\label{rem:weighted energy with lsc phi}It is enough to assume that
$\phi$ is lsc (and has super-logarithmic growth) in order to define
$E_{\phi}(\mu),$ as above. But in order to ensure that $E_{\phi}$
is not identically equal to infinity one then has to assume that $\{\phi<\infty\}$
is an open non-pluripolar subset.
\end{rem}

\section{\label{sec:Probabilistic-setup-and}Probabilistic setup and mean
field approximations}

Given a Hausdorff locally compact topological space $\Omega$ we will
denote by $\mathcal{M}(\Omega)$ the space of all signed (Borel) measures
on $\Omega$ and by $\mathcal{M}_{1}(\Omega)$ the subspace of all
measures with unit total mass, i.e. the space of probability measures
on $\Omega.$ 

\subsection{\label{subsec:Gamma-convergence-and-Legendre-F}Gamma-convergence
and Legendre-Fenchel transforms}

We start with some functional analytic preparations in the context
of Gamma-convergence, as introduced by De Georgi (see the book \cite{bra}
for background on Gamma-convergence).
\begin{defn}
A sequence of functions $E_{N}$ on a topological space \emph{$\mathcal{M}$
is said to $\Gamma-$converge }to a function $E$ on $\mathcal{M}$
if 
\begin{equation}
\begin{array}{ccc}
\mu_{N}\rightarrow\mu\,\mbox{in\,}\mathcal{M} & \implies & \liminf_{N\rightarrow\infty}E_{N}(\mu_{N})\geq E(\mu)\\
\forall\mu & \exists\mu_{N}\rightarrow\mu\,\mbox{in\,}\mathcal{M}: & \lim_{N\rightarrow\infty}E_{N}(\mu_{N})=E(\mu)
\end{array}\label{eq:def of gamma conv}
\end{equation}
Given $\mu$ a sequence $\mu_{N}$ as in the last point above is called
a\emph{ recovery sequence.} The limiting functional $E$ is automatically
lower semi-continuous on $\mathcal{M}.$ 

In the present setting we will take $\mathcal{M}=\mathcal{M}(X)$
for a compact manifold $X$ and define $E_{N}$ by setting $E_{N}=\infty$
on the complement of the image of the map $\delta_{N}$ and 
\begin{equation}
E_{N}(\delta_{N}(x_{1},...,x_{N})):=H^{(N)}(x_{1},...,x_{N})/N\label{eq:energi on M_1 induced by Hamilt}
\end{equation}
\end{defn}

\begin{rem}
Note that in the previous example it is not the case that $\mu_{N}:=\delta_{N}(x_{1},...,x_{N})\rightarrow\mu$
implies that $\limsup_{N}E_{N}(\mu_{N})\leq E(\mu).$ Indeed, for
any sequence where two points, say $x_{1}$ and $x_{2},$ coincide,
$E_{N}(\mu_{N})=\infty!$ This phenomenon explains the asymmetry between
the first and the second condition in the definition of Gamma-convergence.
\end{rem}

A criterion for Gamma-convergence on $\mathcal{M}_{1}(X)$ can be
obtained using duality in topological vector spaces, as next explained.
Let $f$ be a function on a topological vector space $V.$ The \emph{Legendre-Fenchel
transform} $f^{*}$ of $f$ is defined as following convex lower semi-continuous
function $f^{*}$ on the topological dual $V^{*}$ 
\[
f^{*}(w):=\sup_{v\in V}\left\langle v,w\right\rangle -f(v)
\]
in terms of the canonical pairing between $V$ and $V^{*}.$ In the
present setting we will take $V=C^{0}(X)$ and $V^{*}=\mathcal{M}(X),$
the space of all signed Borel measures on a compact topological space
$X.$ 
\begin{prop}
\label{prop:crit for gamma conv}Let $E_{N}$ be a sequence of functions
on the space $\mathcal{M}_{1}(X)$ of probability measures on a compact
space $X$ and assume that
\[
\lim_{N\rightarrow\infty}E_{N}^{*}(w)=f(w)
\]
for any $w\in C(X)$ and that $f$ defines a Gateaux differentiable
function on $C(X).$ Then $E_{N}$ converges to $E:=f^{*}$ in the
sense of $\Gamma-$convergence on the space $\mathcal{M}_{1}(X),$
equipped with the weak topology. 
\end{prop}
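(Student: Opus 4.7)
The plan is to establish the two defining conditions of $\Gamma$-convergence separately, with Legendre--Fenchel duality as the unifying tool.

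For the liminf inequality, fix $w \in C(X)$ and use the pointwise bound $E_N(\mu) \geq \langle \mu, w \rangle - E_N^*(w)$ that comes directly from the definition of $E_N^*$. If $\mu_N \to \mu$ weakly, the continuity of $\nu \mapsto \langle \nu, w \rangle$ together with the hypothesis $E_N^*(w) \to f(w)$ yields $\liminf_N E_N(\mu_N) \geq \langle \mu, w \rangle - f(w)$, and taking the supremum over $w \in C(X)$ gives $\liminf_N E_N(\mu_N) \geq f^*(\mu) = E(\mu)$. This step is essentially automatic.

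For the recovery sequence, I would first treat the case $\mu = df_w$ for some $w \in C(X)$. Picking near-maximizers $\mu_{N,w} \in \mathcal{M}_1(X)$ in the variational problem defining $E_N^*(w)$, one has $E_N(\mu_{N,w}) = \langle \mu_{N,w}, w \rangle - E_N^*(w) + o(1)$. Let $\mu_w^*$ be any weak limit point, supplied by compactness of $\mathcal{M}_1(X)$. The optimality inequality $E_N^*(w + tv) \geq \langle \mu_{N,w}, w + tv \rangle - E_N(\mu_{N,w})$ combined with $E_N^* \to f$ yields $f(w + tv) - f(w) \geq t \langle \mu_w^*, v \rangle$ for every $v \in C(X)$ and every $t \in \R$. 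Dividing by $t$ and letting $t \to 0^\pm$, the Gateaux differentiability of $f$ at $w$ forces $\langle \mu_w^*, v \rangle = df_w(v)$ for all $v$, hence $\mu_w^* = df_w$. Thus the full sequence $\mu_{N,w}$ converges weakly to $df_w$, and along it $E_N(\mu_{N,w}) \to \langle df_w, w \rangle - f(w) = f^*(df_w) = E(df_w)$, giving the desired recovery.

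To extend recovery from Gateaux derivatives $df_w$ to arbitrary $\mu \in \mathcal{M}_1(X)$ with $E(\mu) < \infty$, I would invoke the lower semicontinuity of the $\Gamma$-$\limsup$ functional $E^+(\mu) := \inf_{\mu_N \to \mu} \limsup_N E_N(\mu_N)$ together with a density claim: there exist $w_k \in C(X)$ with $df_{w_k} \to \mu$ weakly and $f^*(df_{w_k}) \to f^*(\mu)$. Such $w_k$ would be produced as near-maximizers of $\sup_w \langle \mu, w \rangle - f(w)$, with Gateaux differentiability and convexity of $f$ used to force the associated subgradients to cluster at $\mu$; a standard diagonal extraction then yields a recovery sequence for $\mu$. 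The main obstacle is this last density step: showing that every finite-energy probability measure lies in the weak closure of $\{df_w : w \in C(X)\}$ with matching energy. It is a Bishop--Phelps/Mazur-style fact in convex analysis in which the Gateaux differentiability hypothesis is decisive, being precisely what renders the subdifferential single-valued; the other steps are routine consequences of Legendre duality.
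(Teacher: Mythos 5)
Your proposal is correct and tracks the paper's approach closely: the liminf inequality via the Young bound, the recovery sequence at points $\mu=df_{w}$ via near-maximizers and the optimality condition forced by Gateaux differentiability, and then a density argument. The step you flag as the main obstacle is exactly the application of the Br{\o}ndsted--Rockafellar theorem that \cite{berm8} invokes to produce the recovery sequence, so your Bishop--Phelps attribution identifies the same tool.
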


See \cite{berm8} for the proof (the existence of a recovery sequence
is based on an application of a general approximation result of Brøndsted-Rockafellar,
which applies in certain topological vector spaces).
\begin{rem}
In the present setting it will convenient to rather look at the following
transform
\[
-E_{N}^{*}(-u)=\inf_{X^{N}}N^{-1}\left(H^{(N)}(x_{1},...,x_{N})+\sum_{i=1}^{N}u(x_{i})\right)
\]
\end{rem}

\subsection{\label{subsec:Probabilistic-setup}Probabilistic setup}

We start by recalling some basic notions of probability theory (covered
by any standard textbook; see in particular \cite{d-z} for an introduction
to large deviation techniques). A \emph{probability space }is a space
$\Omega$ equipped with a probability measure $p.$ The space $\Omega$
is called the\emph{ sample space} and a measurable subset $\mathcal{B}\subset\Omega$
is called an \emph{event} with 
\[
\mbox{Prob}\mathcal{B}:=p(\mathcal{B}),
\]
interpreted as the probability of observing the event $\mathcal{B}$
when sampling from $(\mathcal{X},\Omega).$ A measurable function
$Y:\,\Omega\rightarrow\mathcal{Y}$ on a probability space $(\Omega,p)$
is called a\emph{ random element with values in $\mathcal{Y}$ }and
its\emph{ law} $\Gamma$ is the probability measure on $\mathcal{Y}$
defined by the push-forward measure
\[
\Gamma:=Y_{*}p
\]
 (the law of $Y$ is often also called the distribution of $Y$).
A sequence of random elements $Y_{N}:\,\Omega_{N}\rightarrow\mathcal{Y}$
taking values in the same topological space $\mathcal{Y}$ are said
to\emph{ convergence in law towards a deterministic element $y$ in
$\mathcal{Y}$ }if the corresponding laws $\Gamma_{N}$ on $\mathcal{Y}$
converge  to a Dirac mass at $y:$ 
\[
\lim_{N\rightarrow\infty}\Gamma_{N}=\delta_{y}
\]
 in the weak topology. In the present setting $\mathcal{Y}$ will
always be a separable metric space with metric $d$ and then $Y_{N}$
converge in law towards the deterministic element $y$ iff $Y_{N}$
\emph{converge in probability} towards $y,$ i.e. for any fixed $\epsilon>0$
\[
\lim_{N\rightarrow\infty}p\{d(Y_{N},y)>\epsilon\}=0.
\]
\begin{rem}
The\emph{ expectation} of a random variable $Y$ it defined by 
\[
\E(Y):=\int_{\Omega}Yp
\]
 (aka the\emph{ sample mean} of $Y)$ which defines an element in
$\mathcal{Y}.$ The statement that $Y_{N}$ converges in law towards
a deterministic element $y$ equivalently means that $\E(Y_{N})\rightarrow y$
and that $Y_{N}$ satisfies the (weak) law of large numbers, i.e.
the probability that $Y_{N}$ deviates from its mean tends to zero,
as $N\rightarrow\infty.$
\end{rem}

A\emph{ random point process} \emph{with $N$ particles }on a space
$X$ is, by definition, a probability measure $\mu^{(N)}$ on the
$N-$fold product $X^{N}$ (the $N-$particle space) which is symmetric,
i.e. invariant under action of the group $S_{N}$ by permutations
of the factors of $X^{N}.$The\emph{ empirical measure} of a given
random point process is the following random measure 
\begin{equation}
\delta_{N}:\,\,X^{N}\rightarrow\mathcal{M}_{1}(X),\,\,\,(x_{1},\ldots,x_{N})\mapsto\delta_{N}(x_{1},\ldots,x_{N}):=\frac{1}{N}\sum_{i=1}^{N}\delta_{x_{i}}\label{eq:empirical measure text}
\end{equation}
on $(X^{N},\mu^{(N)}).$ The law of $\delta_{N}$ thus defines a probability
measure on the space$\mathcal{M}_{1}(X),$ that we shall denote by
$\Gamma_{N}:$ 
\begin{equation}
\Gamma_{N}:=(\delta_{N})_{*}\mu^{(N)}\label{eq:def of Gamma N}
\end{equation}
 
\begin{rem}
\label{rem:The-point-correlation}\emph{The $j-$point correlation
measure $(\mu^{(N)})_{j}$ }of the $N-$particle random point process
is the probability measure on $X^{j}$ defined as the push-forward
to $X^{j}$ of $\mu^{(N)}$ under projection $X^{N}\rightarrow X^{j},$
where $(x_{1},...,x_{N})\mapsto(x_{i_{1}},...,x_{i_{j}})$ for any
choice of $j$ different indices $i_{1},...,i_{j}.$ In particular,
\begin{equation}
\E(\delta_{N})=(\mu^{(N)})_{1}\label{eq:expect as one-point correl measure}
\end{equation}

In the statistical mechanical setup recalled in the beginning of Section
\ref{subsec:Statistical-mechanics-and} a random point process on
with $N$ points on $X$ is induced by the data $(H^{(N)},dV,\beta).$
We denote by 
\[
\mu_{\beta}^{(N)}=\frac{1}{Z_{N,\beta}}e^{-\beta H^{(N)}}dV^{\otimes N}
\]
 the corresponding Gibbs measure, which defines a symmetric probability
measure on $X^{N}.$ 
\end{rem}

\subsubsection{\label{subsec:The-notion-of}Large Deviation Principles (LDP)}

The notion of a \emph{Large Deviation Principle (LDP)}, introduced
by Varadhan, allows one to give a notion of exponential convergence
in probability. The general definition of a Large Deviation Principle
(LDP) for a general sequence of measures \cite{d-z} is modeled on
the classical Laplace principle of ``saddle approximation'':
\begin{defn}
\label{def:large dev}Let $\mathcal{Y}$ be a Polish space, i.e. a
complete separable metric space.

$(i)$ A function $I:\mathcal{\,Y}\rightarrow]-\infty,\infty]$ is
a \emph{rate function} if it is lower semi-continuous. It is a \emph{good}
\emph{rate function} if it is also proper.

$(ii)$ A sequence $\Gamma_{k}$ of measures on $\mathcal{Y}$ satisfies
a \emph{large deviation principle} with \emph{speed} $r_{k}$ and
\emph{rate function} $I$ if

\[
\limsup_{k\rightarrow\infty}\frac{1}{r_{k}}\log\Gamma_{k}(\mathcal{F})\leq-\inf_{\mu\in\mathcal{F}}I
\]
 for any closed subset $\mathcal{F}$ of $\mathcal{Y}$ and 
\[
\liminf_{k\rightarrow\infty}\frac{1}{r_{k}}\log\Gamma_{k}(\mathcal{G})\geq-\inf_{\mu\in G}I(\mu)
\]
 for any open subset $\mathcal{G}$ of $\mathcal{Y}.$ 
\end{defn}

In the present setting $\Gamma_{N}$ will arise as the sequence of
probability measures on $\mathcal{M}_{1}(X)$ defined as laws of the
empirical measures $\delta_{N}$ (formula \ref{eq:empirical measure text}).
Once the LDP has been established we can apply the following basic
\begin{lem}
Assume that the laws $\Gamma_{N}$ above satisfy a LDP with a good
rate functional $I_{\beta}$ which admits a\emph{ unique }minimizer
$\mu_{\beta}.$ Then the random measures $\delta_{N}$ converge in
law towards the deterministic measure $\mu_{\beta}.$ More precisely,
\[
\mbox{Prob}\{d(\delta_{N},\mu_{\beta})\geq\epsilon\}\leq C_{\epsilon}e^{-N/C_{\epsilon}}
\]
\end{lem}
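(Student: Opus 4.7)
The plan is to derive the lemma from the LDP upper bound together with goodness of $I_\beta$ and uniqueness of its minimizer. Since $\Gamma_N$ is a probability measure on $\mathcal{M}_1(X)$ for every $N$, applying the LDP with $\mathcal{F}=\mathcal{G}=\mathcal{M}_1(X)$ forces $\inf I_\beta\leq 0$, and for the conclusion to be meaningful we normalize so that $I_\beta(\mu_\beta)=0$.

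Fix $\epsilon>0$ and set $\mathcal{F}_\epsilon:=\{\mu\in\mathcal{M}_1(X):d(\mu,\mu_\beta)\geq\epsilon\}$, which is closed in the weak topology. The first step is to show
\[
c_\epsilon:=\inf_{\mathcal{F}_\epsilon}I_\beta>0.
\]
Take any sequence $\mu_k\in\mathcal{F}_\epsilon$ with $I_\beta(\mu_k)\to c_\epsilon$; eventually these lie in the sublevel set $\{I_\beta\leq c_\epsilon+1\}$, which is compact by goodness. Passing to a subsequence we get $\mu_k\to\mu_*\in\mathcal{F}_\epsilon$ (the latter being closed), and by lower semi-continuity $I_\beta(\mu_*)\leq c_\epsilon$. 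If $c_\epsilon$ were $0$, then $\mu_*$ would be a minimizer of $I_\beta$, forcing $\mu_*=\mu_\beta$ by uniqueness, contradicting $\mu_*\in\mathcal{F}_\epsilon$.

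The second step is to apply the LDP upper bound to the closed set $\mathcal{F}_\epsilon$:
\[
\limsup_{N\to\infty}\frac{1}{N}\log\mbox{Prob}\{d(\delta_N,\mu_\beta)\geq\epsilon\}=\limsup_{N\to\infty}\frac{1}{N}\log\Gamma_N(\mathcal{F}_\epsilon)\leq -c_\epsilon<0.
\]
Hence for all $N$ sufficiently large, $\mbox{Prob}\{d(\delta_N,\mu_\beta)\geq\epsilon\}\leq e^{-Nc_\epsilon/2}$; absorbing the finitely many remaining small values of $N$ into a constant $C_\epsilon\geq 2/c_\epsilon$ yields the stated bound $C_\epsilon e^{-N/C_\epsilon}$. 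Convergence in law then follows, since on the Polish space $\mathcal{M}_1(X)$ convergence in probability to a deterministic element is equivalent to weak convergence of the laws to the Dirac mass at that element. I do not anticipate any real obstacle: this is the standard bridge from an LDP with a unique minimizer to exponential concentration, the only conceptual input being that goodness converts pointwise strict positivity of $I_\beta$ away from $\mu_\beta$ into a uniform gap $c_\epsilon>0$ on $\mathcal{F}_\epsilon$.
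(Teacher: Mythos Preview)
Your proof is correct and follows essentially the same route as the paper: show $I_\beta(\mu_\beta)=0$, show $\inf_{\mathcal{F}_\epsilon}I_\beta>0$, then apply the LDP upper bound to $\mathcal{F}_\epsilon$. One minor remark: you phrase $I_\beta(\mu_\beta)=0$ as a normalization choice, but in fact it is forced by the LDP itself, since $\Gamma_N(\mathcal{M}_1(X))=1$ and applying both the upper and lower bounds with $\mathcal{F}=\mathcal{G}=\mathcal{M}_1(X)$ gives $\inf I_\beta=0$ exactly (the paper states this directly); otherwise your argument, in particular the explicit use of goodness to extract $c_\epsilon>0$, is a faithful and slightly more detailed version of the paper's proof.
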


\begin{proof}
First applying the LDP to $\mathcal{F}=\mathcal{G}=\mathcal{Y}$ gives
$I_{\beta}(\mu_{\beta})=0.$ Since $\mu_{\beta}$ is the unique minimizer
of $I_{\beta}$ it follows that $\inf I_{\beta}>0$ on the closed
subset $\mathcal{F}_{\epsilon}$ of $\mathcal{Y}$ where $d(\cdot,\mu_{\beta})\geq\epsilon.$
Applying $(i)$ in the LDP to $\mathcal{F}_{\epsilon}$ thus concludes
the proof of the desired. deviation inequality.
\end{proof}
In other words, the lemma says that risk that $\delta_{N}$ deviates
from $\mu_{\beta}$ is exponentially small. In order to establish
the LDP we will have great use for the following alternative formulation
of a LDP (see Theorems 4.1.11 and 4.1.18 in \cite{d-z}):
\begin{prop}
\label{prop:d-z}Let $\mathcal{Y}$ be a compact  metric space and
denote by $B_{\epsilon}(y)$ the ball of radius $\epsilon$ centered
at $y\in\mathcal{Y}.$ Then a sequence $\Gamma_{N}$ of probability
measures on $\mathcal{P}$ satisfies a LDP with speed $r_{N}$ and
a rate functional $I$ iff 
\begin{equation}
\lim_{\epsilon\rightarrow0}\liminf_{N\rightarrow\infty}\frac{1}{r_{N}}\log\Gamma_{N}(B_{\epsilon}(y))=-I(y)=\lim_{\epsilon\rightarrow0}\limsup_{N\rightarrow\infty}\frac{1}{r_{N}}\log\Gamma_{N}(B_{\epsilon}(y))\label{eq:ldp in terms of balls in prop}
\end{equation}
\end{prop}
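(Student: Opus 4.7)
The statement is a classical equivalence in large deviations theory, so the plan is to prove the two implications separately, using only the basic definitions and compactness of $\mathcal{Y}$.

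For the forward direction (LDP implies the ball characterization), I would apply the LDP upper bound to the closed ball $\overline{B_\epsilon(y)}$ and the LDP lower bound to the open ball $B_\epsilon(y)$. The upper-bound half gives
\[
\limsup_{N\to\infty}\frac{1}{r_{N}}\log\Gamma_{N}(B_{\epsilon}(y))\leq-\inf_{\overline{B_{\epsilon}(y)}}I,
\]
and the right-hand side increases to $I(y)$ as $\epsilon\downarrow 0$ by lower semicontinuity of the rate function $I$. The lower-bound half gives $\liminf_{N}\frac{1}{r_{N}}\log\Gamma_{N}(B_{\epsilon}(y))\geq-\inf_{B_\epsilon(y)}I\geq-I(y)$. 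Sandwiching $\liminf\leq\limsup$ between $-I(y)$ and $-I(y)$ yields the two equalities in \eqref{eq:ldp in terms of balls in prop}.

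For the reverse direction, I first \emph{define} $I(y)$ as the common value in \eqref{eq:ldp in terms of balls in prop} and verify it is a rate function. Lower semicontinuity is immediate: if $y_{n}\to y$, then for every $\epsilon>0$ eventually $y_{n}\in B_{\epsilon/2}(y)$ so $B_{\epsilon/2}(y_{n})\subset B_{\epsilon}(y)$, from which one reads off $\liminf_{n} I(y_{n})\geq \lim_{\epsilon\to 0}\limsup_{N}r_{N}^{-1}\log\Gamma_{N}(B_\epsilon(y))= -I(y)$ after the signs unwind correctly. The lower bound of the LDP on an open set $\mathcal{G}$ is now direct: for each $y\in\mathcal{G}$ pick $\epsilon>0$ with $B_\epsilon(y)\subset\mathcal{G}$, apply the ball hypothesis, send $\epsilon\to 0$, and take the sup over $y\in\mathcal{G}$.

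The main obstacle, as expected, is the upper bound on a closed (hence, by compactness of $\mathcal{Y}$, compact) set $\mathcal{F}$. Here I would use a standard finite-cover argument: fix $\alpha>0$ and $M<\infty$, and for each $y\in\mathcal{F}$ choose $\epsilon(y)>0$ so small that
\[
\limsup_{N\to\infty}\frac{1}{r_{N}}\log\Gamma_{N}(B_{\epsilon(y)}(y))\leq -\min(I(y),M)+\alpha,
\]
which is possible by the ball hypothesis. Extract a finite subcover $\{B_{\epsilon(y_{i})}(y_{i})\}_{i=1}^{m}$ of the compact set $\mathcal{F}$, bound $\Gamma_{N}(\mathcal{F})\leq\sum_{i}\Gamma_{N}(B_{\epsilon(y_{i})}(y_{i}))$, and use $\limsup_{N}r_{N}^{-1}\log(a_{N}+b_{N})=\max(\limsup r_{N}^{-1}\log a_{N},\,\limsup r_{N}^{-1}\log b_{N})$ to conclude
\[
\limsup_{N\to\infty}\frac{1}{r_{N}}\log\Gamma_{N}(\mathcal{F})\leq -\min_{i}\min(I(y_{i}),M)+\alpha\leq -\min(\inf_{\mathcal{F}}I,M)+\alpha.
\]
Sending $\alpha\to 0$ and $M\to\infty$ finishes the upper bound. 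Combining the two bounds completes the LDP and hence the equivalence.
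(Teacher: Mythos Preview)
Your argument is correct and is essentially the standard proof; the paper does not give its own proof of this proposition but merely cites Theorems~4.1.11 and~4.1.18 in Dembo--Zeitouni, where exactly this equivalence (local estimates on a base of the topology versus the full LDP, with compactness used to extract a finite subcover for the upper bound) is established. Your write-up matches that argument.
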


In the present setting of a sequence of random point process with
$N$ particles the previous proposition may be symbolically summarized
as follows: 
\[
\text{Prob }\left(\frac{1}{N}\sum_{i=1}^{N}\delta_{x_{i}}\in B_{\epsilon}(\mu)\right)\sim e^{-r_{N}I(\mu)}
\]
when first $N\rightarrow\infty$ and then $\epsilon\rightarrow0.$

We will also use the following classical result of Sanov, which is
the standard example of an LDP for random point processes (describing
the case when the particles $x_{1},...,x_{N}$ define independent
variables with identical distribution $\mu_{0}$):
\begin{prop}
(Sanov) \label{prop:sanov}Let $X$ be a topological space and $\mu_{0}$
a finite measure on $X.$ Then the laws $\Gamma_{N}$ of the empirical
measures $\delta_{N}$ defined with respect to the product measure
$\mu_{0}^{\otimes N}$ on $X^{N}$ satisfy an LDP with speed $N$
and rate functional the relative entropy $D_{\mu_{0}}.$ 
\end{prop}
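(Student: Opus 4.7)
The plan is to establish matching upper and lower LDP bounds via the Donsker--Varadhan variational formula
\[
D_{\mu_{0}}(\nu) \;=\; \sup_{u \in C_{b}(X)} \Bigl(\langle u, \nu\rangle - \log \int_{X} e^{u} \, d\mu_{0}\Bigr),
\]
which identifies the relative entropy with a Legendre transform and plays the same bridging role here as in Proposition~\ref{prop:crit for gamma conv}. I would verify the LDP through the local characterization of Proposition~\ref{prop:d-z}, i.e.\ by computing the asymptotics of $\Gamma_{N}(B_{\epsilon}(\nu))$ on small balls centered at a given $\nu$.

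For the upper bound, the starting point is the tensorization identity
\[
\int_{X^{N}} e^{N \langle u, \delta_{N}\rangle} \, d\mu_{0}^{\otimes N} \;=\; \Bigl(\int_{X} e^{u} \, d\mu_{0}\Bigr)^{\!N}, \qquad u \in C_{b}(X),
\]
which is immediate from $N \langle u, \delta_{N}\rangle = \sum_{i=1}^{N} u(x_{i})$ and Fubini. Combined with an exponential Chebyshev inequality applied to the event $\{\delta_{N} \in B_{\epsilon}(\nu)\}$ this yields
\[
\Gamma_{N}(B_{\epsilon}(\nu)) \;\leq\; \exp\Bigl(-N\bigl[\inf_{\nu' \in B_{\epsilon}(\nu)} \langle u, \nu'\rangle - \log \textstyle\int e^{u} d\mu_{0}\bigr]\Bigr).
\]
Since the inner infimum equals $\langle u, \nu\rangle + o_{\epsilon}(1)$ by weak continuity, optimizing over $u$ via the Donsker--Varadhan identity delivers the local upper bound $\limsup_{N} N^{-1} \log \Gamma_{N}(B_{\epsilon}(\nu)) \leq -D_{\mu_{0}}(\nu) + o_{\epsilon}(1)$.

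For the lower bound it suffices, again by Proposition~\ref{prop:d-z}, to show that for every $\nu$ with $D_{\mu_{0}}(\nu) < \infty$ one has $\liminf_{\epsilon \to 0} \liminf_{N} N^{-1} \log \Gamma_{N}(B_{\epsilon}(\nu)) \geq -D_{\mu_{0}}(\nu)$. Such $\nu$ is absolutely continuous with density $f := d\nu/d\mu_{0}$, and I would perform the change of measure
\[
\Gamma_{N}(B_{\epsilon}(\nu)) \;=\; \int_{\{\delta_{N} \in B_{\epsilon}(\nu)\}} \exp\bigl(-N \langle \log f, \delta_{N}\rangle\bigr) \, d\nu^{\otimes N}.
\]
Under $\nu^{\otimes N}$ the weak law of large numbers forces both $\delta_{N} \to \nu$ and $\langle \log f, \delta_{N}\rangle \to \int \log f \, d\nu = D_{\mu_{0}}(\nu)$ in probability, so the $\nu^{\otimes N}$-mass of $B_{\epsilon}(\nu) \cap \{|\langle \log f, \delta_{N}\rangle - D_{\mu_{0}}(\nu)| < \delta\}$ tends to $1$; restricting the integral to this event and using the uniform exponential factor yields the lower bound after letting $\delta, \epsilon \to 0$.

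The main obstacle is that $\log f$ need not be bounded or even absolutely integrable, so the weak law of large numbers is not directly applicable and the change-of-measure step requires care. I would address this by truncation: set $f_{M} := \max(\min(f, M), 1/M)$, let $\nu_{M}$ be the renormalized probability measure with density proportional to $f_{M}$, and verify via monotone/dominated convergence for the convex function $t \mapsto t \log t$ that $\nu_{M} \to \nu$ weakly and $D_{\mu_{0}}(\nu_{M}) \to D_{\mu_{0}}(\nu)$; a diagonal extraction in $M, \epsilon, N$ then recovers the bound at $\nu$. A secondary technical concern is that Proposition~\ref{prop:d-z} requires a metric on $\mathcal{M}_{1}(X)$; this is handled by the standard reduction to a countable family of test functions in $C_{b}(X)$, which suffices in the cases used later in the paper, where $X$ is Polish (in fact a compact manifold).
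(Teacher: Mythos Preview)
The paper does not actually prove this proposition: it is stated as ``the following classical result of Sanov'' and the reader is implicitly referred to \cite{d-z} for the proof. So there is no paper proof to compare against.

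Your outline is the standard textbook argument (essentially the one in Dembo--Zeitouni, which the paper cites): the upper bound via the exponential Chebyshev inequality and the Donsker--Varadhan duality, and the lower bound via the change-of-measure (tilting) to $\nu^{\otimes N}$ combined with the weak law of large numbers. You have correctly flagged the only real subtlety, namely that $\log f$ need not be bounded, and your truncation fix is the right one. One small caveat: in your truncation step you want $D_{\mu_0}(\nu_M)\to D_{\mu_0}(\nu)$ and $\nu_M\to\nu$ weakly; the latter is fine, but for the former you should be slightly careful since renormalizing $f_M$ to a probability density introduces a multiplicative constant whose contribution to the entropy must be tracked (it goes to zero, but this should be said). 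Also, the statement in the paper allows $\mu_0$ to be a finite measure rather than a probability measure; your argument goes through unchanged, but the Donsker--Varadhan formula then gives $D_{\mu_0}$ without any normalizing constant, which is consistent with the paper's definition \eqref{eq:def of rel entropy}. Overall the sketch is sound and matches what one finds in the reference the paper relies on.
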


We recall that the \emph{relative entropy} $D_{\mu_{0}}$ (also called
the \emph{Kullback\textendash Leibler divergence }or the\emph{ information
divergence} in probability and information theory) is the functional
on $\mathcal{M}_{1}(X)$ defined by 
\begin{equation}
D_{\mu_{0}}(\mu):=\int_{X}\log\frac{\mu}{\mu_{0}}\mu,\label{eq:def of rel entropy}
\end{equation}
 when $\mu$ has a density $\frac{\mu}{\mu_{0}}$ with respect to
$\mu_{0}$ and otherwise $D_{\mu_{0}}(\mu):=\infty.$ If $\mu_{0}$
is a probability measure, then $D_{\mu_{0}}(\mu)\geq0$ and $D_{\mu_{0}}(\mu)=0$
iff $\mu=\mu_{0}$ (by Jensen's inequality). 
\begin{rem}
\label{rem:The-entropy-is}The ``physical entropy'' is usually defined
as 
\[
S(\mu):=-D_{\mu_{0}}(\mu)
\]
In fact, Sanov's theorem can be seen as a mathematical justification
of Boltzmann's original formula expressing the entropy $S$ as the
logarithm of the number of microscopic states consistent with a given
macroscopic state (using the characterization of a LDP in Prop \ref{prop:d-z}).
\end{rem}

\subsection{\label{subsec:Determinantal-point-processes}Determinantal point
processes, their deformations and random matrices}

In this section we compare the definition of the probability measure
\ref{eq:prod meas mubetaN intro} with the general setup of determinantal
point processes \cite{h-k-p}. A brief comparison with random matrix
theory is also made. 

Let $(M,\nu)$ be a measure space. A random point process with $N$
particles on $M$ is said to be\emph{ a projectional determinantal
point processes }if there exists a complex-valued Hermitian function
$\mathcal{K}(x,y),$ which defines a projection operator on $L^{2}(X,\nu)$
of rank $N$ on $M\times M,$ such that 
\begin{equation}
\mu^{(N)}=\frac{1}{N!}\det\left(\mathcal{K}(x_{i},x_{j})\right)_{i,j\leq N}\nu^{\otimes N}\label{eq:def of determ prob measure general}
\end{equation}
(the kernel $\mathcal{K}$ need only be defined a.e. wrt $\nu\otimes\nu).$
In the most general setup of determinantal point processes it is not
assumed that $\mathcal{K}$ defines a projection operator, but this
will always be assumed in the following and we will thus drop the
adjective projectional. A remarkable feature of a determinantal point
process is that all correlation measures \ref{rem:The-point-correlation}
can be expressed as determinants of the kernel $\mathcal{K}$ and,
in particular,
\begin{equation}
(\mu^{(N)})_{1}=\E(\delta_{N})=\frac{1}{N}\mathcal{K}(x,x)\nu\label{eq:expect in terms of kernel}
\end{equation}
Moreover, 

\[
\det\left(\mathcal{K}(x_{i},x_{j})\right)_{i,j\leq N}=\left||f(x_{1},...,x_{N})\right|^{2},
\]
 where 
\[
f(x_{1},...,x_{N})=\det\left(f_{i}(x_{j})\right)_{i,j\leq N_{k}}
\]
expressed in terms of a fixed orthonormal base $f_{1},...,f_{N}$
in the Hilbert space $\mathcal{H}_{N}\subset L^{2}(X,\nu)$ defined
as the image of the projection operator. More generally, if $f_{i}$
is a, possibly non-orthonormal, basis in $\mathcal{H}_{N},$ then
\[
\mu^{(N)}=\frac{1}{Z_{N}}\left||f(x_{1},...,x_{N})\right|^{2}\nu^{\otimes N},\,\,\,Z_{N}:=\int_{X^{N}}\left||f(x_{1},...,x_{N})\right|^{2}\nu^{\otimes N}
\]
The systematic study of determinantal point processes was initiated
by Macchi in the seventies who called them \emph{fermionic} point
processes, inspired by the properties of fermion gases in quantum
mechanics. From the physics point of view the functions $f_{i}$ play
the role of wave functions representing single fermions and $f$ (called
a Slater determinant) is the corresponding $N-$body wave function
(which is totally anti-symmetric, since the particles are fermions).
\begin{example}
\label{exa:The-Vandermonde determinantal point}The Vandermonde probability
measure \ref{eq:prod meas mubetaN intro} defines, when $\beta=k,$
a (projectional) determinantal random point process on $\C^{n}$ with
$N_{k}$ particles, $\nu=dV$ and $\mathcal{H}_{N_{k}}$ the space
of all weighted polynomials $p_{k}e^{-k\phi/2}$ of degree at most
$k.$ In physical terms $p_{k}e^{-k\phi/2}$ represents a fermion
(electron) on $\C^{n}$ subject to an exterior magnetic field represented
by the two-form $k\omega^{\phi}.$ Indeed, $p_{k}e^{-k\phi/2}$ is
in the null space of the Dirac operator coupled to a gauge field $A$
with curvature form $dA=i\omega^{\phi}$ (see \cite{berm 1 komma 5}
for the relation to boson-fermion correspondences). When $n=1$ the
corresponding probability measures appear in the collective description
of the integer Quantum Hall Effect (QHE) introduced by Laughlin, while
the case when $\beta=km$ for an integer $m$ appears in the fractional
QHE (see the survey \cite{kl}). 
\end{example}

Random Matrix Theory offers several famous examples of determinantal
point processes (see the book \cite{me} for further background): 
\begin{example}
Consider a random Hermitian matrix of rank $N,$ where the entries
are taken to be independent and normally distributed. Then the corresponding
eigenvalue distribution defines a determinantal point process on $\R.$
More precisely, if $\gamma_{N}$ denotes the centered Gaussian measure
of variance $N^{-1}$ on the space $H_{N}$ of all Hermitian matrices
of rank $N,$ then the push-forward of $\gamma_{N}$ to $\R^{N}$
(obtained by fixing an ordering of the eigenvalues) is given by the
determinantal point process on $X:=\R$ defined by the Vandermonde
expression \ref{eq:prod meas mubetaN intro} for $n=1$ and $\beta=k$
and the weight $\phi(x):=|x|^{2}/2,$ but with the measure $dV$ on
$\C$ replaced by the Lesbegue measure on $\R\subset\C.$ The corresponding
limiting equilibrium measure $\mu_{(\phi,\R)}$ is the semi-circle
law, $\mu_{(\phi,\R)}=1_{[-1,1]}(1-|x|^{2})^{1/2}2/\pi.$ The case
when $\phi(z):=|z|^{2}/2$ and $dV$ is Lebesgue measure on $\C$
also appears naturally in random matrix theory as the eigenvalue distribution
of a general rank $N$ matrix with random complex entries, i.e. the
centered Gaussian measure of variance $N^{-1}$ on $gl(N,\C)$ (this
is called the Ginibre ensemble). In this case $\mu_{\phi}$ is the
uniform probability measure on the unit-disc. Moreover, if $(A,B)\in gl(N,\C)^{2}$
are drawn independently from the Ginibre ensemble, then the eigenvalues
of $AB^{-1}$ are distributed according to the probability measure
\ref{eq:prod meas mubetaN intro} with $\phi=\phi_{FS}$ and $dV=d\lambda$
on $\C$ \cite{kr} (and then $\mu_{\phi}$ may be identified with
the invariant probability measure on the Riemann sphere). 
\end{example}

Given a positive number $p$ one can consider a more general class
of point processes by setting 
\[
\mu_{p}^{(N)}=\frac{1}{Z_{p}}\det\left(\mathcal{K}(x_{i},x_{j})\right)_{i,j\leq N}^{p/2}\nu^{\otimes N},
\]
 which specializes to the determinantal case when $p=2.$ Equivalently,
this means that $\mu^{(N)}$ can be expressed as 
\[
\mu_{p}^{(N)}=\frac{1}{Z_{N,p}}\left||f(x_{1},...,x_{N})\right|^{p}\nu^{\otimes N},\,\,\,Z_{N,p}:=\int_{X^{N}}\left||f(x_{1},...,x_{N})\right|^{p}\nu^{\otimes N},
\]
 which suggests that this setting can be viewed as an $L^{p}-$generalization
of determinantal point process. In the random matrix literature one
usually writes $p=\beta$ and calls ($M^{N},\mu_{p}^{(N)})$ a \emph{$\beta-$ensemble}
or a \emph{$\beta-$deformation} of a determinantal point process
(the parameter $\beta$ is called the Dyson index). In the present
setting in $\C^{n},$ introduced in Section \ref{subsec:Statistical-mechanics-and},
it is, however, convenient to use a rescaling and set
\[
p:=2\beta/k,
\]
 Thus the determinantal case appears when $\beta$ is taken to be
depend on $k$ as $\beta=k.$ 
\begin{example}
The cases $p=1,2$ and $4$ with $\phi(x)=|x|^{2}/2$ on the real
line appear naturally in Random Matrix Theory, as Gaussian ensembles
GOE, GUE and GSE \cite{me}. They are classified under their symmetry
group (the Orthogonal, Unitary and Symplectic group, respectively).
More generally, for any $p>0$ random matrix realizations were introduced
in \cite{d-e}, in the case of real eigenvalues, based on random Jacobi
matrices. Using such realization the large $N-$limit with $p=\beta N^{-1}$
was studied in \cite{d-s}, proving the analog of Theorem \ref{thm:conv in law in C^n intro}
for $n=1$ (and identifying the limiting measure $\mu_{\beta}$ with
the spectral measure of a semi-infinite Jacobi matrix). However, random
matrix interpretations in the complex case do not seem to be known,
for $p\neq2.$ 
\end{example}

\subsubsection{Determinantal point process for polarized manifolds}

Determinantal point processes may also be attached to a polarized
complex manifold $(X,L)$ when $L$ is endowed with an Hermitian metric
$\left\Vert \cdot\right\Vert $ and a $X$ with a measure $\nu,$
which is assumed to not charge pluripolar subsets. The corresponding
large $N-$limit was studied in \cite{berm 1 komma 5}. To explain
the setup we endow the $N_{k}-$dimension Hilbert space $H^{0}(X,kL)$
with the $L^{2}-$norm induced from $(\nu,\left\Vert \cdot\right\Vert )$
and denote by $K_{k}$ the corresponding Bergman kernel. In other
words, $K_{k}$ is the holomorphic section of $kL\boxtimes k\bar{L}\rightarrow X\times\overline{X}$
defined as the integral kernel of the orthogonal projection from $L^{2}(X,L)$
onto $H^{0}(X,kL).$ The corresponding determinantal probability measure
on $\mu^{(N_{k})}$ on $X^{N_{k}}$ may be defined as 
\[
\mu^{(N)}=\frac{1}{N!}\left\Vert \det\left(K_{k}(x_{i},x_{j})\right)_{i,j\leq N}\right\Vert \nu^{\otimes N}
\]
To see that this fits into the definition \ref{eq:def of determ prob measure general},
fix $e_{k}\in H^{0}(X,kL),$ not vanishing identically on $X,$ and
denote by $U$ the subset of $X$ where $e_{k}$ is non-vanishing.
On $U$ we can write $K_{k}=K_{k}(x,y)e_{k}\otimes\bar{e}_{k},$ where
$K_{k}(x,y)$ defines a holomorphic function on $X\times\overline{X}$
and set 
\[
\mathcal{K}(x,y):=K_{k}(x,y)\left\Vert e_{k}\right\Vert (x)\left\Vert e_{k}\right\Vert (y),
\]
which is defined a.e. wrt $\nu\otimes\nu$ (since, by assumption,
$\nu$ does not charge pluripolar subsets), as desired. As a consequence,
by formula \ref{eq:expect in terms of Bergman}, 
\begin{equation}
\E(\delta_{N})=\rho_{k}\nu,\,\,\,\rho_{k}:=\frac{1}{N}\left\Vert K_{k}(x,x)\right\Vert \label{eq:expect in terms of Bergman}
\end{equation}
 where the right hand side is called the \emph{Bergman measure} in
\cite{b-b-w} and $\rho_{k}$ is called the\emph{ Bergman density
function. }It admits the alternative representation 
\begin{equation}
\rho_{k}(x)=N_{k}^{-1}\sup_{\Psi\in H^{0}(X,kL)}\frac{\left\Vert \Psi(x)\right\Vert ^{2}}{\int_{X}\left\Vert \Psi\right\Vert ^{2}\nu}\label{eq:Bergman dens as quotient}
\end{equation}
\begin{example}
When $(X,L)=(\P^{n},\mathcal{O}(1))$ and $e_{k}$ is the $k$ th
tensor power of the section $Z_{0}$ the corresponding determinantal
point process on $M:=\C^{n}$ is precisely the one defined described
in example \ref{exa:The-Vandermonde determinantal point}. 
\end{example}

The corresponding deformed determinatal point processes will be studied
in Section \ref{sec:Proofs-for-the limit N} and referred to as \emph{temperature-deformed
determinantal point processes,} since $\beta$ plays the role of the
inverse temperature. 

\subsection{\label{subsec:Mean-field-approximations}Mean field approximations
and large deviations}

The notion of mean field approximations (and mean field theory) is
wide-spread in the physics literature and first appeared in the study
of phase transitions in ferromagnetic spins system (see the Appendix).
But here the notion will be used in a generalized sense, which goes
beyond the more standard ``linear'' setting of pair-interactions
encountered in the physics literature. 

Let us start with some heuristic arguments. Assume that the following
``Mean Field Approximation'' holds: 
\begin{equation}
E^{(N)}(x_{1},...x_{N}):=\frac{1}{N}H^{(N)}(x_{1},...,x_{N})\approx E(\frac{1}{N}\sum_{i=1}^{N}\delta_{x_{i}}),\,\,\,\,N>>1\label{eq:mean field apprx heurstic form}
\end{equation}
for some functional $E$ on $\mathcal{M}_{1}(X).$ 

By definition, given $\mu\in\mathcal{M}_{1}(X)$ and $\epsilon>0$
\[
\text{Prob }\left(\frac{1}{N}\sum_{i=1}^{N}\delta_{x_{i}}\in B_{\epsilon}(\mu)\right):=Z_{N,\beta}^{-1}\int_{\delta_{N}^{-1}\left(B_{\epsilon}(\mu)\right)}e^{-\beta NE^{(N)}}dV^{\otimes N}
\]
Hence, formally, as $N\rightarrow\infty$ and $\epsilon\rightarrow0,$
we can take out the factor $e^{-\beta NE^{(N)}}$ to get
\begin{equation}
\int_{\delta_{N}^{-1}\left(B_{\epsilon}(\mu)\right)}e^{-\beta NE^{(N)}}dV^{\otimes N}\sim e^{-\beta NE(\mu)}\int_{\delta_{N}^{-1}\left(B_{\epsilon}(\mu)\right)}dV^{\otimes N}\label{eq:take out}
\end{equation}
Applying the Sanov's LDP result \ref{prop:sanov} to the integral
thus suggests that the non-normalized measure 
\[
(\delta_{N})_{*}\left(e^{-\beta H^{(N)}}dV^{\otimes N}\right)
\]
satisfies a LDP with rate functional 
\[
F_{\beta}(\mu):=E(\mu)+\beta^{-1}D_{dV}(\mu)
\]

In order to make this argument rigorous two issues need to be confronted.
First, the nature of the convergence in the ``Mean Field Approximation''
\ref{eq:mean field apprx heurstic form} has to be specified. Secondly,
appropriate conditions on $H^{(N)}$ need to be introduced, ensuring
that the ``taking out'' argument \ref{eq:take out} is justified.
The simplest way to handle both issues is to assume the following
``regularity property'': 

\begin{equation}
\lim_{N\rightarrow\infty}\sup_{X^{N}}\left|E^{(N)}(x_{1},...x_{N})-E(\delta_{N})\right|=0\label{eq:reg prop}
\end{equation}
for a continuous functional $E(\mu)$ on $\mathcal{M}_{1}(X)$ \cite{e-h-t}.
However, this fails as soon as $H^{(N)}$ is singular, i.e. not point-wise
bounded. For example, this happens in the classical ``linear'' setting
of \emph{pair-interactions }with a mean field scaling:\emph{ }
\begin{equation}
E^{(N)}(x_{1},...,x_{N})=\frac{1}{N(N-1)}\frac{1}{2}\sum_{i\neq j}g(x_{i},x_{j})\label{eq:linear case}
\end{equation}
as soon as $g(x_{i},x_{j})$ is singular on the diagonal (as in the
case of the complex plane, formula \ref{eq:E and D in terms of g intro}$)$.
Here linearity refers to fact that the differential $dE_{|\mu}$ of
the corresponding functional $E$ is linear with respect to $\mu:$
\[
E(\mu)=\frac{1}{2}\int_{X}g(x,y)\mu(y)\otimes\mu(y)
\]
 and
\[
dE_{|\mu}=-\psi_{\mu}(x):=-\int_{X}g(x,y)\mu(y)
\]
 Still, using a different variational method introduced in \cite{m-s},
based on Gibbs variational principle, it follows from \cite{d-l-r,berm10,gz}
that the LDP in question does hold, if $g$ is assumed lower-semi
continuous (or, more generally, if an appropriate exponential integrability
assumption on $g$ is assumed, as shown in \cite{berm10}, building
on \cite{clmp,k2}). 

For a general Hamiltonian it was as shown in \cite{berm10} that,
under certain technical assumptions, the LDP holds if the ``Mean
Field Approximation'' assumption \ref{eq:mean field apprx heurstic form}
is taken to hold in the following sense: 
\begin{equation}
\lim_{N\rightarrow\infty}\int_{X^{N}}E^{(N)}\mu^{\otimes N}=E(\mu)\label{eq:conv of mean energies}
\end{equation}
 (which is trivially the case in the ``linear case'' \ref{eq:linear case}).
However, in the present setting the Hamiltonian $H^{(N)}$ (as defined
by formula \ref{eq:def of H N weighted intro}) is both singular and
highly non-linear. Moreover, the convergence \ref{eq:conv of mean energies}
is yet to be established (see Section \ref{subsec:The-case-of infin}).
On the other hand, the ``Mean Field Approximation''\ref{eq:mean field apprx heurstic form}
does hold in the sense of Gamma-convergence. Exploiting that $H^{(N)}$
is uniformly super-harmonic, the ``taking out'' argument in formula
\ref{eq:take out} can then be rigorously justified (see Section \ref{sec:Proofs-for-the limit N}).
\begin{rem}
The regularity property \ref{eq:reg prop} is preserved when the sign
of $H^{(N)}$ (or, equivalently, the sign of $\beta)$ is switched.
However, in the present singular setting the sign is crucial and the
``negative temperature case'' is yet to be established (see Section
\ref{subsec:The-case-of-neg}).
\end{rem}

\subsubsection{Mean field equations}

Assume that $\mu$ is a critical point of the free energy functional
$F_{\beta}$ in the interior of $\mathcal{M}_{1}(X)$ and that the
variational derivative of $E(\mu)$ exists at $\mu,$ in a suitable
sense: 
\[
dE_{|\mu}=-\varphi_{\mu},
\]
for a function $\varphi_{\mu}$ on $X,$ defined up to an additive
constant. Then $\mu$satisfies the following equation, known as a\emph{
mean field equation} in the physics literature,
\begin{equation}
\mu=e^{\beta\varphi_{\mu}}/Z\label{eq:mean field eq in heur}
\end{equation}
for some positive constant $Z.$ This follows directly from the fact
that the differential of $D_{\mu_{0}}$ at $\mu$ is represented by
the function $\log(\mu/\mu_{0})$ if $D_{\mu_{0}}(\mu)<\infty.$ However,
if $E$ is singular and non-linear it is, in general, highly non-trivial
to prove that a general minimizer of $E$ satisfies the equation \ref{eq:mean field eq in heur}.
In the present complex geometric geometric this was accomplished in
\cite{berm6}, building on \cite{bbgz}.

\section{\label{sec:Proofs-for-the limit N}Proofs for the limit $N\rightarrow\infty$}

We now turn to the proofs of the results stated in Section \ref{sec:A-birds-eye}.
We will use the compactification of $\C^{n}$ by $\P^{n}$ to reduce
the proofs to the setting of a compact polarized manifold.

Let thus $(X,L)$ be a polarized compact complex manifold and set
\[
N_{k}:=\dim H^{0}(X,kL)=Vk^{n}+o(k^{n}),
\]
 where by assumption $V>0$ (see Section \ref{subsec:Polarized-compact-manifolds}).
We fix once and for all the back-ground data $(\left\Vert \cdot\right\Vert ,dV)$
consisting of a Hermitian metric $\left\Vert \cdot\right\Vert $ on
$L$ and a volume form $dV$ on $X.$ We will denote by $\omega$
the curvature two-form of the metric $\left\Vert \cdot\right\Vert $
on $L$. The data $(\left\Vert \cdot\right\Vert ,dV)$ induces a Hilbert
space structure on $H^{0}(X,kL).$ The corresponding\emph{ Slater
determinant} $\det\Psi^{(k)}$ is the holomorphic section of $(kL)^{\boxtimes N_{k}}\rightarrow X^{N_{k}}$
determined, up to a choice of sign, by demanding that it be totally
anti-symmetric and normalized: 
\begin{equation}
(N_{k}!)^{-1}\int_{X^{N_{k}}}\left\Vert \det\Psi^{(k)}\right\Vert ^{2}dV^{\otimes N_{k}}=1\label{eq:normalization of Slater}
\end{equation}
Concretely,
\begin{equation}
\det\Psi^{(k)}(x_{1},...,x_{N_{k}}):=\det\left(\Psi_{i}^{(k)}(x_{j})\right)_{i,j\leq N_{k}}\label{eq:det section as a determinant}
\end{equation}
where $\Psi_{1}^{(k)},...,\Psi_{N_{k}}^{(k)}$ is a fixed orthonormal
base in the Hilbert space $H^{0}(X,kL).$ 
\begin{rem}
\label{rem:generator}The section $\det\Psi^{(k)}$ is uniquely determined,
up to a multiplicative non-zero complex number, by the property that
it generates the top exterior power $\Lambda^{N_{k}}H^{0}(X,kL)$
of the complex vector space $H^{0}(X,kL),$ viewed as a one-dimensional
subspace of $H^{0}(X^{N_{k}},(kL)^{\boxtimes N_{k}})$ (the totally
anti-symmetric part). In particular, if $\det\widetilde{\Psi}^{(k)}(x_{1},...,x_{N_{k}})$
denotes any other generator, then 

\[
\det\widetilde{\Psi}^{(k)}(x_{1},...,x_{N_{k}})=C_{k}\det\Psi^{(k)}(x_{1},...,x_{N_{k}}),\,\,\,C_{k}\in\C^{*}
\]
 (hence, $|C_{k}|=1$ if the normalization condition \ref{eq:normalization of Slater}
holds). 
\end{rem}

\begin{example}
\label{exa:Vandermonde homogen}By homogenization (see Section \ref{subsec:Compactification-of-})
the Vandermonde determinant $D^{(N_{k})}$ on $(\C^{n})^{N_{k}}$
may be identified with a generator for $\Lambda^{N_{k}}H^{0}(\P^{n},\mathcal{O}(k))$
that we shall denote by the same symbol $D^{(N_{k})}.$ Concretely,
over $\P^{n}$ $D^{(N_{k})}$ is of the form \ref{eq:det section as a determinant}
with $\Psi_{i}^{(k)}$ a basis of homogeneous multinomials of degree
$k.$ However, this basis is not orthonormal with respect to $(\left\Vert \cdot\right\Vert ,dV),$
for any volume form on $\P^{n}.$
\end{example}

We define the \emph{energy $E_{\omega}^{(N)}(x_{1},...,x_{N_{k}})$}
of a configuration $x_{1},...x_{N}$ of $N$ points on $X^{N},$ relative
$\omega,$ by

\[
E_{\omega}^{(N)}(x_{1},...,x_{N_{k}}):=-\frac{1}{N_{k}k}\log\left\Vert \det\Psi^{(k)}(x_{1},...,x_{N_{k}})\right\Vert ^{2},
\]
 which is canonically attached to the data $(X,L,\left\Vert \cdot\right\Vert ,dV)$
or more precisely to $(X,L,\omega,dV)$ (since $\det\Psi^{(k)}$ is
uniquely determined up to a sign). 

To the data $(\left\Vert \cdot\right\Vert ,dV,\beta),$ where $\beta$
is a given positive number, we associate the following symmetric probability
measure on $X^{N_{k}}:$
\begin{equation}
\mu_{\beta}^{(N_{k})}:=\frac{\left\Vert \det\Psi^{(k)}(x_{1},...,x_{N_{k}})\right\Vert ^{2\beta/k}}{Z_{N_{k},\beta}}dV^{\otimes N_{k}}.\label{eq:temper deformed text}
\end{equation}

The corresponding random point process on $X$ will be called the
\emph{temperature deformed determinantal point process} attached to
$(\left\Vert \cdot\right\Vert ,dV,\beta).$ The probability measure
\ref{eq:temper deformed text} is the Gibbs measure on $X^{N_{k}},$
at inverse temperature $\beta,$ associated to the Hamiltonian $NE_{\omega}^{(N)}:$\emph{
\[
\mu_{\beta}^{(N)}:=\frac{e^{-\beta NE_{\omega}^{(N)}}}{Z_{N,\beta}}dV^{\otimes N}
\]
}

By homogenity, $\mu_{\beta}^{(N)}$ is independent of the choice of
generator of $\Lambda^{N_{k}}H^{0}(X,kL)$ (compare Remark \ref{rem:generator}).
But the point of choosing the particular generator $\det\Psi^{(k)}$
is that the corresponding determinantal energy $E_{\omega}^{(N)}$
admits a large $N-$limit. This is made precise by the following result
(the definition of the projection operator $P_{(K,\omega)}$ is given
in Section \ref{subsec:The-PSH-projection-}):
\begin{thm}
\label{thm:Gamma conv polarized}Let $(X,L)$ be a compact polarized
manifold. Then $E_{\omega}^{(N)}$ Gamma-converges towards the pluricomplex
energy $E_{\omega}$ on $\mathcal{M}_{1}(X),$ as $N\rightarrow\infty.$
As a consequence, if $(x_{1},...,x_{N_{k}})$ is a minimizer $E_{\omega}^{(N)}$
on $X^{N},$ i.e. it maximizes $\left\Vert \det\Psi^{(k)}\right\Vert ,$
then the corresponding empirical measures 
\[
\delta_{N}:=N^{-1}\sum_{i=1}^{N}\delta_{x_{i}}
\]
 converge weakly towards the unique minimizer $\mu_{\omega}$ of $E_{\omega}^{(N)}$
on $\mathcal{P}(X).$ More generally, if $K$ is a non-polar subset
of $X$ and $(x_{1},...,x_{N_{k}})$ is a minimizer $E_{\omega}^{(N)}$
on $K^{N},$ then the corresponding empirical measures converge weakly
towards the unique minimizer $\mu_{(K,\omega)}$ of $E_{\omega}$
on $\mathcal{M}_{1}(X),$ where
\begin{equation}
\mu_{(K,\omega)}=MA_{\omega}\varphi_{(K,\omega)},\,\,\,\varphi_{(K,\omega)}:=P_{(K,\omega)}0\label{eq:min of E as MA}
\end{equation}
\end{thm}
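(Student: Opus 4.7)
The plan is to apply the Legendre-Fenchel criterion for Gamma-convergence (Proposition \ref{prop:crit for gamma conv}) on the compact space $\mathcal{M}_1(X)$. For the induced functional $E_N$ on $\mathcal{M}_1(X)$ (in the sense of formula \ref{eq:energi on M_1 induced by Hamilt}, which assigns $E_\omega^{(N)}$ on empirical measures and $+\infty$ elsewhere), the transform computes explicitly as
\[
-E_N^*(-u)=\inf_{X^{N_k}}\!\Big(E_\omega^{(N_k)}(x_1,\dots,x_{N_k})+\tfrac{1}{N_k}\sum_i u(x_i)\Big)
=-\tfrac{1}{kN_k}\log\sup_{X^{N_k}}\!\big\|\det\Psi^{(k)}\big\|^2_{k(\phi+u)}.
\]
Thus the whole game is to show that this $L^\infty$-type quantity converges, as $k\to\infty$, to $\mathcal{E}(P_\omega u)$ (modulo the constant $\mathcal{E}(\varphi_{(X,\omega)})$ fixed by the normalization \ref{eq:norm cond on energy}), since by the orthogonality relation \ref{eq:og relation} the functional $u\mapsto\mathcal{E}(P_\omega u)$ is Gateaux differentiable on $C(X)$ with differential $MA_\omega(P_\omega u)$, and its Legendre transform on $\mathcal{M}_1(X)$ is precisely $E_\omega$ (see formula \ref{eq:def of e as sup}).

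The convergence $-E_N^*(-u)\to \mathcal{E}(P_\omega u)$ will be carried out in two steps. First, a Bernstein-Markov/sub-mean value argument, applied to the section $\det\Psi^{(k)}$ of $(kL)^{\boxtimes N_k}\to X^{N_k}$ in the twisted metric $k(\phi+u)$, compares the pointwise sup to the $L^2$-norm up to a subexponential factor: for every $\epsilon>0$,
\[
\sup_{X^{N_k}}\big\|\det\Psi^{(k)}\big\|^2_{k(\phi+u)}=\Big(\int_{X^{N_k}}\big\|\det\Psi^{(k)}\big\|^2_{k(\phi+u)}dV^{\otimes N_k}\Big)\,e^{\epsilon kN_k+o(kN_k)}.
\]
Second, since $\{\Psi_i^{(k)}\}$ is orthonormal in the untwisted $L^2$ product, the twisted integral equals $N_k!$ times $\det G_k$, where $G_k$ is the Gram matrix $\big(\langle\Psi_i^{(k)},\Psi_j^{(k)}\rangle_u\big)$. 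The key asymptotic input, which is the heart of the proof in \cite{b-b,b-b-w}, is that
\[
\tfrac{1}{kN_k}\log\det G_k \;\longrightarrow\; \mathcal{E}(P_\omega u)-\mathcal{E}(P_\omega 0)
\]
as $k\to\infty$; this in turn reduces, by differentiating in $u$, to the Bouche--Tian--Catlin--Zelditch type asymptotics $\tfrac{1}{N_k}\sum_i \|\Psi_i^{(k)}\|^2_{k(\phi+u)}dV\to MA_\omega(P_\omega u)$ weakly, combined with a monotonicity/integration argument along a path of weights.

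Granted the criterion \ref{prop:crit for gamma conv}, the Gamma-convergence $E^{(N)}_\omega\to E_\omega$ follows, and its standard corollary — that any sequence of almost-minimizers of $E^{(N)}_\omega$ converges to the unique minimizer of $E_\omega$, which by Theorem \ref{thm:var sol of ma} is the Monge-Amp\`ere measure of the envelope $P_\omega 0 = \varphi_{(X,\omega)}$ — yields the first conclusion. The generalization to a non-pluripolar compact $K\subset X$ is obtained by restricting the infimum to $K^{N_k}$ throughout: the same Bernstein-Markov/Gram-determinant scheme applies with $P_\omega$ replaced by $P_{(K,\omega)}$ (using that non-pluripolarity ensures $\varphi_{(K,\omega)}$ is bounded and the weighted Bernstein-Markov property holds for suitable reference measures on $K$), identifying the limit with $\mathcal{E}(P_{(K,\omega)}u)$ and hence the unique minimizer with $MA_\omega(\varphi_{(K,\omega)})$ as in \ref{eq:min of E as MA}.

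The main obstacle is the asymptotic expansion $\tfrac{1}{kN_k}\log\det G_k \to \mathcal{E}(P_\omega u)-\mathcal{E}(P_\omega 0)$: upper bounds follow from submean value/Bernstein-Markov and monotonicity of $\mathcal{E}\circ P_\omega$, but the matching lower bound requires constructing, for each continuous $u$, approximate orthonormal bases concentrated where the envelope $P_\omega u$ saturates $u$, and exploiting the equality $P_\omega u=u$ almost everywhere with respect to $MA_\omega(P_\omega u)$ (the orthogonality relation \ref{eq:og relation}). This is the content of the envelope-based proof in \cite{b-b,b-b-w}, and every other ingredient above is either general Gamma-convergence machinery or a formal Legendre-duality computation.
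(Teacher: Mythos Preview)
Your proposal is correct and follows essentially the same route as the paper: apply the Legendre--Fenchel criterion of Proposition~\ref{prop:crit for gamma conv}, identify $-E_N^*(-u)$ with the $L^\infty$-norm functional $\mathcal{F}_{k,L^\infty}[u]$, reduce to $L^2$ via sub-mean inequalities, and invoke the Gram-determinant/Bergman-density asymptotics together with the differentiability of $\mathcal{E}\circ P_\omega$ coming from the orthogonality relation~\ref{eq:og relation}. The only notable difference is in how the lower bound for $\tfrac{1}{kN_k}\log\det G_k$ is obtained: rather than ``constructing approximate orthonormal bases concentrated where the envelope saturates,'' the paper uses the cleaner observation that $\mathcal{F}_{k,L^\infty}[u]=\mathcal{F}_{k,L^\infty}[P_\omega u]$ (directly from the definition of $P_\omega$), then approximates $P_\omega u$ uniformly by genuine K\"ahler potentials $\varphi_j$, for which the Bouche--Tian Bergman asymptotics give two-sided convergence immediately---so no separate lower-bound construction is needed.
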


\begin{proof}
This result follows from combining the general convergence criterion
Prop \ref{prop:crit for gamma conv} with Theorem A and B in \cite{b-b}.
To see this denote by $f_{N}$ the functional on $C(X)$ defined as
the Legendre-Fenchel transform of $E_{\omega}^{(N)}:$ 
\begin{equation}
f_{N}(u):=\sup_{X^{N}}-\frac{1}{N_{k}k}\log\left\Vert \det\Psi^{(k)}(x_{1},...,x_{N_{k}})\right\Vert _{ku}^{2}\label{eq:def of f_N in terms of H}
\end{equation}
 where $\left\Vert \cdot\right\Vert _{ku}$ denotes the metric on
$kL$ defined by $\left\Vert \cdot\right\Vert _{ku}^{2}=\left\Vert \cdot\right\Vert ^{2}e^{-ku}$
(as in Section \ref{subsec:Polarized-compact-manifolds}). By \cite[Thm A]{b-b}
\begin{equation}
f_{N}(u)\rightarrow f(u),\,\,\,-f(-u)=\mathcal{E}\circ P_{\omega},\label{eq:f in terms of beautf e}
\end{equation}
Moreover, by \cite[Thm B]{b-b}, the functional $\mathcal{E}(P_{\omega}u)$
is Gateaux differentiable on $C(X).$ Hence, all that remains is to
verify that the Legendre-Fenchel transform of $f$ coincides with
$E_{\omega}(\mu).$ But this follows readily from the definition \ref{eq:def of e as sup}
of $E_{\omega}(\mu),$ only using that $P_{\omega}$ is an increasing
projection from $C(X)$ onto $PSH(X,\omega)\cap C(X).$ Moreover,
by general convex duality theory (see \cite{berm6}) it also follows
that $E_{\omega}(\mu)$ admits a unique minimizer of $\mathcal{M}_{1}(X),$
namely the differential of $\mathcal{E}\circ P_{\omega}$ at $u=0,$
which is given by formula \ref{eq:min of E as MA} (by \cite[Thm B]{b-b}).
The latter formula holds as long as $K$ is not pluripolar (which
ensures that $P_{(K,\omega)}$ is well-defined).

For completeness, let us also outline the proof of Theorem A in \cite{b-b},
i.e. the convergence \ref{eq:f in terms of beautf e}, in the present
setting where $L$ is assumed ample. Given $u\in C(X)$ we set \emph{
\begin{equation}
\mathcal{F}_{k,L^{p}}[u]:=-\frac{1}{kN_{k}}\log\left\Vert \det\Psi^{(k)}\right\Vert _{L^{p}(X^{N_{k}},u,dV)}^{2}\label{eq:def of beati F p}
\end{equation}
}defined in terms of $L^{p}-$norm $\left\Vert \cdot\right\Vert _{u}$
on $H^{0}(X^{N_{k}},(kL)^{N_{k}})$ induced by $(u,dV),$ for $p\in[1,\infty]$
(which is defined to be the ordinary sup-norm for $p=\infty$ and
thus independent of $dV).$ For $p=\infty,$ which is the case appearing
in the convergence \ref{eq:f in terms of beautf e}, first observe
that 
\begin{equation}
\mathcal{F}_{k,L^{\infty}}[u]=\mathcal{F}_{k,L^{\infty}}[P_{\omega}u].\label{eq:norm wrt phi as wrt P phi}
\end{equation}
Indeed, by the definition of the operator $P$ we have
\[
\sup_{X}(e^{\varphi-u)})=\sup_{X}e^{(\varphi-P_{\omega}u)}
\]
 for any $\varphi\in PSH(X,\omega).$ Applying the previous equality
to each factor of $X^{N_{k}}$ then gives the equality \ref{eq:norm wrt phi as wrt P phi}.
Moreover, the functional $\mathcal{F}_{k,L^{\infty}}$ is equicontinuous
with respect to the sup-norm on $C(X)$ (as follows from the fact
that the $\mathcal{F}_{k,L^{\infty}}$ is increasing and satisfies
$\mathcal{F}_{k,L^{\infty}}[u+C]=\mathcal{F}_{k,L^{\infty}}[u]+C,$
when $C\in\R).$ Accordingly, writing $P_{\omega}u$ as the uniform
limit of Kähler potentials $u_{j},$ i.e. smooth $u_{j}\in PSH(X,\omega)$
such that $\omega_{u_{j}}>0$ (using the approximation property in
Prop \ref{prop:omega psh compact appr} ) it is enough to establish
the convergence of $\mathcal{F}_{k,L^{\infty}}[\varphi]$ when $\varphi$
is a Kähler potential. To this end one uses that
\begin{equation}
\mathcal{F}_{k,L^{\infty}}[\varphi]=\mathcal{F}_{k,L^{2}}[\varphi]+o(1),\label{eq:distortion of norms}
\end{equation}
where the error term $o(1)$ (tending to zero) only depends on the
modulus of continuity of $\varphi.$ Indeed, this follows directly
from applying the standard submean property of holomorphic functions
on small coordinate balls on $X,$ for each factor of $X^{N_{k}}.$
Now, a direct calculation reveals that the differential of $\mathcal{F}_{k,L^{2}}$
at any $u\in C(X)$ is given by
\[
d(\mathcal{F}_{k,L^{2}})_{|u}=\frac{1}{N_{k}}\rho_{ku}dV
\]
 where the function $\rho_{ku}$ is the restriction to the diagonal
of the point-wise norm of the Bergman kernel of the Hilbert space
$\left(H^{0}(X,kL),\left\Vert \cdot\right\Vert _{L^{2}(X,u,dV)}\right).$
The asymptotics of $\rho_{ku},$ when $u$ is a Kähler potential,
i.e. when it defines a smooth metric on $L$ with strictly positive
curvature $\omega_{u}$ are well-known (and due to Bouche and Tian,
independently) and, in particular, give that 
\begin{equation}
(i)\,\lim_{k\rightarrow\infty}\frac{1}{N_{k}}\rho_{ku}dV=\frac{1}{V}(\omega_{u})^{n}\,\,\,(ii)\,\frac{1}{N_{k}}\rho_{ku}\leq C\label{eq:B-T}
\end{equation}
in the weak topology (see \cite{berm 1 komma 1} for an elementary
proof in the $\C^{n}-$setting, based on formula \ref{eq:Bergman dens as quotient}
and the sub-mean property of holomorphic functions). Using the defining
property \ref{eq:def of energyfunc as primitive} of the functional
$\mathcal{E}$ (and integrating along a line segment in the space
of all positively curved metrics) this gives
\[
\mathcal{F}_{k,L^{\infty}}[u]=\mathcal{E}_{u}(u)+o(1),
\]
 which proves the convergence \ref{eq:f in terms of beautf e}, thanks
to \ref{eq:norm wrt phi as wrt P phi} and \ref{eq:distortion of norms}. 

Finally, we recall that the differentiability statement (Thm B in
\cite{b-b}) is proved using the ``orthogonality relation'' (see
also \cite{l-ng} for an elegant simplification of the original proof).
\end{proof}
\begin{rem}
From a probabilistic point of view, an interesting feature of the
previous proof is that it reduces the (deterministic) Gamma-convergence
problem to establishing the weak convergence of the the expectations
$\E(\delta_{N})$ for the corresponding determinantal point process
(corresponding to $p=2).$ Indeed, by formula \ref{eq:expect in terms of Bergman},
$\E(\delta_{N_{k}})$ coincides with the corresponding Bergman measure
$\frac{1}{N_{k}}\rho_{k}dV$ and $\mathcal{F}_{k,L^{2}}(u)$ (formula\ref{eq:def of beati F p})
essentially coincides with the logarithmic moment generating function
of the determinantal point process. To make the connection to Kähler
geometry one can use formula \ref{eq:det section as a determinant}
to express the functional $\mathcal{F}_{k,L^{2}}$ as
\[
\mathcal{F}_{k,L^{2}}(u)=-\frac{1}{kN_{k}}\log\left(N_{k}!\det_{i,j\leq N_{k}}\left\langle \Psi_{i}^{(k)},\Psi_{j}^{(k)}\right\rangle _{(ku,dV)}\right),
\]
 in terms of the scalar product on $H^{0}(X,kL)$ induced by $(u,dV).$
This reveals that $\mathcal{F}_{k,L^{2}}$ is essentially Donaldson's
L-functional, which plays a prominent role in Kähler geometry (see
\cite{b-b,b-b-w} and references therein).

We next consider the \emph{determinantal energy }
\[
E^{(N_{k})}(z_{1},...z_{N_{k}}):=-\frac{1}{N_{k}k}\log\left|D^{(N_{k})}(z_{1},...z_{N_{k}})\right|^{2}
\]
in $\C^{n},$ introduced in connection to the study of Fekete points
on compact subsets of $\C^{n}$ in Section \ref{subsec:Fekete-points-in}.
By the next corollary $E^{(N_{k})}$ Gamma-converges towards the (non-weighted)
pluricomplex energy $E(\mu),$ defined in Section \ref{subsec:The-(non-)weighted-pluricomplex}.
\end{rem}

\begin{cor}
Let $K$ be a compact subset of $\C^{n}$ which is not pluripolar.
Then the determinantal energy $E^{(N_{k})}$ converges towards the
(non-weighted) pluricomplex energy $E(\mu)$ on $\mathcal{M}_{1}(K).$
As a consequence, if $(z_{1},...,z_{N_{k}})$ are minimizers of $E^{(N_{k})}$
on $K^{N_{k}},$ i.e. Fekete points for $K,$ then the corresponding
empirical measure converges weakly towards the pluripotential equilibrium
measure of $K.$
\end{cor}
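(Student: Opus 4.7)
The plan is to reduce the corollary to Theorem \ref{thm:Gamma conv polarized} applied to the polarized manifold $(\P^{n},\mathcal{O}(1))$ compactifying $\C^{n}$ via homogenization, as in Section \ref{subsec:Compactification-of-}. First I choose a smooth Hermitian metric $\|\cdot\|$ on $\mathcal{O}(1)\to\P^{n}$ whose local weight $\phi$ (with respect to the trivializing section $Z_{0}$) vanishes on a bounded open neighborhood $U$ of $K$ in $\C^{n}$; this is possible since $K$ is compact. Its curvature form $\omega$ then vanishes on $U$. I fix also a smooth volume form $dV$ on $\P^{n}$. By Example \ref{exa:Vandermonde homogen} the Vandermonde determinant $D^{(N_{k})}$ extends to a holomorphic section of $(k\mathcal{O}(1))^{\boxtimes N_{k}}$ over $(\P^{n})^{N_{k}}$ which generates the one-dimensional space $\Lambda^{N_{k}}H^{0}(\P^{n},\mathcal{O}(k))$. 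If $\det\Psi^{(k)}$ denotes the Slater determinant (formula \ref{eq:det section as a determinant}) for an $L^{2}$-orthonormal basis induced by $(\|\cdot\|,dV)$, then by Remark \ref{rem:generator} one has $\det\Psi^{(k)}=c_{k}D^{(N_{k})}$ for some $c_{k}\in\C^{*}$. Since $\phi\equiv 0$ on $U$, unwinding the pointwise norm gives, on $K^{N_{k}}$,
\[
E_{\omega}^{(N_{k})}(z_{1},\ldots,z_{N_{k}}) \;=\; E^{(N_{k})}(z_{1},\ldots,z_{N_{k}}) + a_{k}, \qquad a_{k}:=\frac{1}{N_{k}k}\log|c_{k}|^{2}.
\]

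Second, I invoke Theorem \ref{thm:Gamma conv polarized} applied to $(X,L)=(\P^{n},\mathcal{O}(1))$: since $K$ is non-pluripolar, it yields the Gamma-convergence of $E_{\omega}^{(N_{k})}$ to the pluricomplex energy $E_{\omega}$ on $\mathcal{M}_{1}(K)$, with unique minimizer $\mu_{(K,\omega)}=MA_{\omega}\varphi_{(K,\omega)}$. As $E^{(N_{k})}$ and $E_{\omega}^{(N_{k})}$ differ on $K^{N_{k}}$ only by the additive constant $a_{k}$, Gamma-convergence of $E^{(N_{k})}$ on $\mathcal{M}_{1}(K)$ follows as soon as $a_{k}$ has a finite limit. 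To identify the Gamma-limit with the non-weighted pluricomplex energy $E(\mu)$ of Section \ref{subsec:The-(non-)weighted-pluricomplex}, I appeal to Lemma \ref{lem:non-weighted energ}: for $\mu\in\mathcal{M}_{1}(K)$ one has $E(\mu)=E_{\omega}(\mu)-\mathcal{E}_{\omega}(P_{T^{n}}0)$. It therefore suffices to verify $\lim_{k\to\infty}a_{k}=-\mathcal{E}_{\omega}(P_{T^{n}}0)$.

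Third, the convergence of $a_{k}$ to the correct value is a byproduct of the proof of Theorem \ref{thm:Gamma conv polarized}. The normalization $\int_{(\P^{n})^{N_{k}}}\|\det\Psi^{(k)}\|^{2}dV^{\otimes N_{k}}=N_{k}!$, combined with $\det\Psi^{(k)}=c_{k}D^{(N_{k})}$, re-expresses $a_{k}$ as (minus) the logarithm of a normalized $L^{2}$-norm of $D^{(N_{k})}$, and hence in terms of the functional $\mathcal{F}_{k,L^{2}}$ of formula \ref{eq:def of beati F p} evaluated at a suitable reference weight. The Bouche--Tian asymptotics \ref{eq:B-T} of the Bergman density together with the $L^{\infty}$-vs-$L^{2}$ distortion \ref{eq:distortion of norms} then yield the identification with $-\mathcal{E}_{\omega}(P_{T^{n}}0)$.

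The Fekete-point assertion is then immediate: Fekete points minimize $E^{(N_{k})}$ and therefore also $E_{\omega}^{(N_{k})}$ on $K^{N_{k}}$ (the two functionals differ only by a constant), so by Theorem \ref{thm:Gamma conv polarized} the empirical measures converge weakly to $\mu_{(K,\omega)}$, which coincides with the pluripotential equilibrium measure $\mu_{K}=MA(\psi_{K})$ after returning from the $\P^{n}$- to the $\C^{n}$-normalization via Lemma \ref{lem:correspond funct metric in Cn}. The main obstacle is the identification of the additive constant $\lim a_{k}$ with $-\mathcal{E}_{\omega}(P_{T^{n}}0)$; for the Fekete-point conclusion alone this identification is unnecessary (additive constants do not affect minimizers), but it is essential to pin down the Gamma-limit precisely as the non-weighted pluricomplex energy $E$ of Section \ref{subsec:The-(non-)weighted-pluricomplex}.
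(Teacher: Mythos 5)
Your overall strategy coincides with the paper's: compactify by $\P^{n}$, choose a metric on $\mathcal{O}(1)$ trivial near $K$, note that $D^{(N_{k})}$ and $\det\Psi^{(k)}$ differ by a multiplicative constant $c_{k}$ (Remark \ref{rem:generator}), invoke Theorem \ref{thm:Gamma conv polarized}, and pin down the additive constant via Lemma \ref{lem:non-weighted energ}. Two points should be fixed.

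First, a minor sign error: since $\left\Vert\det\Psi^{(k)}\right\Vert^{2}=|c_{k}|^{2}|D^{(N_{k})}|^{2}$ on $K^{N_{k}}$ (where the weight vanishes), one actually gets $E_{\omega}^{(N_{k})}=E^{(N_{k})}-\tfrac{1}{N_{k}k}\log|c_{k}|^{2}$, so your $a_{k}$ has the opposite sign to the one displayed; this does not affect the logic, but the target value of $\lim a_{k}$ changes sign accordingly.

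Second, and more substantively, your third step is under-specified in a way that would not survive being fleshed out as written. Writing $|c_{k}|^{2}=N_{k}!/\left\Vert D^{(N_{k})}\right\Vert_{L^{2}}^{2}$ reduces the question to the asymptotics of $\tfrac{1}{kN_{k}}\log\left\Vert D^{(N_{k})}\right\Vert_{L^{2}(X^{N_{k}})}^{2}$, but this is \emph{not} of the form $\mathcal{F}_{k,L^{2}}[u]$ for a weight $u\in C(X)$: the multinomial basis underlying $D^{(N_{k})}$ is not $L^{2}$-orthonormal for $(\left\Vert\cdot\right\Vert ,dV)$ for any volume form $dV$ (see Example \ref{exa:Vandermonde homogen}). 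The Bouche--Tian asymptotics \ref{eq:B-T} and the distortion estimate \ref{eq:distortion of norms} are formulated for smooth positively curved metrics and volume forms, so invoking them directly to ``identify the constant'' does not apply here. What actually makes the identification work in the paper is the observation that $D^{(N_{k})}$ \emph{is} the Slater determinant for the weighted measure $(\nu,0)$, with $\nu$ the Haar measure on the unit torus $T^{n}\Subset\C^{n}$ and $0$ the trivial metric; $\nu$ is a Bernstein--Markov measure (by $T^{n}$-symmetry the Bergman density $\rho_{k}\equiv1$), and the requisite $L^{2}$-asymptotics for Bernstein--Markov measures are supplied by \cite[Thm A]{b-b-w}. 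Your phrase ``evaluated at a suitable reference weight'' hides exactly this step: the relevant change is not a change of weight on $L$ but a change of reference measure from a volume form to a singular Bernstein--Markov measure. You correctly flag this identification as the main obstacle; the missing input is the Bernstein--Markov/torus-symmetry argument rather than Bouche--Tian.

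The Fekete-point conclusion is, as you note, insensitive to the additive constant and follows directly from Theorem \ref{thm:Gamma conv polarized} once the metric is chosen as above; this part is correct and matches the paper.
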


\begin{proof}
Take a metric $\left\Vert \cdot\right\Vert $ on $\mathcal{O}(1)\rightarrow\P^{n}$
which coincides with the trivial metric on an open subset containing
$K,$ under the standard embedding of $\C^{n}$ in $\P^{n}.$ Identifying
the Vandermonde determinant with a section over $(\P^{n})^{N_{k}}$
(using the identifications in Section \ref{subsec:Compactification-of-})
, we can, over $K\Subset\P^{n},$ rewrite 
\[
\log|D^{(N_{k})}|^{2}=\log\frac{\left\Vert D^{(N_{k})}\right\Vert ^{2}}{\left\Vert D^{(N_{k})}\right\Vert _{L^{2}(X^{N_{k}})}^{2}}+\log\left\Vert D^{(N_{k})}\right\Vert _{L^{2}(X^{N_{k}})}^{2}
\]
By homogenity and Remark \ref{rem:generator} we may as well replace
$D^{(N_{k})}$ with $\det\Psi^{(k)}$ in the quotient appearing in
the right hand side above. Hence, 
\[
-k^{-1}N_{k}^{-1}\log|D^{(N_{k})}|^{2}=E_{\omega}^{(N)}-k^{-1}N_{k}^{-1}\log\left\Vert D^{(N_{k})}\right\Vert _{L^{2}(X^{N_{k}})}^{2}
\]
By the Gamma-convergence in the previous theorem, combined with Prop
\ref{prop:crit for gamma conv}, all that remains is to verify that

\begin{equation}
-\lim_{N_{k}\rightarrow\infty}k^{-1}N_{k}^{-1}\log\left\Vert D^{(N_{k})}\right\Vert _{L^{2}(X^{N_{k}})}^{2}=\mathcal{E}_{\omega}(P_{T^{n}}0)\label{eq:asympt of D in pf}
\end{equation}
But this follows from \cite[Thm A]{b-b-w}. Indeed, $D^{(N_{k})}$
is the Slater determinant associated to a the basis in $H^{0}(\P^{n},k\mathcal{O}(1))$,
which is orthonormal with respect to the weighted measure $(\nu,0),$
where $\nu$ is the standard invariant measure on the unit $n-$torus
$T^{n}$ in $\C^{n}\Subset\P^{n}$ and $0$ denotes the trivial metric
on $\mathcal{O}(1)\rightarrow\C^{n}\Subset\P^{n}.$ This measure is
a Bernstein-Markov measure, i.e. the the corresponding Bergman density
$\rho_{k}$ has sub-exponential growth. In fact, by the $T^{n}-$symmetry,
$\rho_{k}=1.$ Hence, \ref{eq:asympt of D in pf} follows from Theorem
\cite[Thm A]{b-b-w} (the proof for Bernstein-Markov measures is essentially
the same as the proof of the case when $\nu=dV$ on $X,$ outlined
in the proof of the previous theorem, using that \ref{eq:distortion of norms}
still holds for a Bernstein-Markov measure). 
\end{proof}
Turning to the probabilistic setting we next explain the proof of
the following general result in \cite{berm8} (which covers Theorem
\ref{thm:LDP subharm intro}, stated in Section \ref{sec:A-birds-eye}). 
\begin{thm}
\label{thm:LDP subh text}Assume that $X$ is a compact Riemannian
manifold without boundary and that the sequence $\frac{1}{N}H^{(N)}(x_{1},...,x_{N})$
Gamma converges to a functional $E(\mu)$ on $\mathcal{M}_{1}(X).$
If moreover, $H^{(N)}$ is uniformly quasi-superharmonic, i.e. there
exists a constant $C$ such that for all $N$
\[
\Delta_{x_{i}}H^{(N)}(x_{1},x_{2},...x_{N})\leq C\,\,\,\,X^{N},
\]
 then the measures 
\[
(\delta_{N})_{*}\left(e^{-\beta H^{(N)}}dV^{\otimes N}\right)
\]
satisfy a Large Deviation Principle (LDP) on\emph{ $\mathcal{M}_{1}(X)$
}with speed $\beta N$ and rate functional $F_{\beta}(\mu).$ As a
consequence, the laws of the random measures $\delta_{N}$ on $(X^{N},\mu_{\beta}^{(N)})$
satisfy a Large Deviation Principle (LDP) with speed $\beta N$ and
rate functional $F_{\beta}(\mu)-C_{\beta},$ where 
\[
C_{\beta}=\inf_{\mathcal{M}_{1}(X)}F_{\beta}
\]
In particular, if $E$ is convex and $\beta>0$ then $\delta_{N}$
converges in law towards the unique minimizer $\mu_{\beta}$ of $F_{\beta}.$
\end{thm}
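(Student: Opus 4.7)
The plan is to establish the LDP for the non-normalized measures $(\delta_N)_*(e^{-\beta H^{(N)}}dV^{\otimes N})$ on $\mathcal{M}_1(X)$ first, and then to obtain the Gibbs LDP and the convergence in law as formal corollaries. By Proposition \ref{prop:d-z}, this reduces to verifying the local asymptotic
\[
\lim_{\epsilon\to 0}\lim_{N\to\infty}\frac{1}{\beta N}\log\int_{\delta_N^{-1}(B_\epsilon(\mu))}e^{-\beta H^{(N)}}dV^{\otimes N}=-F_\beta(\mu)
\]
for each $\mu\in\mathcal{M}_1(X)$, where $F_\beta(\mu):=E(\mu)+\beta^{-1}D_{dV}(\mu)$.

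For the upper bound I would combine the two $\Gamma$-liminf-type estimates at our disposal: first, $\Gamma$-convergence together with compactness of $\bar B_\epsilon(\mu)$ in $\mathcal{M}_1(X)$ yields $\liminf_{N}\inf_{\delta_N^{-1}(B_\epsilon(\mu))}H^{(N)}/N\geq\inf_{\bar B_\epsilon(\mu)}E$; and second, Sanov's theorem (Proposition \ref{prop:sanov}) gives $dV^{\otimes N}(\delta_N^{-1}(B_\epsilon(\mu)))\leq e^{-N\inf_{\bar B_\epsilon(\mu)}D_{dV}+o(N)}$. Multiplying the two exponential bounds, taking logarithms, dividing by $\beta N$, and sending $\epsilon\to 0$ via lower semicontinuity of $E$ and $D_{dV}$ produces the bound $-F_\beta(\mu)$ from above.

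The matching lower bound is the main obstacle, and this is precisely where the uniform quasi-superharmonicity of $H^{(N)}$ becomes indispensable. Assuming $F_\beta(\mu)<\infty$ (otherwise the statement is vacuous), I would first apply $\Gamma$-limsup to extract a recovery sequence $\xi_N=(x_1^{(N)},\ldots,x_N^{(N)})$ with $\delta_{\xi_N}\to\mu$ and $H^{(N)}(\xi_N)/N\to E(\mu)$. The hypothesis $\Delta_{x_i}H^{(N)}\leq C$ makes $H^{(N)}$ quasi-superharmonic in each variable separately; the classical super-mean-value inequality, combined with Jensen applied to the convex exponential $t\mapsto e^{-\beta t}$ in each slice, then yields the sub-mean-value estimate
\[
\int_{\prod_i B_{r_i}(y_i)}e^{-\beta H^{(N)}}dV^{\otimes N}\geq\Bigl(\prod_i|B_{r_i}|\Bigr)\exp\Bigl(-\beta H^{(N)}(y_1,\ldots,y_N)-\beta C\sum\nolimits_i r_i^2\Bigr).
\]
Applying this at the single configuration $\xi_N$ alone misses the entropy altogether, so one must exploit the $S_N$-symmetry of $H^{(N)}$ and work with the union $\Omega_N^r:=\bigcup_{\sigma\in S_N}\prod_i B_{r_{\sigma(i)}}(x_{\sigma(i)}^{(N)})$, whose $dV^{\otimes N}$-volume equals $N!\prod_i|B_{r_i}|$ when the constituent boxes are pairwise disjoint. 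For a $\mu$ with continuous positive density $f=d\mu/dV$ (the general case reached by an approximation argument in the topology of convergence in energy), radii $r_i$ of order $(Nf(x_i^{(N)}))^{-1/\dim X}$ achieve both $\sum_i r_i^2=o(N)$ and, via Stirling, $N!\prod_i|B_{r_i}|=\exp(-ND_{dV}(\mu)+o(N))$. Assembling the pieces yields $(\beta N)^{-1}\log\int_{\Omega_N^r}e^{-\beta H^{(N)}}dV^{\otimes N}\geq -F_\beta(\mu)+o(1)$, while $\Omega_N^r\subset\delta_N^{-1}(B_\epsilon(\mu))$ as soon as the radii are small relative to $\epsilon$. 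The technical subtleties to control are the geometric disjointness of the ball covering attached to a general recovery sequence, and the density-regularization step for singular $\mu$.

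Granted the LDP for the non-normalized measures, specializing the upper and lower bounds to the whole space $\mathcal{M}_1(X)$ pins down the partition function asymptotics $(\beta N)^{-1}\log Z_{N,\beta}\to -C_\beta$, and dividing delivers the LDP for the laws of $\delta_N$ under $\mu_\beta^{(N)}$ with rate $F_\beta-C_\beta$. The final assertion then follows at once from the standard deviation lemma in Section \ref{subsec:The-notion-of}: for $\beta>0$ with $E$ convex, strict convexity of the relative entropy $D_{dV}$ forces $F_\beta$ to have a unique minimizer $\mu_\beta$, so the random measures $\delta_N$ concentrate exponentially fast at $\mu_\beta$.
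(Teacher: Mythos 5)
Your global strategy (reduce to the local asymptotic via Proposition \ref{prop:d-z}, split into upper and lower bounds, and recover the Gibbs LDP and the convergence in law by dividing out the partition function) is exactly the one the paper uses, and your upper bound argument---take out the sup, invoke the liminf part of Gamma-convergence, invoke Sanov's theorem, and send $\epsilon\to 0$ by lower semicontinuity---is essentially word for word the paper's. The discrepancy is in how the lower bound is actually proved, and there the proposal has a genuine gap.

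Your lower bound rests on a box-covering estimate: around the recovery configuration $\xi_N=(x_1^{(N)},\dots,x_N^{(N)})$ you place balls $B_{r_i}(x_i^{(N)})$ of radius $r_i\sim (Nf(x_i^{(N)}))^{-1/\dim X}$, form the $S_N$-orbit of the product box, and compute its $dV^{\otimes N}$-volume by Stirling \emph{assuming the balls are pairwise disjoint}. You flag disjointness as a ``technical subtlety'', but it is in fact the crux of the matter and it does not follow from the hypotheses. Gamma-convergence only supplies \emph{some} sequence $\xi_N$ with $\delta_{\xi_N}\to\mu$ and $H^{(N)}(\xi_N)/N\to E(\mu)$; it gives no control whatsoever over the minimum spacing $\min_{i\ne j}d(x_i^{(N)},x_j^{(N)})$. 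Disjointness of your balls requires this spacing to be of order $N^{-1/\dim X}$ uniformly, a quasi-uniformity condition that a recovery sequence has no reason to satisfy (indeed, for the singular determinantal Hamiltonians motivating the theorem, recovery configurations must avoid the diagonal but may approach it at rates much faster than $N^{-1/\dim X}$). If the balls overlap, the Stirling count collapses and the entropy term is lost; shrinking the radii to restore disjointness destroys it as well. So the step where you conclude $N!\prod_i|B_{r_i}|=\exp(-ND_{dV}(\mu)+o(N))$ cannot be justified from the stated hypotheses. A secondary issue: proving the constant in the Stirling identity comes out exactly right, and running a regularization of $\mu$ by smooth positive densities compatible with the abstract Gamma-convergence of $H^{(N)}/N$, both require care and are not obviously available in the generality of the theorem.

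The paper proceeds differently precisely to avoid this. The observation exploited there is that the map $\delta_N$ is an \emph{isometry} from the quotient orbifold $X^N/S_N$, equipped with the quotient of the rescaled Riemannian metric $g_N:=N^{-1}g^{\otimes N}$, onto its image in $(\mathcal{M}_1(X),d_{W_2})$. Thus the preimage $\delta_N^{-1}(B_\epsilon(\delta_N(\boldsymbol{x}^{(N)})))$ is a genuine \emph{metric ball} in this orbifold (not a product of balls), and the key inequality \ref{eq:suffiecnt estimate} is a sub-mean-value inequality for the function $e^{-\beta N E^{(N)}}$ on that metric ball, holding uniformly in the configuration $\boldsymbol{x}^{(N)}$. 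This inequality is proved (\cite[Thm 2.1, Prop 3.9]{berm8}) by a Li--Schoen-type argument in geometric analysis, using that $g_N$ has Ricci curvature bounded below proportionally to its dimension and that the quasi-superharmonicity assumption translates into a uniform Laplacian lower bound on $E^{(N)}$ with respect to $g_N$. Because it is a single intrinsic estimate on a metric ball rather than a packing argument, it is insensitive to the spacing of the configuration. That geometric mechanism, rather than Euclidean ball-packing plus Stirling, is what carries the lower bound; your proposal does not reproduce it, and without it the argument does not close.

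Everything after the LDP for the non-normalized measures (specializing to $\mathcal{M}_1(X)$ to get the free-energy asymptotic, normalizing to get the Gibbs LDP with rate $F_\beta-C_\beta$, and invoking the concentration lemma together with strict convexity of $D_{dV}$ to obtain convergence in law when $\beta>0$ and $E$ is convex) is correct and agrees with the paper.
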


\begin{proof}
The idea of the proof is to make rigorous the formal mean field approximation
argument given in Section \ref{subsec:Mean-field-approximations},
by exploiting the assumed quasi-superharmonicity of $H^{(N)}.$ Using
the characterization of a LDP in Proposition \ref{prop:d-z}, the
upper bound in the first LDP stated in the theorem follows almost
immediately from the liminf property of the Gamma-convergence together
with Sanov's theorem. Indeed, 
\[
\int_{\delta_{N}^{-1}\left(B_{\epsilon}(\mu)\right)}e^{-\beta NE^{(N)}}dV^{\otimes N}\leq\sup_{\delta_{N}^{-1}\left(B_{\epsilon}(\mu)\right)}e^{-\beta NE^{(N)}}\int_{\delta_{N}^{-1}\left(B_{\epsilon}(\mu)\right)}dV^{\otimes N},
\]
Hence, using the assumed Gamma-convergence gives
\[
\limsup_{N\rightarrow\infty}N^{-1}\log\int_{\delta_{N}^{-1}\left(B_{\epsilon}(\mu)\right)}e^{-\beta NE^{(N)}}dV^{\otimes N}\leq-\beta\inf_{B_{\epsilon}(\mu)}E(\mu)+\limsup_{N\rightarrow\infty}N^{-1}\log\int_{\delta_{N}^{-1}\left(B_{\epsilon}(\mu)\right)}dV^{\otimes N},
\]
Finally, letting $\epsilon\rightarrow0$ and invoking Sanov's LDP
result \ref{prop:sanov} concludes the proof of the upper bound in
the LDP.

In order to prove the lower bound in the LDP we first observe that
it is enough to establish an estimate of the following form: there
exists a constant $C>0$ and $r(\epsilon)$ tending to zero, as $\epsilon\rightarrow0,$
such 
\begin{equation}
e^{CN\epsilon}\int_{\delta_{N}^{-1}\left(B_{\epsilon}(\delta_{N}(\boldsymbol{x}^{(N)})\right)}e^{-\beta NE^{(N)}}dV^{\otimes N}\geq\sup_{\delta_{N}^{-1}\left(B_{r(\epsilon)}(\delta_{N}(\boldsymbol{x}^{(N)}))\right)}e^{-\beta NE^{(N)}}\int_{\delta_{N}^{-1}\left(B_{r(\epsilon)}(\mu)\right)}dV^{\otimes N},\label{eq:suffiecnt estimate}
\end{equation}
for any given $\boldsymbol{x}^{(N)}:=(x_{1},..,x_{N})\in X^{N}.$
Indeed, by the Gamma-convergence assumption there exists a sequence
$\boldsymbol{x}^{(N)}:=(x_{1},..,x_{N})\in X^{N}$ such that $\delta_{N}(\boldsymbol{x}^{(N)})\rightarrow\mu$
and $E^{(N)}(\boldsymbol{x}^{(N)})\rightarrow E(\mu).$ In particular,
\[
\delta_{N}^{-1}\left(B_{\epsilon}(\mu)\right)\supseteq\delta_{N}^{-1}\left(B_{r(\epsilon)}(\delta_{N}(\boldsymbol{x}^{(N)})\right),
\]
 when $N$ is sufficiently large. Hence, if \ref{eq:suffiecnt estimate}
holds, then
\[
e^{CN\epsilon}\int_{\delta_{N}^{-1}\left(B_{\epsilon}(\delta_{N}(\boldsymbol{x}^{(N)})\right)}e^{-\beta NE^{(N)}}dV^{\otimes N}\geq e^{-\beta NE^{(N)}(\boldsymbol{x}^{(N)})}\int_{\delta_{N}^{-1}\left(B_{r(\epsilon)}(\mu)\right)}dV^{\otimes N}.
\]
 From here the rest argument proceeds essentially as in the proof
of the upper bound. The good news is that the estimate \ref{eq:suffiecnt estimate}
does hold with $r(\epsilon)=\epsilon^{2},$ when the metric $d$ on
$\mathcal{M}_{1}(X)$ is taken as the Wasserstein $L^{2}-$metric
$d_{W_{2}}$ induced from the Riemannian metric $g$ on $X.$ This
is the content of \cite[Prop 3.9]{berm8}, which thus concludes the
proof of the LDP in question. 

Finally, let us point out that the starting point of the proof of
the inequality \ref{eq:suffiecnt estimate} is the well-known fact
that the embedding $\delta_{N}$ of $X^{N}/S_{N}$ into $(\mathcal{M}_{1}(X),d_{W_{2}})$
is an isometry when $X^{N}/S_{N}$ is endowed with the quotient space
metric induced from the Riemannian metric $g_{N}$ on $X^{N},$ defined
as $N^{-1}$ times the product Riemannian metric. The quasi-subharmonicity
assumption on $E^{(N)}$ is equivalent to 
\[
\Delta_{g_{N}}E^{(N)}\geq-\lambda
\]
 on $X^{N}.$ Moreover, the scaling of $g_{N}$ also ensures that
the Ricci curvature of $g^{(N)}$ is bounded from below by a uniform
constant times the dimension of $X^{N}.$\emph{ }The inequality\ref{eq:suffiecnt estimate}
now follows from the general sub-mean inequality in \cite[Thm 2.1]{berm8}
for Riemannian quotients (orbifolds) $Y:=M/G$ (which yields a distortion
factor with sub-exponential growth in the dimension). In turn, the
latter inequality is proved using geometric analysis on the orbifold
$Y,$ by generalizing a fundamental inequality of Li-Schoen in Riemannian
geometry \cite{li-sc}.

The previous theorem, applied to the temperature-deformed determinantal
point processes \ref{eq:temper deformed text}, yields the following
LDP obtained from \cite{berm8}:
\end{proof}
\begin{thm}
\label{thm:LDP for temp deform on polar}Let $(X,L)$ be a compact
polarized manifold and consider the temperature deformed determinantal
point processes $(X^{N},\mu_{\beta}^{(N)})$ on $X$ associated to
given data $(\left\Vert \cdot\right\Vert ,dV,\beta)$ with $\beta>0.$
Then the the laws of the random measures $\delta_{N}$ on $(X^{N},\mu_{\beta}^{(N)})$
satisfy a Large Deviation Principle (LDP) with speed $\beta N$ and
rate functional $F_{\beta}(\mu)-C_{\beta},$ where 
\[
F_{\beta}(\mu)=E_{\omega}(\mu)+\beta^{-1}D_{dV}(\mu)-C_{\beta}
\]
 with 
\[
C_{\beta}=\inf_{\mathcal{M}_{1}(X)}F_{\beta}
\]
Moreover, $F_{\beta}$ admits a unique minimizer $\mu_{\beta}$ on
$\mathcal{M}_{1}(X)$ and it can be expressed as $\mu=e^{\beta u_{\beta}}dV,$
where $u_{\beta}$ is the unique finite energy solution to the complex
Monge-Ampère equation
\begin{equation}
MA_{\omega}(u)=e^{\beta u}dV.\label{eq:ma eq in thm temp deformed text}
\end{equation}
More generally, the corresponding statement holds when $dV$ is replaced
by a singular finite measure $dV_{s}$ of the form 
\[
dV_{s}=e^{v}dV
\]
 for $v$ a function on $X$ with analytic singularities, i.e locally
$v=u_{0}+v,$ where $u_{0}$ is continuous and $v$ has analytic singularities,
i.e. locally 
\begin{equation}
v=\sum_{i=1}^{m}\lambda_{i}\log|h_{i}|^{2}\label{eq:analytic sin}
\end{equation}
 for some $\lambda_{i}\in\R$ and holomorphic functions $h_{i}.$ 
\end{thm}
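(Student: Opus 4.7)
The plan is to deduce this theorem by applying the abstract LDP of Theorem \ref{thm:LDP subh text} to the Hamiltonian
\[
H^{(N_k)}(x_1,\ldots,x_{N_k}) := -\tfrac{1}{k}\log\bigl\|\det\Psi^{(k)}(x_1,\ldots,x_{N_k})\bigr\|^2,
\]
so that $N_k^{-1}H^{(N_k)} = E_\omega^{(N_k)}$. The Gamma-convergence hypothesis required to invoke Theorem \ref{thm:LDP subh text} is then precisely the content of Theorem \ref{thm:Gamma conv polarized}, giving $E_\omega^{(N_k)} \to E_\omega$ on $\mathcal{M}_1(X)$. Sanov's theorem (Prop.\ \ref{prop:sanov}) then automatically supplies the entropy term $\beta^{-1}D_{dV}(\mu)$ in the rate functional, yielding the free energy $F_\beta = E_\omega + \beta^{-1}D_{dV}$.

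The first technical verification is that $H^{(N_k)}$ is uniformly quasi-superharmonic in each variable. The key point is that for each fixed $i$, the map $x_i \mapsto \det\Psi^{(k)}(x_1,\ldots,x_{N_k})$ is a holomorphic section of $kL \to X$ with the other arguments frozen (by multilinearity of the determinant and holomorphicity of the $\Psi_j^{(k)}$). Thus in a local trivialization $e_U^{\otimes k}$ with weight $\phi_U$, we have $\log\|\det\Psi^{(k)}\|^2 = \log|F_i(x_i)|^2 - k\phi_U(x_i)$ for a holomorphic $F_i$, and subharmonicity of $\log|F_i|^2$ gives the distributional bound $\Delta_{x_i}\log\|\det\Psi^{(k)}\|^2 \geq -k\Delta\phi_U \geq -kC$ with $C$ independent of $N_k$. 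Consequently $\Delta_{x_i} H^{(N_k)} \leq C$ uniformly, which is the required estimate. Applying Theorem \ref{thm:LDP subh text} then yields the LDP with speed $\beta N$ and rate functional $F_\beta - C_\beta$.

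To identify the minimizer with the MA equation, I would use the variational characterization: any minimizer $\mu_\beta$ of $F_\beta$ has finite relative entropy, hence $\mu_\beta \ll dV$, and in particular $E_\omega(\mu_\beta) < \infty$, so a finite-energy potential $\varphi_{\mu_\beta}$ exists by Theorem \ref{thm:var sol of ma}. The sub-gradient property (Prop.\ \ref{prop:sub-gradient}) for $E_\omega$ combined with the explicit differential $\log(\mu/dV)$ of $D_{dV}$ yields the Euler--Lagrange equation $-\varphi_{\mu_\beta} + \beta^{-1}\log(\mu_\beta/dV) = \mathrm{const}$ on $\{\mu_\beta>0\}$, i.e.\ $\mu_\beta = e^{\beta u_\beta}dV$ for $u_\beta := \varphi_{\mu_\beta} + c$. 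Combining with $MA_\omega(\varphi_{\mu_\beta}) = \mu_\beta$ (formula \ref{eq:potential of meas}) produces the Monge--Amp\`ere equation \ref{eq:ma eq in thm temp deformed text}. Uniqueness of both $\mu_\beta$ and $u_\beta$ follows from the strict convexity of $D_{dV}$ (Jensen) together with convexity of $E_\omega$; the rigorous passage from the formal Euler--Lagrange argument to this identification is the delicate step and I would cite \cite{berm6, bbgz} for the relevant machinery.

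The extension to a singular reference measure $dV_s = e^v dV$ with $v$ having analytic singularities is the main obstacle. Absorbing $e^v$ into the Hamiltonian gives the modified Hamiltonian $\widetilde H^{(N_k)} = H^{(N_k)} - \beta^{-1}\sum_i v(x_i)$; the Gamma-convergence is still valid (the linear term $-\beta^{-1}\int v\,d\mu$ adds continuously to $E_\omega$ on measures of finite energy), but quasi-superharmonicity fails because the positive contributions $\sum_{\lambda_i>0}\lambda_i\log|h_i|^2$ give $-\Delta v$ a negative current of integration along $\{h_i = 0\}$. My plan would be to regularize, replacing $v$ by $v_\epsilon := u_0 + \sum\lambda_i\log(|h_i|^2+\epsilon)$ so that $v_\epsilon$ is smooth with $\Delta v_\epsilon$ uniformly bounded below on each compact set, apply the non-singular case above to obtain an LDP for $\mu_{\beta,\epsilon}^{(N_k)}$, and then pass to the limit $\epsilon \downarrow 0$ using monotone convergence of $e^{v_\epsilon} \downarrow e^v$ (after handling the cases $\lambda_i < 0$ by reversed monotonicity or by truncation). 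The compatibility of the Gamma-limit with this regularization — in particular, continuity of the free energy along $\epsilon \to 0$ on finite-energy measures, which uses that $v \in L^1(MA_\omega(\varphi))$ for $\varphi \in \mathcal{E}^1$ when $v$ has analytic singularities — is where I expect the genuine technical difficulty to lie.
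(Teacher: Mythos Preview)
Your approach for the smooth volume form case is correct and coincides with the paper's: apply Theorem \ref{thm:LDP subh text} using the Gamma-convergence of Theorem \ref{thm:Gamma conv polarized}, verify quasi-superharmonicity via the plurisubharmonicity of $x_i\mapsto k^{-1}\log\|\det\Psi^{(k)}\|^2$ (exactly as in Example \ref{exa:hol section becomes psh}), and deduce the Monge--Amp\`ere equation for the minimizer from the sub-gradient property in Prop.\ \ref{prop:sub-gradient} together with strict convexity of $F_\beta$. Your write-up is in fact more detailed than the paper's, which simply points to these ingredients.

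There is, however, a sign slip in your singular-case analysis. With $\widetilde H^{(N)} = H^{(N)} - \beta^{-1}\sum_i v(x_i)$ you need $\Delta_{x_i}\widetilde H^{(N)}\le C$, i.e.\ $-\beta^{-1}\Delta v \le C'$. Since each $\Delta\log|h_j|^2$ is a \emph{positive} current, the terms with $\lambda_j>0$ contribute $-\beta^{-1}\lambda_j\Delta\log|h_j|^2\le 0$, which is harmless; it is the terms with $\lambda_j<0$ (where $dV_s$ has poles) that produce an unbounded positive contribution and destroy quasi-superharmonicity. This also reverses the direction of your proposed monotone regularization. The paper does not work through this case here; it simply states that the argument ``follows in a similar way'' and cites \cite{berm8 comma 5}. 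A regularization scheme along your lines is a reasonable route, but once the sign is corrected you will see that the problematic case is when $e^{v}$ is not bounded above, and the stability of the LDP and of the free energy along the approximation then requires a genuine integrability argument (essentially that $e^{-\lambda_j\log|h_j|^2}$ is locally $L^1$ together with uniform control on the one-point correlation measures), rather than the entropy/finite-energy continuity you sketched.
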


\begin{proof}
The LDP in the case of a volume form $dV$ follows directly from the
previous theorem together with the Gamma-convergence in Theorem \ref{thm:Gamma conv polarized}
(the quasi-suberharmonicity of $E^{(N)}$ is a consequence of quasi-plurisubharmonicity
in example \ref{exa:hol section becomes psh}). The singular case
$dV_{s}$ also follows in a similar way (see \cite{berm8 comma 5}).
Since $E$ is convex and $S$ is strictly convex $F_{\beta}$ admits
precisely one minimizer. Finally, to see that the minimizer satisfies
the mean field type equation \ref{eq:temper deformed text} one can
use the sub-gradient property in Prop \ref{prop:sub-gradient} and
the convexity of $F_{\beta}$ (see \cite{berm8 comma 5}).
\end{proof}
We next turn to the $\C^{n}-$version of the previous theorem, which
is a new result (which covers Theorem \ref{thm:conv in law in C^n intro},
stated in Section \ref{sec:A-birds-eye}). We will denote by $E_{\phi}(\mu)$
the weighted pluricomplex energy of a probability measure $\mu$ on
$\C^{n},$ defined in Section \ref{subsec:The-(non-)weighted-pluricomplex}.
\begin{thm}
Given a weighted measure $(\phi,dV)$ which is admissible with respect
to $\beta$ (formula \ref{eq:adm}) the random measures $\delta_{N_{k}}$
on the probability spaces $((\C^{n})^{N},\mu_{\beta}^{(N_{k})})$
converge in law , as $N_{k}\rightarrow\infty,$ to the unique minimizer
$\mu_{\beta}$ of the following free energy type functional on $\mathcal{M}_{1}(\C^{n}):$
\[
F_{\phi,\beta}(\mu):=E_{\phi}(\mu)+\beta^{-1}D_{dV}(\mu),
\]
Moreover, the minimizer satisfies $\mu=e^{\beta\psi_{\beta}}dV,$
where $\psi_{\beta}$ is the unique solution in $\mathcal{E}^{\text{1}}(\C^{n})$
to 
\[
MA(\psi)=e^{\beta(\psi-\phi)}dV
\]
\end{thm}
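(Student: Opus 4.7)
The plan is to reduce to the compact polarized setting on $(\P^n,\mathcal{O}(1))$ via the standard embedding $\C^n\hookrightarrow\P^n$ of Section \ref{subsec:Compactification-of-}, and then invoke the underlying LDP mechanism of Theorem \ref{thm:LDP subh text} (whose polarized instantiation is Theorem \ref{thm:LDP for temp deform on polar}). Concretely, fix the Fubini--Study weight $\phi_{FS}$ on $\mathcal{O}(1)$ as a reference and decompose $\phi=\phi_{FS}+u$ on $\C^n$; the super-logarithmic growth forced by admissibility (formula \ref{eq:adm}) ensures that $u$ is bounded below. Using the homogenization isometry $|p_k(z)|^2 e^{-k\phi(z)}=\|P_k(1,z)\|^2_{k\phi}$ and noting (Remark \ref{rem:generator}) that $D^{(N_k)}$ agrees, up to a unimodular constant absorbed in $Z_{N_k,\beta}$, with the Slater determinant $\det\Psi^{(k)}$ of $H^0(\P^n,k\mathcal{O}(1))$, the Gibbs measure \ref{eq:prod meas mubetaN intro} can be rewritten as
\[
\mu_\beta^{(N_k)}=\frac{1}{Z_{N_k,\beta}}\|\det\Psi^{(k)}\|^{2\beta/k}_{k\phi_{FS}}\, d\tilde V^{\otimes N_k},\qquad d\tilde V:=e^{-\beta u}\,dV,
\]
with $d\tilde V$ extended by zero across the hyperplane at infinity $H_0=\{Z_0=0\}$. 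Admissibility guarantees that $d\tilde V$ is a finite measure on $\P^n$.

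With this identification the three hypotheses of Theorem \ref{thm:LDP subh text} are all in place on $\P^n$: $\Gamma$-convergence of $-N_k^{-1}k^{-1}\log\|\det\Psi^{(k)}\|^{2}_{k\phi_{FS}}$ towards $E_{\omega_{FS}}$ is Theorem \ref{thm:Gamma conv polarized}; uniform quasi-superharmonicity of the Hamiltonian follows from quasi-plurisubharmonicity of $\log\|\det\Psi^{(k)}\|^{2}_{k\phi_{FS}}$ on $(\P^n)^{N_k}$ (Example \ref{exa:hol section becomes psh}); and Sanov's theorem (Proposition \ref{prop:sanov}) applies to the finite reference measure $d\tilde V$. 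This yields an LDP on $\mathcal{M}_1(\P^n)$ with speed $\beta N_k$ and rate functional
\[
F_\beta^{\P^n}(\mu)=E_{\omega_{FS}}(\mu)+\beta^{-1}D_{d\tilde V}(\mu)-C_\beta,
\]
extended by $+\infty$ to measures charging $H_0$.

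To descend to $\C^n$ and recover the stated rate, expand $D_{d\tilde V}(\mu)=D_{dV}(\mu)+\beta\int u\,d\mu$ for $\mu$ supported on $\C^n$, and use the weighted pluricomplex energy identity $E_\phi(\mu)=E_{\omega_{FS}}(\mu)+\int u\,d\mu$ modulo an additive constant (Lemma \ref{lem:non-weighted energ} together with the definition of $E_\phi$ in Section \ref{subsec:The-(non-)weighted-pluricomplex}). Substituting yields exactly $F_\beta^{\P^n}(\mu)=F_{\phi,\beta}(\mu)-C_{\phi,\beta}$. The identification of the minimizer then follows from Euler--Lagrange: strict convexity of $D_{dV}$ forces uniqueness, and combining the sub-gradient formula $dE_\phi|_\mu=-\psi_\mu+\phi$ (Proposition \ref{prop:sub-gradient}, applied to the decomposition $E_\phi=E+\int\phi\,d\mu$) with $dD_{dV}|_\mu=\log(\mu/dV)$ gives $\mu_\beta=e^{\beta(\psi_\beta-\phi)}dV$ after normalizing $\psi_\beta:=\psi_{\mu_\beta}$, and hence $MA(\psi_\beta)=e^{\beta(\psi_\beta-\phi)}dV$.

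The main obstacle is the non-compactness of $\C^n$: one must verify both that the empirical measures do not concentrate near $H_0$ in the large-$N_k$ limit and that the measure $d\tilde V$ fits the proof of Theorem \ref{thm:LDP subh text} despite its degeneracy at $H_0$. Admissibility enters crucially here: it forces rate $+\infty$ on measures charging $H_0$ through divergence of $\int u\,d\mu$, thereby localizing the minimizer in $\C^n$, and the iterated exponential upper bound on $\phi$ provides the integrability needed to control $Z_{N_k,\beta}$ uniformly and to run the Riemannian orbifold submean inequality on $(\P^n)^{N_k}$ in this mildly singular setting. Where the "analytic singularities" framework of Theorem \ref{thm:LDP for temp deform on polar} does not literally accommodate $d\tilde V$, one can approximate $\phi$ by suitable cutoffs and pass to the limit using stability of $\Gamma$-convergence and lower semi-continuity of $F_{\phi,\beta}$.
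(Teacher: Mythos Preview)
Your reduction to $(\P^n,\mathcal{O}(1))$ and the identification of the Gibbs measure with a temperature-deformed determinantal process on $\P^n$ is exactly the paper's strategy, and your derivation of the rate functional and of the Euler--Lagrange equation is fine. The gap is in your last paragraph, where you claim that when $d\tilde V=e^{-\beta u}dV$ falls outside the analytic-singularities framework of Theorem \ref{thm:LDP for temp deform on polar} one can ``approximate $\phi$ by suitable cutoffs and pass to the limit using stability of $\Gamma$-convergence and lower semi-continuity of $F_{\phi,\beta}$.'' This only yields one of the two inequalities. Monotone cutoffs $\phi_R\nearrow\phi$ give $Z_N[\phi_R]\geq Z_N[\phi]$ and hence $\inf F_{\phi_R,\beta}\leq\liminf(-\beta^{-1}N^{-1}\log Z_N[\phi])$; letting $R\to\infty$ this produces the lower bound \ref{eq:lower bound for free with phi}. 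But neither lower semi-continuity of $F_{\phi,\beta}$ nor $\Gamma$-stability of the Hamiltonian gives the reverse inequality $\limsup(-\beta^{-1}N^{-1}\log Z_N[\phi])\leq\inf F_{\phi,\beta}$, which is what you need to close the G\"artner--Ellis argument (and hence the LDP).

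The missing ingredient is an a-priori tightness estimate preventing the Gibbs ensembles from pushing mass to infinity. The paper obtains it via Jensen's inequality \ref{eq:decomp of F R} together with the uniform bound \ref{eq:ineq for one point cor with R} on the one-point correlation measure, namely $(\mu_R^{(N)})_1\leq Ce^{-\phi_{R-1}}dV_s$ with $C$ independent of $R$ and $N$. This bound comes from the observation that $-H^{(N)}(\cdot,z_2,\dots,z_N)\in\mathcal{L}(\C^n)$ combined with compactness of sup-normalized $\omega_{FS}$-psh functions on $\P^n$, and it is precisely here that the iterated-exponential growth assumption on $\phi$ is used (to dominate $\phi e^{-\phi_{R-1}}$ uniformly in $R$ by an integrable function). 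Without this step your approximation scheme cannot rule out that $Z_N[\phi]$ is exponentially smaller than $Z_N[\phi_R]$.
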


\begin{proof}
We will identify $\phi$ with a lsc on $X:=\P^{n}$ which is equal
to $\infty$ on $A:=\P^{n}-\C^{n}.$ Accordingly, we identify the
probability measure $\mu_{\beta}^{(N)}$ on $(\C^{n})^{N_{k}}$ with
a probability measure on $X^{N_{k}}.$ We are going to prove the stronger
result that the laws of the corresponding empirical measure $\delta_{N}$
satisfy an LDP with rate functional $F_{\phi,\beta}$ on $\mathcal{M}_{1}(X).$
To this end, first recall that we may assume that $dV=d\lambda$ so
that the growth condition becomes
\[
\phi\geq\left(1+n/\beta+\epsilon\right)\phi_{FS}-C
\]
 on $\C^{n}$ (as pointed out in the discussion before the statement
of Theorem \ref{thm:conv in law in C^n intro})

\emph{Step 1: The LDP holds in the case when $\phi(z)=C_{\beta}\phi_{FS}+u$
for some constant $C_{\beta}$ and function $u$ such that $u$ extends
to a continuous function on $\P^{n}.$}

In this case we can rewrite 
\[
e^{-\beta\phi}dV=e^{-(\beta\phi_{FS}+u)}dV_{s},\,\,\,dV_{s}:=e^{-v}dV_{FS},\,\,\,v(z)=C'\phi_{FS}(z)
\]

The LDP now follows from the previous theorem, using that $v$ has
analytic singularities, viewed as a function on $\P^{n}.$ Indeed,
expressed in terms of homogeneous coordinates $Z_{i}$ on $\P^{n},$
viewed as holomorphic sections of $\mathcal{O}(1)\rightarrow\P^{n},$
we have have $\phi_{FS}:=\log(1+|z|^{2})=\log(|Z_{0}|^{2}+...+|Z_{n}|^{2})$
on $\P^{n}$ which is of the form \ref{eq:analytic sin}.

\emph{Step 2: The LDP holds when $\phi$ has at most polynomial growth}

Given a weight function $\phi$ we will denote by $Z_{N}[\phi]$ the
corresponding partition function (we shall drop the subscript $\beta$
in the following). It will be enough (and, in fact, equivalent) to
prove that the following holds for all admissible weight functions:
\begin{equation}
-\lim_{N\rightarrow\infty}\beta^{-1}N^{-1}\log Z_{N}[\phi]=\inf_{\mathcal{M}_{1}(X)}F_{\phi}\label{eq:asympto for free energy in pf}
\end{equation}
Indeed, applying the formula to $\phi+u$ for any $u\in C(X)$ the
LDP then follows from the Gärtner-Ellis theorem (as explained in \cite[Section 4.2]{berm8}).
To this end let us first prove the following bound, only assuming
that $\phi$ is lsc on $\C^{n}$ and that the growth assumption \ref{eq:adm}
holds: 
\begin{equation}
\inf_{\mathcal{M}_{1}(X)}F_{\phi}\leq\liminf_{N\rightarrow\infty}-\beta^{-1}N^{-1}\log Z_{N}[\phi]\label{eq:lower bound for free with phi}
\end{equation}
 First observe that there exists a sequence $\phi_{j},$ satisfying
the assumptions in Step one and increasing to $\phi.$ Indeed, taking
$C_{\beta}:=\left(1+n/\beta+\epsilon\right)$ we have that $\phi(z)=C_{\beta}\phi_{FS}+u,$
where $u$ is a lsc continuous function on $\C^{n}$ which is bounded
from below. Hence, $u$ extends to a lsc function on $\P^{n}.$ This
means that there exists a sequence of functions $u_{j}\in C(\P^{n})$
such that $u_{j}$ increases to $u$ on $\P^{n}$ and we can thus
take $\phi_{j}:=C_{\beta}\phi_{FS}+u_{j}.$ Now, since, $\phi_{j}\leq\phi$
we have, for any $j$ and $N,$ 
\[
-\beta^{-1}N^{-1}\log Z_{N}[\phi_{j}]\leq-\beta^{-1}N^{-1}\log Z_{N}[\phi]
\]

Using the LDP in Step one thus gives 
\[
\inf_{\mathcal{M}_{1}(X)}F_{\phi_{j}}\leq\liminf_{N\rightarrow\infty}-\beta^{-1}N^{-1}\log Z_{N}[\phi]
\]

Finally, letting $j\rightarrow\infty$ proves the lower bound \ref{eq:lower bound for free with phi}. 

To prove the upper bound we will also use the assumption that $\phi$
has at most polynomial growth. Fix $R>0$ and set
\[
\phi_{R}:=\min\{\phi,R\phi_{FS}\},\,\,\,K_{R}:=\{\phi=\phi_{R}\}
\]
 We may assume that the increasing compact subsets $K_{R}$ exhaust
$\C^{n}$ as $R\rightarrow\infty$ (otherwise we are in the realm
of the previous step). Denote by $\mu_{R}^{(N)}$ the Gibbs measure
with Hamiltonian $H_{\phi_{R}}^{(N)}.$ By Jensen's inequality 
\begin{equation}
-\beta^{-1}N^{-1}\log Z_{N}[\phi]\leq-\beta^{-1}N^{-1}\log Z_{N}[\phi_{R}]+N^{-1}\sum_{i=1}^{N}\int(\phi-\phi_{R})(z_{i})(\mu_{R}^{(N)})\label{eq:decomp of F R}
\end{equation}
Next, we note that, by permutation symmetry,
\[
N^{-1}\sum_{i=1}^{N}\int(\phi-\phi_{R})(z_{i})(\mu_{R}^{(N)})=\int_{\C^{n}}(\phi-\phi_{R})(\mu_{R}^{(N)})_{1},
\]
 where $(\mu_{R}^{(N)})_{1}$ denotes the one-point correlation measure
of $\mu_{R}^{(N)}$ (see Remark \ref{rem:The-point-correlation}).
Thus, in order to prove the upper bound in \ref{eq:asympto for free energy in pf},
it will now be enough to prove 
\begin{equation}
\limsup_{R\rightarrow\infty}\limsup_{N\rightarrow\infty}\int_{\C^{n}}(\phi-\phi_{R})(\mu_{R}^{(N)})_{1}=0\label{eq:limsup R}
\end{equation}
To this end it is, in turn, enough to prove the following

\begin{equation}
\text{claim:\,}(\mu_{R}^{(N)})_{1}\leq Ce^{-\phi_{R-1}}dV_{s}\label{eq:ineq for one point cor with R}
\end{equation}
 for a constant $C,$ independent of $R$ and $N.$ Indeed, then the
integral in formula \ref{eq:limsup R} may be estimated from above
as follows, for $R$ sufficiently large,
\begin{equation}
\int_{K_{R}}\phi(\mu_{R}^{(N)})_{1}\leq C\int_{K_{R}}\phi e^{-\phi_{R-1}}dV_{s}\leq\epsilon_{R},\label{eq:ineq in terms of epsilon R}
\end{equation}
 where $\epsilon_{R}$ is independent of $N$ and tends to zero as
$R\rightarrow\infty.$ In the last inequality we used that $\phi$
is assumed to have at most polynomial growth and hence there exists
a function $g\in L^{1}(\C^{n},dV_{s})$ such that $\phi e^{-\phi_{R-1}}\leq g$
for any sufficiently large $R.$ 

All that remains is thus to prove the claim \ref{eq:ineq for one point cor with R}.
To this end first observe that, by definition,
\[
(\mu_{R}^{(N)})_{1}(z):=dV_{s}(z)\frac{1}{Z_{N}[\phi_{R}]}\int_{\C^{N-1}}e^{-\beta H_{\phi_{R}}^{(N)}(z,z_{2},...,z_{N})}dV_{s}(z_{2})\cdots dV_{s}(z_{N})
\]
Since $-H^{(N)}(\cdot,z_{2},...,z_{N})$ is in $\mathcal{L}(\C^{n})$
it follows that
\[
(\mu_{R}^{(N)})_{1}(z)\leq dV_{s}(z)\sup_{\psi\in\mathcal{L}(\C^{n})}\frac{e^{(\psi-\phi_{R})(z)}}{\int_{\C^{n}}e^{(\psi-\phi_{R})}dV_{s}}
\]
To proof of the claim \ref{eq:ineq for one point cor with R} is thus
reduced to the prove of the following
\begin{equation}
\text{claim: \,}f_{R}(z):=\sup_{\psi\in\mathcal{L}(\C^{n})}\frac{e^{(\psi-\phi_{FS}(z))}}{\int_{\C^{n}}e^{(\psi-\phi_{R})}dV_{s}}\leq C\label{eq:claim 2}
\end{equation}
 for some constant $C,$ independent of $R.$ To prove the latter
claim first rewrite 
\[
\frac{e^{(\psi-\phi_{FS}(z))}}{\int_{\C^{n}}e^{(\psi-\phi_{R})}dV_{s}}=\frac{e^{\varphi(z)}}{\int_{\C^{n}}e^{\varphi}\nu_{R}},\,\,\nu_{R}:=e^{\phi_{FS}-\phi_{R}}dV_{s}
\]
 Since $\nu_{R}$ decreases to $\nu:=e^{\phi_{FS}-\phi}dV_{s}$ as
$R\rightarrow\infty$ we get, by identifying$\nu$ with a measure
on $\P^{n},$ 
\[
f_{R}(z)\leq\sup_{PSH(\P^{n},\omega_{FS})}\frac{e^{\varphi(z)}}{\int_{\P^{n}}e^{\varphi}\nu}=\sup_{PSH(\P^{n},\omega_{FS})_{0}}\frac{1}{\int_{\P^{n}}e^{\varphi}\nu},
\]
 where $PSH(\P^{n},\omega_{FS})_{0}$ denotes the space of all $\omega_{FS}-$psh
$\varphi$ such that $\sup_{\P^{n}}\varphi=0.$ By Prop \ref{prop:omega psh compact appr}
the latter space is compact in $L^{1}(\P^{n}).$ The proof of the
claim \ref{eq:claim 2} is thus concluded by noting that the functional
$\int_{\P^{n}}e^{\cdot}\nu$ is continuous wrt the $L^{1}-$topology
on $PSH(\P^{n},\omega_{FS})$ (indeed, if $\varphi_{j}\rightarrow\varphi$
in $L^{1}$ then $\varphi_{j}\leq C$ and, after perhaps passing to
a subsequence, $\varphi_{j}\rightarrow\varphi$ a.e. so that the dominated
convergence theorem can be invoked).

\emph{Step 3: The case of iterated exponential growth}

First assume that $\phi$ has exponential growth. Take $\phi_{R}$
increasing to $\phi,$ as $R\rightarrow\infty$ and such that $\phi_{R}$
is continuous and has polynomial growth. By Step 2, the asymptotics
\ref{eq:asympto for free energy in pf} hold for the weight functions
$\phi_{R}.$ Now, since, $\phi$ has exponential growth there exists
a function $g\in L^{1}(\C^{n},dV_{s})$ such that $\phi e^{-\phi_{R}}\leq g$
for any sufficiently large $R.$ Hence, we can repeat the argument
in the proof of Step 2 to conclude that the LDP also holds when $\phi$
has exponential growth. Iterating this argument conclude the proof
of Step 3 and hence of the LDP on $\P^{n}.$
\end{proof}
\begin{rem}
The LDP on $\mathcal{M}_{1}(\P^{n})$ implies, in fact, that the LDP
also holds on $\mathcal{M}_{1}(\C^{n})$ (defined as the topological
dual of the space of bounded functions on $\C^{n})$ with a good rate
functional obtained from the embedding of $\C^{n}$ into $\P^{n}$
(thus coinciding with $F_{\beta,\phi}).$ This follows from general
principles (the contraction principle), only using that $\C^{n}=X-A$
where $A$ is pluripolar.
\end{rem}

\section{\label{sec:Proofs-for-the limit beta}Proofs for the limits $\beta\rightarrow\infty$
and $\beta\rightarrow0$}

In this section the line bundle $L$ plays no role. Accordingly, we
will consider the more general ``transcendental'' setting of a compact
complex manifold $X$ endowed with a real closed $(1,1)-$ form $\omega,$
assumed cohomologous to a Kähler form on $X$ (see Section \ref{subsec:Transcendental-polarized-manifol}).

For any given measure $\mu_{0}$ on $X,$ not charging pluripolar
subsets, there exists \cite[Thm 3.2]{berm6} a unique $\varphi_{\beta}\in\mathcal{E}^{1}(X,\omega)$
such that
\begin{equation}
MA_{\omega}(\varphi_{\beta})=e^{\beta\varphi_{\beta}}\mu_{0}\label{eq:MA eq with beta in section proof beta}
\end{equation}
when $\beta>0$ (however, when $\beta=0$ one needs to assume that
$E_{\omega}(\mu_{0})<\infty;$ see Theorem \ref{thm:var sol of ma}). 

Following \cite{bbgz} we recall that a sequence $\varphi_{j}\in PSH(X,\omega)$
is said to \emph{converge to} $\varphi_{\infty}$ \emph{in energy,}
if the convergence holds in $L^{1}(X)$ and $\mathcal{E}(\varphi_{j})$
converges to $\mathcal{E}(\varphi),$ which moreover is assumed finite.

\subsection{The limit $\beta\rightarrow\infty$}

We start by considering the ``zero temperature-limit'' where $\beta\rightarrow\infty.$
Assume that $\mu_{0}$ is supported on a compact subset $K$ of $X.$
We will say that that $\mu_{0}$ is\emph{ determining wrt $(K,\omega)$}
if 
\[
\varphi\leq0\,\,\text{almost everywhere wrt }\mu_{0}\implies\varphi\leq0\,\,\text{on\,\ensuremath{K}}
\]
\begin{thm}
\label{thm:zero temp in sing case polar}Let $X$ be a compact complex
manifold and $\mu_{0}$ a measure on $X$ with support $K$ such that
$\mu_{0}$ does not charge pluripolar subsets and which is determining
for $(K,\omega).$ Then $\varphi_{\beta}$ converges to $\varphi_{(K,\omega)}$
in energy, as $\beta\rightarrow\infty.$ If moreover, $\mu_{0}$ is
determining for $(K,\omega_{u}),$ for any $u\in C(X),$ then 
\[
F_{\beta}:=E_{\omega}+\beta^{-1}D_{\mu_{0}}
\]
 Gamma-converges towards $E_{\omega},$ as $\beta\rightarrow\infty.$ 
\end{thm}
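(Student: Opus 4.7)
My plan is to prove part 1 (convergence of $\varphi_\beta$ in energy to $\varphi_{(K,\omega)}$) directly via a variational comparison, and then to deduce part 2 (the Gamma-convergence) from part 1 applied to each of the twisted forms $\omega_u$, $u\in C(X)$, via the Legendre-Fenchel criterion of Proposition \ref{prop:crit for gamma conv}.

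For part 1, I would use the standard variational characterization from \cite{bbgz}: after normalizing so that $\mu_0$ is a probability measure and $V=1$, the function $\varphi_\beta$ is the (mod-$\R$-unique) maximizer on $\mathcal{E}^1(X,\omega)$ of
\[
\mathcal{F}_\beta(\varphi):=\mathcal{E}(\varphi)-\beta^{-1}\log\int_X e^{\beta\varphi}\,d\mu_0,
\]
and the Monge-Amp\`ere normalization $\int MA_\omega(\varphi_\beta)=1$ yields $\int e^{\beta\varphi_\beta}d\mu_0=1$, hence $\mathcal{F}_\beta(\varphi_\beta)=\mathcal{E}(\varphi_\beta)$. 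Testing against the envelope, the inequality $\varphi_{(K,\omega)}\le 0$ on $K=\mathrm{supp}(\mu_0)$ gives $\int e^{\beta\varphi_{(K,\omega)}}d\mu_0\le 1$, so $\mathcal{F}_\beta(\varphi_{(K,\omega)})\ge\mathcal{E}(\varphi_{(K,\omega)})$, and maximality yields the energy lower bound $\mathcal{E}(\varphi_\beta)\ge\mathcal{E}(\varphi_{(K,\omega)})$. For the matching upper bound, Chebyshev applied to $\int e^{\beta\varphi_\beta}d\mu_0=1$ gives $\mu_0(\{\varphi_\beta>t\})\le e^{-\beta t}$ for every $t>0$. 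Extracting an $L^1$-convergent subsequence $\varphi_{\beta_j}\to\varphi_\infty$ via the compactness of Proposition \ref{prop:omega psh compact appr}, this estimate forces $\varphi_\infty\le 0$ $\mu_0$-almost everywhere; the determining property then promotes this to $\varphi_\infty\le 0$ pointwise on $K$, whence $\varphi_\infty\le\varphi_{(K,\omega)}$ by definition of the envelope. Combined with upper semicontinuity of $\mathcal{E}$ for weak limits, this sandwiches $\mathcal{E}(\varphi_\infty)=\mathcal{E}(\varphi_{(K,\omega)})$, and strict monotonicity of $\mathcal{E}$ forces $\varphi_\infty=\varphi_{(K,\omega)}$; convergence in energy along the full family then follows.

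For part 2, I invoke Proposition \ref{prop:crit for gamma conv}: it suffices to establish the pointwise convergence of Legendre-Fenchel transforms $F_\beta^*(-u)\to E_\omega^*(-u)$ on $C(X)$, together with Gateaux differentiability of the limit. The saddle-point equation for the maximum defining $F_\beta^*(-u)$ is $MA_\omega(\varphi)=e^{\beta(\varphi-u)}\mu_0$, and the substitution $\tilde\varphi:=\varphi-u$ converts it into $MA_{\omega_u}(\tilde\varphi)=e^{\beta\tilde\varphi}\mu_0$, namely the original equation with $\omega$ replaced by $\omega_u$. Since $\mu_0$ is assumed determining for $(K,\omega_u)$, part 1 applies to this twisted equation and gives convergence of $\tilde\varphi_\beta$ in energy to $\varphi_{(K,\omega_u)}$, whence the claimed pointwise convergence of the Legendre transforms. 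The Gateaux differentiability of $E_\omega^*$, which equals $\mathcal{E}\circ P_{(K,\omega)}$ up to an additive constant, follows from \cite[Thm B]{b-b} adapted to the envelope $P_{(K,\omega)}$ via the orthogonality relation.

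The main obstacle is ensuring the $L^1$-compactness of the family $\{\varphi_\beta\}$: since the Monge-Amp\`ere equation is not shift-invariant, $\varphi_\beta$ is rigidly pinned down by its equation, and one must show a priori that $\sup_X\varphi_\beta$ remains bounded as $\beta\to\infty$. The natural route is to combine the Jensen bound $\int\varphi_\beta\,d\mu_0\le 0$ with the fact that, on the weak-$L^1$-compact slice $\{\psi\in PSH(X,\omega):\sup_X\psi=0\}$, the linear functional $\int\cdot\,d\mu_0$ is bounded below (because $\mu_0$ does not charge pluripolar sets, by assumption); but carrying this out rigorously in the singular setting here requires some care with exceptional sets and will likely rest on the orthogonality relation combined with an ABP/comparison-principle estimate between $\varphi_\beta$ and $\varphi_{(K,\omega)}$.
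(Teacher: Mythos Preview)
Your overall architecture matches the paper's: both argue part~1 via the variational characterization of $\varphi_\beta$ as the maximizer of $\mathcal{F}_\beta(\varphi)=\mathcal{E}(\varphi)-\beta^{-1}\log\int e^{\beta\varphi}d\mu_0$, and both deduce part~2 by twisting $\omega\leadsto\omega_u$ and invoking the Legendre--Fenchel criterion (Proposition~\ref{prop:crit for gamma conv}). Your identification of the limit via Chebyshev (showing $\varphi_\infty\le 0$ $\mu_0$-a.e.\ and then using the determining property directly) is a legitimate alternative to the paper's route, which instead invokes the \emph{equivalence} of ``determining'' with the transcendental Bernstein--Markov inequality \cite[Thm~1.14]{b-b-w} to bound $\sup_K\varphi_\beta$ and then uses continuity of $\sup_K(\cdot)$ on $PSH(X,\omega)$. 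Your route is arguably more elementary in that it does not need that equivalence; it does however require one extra step you omitted, namely controlling $\limsup_j\varphi_{\beta_j}$ $\mu_0$-a.e.\ via the decreasing envelopes $\psi_k:=(\sup_{j\ge k}\varphi_{\beta_j})^*$ together with summability of the Chebyshev bounds along a sparse subsequence.

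There is, however, a genuine gap in your compactness argument. The claim that $\varphi\mapsto\int\varphi\,d\mu_0$ is bounded below on $\{\sup_X\varphi=0\}$ ``because $\mu_0$ does not charge pluripolar sets'' is false in general, even for determining measures. On $\P^1$ with $\omega=\omega_{FS}$, take $d\mu_0=r^{-1}(\log(1/r))^{-2}\,1_{\{r<1/2\}}\,dA$: this is absolutely continuous (hence does not charge polar sets and is determining for its support), yet $\varphi(z)=\log\frac{|z|^2}{1+|z|^2}$ has $\sup_X\varphi=0$ and $\int\varphi\,d\mu_0=-\infty$. So Jensen's inequality $\int\varphi_\beta\,d\mu_0\le 0$ alone cannot bound $\sup_X\varphi_\beta$ from above; your fallback suggestion of an ABP/comparison estimate does not seem to lead anywhere in this pluripotential setting. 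The paper closes this gap precisely via the transcendental Bernstein--Markov property: from $\mathcal{L}_\beta(\varphi_\beta)=0$ and the inequality $\sup_K\varphi\le\epsilon+C/\beta+\mathcal{L}_\beta(\varphi)$ one gets $\sup_K\varphi_\beta\le\epsilon+C/\beta$, and then $\sup_X\varphi_\beta\le\sup_K\varphi_\beta+\sup_X\varphi_{(K,\omega)}$ since $K$ is non-pluripolar. Alternatively, your own Chebyshev estimate can be recycled to prove compactness by contradiction: if $\sup_K\varphi_{\beta_j}\to+\infty$, then the shifted functions $\tilde\varphi_j:=\varphi_{\beta_j}-\sup_K\varphi_{\beta_j}$ satisfy $\mu_0(\{\tilde\varphi_j>-s\})\to 0$ for every $s>0$, and the envelope argument above forces any $L^1$-limit $\tilde\varphi_\infty$ to equal $-\infty$ $\mu_0$-a.e., contradicting that $\mu_0$ does not charge the pluripolar set $\{\tilde\varphi_\infty=-\infty\}$.
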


\begin{proof}
The first part is proved essentially as in \cite{berm11}, where it
is assumed that $K=X.$ The proof may be summarized as follows. First,
by a concavity argument, $\varphi_{\beta}$ is a minimizer of the
functional
\begin{equation}
\mathcal{J}_{\beta}(\varphi):=-\mathcal{E}(\varphi)+\mathcal{L}_{\beta}(\varphi),\,\,\,\mathcal{L}_{\beta}(\varphi):=\beta^{-1}\log\int e^{\beta\varphi}\mu_{0}\label{eq:def of J beta}
\end{equation}
on $\mathcal{E}^{1}(X,\omega).$ In fact, the existence of the solution
$\varphi_{\beta}$ can be shown by minimizing $\mathcal{J}_{\beta}$
\cite[Thm 3.2]{berm6}. Now, by the determining property and \cite[Thm 1.14]{b-b-w}
there exists a constant $C$ such that
\begin{equation}
\sup_{K}\varphi-C/\beta-\epsilon\leq\mathcal{L}_{\beta}(\varphi)\leq\sup_{K}\varphi+C/\beta\label{eq:L beta in terms of sup}
\end{equation}
Moreover, the functional $\sup_{K}(\cdot)$ is continuous on $PSH(X,\omega)$
\cite[Cor 1.16]{b-b-w}. Since, 
\[
\mathcal{L}_{\beta}(\varphi_{\beta})=0
\]
a compactness argument allows one to conclude that, after perhaps
passing to a subsequence, $\varphi_{\beta}$ converges in $L^{1}$
to a minimizer of 
\[
\mathcal{J}_{\infty}(\varphi):=-\mathcal{E}(\varphi)+\sup_{K}\varphi
\]
on the subspace $\mathcal{K}$ of $PSH(X,\omega)$ consisting of all
$\varphi$ such that $\sup_{K}\varphi=0.$ Next, one shows that there
exists a unique minimizer, namely $\varphi=P_{(K,\omega)}0$ (using
that $\mathcal{E}$ is strictly increasing on the space of finite
energy functions in $PSH(X,\omega))$$.$ Hence, $\varphi_{\beta}\rightarrow P_{(K,\omega)}0$
in $L^{1}$ and 
\begin{equation}
\inf_{PSH(X,\omega)}\mathcal{J}_{\beta}\rightarrow-\mathcal{E}(P_{K}0)\label{eq:conv of inf beta as beta infty}
\end{equation}
 Using \ref{eq:L beta in terms of sup} this implies that $\mathcal{E}(\varphi_{\beta})\rightarrow\mathcal{E}(P_{(K,\omega)}0),$
i.e the convergence holds in energy, as desired.

To prove the Gamma-convergence of $F_{\beta}$ we first observe that
\begin{equation}
-\inf_{PSH(X,\omega)}\mathcal{J}_{\beta}=\inf_{\mathcal{M}_{1}(K)}E_{\omega}+\beta^{-1}D_{\mu_{0}}\label{eq:inf J beta as another inf}
\end{equation}
This can be checked explicitly, but it also follows from general duality
considerations (see \cite{berm6}).

Now, replacing $\omega$ with $\omega_{u}$ we deduce from \ref{eq:conv of inf beta as beta infty}
that 
\begin{equation}
\inf_{PSH(X,\omega)}\mathcal{J}_{\beta,u}\rightarrow-\mathcal{E}(P_{K}u),\label{eq:conv of inf J beta u as beta inft}
\end{equation}
 where $\mathcal{J}_{\beta,u}$ is defined by replacing $\mu_{0}$
in the definition of $\mathcal{L}_{\beta}$ with $e^{-\beta u}\mu_{0}.$
Moreover, by \ref{eq:inf J beta as another inf}
\begin{equation}
-\inf_{PSH(X,\omega)}\mathcal{J}_{\beta,u}=\inf_{\mathcal{M}_{1}(K)}\left(E_{\beta}(\mu)+\beta^{-1}D_{\mu_{0}}(\mu)+\int u\mu\right)\label{eq:inf J beta u as another inf}
\end{equation}
(using that $D_{e^{-\beta u}\mu_{0}}=D_{\mu_{0}}+\beta\int u\mu).$
Now, combining \ref{eq:inf J beta u as another inf} and \ref{eq:conv of inf J beta u as beta inft}
reveals that the Legendre-Fenchel transform on $C(K)$ of $F_{\beta}$
converges towards the functional $-\mathcal{E}\circ P_{(K,\omega}(-\cdot),$
which is Gateaux differentiable \cite[Thm B]{b-b} (see the proof
of Theorem \ref{thm:Gamma conv polarized}). Hence, the Gamma-convergence
in question follows from Prop \ref{prop:crit for gamma conv}, also
using that $E_{\omega}$ is the Legendre-Fenchel transform of $-\mathcal{E}\circ P_{(K,\omega)}(-\cdot)$
(just as in the proof of Theorem \ref{thm:Gamma conv polarized}).
\end{proof}
The previous theorem immediately implies Theorem \ref{thm:zero temp limit singular case intro}
in $\C^{n},$ using the identifications in Section \ref{subsec:Compactification-of-}.

\subsection{The limit $\beta\rightarrow0$}

Next, we will consider the ``infinite temperature limit'' when $\beta\rightarrow0.$
\begin{thm}
Let $X$ be a compact complex manifold and $\mu_{0}$ a probability
measure on $X$ such that $E_{\omega}(\mu)<\infty.$ When $\beta\rightarrow0$
the functions $\varphi_{\beta}$ converge to $\varphi_{0}$ in energy,
where $\varphi_{0}$ is the unique solution to 
\[
MA_{\omega}(\varphi)=\mu_{0}
\]
 satisfying the normalization condition 
\begin{equation}
\int_{X}\varphi\mu_{0}=0\label{eq:normalization condition in them text}
\end{equation}
More generally, given a lsc function $u_{0}$ on $X$ we consider
the equation \ref{eq:MA eq with beta in section proof beta} with
$\mu_{0}$ replaced by $e^{-\beta u_{0}}\mu_{0}.$ Then the result
above still holds under the assumption that 
\[
E_{\omega,u_{0}}(\mu):=E_{\omega}(\mu)+\int u_{0}\mu<\infty,
\]
if $\varphi$ is replaced by $\varphi-u_{0}$ in the normalization
condition \ref{eq:normalization condition in them text}.
\end{thm}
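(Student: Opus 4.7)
The strategy is purely variational. By \cite[Thm~3.2]{berm6} (cited in the proof of Theorem \ref{thm:zero temp in sing case polar}), the solution $\varphi_\beta$ is the unique minimizer on $\mathcal{E}^{1}(X,\omega)$ of
\[
\mathcal{J}_\beta(\varphi):=-\mathcal{E}(\varphi)+\mathcal{L}_\beta(\varphi),\qquad \mathcal{L}_\beta(\varphi):=\beta^{-1}\log\int e^{\beta\varphi}\mu_0,
\]
and integrating the equation $MA_\omega(\varphi_\beta)=e^{\beta\varphi_\beta}\mu_0$ against $1$ (using that $MA_\omega(\varphi_\beta)$ is a probability measure by $\mathcal{E}^{1}$-regularity) yields $\mathcal{L}_\beta(\varphi_\beta)=0$, so that $\mathcal{J}_\beta(\varphi_\beta)=-\mathcal{E}(\varphi_\beta)$. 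The natural candidate limit functional is $\mathcal{J}_0(\varphi):=-\mathcal{E}(\varphi)+\int\varphi\,\mu_0$, and by Theorem \ref{thm:var sol of ma} the assumption $E_\omega(\mu_0)<\infty$ ensures that $\mathcal{J}_0$ attains its infimum precisely on the (one-parameter family modulo additive constants of) solutions of $MA_\omega(\varphi)=\mu_0$.

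The first main step is to show that $\mathcal{J}_\beta$ Gamma-converges to $\mathcal{J}_0$ on $\mathcal{E}^{1}(X,\omega)$ as $\beta\to 0^+$. The lower bound is Jensen's inequality, which gives $\mathcal{L}_\beta(\varphi)\geq\int\varphi\,\mu_0$ for all $\beta>0$ and all $\varphi$ for which the right-hand side is defined; hence $\liminf\mathcal{J}_\beta(\varphi_\beta)\geq\mathcal{J}_0(\varphi)$ whenever $\varphi_\beta\to\varphi$ in $L^{1}$ (combined with the upper semi-continuity of $\mathcal{E}$ and lower semi-continuity of $\mu\mapsto\int\varphi_0\,\mu$ along decreasing regularizations). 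For the recovery sequence one takes $\varphi_\beta\equiv\varphi$ and uses dominated convergence after truncating $\varphi$ from above (which is harmless as $\varphi$ is bounded above) to obtain $\mathcal{L}_\beta(\varphi)\to\int\varphi\,\mu_0$; the finiteness $E_\omega(\mu_0)<\infty$ guarantees $\int\varphi\,\mu_0>-\infty$ for every $\varphi\in\mathcal{E}^{1}(X,\omega)$ by Theorem \ref{thm:var sol of ma}.

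The second step is equicoercivity. Normalize by $\sup_X\varphi_\beta=0$, which is permissible since $\mathcal{J}_\beta$ is translation-invariant (both $\mathcal{E}$ and $\mathcal{L}_\beta$ shift by the same constant). The compactness property in Proposition \ref{prop:omega psh compact appr} then extracts $L^{1}$-subsequential limits $\varphi_*\in PSH(X,\omega)$, and using Jensen we get $-\mathcal{E}(\varphi_\beta)=\mathcal{J}_\beta(\varphi_\beta)\leq\mathcal{J}_\beta(\varphi_{\mu_0})\leq \mathcal{J}_{\beta_0}(\varphi_{\mu_0})$ for any fixed $\beta_0>\beta$ small, so $\mathcal{E}(\varphi_\beta)$ is bounded below and $\varphi_*\in\mathcal{E}^{1}$. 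By Gamma-convergence $\varphi_*$ minimizes $\mathcal{J}_0$, hence $MA_\omega(\varphi_*)=\mu_0$ by Theorem \ref{thm:var sol of ma}. Pinning down the additive constant: from $\mathcal{L}_\beta(\varphi_\beta)=0$ and Jensen, $\int\varphi_\beta\mu_0\leq 0$; for the reverse bound, apply $\mathcal{L}_\beta(\varphi_\beta)=0$ in the form $1=\int e^{\beta\varphi_\beta}\mu_0\geq\int_{\{\varphi_\beta\geq -M\}}e^{-\beta M}\mu_0+\text{(small term)}$ together with the uniform upper bound $\varphi_\beta\leq 0$, and pass to the limit with dominated convergence (using $E_\omega(\mu_0)<\infty$ to rule out mass escape to $-\infty$) to conclude $\int\varphi_*\mu_0=0$. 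This identifies $\varphi_*=\varphi_0$ and upgrades subsequential convergence to convergence along the full net.

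Finally, energy convergence follows for free: by Gamma-convergence and the $L^{1}$-convergence $\varphi_\beta\to\varphi_0$ one has $\mathcal{J}_\beta(\varphi_\beta)\to\mathcal{J}_0(\varphi_0)$, which combined with $\mathcal{J}_\beta(\varphi_\beta)=-\mathcal{E}(\varphi_\beta)$ and $\mathcal{J}_0(\varphi_0)=-\mathcal{E}(\varphi_0)$ (using the normalization $\int\varphi_0\,\mu_0=0$) yields $\mathcal{E}(\varphi_\beta)\to\mathcal{E}(\varphi_0)$, i.e.\ convergence in energy. For the generalization with lsc $u_0$, replace $\mu_0$ by $e^{-\beta u_0}\mu_0$ throughout: $\mathcal{L}_\beta$ gains an additional $-\int u_0\mu$ term in the limit, the coercivity input $E_\omega(\mu_0)<\infty$ is replaced by the hypothesis $E_{\omega,u_0}(\mu_0)<\infty$, and the normalization condition involves $\varphi_\beta-u_0$ as claimed. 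The main obstacle in the whole argument is the identification of the normalization constant — controlling $\int\varphi_\beta\,\mu_0$ uniformly in $\beta$ when $\varphi_\beta$ is only in $\mathcal{E}^{1}$ (and possibly unbounded below along pluripolar sets, though $\mu_0$ does not see those) requires the finite-energy hypothesis in an essential way via Theorem \ref{thm:var sol of ma}.
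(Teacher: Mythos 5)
Your overall strategy — minimizing $\mathcal{J}_\beta$, Jensen's inequality $\mathcal{L}_0\le\mathcal{L}_\beta$, $L^1$-compactness of $PSH(X,\omega)$, and the identification via Theorem~\ref{thm:var sol of ma} — matches the paper's argument, and framing it explicitly as $\Gamma$-convergence of $\mathcal{J}_\beta$ to $\mathcal{J}_0$ is a pleasant way to organize the inequality chains the paper writes out by hand. The energy-convergence conclusion and the generalization with $u_0$ are handled the same way.

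However, there is a genuine gap in the step you yourself flag as the main obstacle: pinning down the additive constant. You begin by imposing $\sup_X\varphi_\beta=0$ ``since $\mathcal{J}_\beta$ is translation-invariant,'' but then in the next breath invoke $\mathcal{L}_\beta(\varphi_\beta)=0$ (equivalently $\int e^{\beta\varphi_\beta}\mu_0=1$) to deduce $\int\varphi_\beta\mu_0\le 0$. These are two \emph{different} normalizations and cannot hold simultaneously: if $\sup\varphi_\beta=0$ then $\int e^{\beta\varphi_\beta}\mu_0<1$ unless $\varphi_\beta\equiv 0$ $\mu_0$-a.e. The solution $\varphi_\beta$ of the equation $MA_\omega(\varphi)=e^{\beta\varphi}\mu_0$ carries the normalization $\mathcal{L}_\beta(\varphi_\beta)=0$ built in; if you renormalize to $\sup=0$ you must separately track the constants $c_\beta:=\sup\varphi_\beta$, and showing $c_\beta\to-\int\tilde\varphi_0\,\mu_0$ is precisely the difficulty you are trying to sidestep. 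Moreover the sketched lower bound (``$1=\int e^{\beta\varphi_\beta}\mu_0\ge\int_{\{\varphi_\beta\ge -M\}}e^{-\beta M}\mu_0+\text{(small term)}$ \dots pass to the limit with dominated convergence'') does not, as written, produce a bound $\int\varphi_\beta\mu_0\ge -o(1)$; the content one would need is a quantitative version of Jensen together with a uniform integrability estimate for $\varphi_\beta$ against $\mu_0$, and that is never supplied. The paper resolves this differently and cleanly, via Legendre duality: since $\varphi_\beta$ minimizes $\mathcal{J}_\beta$ one has $-\mathcal{J}_\beta(\varphi_\beta)=E_\omega(\mu_\beta)+\beta^{-1}D_{\mu_0}(\mu_\beta)$ with $\mu_\beta:=MA_\omega(\varphi_\beta)$; Step~1 gives $-\mathcal{J}_\beta(\varphi_\beta)\to E_\omega(\mu_0)$ and the energy convergence $\varphi_\beta\to\varphi_0$ gives $E_\omega(\mu_\beta)\to E_\omega(\mu_0)$, hence $\beta^{-1}D_{\mu_0}(\mu_\beta)=\int\varphi_\beta\mu_\beta\to 0$; finally the continuity of $\mathcal{L}_0$ under convergence in energy (from \cite{bbgz}) yields $\int\varphi_0\mu_0=0$. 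That duality step is the missing ingredient in your proposal. A related smaller issue: for the equicoercivity the paper relies on the coercivity estimate $\mathcal{J}_0(\varphi)\ge C(-\mathcal{E}(\varphi)+\int\varphi\,dV)-C$ from \cite{bbgz} to control $\int\varphi_\beta\,dV$ directly in the equation's own normalization, rather than switching to $\sup$-normalization.
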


\begin{proof}
The function $\varphi_{\beta}$ is the unique minimizer of the functional
$\mathcal{J}_{\beta}$ appearing in the beginning of the proof of
Theorem \ref{thm:zero temp in sing case polar}, subject to the normalization
condition 
\begin{equation}
\mathcal{L}_{\beta}(u)=0\label{eq:normal cond in proof thm}
\end{equation}
Setting $\mathcal{L}_{0}(u):=\int u\mu_{0}$ we thus need to prove
that $u_{\beta}$ converges towards the unique minimizer $u_{0}$
of $\mathcal{J}_{0},$ subject to the normalization condition \ref{eq:normal cond in proof thm}. 

\emph{Step 1: After perhaps passing to a subsequence $\varphi_{\beta}$
converges to $\varphi_{0}$ where $\varphi_{0}$ minimizes $\mathcal{J}_{0}$
and moreover
\[
\mathcal{J}_{\beta}(\varphi_{\beta})\rightarrow-E_{\omega}(\mu_{0})
\]
}

To prove this first observe that $\mathcal{L}_{0}(u)\leq\mathcal{L}_{\beta}(u)$
for any $\beta>0$ and function $u.$ Hence, for any given $\varphi\in PSH(X,\omega)\cap C(X)$
we have, using the very definition of $E_{\omega}(\mu_{0}),$ that
\begin{equation}
-E_{\omega}(\mu_{0})\leq\mathcal{J}_{0}(\varphi_{\beta})\leq\mathcal{J}_{\beta}(\varphi_{\beta})\leq\mathcal{J}_{\beta}(\varphi)\leq\mathcal{J}_{0}(\varphi)+C_{\varphi}\beta,\label{eq:ineq with J}
\end{equation}
In particular, using the normalization condition \ref{eq:normal cond in proof thm}
we deduce that 
\begin{equation}
-E_{\omega}(\mu_{0})\leq-\mathcal{E}(\varphi_{\beta})\leq C\label{eq:uniform control on energy}
\end{equation}

Next, we recall the following coercivity estimate \cite{bbgz}: for
a fixed volume form $dV$ on $X$ there exists positive constant $C$
such that
\begin{equation}
\mathcal{J}_{0}(\varphi)\geq C\left(-\mathcal{E}(\varphi)+\int\varphi dV\right)-C\label{eq:coerciv}
\end{equation}
on $PSH(X,\omega).$ Combined with the inequality \ref{eq:uniform control on energy}
this gives

\[
\int\varphi_{\beta}dV\leq C'
\]
By the $L^{1}-$compactness of $PSH(X,\omega)_{0}$ (Prop \ref{prop:omega psh compact appr})
we deduce that, after perhaps passing to a subsequence, $\varphi_{\beta}\rightarrow\varphi_{0}$
in $L^{1}(X).$ Since $\mathcal{J}_{0}$ is lsc the inequalities \ref{eq:ineq with J}
thus imply that $\varphi_{0}$ minimizes $\mathcal{J}_{0},$ as desired.
Moreover, it also follows from the inequalities \ref{eq:ineq with J}
that $\mathcal{J}_{\beta}(\varphi_{\beta})\rightarrow\mathcal{J}_{0}(\varphi_{0})=-E_{\omega}(\mu_{0}).$
Hence, by the normalization condition \ref{eq:normal cond in proof thm}
$\mathcal{E}(\varphi_{\beta})\rightarrow\mathcal{E}(\varphi_{0}),$
which concludes the proof of Step 1. 

\emph{Step 2: $\int\varphi_{\beta}\mu_{0}\rightarrow0$}

First observe that, when $\beta>0$ and $\mu_{\beta}:=MA_{\omega}(\varphi_{\beta})$
\[
-\mathcal{J}_{\beta}(\varphi_{\beta})=F_{\beta}(\mu_{\beta}):=E_{\omega}(\mu_{\beta})+\beta^{-1}D_{\mu_{0}}(\mu_{\beta}),
\]
 as follows from \ref{eq:inf J beta as another inf}. By the previous
step the lhs above converges towards $E_{\omega}(\mu_{0})$ and so
does $E_{\omega}(\mu_{\beta})$ (since $\varphi_{\beta}$ converges
towards $\varphi_{0}$ in energy). Hence,
\[
\beta^{-1}D_{\mu_{0}}(\mu_{\beta})\rightarrow0.
\]
 But, since $\mu_{\beta}=e^{\beta\varphi_{\beta}}\mu_{0}$ we have
$\beta^{-1}D_{\mu_{0}}(\mu_{\beta})=\int\varphi_{\beta}\mu_{\beta},$
which the concludes the proof of Step 2.

Now, by Step 1, $\varphi_{\beta_{j}}\rightarrow\varphi_{0}$ where
$\varphi_{0}$ minimizes $\mathcal{J}_{0}$ and hence solves the equation
\ref{eq:potential of meas} (by Theorem \ref{thm:var sol of ma}).
Next, by Step 2 and the fact that $\mathcal{L}_{0}$ is continuous
wrt convergence in energy \cite{bbgz}, we have $\mathcal{L}_{0}(\varphi_{0})=0.$
Hence, by the uniqueness in Theorem \ref{thm:var sol of ma} the whole
family $\varphi_{\beta}$ converges to $\varphi_{0},$ which has the
required property.

This concludes the proof of the theorem in the case when $u_{0}=0.$
In order to prove the general case we fix a family $u^{R}\in C(X)$
such that $u^{R}$ increases to $u_{0}$ as $R\rightarrow\infty.$
The existence of $\varphi_{\beta}$ follows from the previous case,
by setting

\[
\mathcal{L}_{\beta}(u):=\beta^{-1}\log\int e^{\beta(u-u_{0})}\mu_{0}
\]
and using that the measure $e^{-\beta u_{0}}\mu_{0}$ has finite energy
(since it is dominated by $C_{0}\mu_{0}).$ Moreover, the inequalities
\ref{eq:ineq with J} still hold if $E_{\omega}(\mu_{0})$ is replaced
by $E_{\omega,u_{0}}(\mu)$ and the last inequality is replaced by
\[
\mathcal{J}_{\beta,u_{0}}(\varphi)\leq-\mathcal{E}(\varphi)+\int(\varphi-u^{R})\mu_{0}+C_{u,R}\beta
\]
(only using that $u\geq u^{R}).$ We note that in the limit when first
$\beta\rightarrow0$ and then $R\rightarrow\infty$ the right hand
side above tends to $\mathcal{J}_{0,u_{0}}(\varphi).$ We can then
conclude essentially as before.
\end{proof}
\begin{cor}
Let $\mu_{0}$ be a probability measure on $\C^{n}$and $\phi$ a
continuous function on $\C^{n}$ with super logarithmic growth. Assume
that $E_{\phi}(\mu_{0})<\infty.$ Given $\beta>0$ there exists a
unique $\psi_{\beta}\in\mathcal{E}^{1}(\C^{n})$ such that
\[
MA(\psi_{\beta})=e^{\beta(\psi_{\beta}-\phi)}\mu_{0}
\]
Moreover, when $\beta\rightarrow0$ the functions $\psi_{\beta}$
converge to $\psi_{0}$ in energy, where $\psi_{0}$ is the unique
solution to 
\[
MA(\psi)=\mu_{0}
\]
 satisfying the normalization condition 
\[
\int_{X}(\phi-\psi_{0})\mu_{0}=0
\]
\end{cor}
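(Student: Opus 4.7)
The plan is to deduce the corollary from the immediately preceding theorem by compactifying $\C^{n}$ to $\P^{n}$ as in Section~\ref{subsec:Compactification-of-}. Under the correspondence of Lemma~\ref{lem:correspond funct metric in Cn}, setting $\varphi:=\psi-\phi_{FS}$ identifies the Lelong class $\mathcal{L}(\C^{n})$ with $PSH(\P^{n},\omega_{FS})$, and the continuous, super-logarithmic weight $\phi$ with the lsc function $u_{0}:=\phi-\phi_{FS}$ on $\P^{n}$ (finite and continuous on $\C^{n}$, bounded below by the growth assumption, and equal to $+\infty$ on the hyperplane at infinity $H_{0}$, which is pluripolar). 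Since $\int_{\P^{n}}\omega_{FS}^{n}=1$, one has $MA(\psi)=MA_{\omega_{FS}}(\varphi)$, so the equation $MA(\psi_{\beta})=e^{\beta(\psi_{\beta}-\phi)}\mu_{0}$ becomes $MA_{\omega_{FS}}(\varphi_{\beta})=e^{\beta(\varphi_{\beta}-u_{0})}\mu_{0}$, and the proposed normalization $\int(\phi-\psi_{0})\mu_{0}=0$ becomes $\int(\varphi_{0}-u_{0})\mu_{0}=0$. Thus the corollary is exactly the lsc-weight case of the preceding theorem on $X=\P^{n}$ with reference form $\omega_{FS}$ and reference weight $u_{0}$.

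The second step is to verify that the hypothesis $E_{\phi}(\mu_{0})<\infty$ is equivalent to the hypothesis $E_{\omega_{FS},u_{0}}(\mu_{0})=E_{\omega_{FS}}(\mu_{0})+\int u_{0}\,\mu_{0}<\infty$ needed to invoke the theorem. By Lemma~\ref{lem:non-weighted energ} and the definition of the weighted pluricomplex energy in Section~\ref{subsec:The-(non-)weighted-pluricomplex}, $E_{\phi}(\mu_{0})$ and $E_{\omega_{FS},u_{0}}(\mu_{0})$ differ by an additive constant plus the pairing $\int(\phi_{FS}-\psi_{T^{n}})\mu_{0}$, which is automatically finite for any $\mu_{0}$ of finite pluricomplex energy since such measures do not charge pluripolar sets and integrate all functions in $\mathcal{L}(\C^{n})$. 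Granted the equivalence, the existence and uniqueness of $\psi_{\beta}\in\mathcal{E}^{1}(\C^{n})$ for each $\beta>0$ follows from the first part of the preceding theorem (using \cite[Thm 3.2]{berm6}) applied to the finite measure $e^{-\beta u_{0}}\mu_{0}$ on $\P^{n}$, which is non-pluripolar-charging because $\mu_{0}$ isn't and $u_{0}$ is finite off the pluripolar set $H_{0}$. Finally, the convergence $\psi_{\beta}\to\psi_{0}$ in energy with the asserted normalization is obtained by translating the second conclusion of the preceding theorem back via $\psi=\varphi+\phi_{FS}$.

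The main obstacle I anticipate is not the invocation of the preceding theorem itself, but the bookkeeping of this step~2: pinning down the precise constants in the identity relating $E_{\phi}(\mu_{0})$ to $E_{\omega_{FS},u_{0}}(\mu_{0})$, and confirming that the lsc-weight version of the theorem genuinely accommodates a weight which equals $+\infty$ on a pluripolar set. The latter is essentially harmless because finite-energy probability measures and non-pluripolar-charging measures ignore $H_{0}$, but it should be checked that the coercivity estimate~\eqref{eq:coerciv}, the continuity of $\mathcal{L}_{0}$ on convergence in energy, and the approximation $u^{R}\nearrow u_{0}$ used in the singular case of the preceding theorem all carry through without modification when $u_{0}=+\infty$ on $H_{0}$; this reduces to choosing $u^{R}:=\min(u_{0},R)\in C(\P^{n})$ and passing to the limit exactly as in the last paragraph of the preceding proof.
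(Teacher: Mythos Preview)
Your proposal is correct and follows essentially the same route as the paper: compactify to $\P^{n}$, write $\phi=\phi_{FS}+u_{0}$ with $u_{0}$ lsc and bounded below on $\P^{n}$, and invoke the lsc-weight case of the preceding theorem. The paper is terser about the bookkeeping you flag in step~2 (it simply points to the proof of Lemma~\ref{lem:non-weighted energ} for the finiteness of $E_{\omega_{FS}}(\mu_{0})$ and leaves the rest implicit), but your expanded verification is accurate and the concerns you raise about $u_{0}=+\infty$ on $H_{0}$ are handled exactly as you suggest.
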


\begin{proof}
Identifying $\mu_{0}$ with a probability measure on $\P^{n}$ we
get $E_{\omega}(\mu_{0})<\infty,$ when $\omega$ is taken, fore example,
as the curvature form of the Fubini-Study metric on $\mathcal{O}(1)\rightarrow\P^{n}.$
This is shown precisely as in the proof of Lemma \ref{lem:non-weighted energ}.
The case when $\phi=\phi_{FS}$ then follows directly from the previous
theorem with $u_{0}=0.$ In the general case we decompose $\phi=u_{0}+\phi_{FS},$
where, by assumption $u_{0}\geq-C$ on $\C^{n}.$ Hence, we can identify
$u_{0}$ with a lsc function on $\P^{n}$ and apply the general form
of the previous theorem.
\end{proof}

\section{\label{sec:Outlook-and-open}Outlook and open problems}

\subsection{More general background measures and the case of compact domains
in $\C^{n}$}

A natural question is whether the volume form $dV$ appearing in Theorem
\ref{thm:conv in law in C^n intro} can be replaced by a more singular
measure on $\C^{n}.$ And what are the conditions that need to be
imposed on $dV?$ The following conjectural answer is motivated by
the fact that the existence of a finite energy solution to the equation
\ref{eq:ma eq with beta intro} only requires that $dV$ does not
charge pluripolar subsets.
\begin{conjecture}
\label{conj:conv in law when nonpluripolar}The result in Theorem
\ref{thm:conv in law in C^n intro} holds more generally if the assumption
that $dV$ be a volume form is replaced by the assumption that $dV$
does not charge pluripolar subsets.
\end{conjecture}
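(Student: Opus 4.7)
The plan is to reduce the conjecture, via the standard compactification $\C^n \hookrightarrow \P^n$ of Section~\ref{subsec:Compactification-of-}, to a Large Deviation Principle on $X := \P^n$ for the Gibbs measures on $X^{N_k}$ built from a reference measure $dV$ that does not charge pluripolar subsets, and then to establish this LDP by approximation from the smooth setting already handled by Theorem~\ref{thm:LDP for temp deform on polar}.

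The upper bound in the LDP carries over from the proof of Theorem~\ref{thm:conv in law in C^n intro} essentially verbatim. Combining the Gamma-convergence of the determinantal energy $E_\omega^{(N)}$ (Theorem~\ref{thm:Gamma conv polarized}) with Sanov's theorem (Proposition~\ref{prop:sanov}) gives the correct exponential upper bound on the Gibbs probability of balls $B_\epsilon(\mu) \subset \mathcal{M}_1(X)$, and no smoothness of $dV$ enters at this stage.

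The lower bound is the genuine difficulty, because the proof of Theorem~\ref{thm:LDP subh text} invokes the geometric sub-mean inequality of \cite[Thm~2.1]{berm8} on the Riemannian orbifold $X^N/S_N$, formulated with respect to the smooth Riemannian volume. My plan is an approximation argument. Choose a sequence of smooth strictly positive volume forms $dV_j$ converging weakly to $dV$; one natural construction is $dV_j := \rho_j \omega_0^n$ with $\rho_j$ obtained by heat-kernel smoothing of $dV$ against an auxiliary K\"ahler form $\omega_0$, or alternatively by a Bergman-kernel regularization as used in the proof of Theorem~\ref{thm:Gamma conv polarized}. For each $j$, Theorem~\ref{thm:LDP for temp deform on polar} gives the full LDP with rate $F_{\phi,\beta,j} := E_\phi - \beta^{-1} S_{dV_j}$. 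To transfer the lower bound on the Gibbs mass of $B_\epsilon(\mu)$ from the smooth to the singular setting one needs the Gamma-convergence $F_{\phi,\beta,j} \to F_{\phi,\beta}$ on $\mathcal{M}_1(X)$. The $\liminf$ half follows from joint lower semicontinuity of the relative entropy in both arguments. For the $\limsup$ half, given $\mu = f\,dV$ with $D_{dV}(\mu) < \infty$ and $E_\phi(\mu) < \infty$, a recovery sequence would be built by truncating $f$ and re-expressing the result in terms of $dV_j$ so as to obtain $\mu_j \to \mu$ weakly with $D_{dV_j}(\mu_j) \to D_{dV}(\mu)$ and $E_\phi(\mu_j) \to E_\phi(\mu)$.

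The main obstacle will be constructing an approximating sequence $dV_j$ which is simultaneously fine enough for the recovery-sequence estimate on the entropy, and spread-out enough for the sub-mean inequality to remain effective at the $j$-dependent scale. The tension is sharpest when $dV$ concentrates on a set of Hausdorff dimension strictly less than $2n$, such as a totally real submanifold of $\C^n$: na\"ive mollifiers then produce $dV_j$ supported in a narrow tube whose width must be coordinated with both the ball radius $\epsilon$ in $\mathcal{M}_1(X)$ and with $N$. An alternative route that sidesteps this tension would be to revisit Theorem~\ref{thm:LDP subh text} directly and extend the sub-mean inequality of \cite[Thm~2.1]{berm8} to a weighted orbifold setting in which the reference measure is singular but satisfies a suitable Bernstein--Markov-type property, in the spirit of Theorem~\ref{thm:zero temp in sing case polar}; a third possibility is to bypass the sub-mean approach entirely and use a Gibbs variational principle in the style of \cite{m-s,berm10}, which is intrinsically more robust to singular reference measures.
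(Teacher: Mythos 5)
The statement you are addressing is labeled a \emph{conjecture} in the paper, and the paper contains no proof of it; the only evidence offered is the remark that it holds when $n=1$ via \cite[Thm 1.1, Cor 1.2]{berm10}. Your text, which you yourself present as an incomplete plan with an acknowledged ``main obstacle,'' is therefore being measured against an open problem, not a known argument. That said, your primary route has a gap that is more fundamental than the recovery-sequence issue you flag.

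You propose to approximate $dV$ by smooth volume forms $dV_j$, apply Theorem \ref{thm:LDP for temp deform on polar} for each $j$, and then pass to the limit in $j$ by proving Gamma-convergence of the macroscopic rate functionals $F_{\phi,\beta,j}\to F_{\phi,\beta}$ on $\mathcal{M}_{1}(X)$. Even granting that Gamma-convergence, it does not transfer the LDP lower bound to the Gibbs measures built from $dV$: the LDP for $dV_j$ is an asymptotic statement in $N$ about a \emph{different} sequence of Gibbs measures (those with reference measure $dV_j^{\otimes N}$), and nothing in your argument controls, uniformly in $N$, the ratio between $\int_{\delta_N^{-1}(B_\epsilon(\mu))}e^{-\beta H^{(N)}}dV^{\otimes N}$ and the same integral against $dV_j^{\otimes N}$. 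This is a genuine interchange-of-limits problem ($j\rightarrow\infty$ versus $N\rightarrow\infty$), and what is missing is precisely a quantitative Bernstein--Markov-type comparison between $dV$ and $dV_j$ at the level of the $N$-particle integrals. If Gamma-convergence of the macroscopic rates alone sufficed, the sub-mean inequality of \cite[Thm 2.1]{berm8} would be dispensable even in the smooth case, which it is not: it is the engine of the lower bound.

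Your ``third possibility'' --- the Gibbs variational principle of \cite{m-s,berm10} --- is the one the paper's remark implicitly endorses, since that is exactly how the $n=1$ case is settled. But you should name the real obstruction to extending it: the Messer--Spohn argument and its descendants in \cite{berm10,c-g-z,d-l-r} rest essentially on the linearity of the macroscopic energy in $\mu\otimes\mu$ (the pair-interaction form \ref{eq:linear case}), which permits truncation of the kernel and exact comparison of energies. For $n>1$ the pluricomplex energy $E_{\omega}$ is not of that form; indeed even the convergence of mean energies \ref{eq:conv of mean energies}, which that machinery takes as input, is itself open for $n>1$ (see Sections \ref{subsec:Mean-field-approximations} and \ref{subsec:A-Green's-formula}). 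A serious attack on the conjecture must confront the nonlinearity of $E_{\omega}$ head-on, rather than treat the choice of background measure as the primary difficulty.
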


The conjecture does hold when $n=1,$ as follows from \cite[Thm 1.1, Cor 1.2]{berm10}.
Strictly speaking, in \cite[Thm 1.1]{berm10} it was assumed that
$dV$ has compact support and finite energy, but the assumption on
compact support may be removed by a truncation argument. Moreover,
the assumption that $dV$ has finite energy was only imposed in \cite[Thm 1.1]{berm10}
to ensure that $F_{\beta}$ is bounded from below and hence admits
a minimizer $\mu_{\beta}.$ But this holds more generally if $dV$
does not charge pluripolar subsets, as explained in the introduction
of Section \ref{sec:Proofs-for-the limit beta}, by taking $\mu_{\beta}:=MA_{\omega}(\varphi_{\beta})$. 

A natural case where the assumption in Conjecture \ref{conj:conv in law when nonpluripolar}
is satisfied is the case when 
\[
dV=1_{\Omega}d\lambda
\]
 for $\Omega$ a bounded open domain in $\C^{n}.$ In this case the
corresponding current $\omega^{\psi_{\beta}}$ is a weak solution
to the Kähler-Einstein equation 
\[
\mbox{Ric\,}\ensuremath{\omega=-\beta\omega}
\]
 in $\Omega.$ As a complement to Conjecture rrer it seems natural
to ask for conditions on $\Omega$ ensuring that $\psi_{\beta}$ be
smooth?
\begin{conjecture}
\label{conj:reg}Assume that $\Omega$ is an open bounded domain in
$\C^{n}$ with smooth and strictly pseudoconvex boundary. Then, if
$\beta\in[0,\infty[$ the solution $\psi_{\beta}\in\mathcal{L}(\C^{n})$
of the equation \ref{eq:ma eq with beta intro}, with $dV=1_{\Omega}d\lambda,$
is smooth in $\Omega.$ 
\end{conjecture}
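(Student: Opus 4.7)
The plan is a smooth approximation scheme combined with uniform interior a priori estimates for the complex Monge-Amp\`ere equation. Replace $1_\Omega$ by a family of smooth, strictly positive densities $f_\epsilon$ on $\C^n$ with $f_\epsilon \searrow 1_\Omega$, $f_\epsilon \leq C$ uniformly, and prescribed super-logarithmic decay at infinity compatible with $\phi$. Passing to the compactification by $\P^n$ as in Section~\ref{subsec:Compactification-of-}, the twisted K\"ahler-Einstein equation
\[
MA(\psi^\epsilon) = e^{\beta(\psi^\epsilon - \phi)} f_\epsilon \, d\lambda
\]
admits a unique solution $\psi^\epsilon \in \mathcal{L}_+(\C^n)$, smooth on $\C^n$ by Aubin-Yau. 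A comparison argument on the compactification shows $\psi^\epsilon$ is uniformly locally bounded and $\psi^\epsilon \searrow \psi_\beta$ in $L^1_{\rm loc}$; it therefore suffices to produce uniform interior $C^k$-estimates for $\psi^\epsilon$ on each $\Omega' \Subset \Omega$.

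The key step is a uniform interior Laplacian estimate on $\Omega'$. Since $f_\epsilon \geq 1$ on $\Omega$ and $\psi^\epsilon$ is uniformly $L^\infty$-bounded, one applies a localized form of Yau's second-order estimate: choosing $\chi \in C^\infty_c(\Omega)$ with $\chi \equiv 1$ on $\Omega'$ and studying the auxiliary quantity $\chi^2 \log(n + \Delta \psi^\epsilon) - A \psi^\epsilon$ for a large constant $A$ depending only on $\|\phi\|_{C^2(\Omega)}$, the maximum principle yields
\[
\sup_{\Omega'} \Delta \psi^\epsilon \leq C,
\]
independent of $\epsilon$ (cf.\ B\l{}ocki's local version of Yau's estimate). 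This bound makes $\psi^\epsilon$ uniformly strictly plurisubharmonic on $\Omega'$, so the logarithmic form of the equation,
\[
\log \det\bigl(\partial^2 \psi^\epsilon / \partial z_i \partial \bar z_j\bigr) = \beta(\psi^\epsilon - \phi) + \log f_\epsilon + \text{const},
\]
becomes a concave, uniformly elliptic, fully nonlinear equation on $\Omega'$. Evans-Krylov gives a uniform $C^{2,\alpha}(\Omega'')$ bound on any $\Omega'' \Subset \Omega'$, and Schauder bootstrap promotes this to uniform $C^{k,\alpha}$ bounds for every $k$. Passing to the limit $\epsilon \to 0$ via Arzel\`a-Ascoli and diagonalization yields $\psi_\beta \in C^\infty(\Omega)$. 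The case $\beta = 0$ is technically easier, as the equation becomes $MA(\psi) = d\lambda$ in $\Omega$, independent of $\psi$.

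The main obstacle is the uniform interior Laplacian estimate with a \emph{purely local} cut-off, since Yau's original argument exploits the global geometry of a fixed K\"ahler background, whereas here the estimate must be insensitive to the behavior of $\psi^\epsilon$ near $\partial\Omega$, where $f_\epsilon$ degenerates in the limit. The strict pseudoconvexity of $\partial\Omega$ enters only indirectly: it ensures the existence of plurisubharmonic barriers at each boundary point, which pins down $\psi_\beta$ continuously on $\overline\Omega$ in the spirit of Caffarelli-Kohn-Nirenberg-Spruck and prevents degeneration of $\psi^\epsilon$ in a collar of $\partial\Omega$, thereby keeping the interior estimates uniform. Without pseudoconvexity, one would lose control of the boundary values of $\psi_\beta$ and the whole approximation could collapse, even if interior regularity is the stated goal.
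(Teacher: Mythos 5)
The statement you are attempting to prove is \emph{Conjecture}~\ref{conj:reg}; the paper does not prove it and explicitly poses it as an open problem, establishing it only in the special cases of a ball (where $\psi_{\beta}$ is identified with a scaled Poincar\'e potential) and of a torus-invariant domain (via optimal transport and Caffarelli's regularity theory for the real Monge-Amp\`ere equation with convex target). So your proposal should be assessed as a genuine attempt to settle an open question, and it has a gap.

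The gap is at the uniform interior Laplacian estimate. You claim that the maximum principle applied to $\chi^{2}\log(n+\Delta\psi^{\epsilon})-A\psi^{\epsilon}$ yields $\sup_{\Omega'}\Delta\psi^{\epsilon}\leq C$ with $C$ independent of $\epsilon$, controlled only by the uniform $L^{\infty}$ bound on $\psi^{\epsilon}$ and by $\|\phi\|_{C^{2}}$. This is false in general: for the complex Monge-Amp\`ere equation (unlike the real one after Caffarelli's theory), there is \emph{no} purely interior $C^{1,1}$ estimate depending only on $\|u\|_{L^{\infty}}$ and the regularity and positivity of the right-hand side. Pogorelov-type counterexamples of B\l ocki exhibit bounded plurisubharmonic functions solving $\det(u_{i\bar j})=f$ with $f$ smooth and strictly positive that are not $C^{1,1}$ in the interior. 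Concretely, in your localized computation the cutoff produces terms proportional to $|\nabla\chi|^{2}/\chi^{2}$ and $\Delta\chi$ multiplying $\log(n+\Delta\psi^{\epsilon})$ and $(n+\Delta\psi^{\epsilon})$ on the support of $\nabla\chi$; absorbing these requires an a priori Laplacian bound on the annulus $\{\nabla\chi\neq0\}$ \emph{before} the argument closes, which is precisely what is unavailable. Yau's global estimate evades this because on a closed manifold there is no cutoff; any local version must input additional structure (e.g.\ prescribed boundary values as in CKNS, or a convexity structure in the sense of Caffarelli). Here the ``boundary values'' of $\psi_{\beta}$ on $\partial\Omega$ are not prescribed but determined by the global equation on $\C^{n}$, and their regularity is exactly the crux of the open problem. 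The remark following the conjecture makes this point: the global problem is genuinely different from the Dirichlet problem on $\Omega$, and the two are only known to coincide for the ball. Your appeal to strict pseudoconvexity as providing psh barriers does not repair this: barriers give one-sided continuity information, not the two-sided second-order control needed to close the Evans-Krylov step, and the paper even suggests pseudoconvexity ``may be superfluous'' (it is not used in the torus-invariant case, where convexity of the target simplex in the optimal-transport formulation does the real work). Finally, your remark that $\beta=0$ is technically easier is misleading: the degeneracy of the right-hand side outside $\overline{\Omega}$ is the same for all $\beta\in[0,\infty[$ and is the source of the difficulty, not the $\psi$-dependence.
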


\begin{rem}
This regularity problem is, in general, different than the corresponding
Dirichlet problem for the equation \ref{eq:ma eq with beta intro}
in $\Omega,$ when $n>1.$ The latter problem is known to have a smooth
solution under the assumptions above \cite{c-k-n-s}. However, in
the case when $\Omega$ is a ball it is not hard to see that the two
solutions coincide (up to an additive constant) and that $\omega_{\beta}$
is the restriction to $\Omega$ of a scaled hyperbolic (Poincaré)
metric when $\beta>0.$
\end{rem}

The assumption that $\Omega$ is pseudoconvex may be superfluous.
In fact, in the case when $\Omega$ is invariant under the standard
action of the unit-torus Conjecture \ref{conj:reg} does hold, without
assuming that $\Omega$ is pseudoconvex. To see this first assume
that $\beta=0$ and that $\Omega\subset\C^{*n}.$ Denoting by $\text{Log}$
the projection from $\C^{*n}$ onto $\R^{n}$ in formula \ref{eq:Log map intro}
and setting $\mu:=(\text{Log )}_{*}1_{\Omega}d\lambda$ and $D:=\text{Log \ensuremath{(\Omega)}}$
one can check that $\psi\in\mathcal{L}(\C^{n})$ solves the equation
\ref{eq:ma eq with beta intro} and is in $C^{\infty}(\Omega)$ iff
$\psi=\text{Log}^{*}\phi,$ where $\phi$ is strictly convex and
\[
(\nabla\phi)_{*}\mu=\nu,\,\,\,\nu:=1_{P}d\lambda/n!,
\]
 where $P$ is the standard simplex in $\R^{n}$ (this follows from
combining \cite[Lemma 2.4]{berm12} and \cite[Lemma 3.12]{berm12}).
This equivalently means that the gradient map $T:=\nabla\phi$ is
the optimal map transporting $\mu$ to $\nu$ in the sense of the
theory of Optimal Transport (see Section \ref{subsec:Optimal-transport}).
By the regularity results for the Optimal Transport problem in \cite{ca0}
it follows, using that the target $P$ is convex, that $\phi$ is
in $C^{\infty}(D).$ Hence, $\psi\in C^{\infty}(\Omega),$ as desired.
The case when $\Omega$ is not contained in $\C^{*n}$ can be handled
by a change of variables argument. Moreover, the case $\beta>0$ can
also be reduced to well-known regularity results in the real setting,
using similar arguments. 

\subsection{\label{subsec:A-Green's-formula}A Green's formula for the Monge-Ampère
equation $(\beta=0)$}

Assume given a normalized volume form $dV$ on $\C^{n}$ satisfying
the integrability property \ref{eq:integrability condition}. Then
the unique finite energy solution $\psi\in\mathcal{E}^{1}(\C^{n})$
to the complex Monge-Ampère equation 
\begin{equation}
MA(\psi)=dV,\label{eq:ma eq in outlook}
\end{equation}
satisfying the normalization condition \ref{eq:norm cond in thm zero},
for a given weight function $\phi,$ may be obtained as the double
limit $\psi:=\lim_{\beta\rightarrow\infty}\lim_{k\rightarrow\infty}\psi_{\beta}^{(k)},$
where 
\[
\psi_{\beta}^{(k)}:=\frac{1}{\beta}\log\frac{\int_{(\C^{n})^{N_{k}-1}}\left|D^{(N_{k})}(\cdot,z_{2},...z_{N_{k}})\right|^{2\beta/k}(e^{-\beta\phi}dV)^{\otimes(N_{k}-1)}}{dV}-\log Z_{N_{k}}
\]
(by combining Corollary \ref{cor:conv of one pt correl measr etc intro}
with Theorem\ref{thm:zero temp limit singular case intro}). Formally
interchanging the two limits thus suggests the following 
\begin{conjecture}
\label{conj:c-y eq}Let $dV$ be a probability measure on $\C^{n}$
as above. Then there exists a sequence of constants $C_{k}$ such
that
\begin{equation}
\psi^{(k)}:=\frac{1}{k}\frac{\int_{(\C^{n})^{N_{k}-1}}\log\left|D^{(N)}(\cdot,z_{2},...z_{N_{k}})\right|^{2}dV{}^{\otimes(N_{k}-1)}}{dV}-C_{k}\label{eq:form of psi k in conj green}
\end{equation}
converges in $L_{loc}^{1}$ to a finite energy solution $\psi\in\mathcal{E}^{1}(\C^{n})$
to the complex Monge-Ampère equation \ref{eq:ma eq in outlook}.
\end{conjecture}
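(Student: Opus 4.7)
The starting point is to identify $\psi^{(k)}$ with the $\beta\to 0$ limit of $\psi_\beta^{(N_k)}$ (Corollary \ref{cor:conv of one pt correl measr etc intro}) for each fixed $k$. Since $dV$ is a probability measure, $\nu_k^\beta/dV\to 1$ as $\beta\to 0$, and L'H\^{o}pital applied to $\frac{1}{\beta}\log(\nu_k^\beta/dV)$ reduces to evaluating $\frac{d}{d\beta}(\nu_k^\beta/dV)\big|_{\beta=0}$. Expanding $|D|^{2\beta/k}=1+(2\beta/k)\log|D|+O(\beta^2)$ in both the numerator and denominator of $\nu_k^\beta$ and taking the ratio yields precisely formula \ref{eq:form of psi k in conj green} with $C_k=\frac{1}{k}\int\log|D^{(N_k)}|^2\,dV^{\otimes N_k}$ (taking $\phi=0$ for simplicity). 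The conjecture is therefore the statement that the two iterated limits $\lim_{k\to\infty}\lim_{\beta\to 0^+}$ and $\lim_{\beta\to 0^+}\lim_{k\to\infty}$ applied to $\psi_\beta^{(N_k)}$ agree; the second iterate equals $\psi_0$ by Corollary \ref{cor:conv of one pt correl measr etc intro} composed with Theorem \ref{thm:inf temp intro}.

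The plan proceeds in four steps. For compactness, note that $\frac{1}{k}\log|D^{(N_k)}(\cdot,z_2,\dots,z_{N_k})|^2$ lies in $\mathcal{L}_+(\C^n)$ for a.e.\ $(z_2,\dots,z_{N_k})$, so does its $dV^{\otimes(N_k-1)}$-average $\psi^{(k)}$; after shifting $C_k$ to enforce, say, $\sup_{\C^n}(\psi^{(k)}-\log(1+|z|^2))=0$, the family is uniformly bounded in $\mathcal{L}(\C^n)$ and hence $L^1_{loc}$-precompact, giving a subsequential limit $\psi_\infty\in\mathcal{L}(\C^n)$. For the energy, I would compare $\psi^{(k)}$ with $\psi_\beta^{(N_k)}$ at a small fixed $\beta$ and use the convexity of $\beta\mapsto\log Z_{N_k,\beta}$ together with the finite energy of $\psi_\beta$ (Theorem \ref{thm:inf temp intro}) to produce a uniform bound $\mathcal{E}(\psi^{(k)})\geq -C$, placing $\psi_\infty$ in $\mathcal{E}^1(\C^n)$.

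The crucial step is identifying $\psi_\infty=\psi_0$. I would avoid computing $MA(\psi^{(k)})$ directly---this is intractable because $\psi^{(k)}$ is a plurisubharmonic \emph{average} and $MA$ does not commute with averaging---and instead invoke Theorem \ref{thm:var sol of ma}: $\psi_0$ is the unique maximizer modulo constants in $\mathcal{E}^1(\C^n)$ of $\psi\mapsto\mathcal{E}(\psi)-\int\psi\,dV$. The strategy is to exhibit $\psi^{(k)}$ as an approximate maximizer, by routing through the partition-function asymptotics \ref{eq:asymptotics of integral D in thm intro}: the convergence of $-\frac{1}{kN_k}\log Z_{N_k,\beta}$ to $C_\beta=\inf_\mu F_{\phi,\beta}(\mu)$ as $k\to\infty$, paired with joint convexity in $\beta$, should force convergence of the right $\beta$-derivatives at $\beta=0^+$; the continuum-side derivative at $0^+$ reproduces $\mathcal{E}(\psi_0)-\int\psi_0\,dV$ (via Theorem \ref{thm:inf temp intro} and standard Legendre duality), and the discrete-side derivative reproduces $\mathcal{E}(\psi^{(k)})-\int\psi^{(k)}\,dV$ up to lower-order corrections. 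The normalization \ref{eq:norm cond in thm zero} is then imposed by a final adjustment of $C_k$.

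The main obstacle is making rigorous this commutation of $\beta$-differentiation at $0^+$ with the large-$k$ limit: convex convergence of functions delivers convergence of one-sided derivatives only at \emph{interior} points of the domain, whereas the relevant derivative sits at the boundary where $F_{\phi,\beta}$ degenerates (the entropy term disappears). A workable substitute would be a diagonal argument selecting $\beta_k\to 0$ slowly enough that both $\psi_{\beta_k}^{(N_k)}\to\psi_{\beta_k}$ (from Corollary \ref{cor:conv of one pt correl measr etc intro}) and $\psi_{\beta_k}^{(N_k)}-\psi^{(k)}\to 0$ hold in $L^1_{loc}$; this requires quantitative rates both for the large-$k$ convergence uniform in $\beta$ bounded below, and for the $O(\beta)$ remainder uniform in $k$, the latter amounting to uniform-in-$k$ control of $\partial_\beta^2\log Z_{N_k,\beta}$ near $\beta=0$. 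Producing these variance-type estimates, which go beyond the tools developed in the paper, is the heart of the matter.
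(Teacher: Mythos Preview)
The statement you are attempting to prove is a \emph{conjecture} in the paper, not a theorem; the paper offers no proof. What the paper does provide is a reduction: Conjecture~\ref{conj:c-y eq} would follow from the ``mean energy conjecture'' stated immediately after it, namely
\[
\lim_{k\to\infty}\int_{(\C^n)^{N_k}} E^{(N_k)}\,dV^{\otimes N_k}=E(dV),
\]
which the paper notes is itself open and would in turn follow from the conjectural LDP at negative $\beta$ (Section~\ref{subsec:The-case-of-neg}).

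Your proposal, once unwound, lands on the same reduction. The quantity you single out as the crux---the right $\beta$-derivative at $0^+$ of $-\frac{1}{N_k}\log Z_{N_k,\beta}$---is precisely $\int E^{(N_k)}\,dV^{\otimes N_k}$ (differentiate $Z_{N_k,\beta}=\int e^{-\beta N_k E^{(N_k)}}dV^{\otimes N_k}$ at $\beta=0$, using that $dV$ is a probability measure). Likewise the right $\beta$-derivative of $\inf_\mu F_{\phi,\beta}(\mu)$ at $0^+$ is $E_\phi(dV)$. So the ``boundary derivative commutation'' you isolate as the main obstacle is exactly the mean energy conjecture in disguise; you have rediscovered the paper's reduction rather than circumvented it. Your own diagnosis is correct that convex convergence gives derivative convergence only at interior points, and your proposed diagonal $\beta_k\to 0$ workaround would require the same uniform-in-$k$ second-derivative (variance) control that nobody currently has.

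The preliminary steps in your outline are sound: the averaged function $\psi^{(k)}$ does lie in $\mathcal L(\C^n)$ (averages of psh functions are psh, and the growth is inherited), so after normalization the family is $L^1_{loc}$-precompact by Proposition~\ref{prop:omega psh compact appr}. But the identification step is the whole difficulty, and neither your variational route nor the paper's reduction closes it with available tools. Your honest assessment at the end---that this ``goes beyond the tools developed in the paper''---is exactly right; the statement remains open for $n>1$.
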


This conjecture can be seen as a non-linear generalization of the
classical Green's formula for the solution of the Laplace equation
in $\C.$ Indeed, when $n=1$ formula \ref{eq:form of psi k in conj green}
for $\psi^{(k)}$ coincides with the Green's formula for the solution
of corresponding Laplace equation (for an appropriate choice of constants
$C_{k}),$ as follows immediately from formula \ref{eq:E and D in terms of g intro}. 

It turns out that the validity of the conjecture above would follow
from the following generalization of the Green's formula \ref{eq:greens formul for E in plane}
for the classical energy in $\C:$
\begin{conjecture}
Let $dV$ be a probability measure on $\C^{n}$ as above. Then the
non-weighted pluricomplex energy $E(dV)$ can be expressed as 
\[
E(dV)=\lim_{k\rightarrow\infty}\frac{1}{kN_{k}}\int_{(\C^{n})^{N_{k}}}\log\left|D^{(N)}(z_{1},z_{2},...z_{N_{k}})\right|^{2}dV{}^{\otimes N_{k}}
\]
\end{conjecture}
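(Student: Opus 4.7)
The plan is to interpret the quantity $Q_k := \frac{1}{kN_k}\int \log|D^{(N_k)}|^2\,dV^{\otimes N_k}$ as a one-sided derivative at $\beta=0$ of the family of partition functions
\[
Z_k(\beta) := \int_{(\C^n)^{N_k}} |D^{(N_k)}|^{2\beta/k}\,dV^{\otimes N_k},
\]
and to combine this with the large deviation asymptotics of Theorem \ref{thm:conv in law in C^n intro} and the infinite-temperature limit of Theorem \ref{thm:inf temp intro}. After compactifying $\C^n\hookrightarrow\P^n$ via Section \ref{subsec:Compactification-of-} and choosing a metric on $\mathcal{O}(1)$ whose weight vanishes on a neighborhood of $\mathrm{supp}(dV)$ (assumed compact; the general case follows by truncation together with the integrability of $\log(1+|z|^2)$ against $dV$), set $g_k(\beta) := N_k^{-1}\log Z_k(\beta)$. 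Then $g_k$ is convex, $g_k(0)=0$, and the LDP (Theorem \ref{thm:LDP for temp deform on polar} with $\phi=0$) gives for each $\beta>0$
\[
g_k(\beta) \to g(\beta) := -\beta\, C_\beta^{dV},\qquad C_\beta^{dV} := \inf_\mu\bigl\{E(\mu)+\beta^{-1}D_{dV}(\mu)\bigr\}.
\]
A direct computation for $n=1$ shows $-Q_k = \int E^{(N_k)}\,dV^{\otimes N_k} = E(dV)$ exactly, so the intended content of the conjecture is $-Q_k\to E(dV)$, i.e.\ $g_k'(0^+)\to g'(0^+)=-E(dV)$ (the statement as printed appears to be missing a minus sign).

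One half of this follows from convexity. Since $g_k$ is convex with $g_k(0)=0$, we have $g_k'(0^+)\leq g_k(\beta)/\beta$ for every $\beta>0$; passing $k\to\infty$ and then $\beta\to 0^+$, and using $C_\beta^{dV}\to E(dV)$ (a consequence of Theorem \ref{thm:inf temp intro}: the minimizer $\mu_\beta$ converges to $dV$ in energy and $\beta^{-1}D_{dV}(\mu_\beta)\to 0$ provided $E(dV)<\infty$), one gets $\limsup_k g_k'(0^+)\leq -E(dV)$, i.e.\ $\liminf_k \int E^{(N_k)}\,dV^{\otimes N_k}\geq E(dV)$.

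The opposite inequality is the main obstacle, and I would attack it by extending pointwise convergence $g_k(\beta)\to g(\beta)$ to a small interval $\beta\in(-\delta,0)$. Once available, convexity yields $g_k'(0^+)\geq g_k'(0^-)\geq g_k(\beta)/\beta$ for $\beta<0$, and taking $k\to\infty$ then $\beta\to 0^-$ combined with differentiability of $g$ at $0$ pins down $\liminf_k g_k'(0^+)\geq g'(0)=-E(dV)$. Extending the LDP to a small range of negative $\beta$ is a mild version of the attractive regime: the integrand $|D^{(N_k)}|^{2\beta/k}$ is a small negative power, integrable since the zero locus of $D^{(N_k)}$ is a proper algebraic subvariety and $dV$ does not charge pluripolar subsets. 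The technical heart is a uniform-in-$k$ Bernstein-Markov-type bound, in the spirit of Section \ref{subsec:Comparison-with-determinantal} and \cite{b-b-w}, controlling $\int|D^{(N_k)}|^{-2\epsilon/k}\,dV^{\otimes N_k}$ as $k\to\infty$ and identifying the limit via the same variational formula (now attained as a supremum). The full negative-temperature problem is flagged in this survey as extremely challenging, but confining $\beta$ to a neighborhood of $0$ should stay well away from the critical-temperature phenomena that obstruct the full problem.

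An alternative route combines the Gamma-convergence $E^{(N_k)}\to E$ (Theorem \ref{thm:Gamma conv polarized}) and Sanov's theorem (Proposition \ref{prop:sanov}) for the law of $\delta_{N_k}$ under $dV^{\otimes N_k}$ with a uniform-integrability estimate for $\{E^{(N_k)}\}$ near the discriminant $\{D^{(N_k)}=0\}$. Both routes reduce to the same analytic core: controlling the contribution to $\int E^{(N_k)}\,dV^{\otimes N_k}$ from configurations with nearly coincident points, which amounts exactly to uniform integrability of small negative powers of $|D^{(N_k)}|$ with respect to $dV^{\otimes N_k}$.
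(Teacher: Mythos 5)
This is stated in the paper as a \emph{conjecture}, not a theorem: there is no proof to compare against. The paper's only comment on it is that ``the conjecture would follow from the conjectural LDP in the case $\beta<0$'' (Conjecture \ref{conj:neg}), referring to the earlier observation in \cite[Section 4.3]{berm8}. Your proposal is therefore not in competition with a proof in the paper, but it does independently arrive at the same reduction, and it adds useful detail along the way.

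Three points of assessment. First, you correctly flagged the sign issue: with the paper's normalization $E^{(N_k)} = -\tfrac{1}{N_k k}\log|D^{(N_k)}|^2$, the intended statement (the convergence of mean energies \ref{eq:conv of mean energies}) reads $\lim_k \int E^{(N_k)}\,dV^{\otimes N_k}=E(dV)$, i.e.\ the right-hand side in the display should carry a minus sign; the $n=1$ computation you perform confirms this. Second, your derivation of the one-sided inequality $\liminf_k\int E^{(N_k)}\,dV^{\otimes N_k}\geq E(dV)$ is correct and is a genuine addition to what the paper records: convexity of $g_k(\beta)=N_k^{-1}\log Z_k(\beta)$ together with $g_k(0)=0$ gives $g_k'(0^+)\leq g_k(\beta)/\beta$, the $\beta>0$ LDP (Theorem \ref{thm:LDP for temp deform on polar}) passes to the limit in $k$, and the infinite-temperature limit (Theorem \ref{thm:inf temp intro}) gives $C_\beta^{dV}\to E(dV)$ as $\beta\to 0^+$ (note $C_\beta^{dV}\leq E(dV)$ already follows by testing $F_\beta$ at $\mu=dV$). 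Third, your reduction of the remaining inequality to an LDP (or even just the free-energy asymptotics $g_k(\beta)\to g(\beta)$) on a small negative interval $\beta\in(-\delta,0)$ is exactly the reduction the paper points to, so you and the paper agree on where the obstruction lies. You are right that restricting to a neighborhood of $\beta=0$ avoids the critical-temperature phenomena of Section \ref{subsec:The-case-of-neg}, but the uniform-in-$k$ control of $\int|D^{(N_k)}|^{-2\epsilon/k}\,dV^{\otimes N_k}$ that you identify as the ``technical heart'' is, as far as I can tell, genuinely open even for small $\epsilon$; the non-pluripolarity hypothesis on $dV$ ensures the integrand is a.e.\ finite but does not by itself control its integral. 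So the proposal is a correct and well-structured partial result plus an honest identification of the gap, consistent with the paper's own assessment that this remains conjectural.
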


This amounts to the convergence of the mean energies \ref{eq:conv of mean energies}
in the present setting. As discussed in \cite[Section 4.3]{berm8},
the conjecture would follow from the conjectural LDP in the case $\beta<0,$
discussed in Section \ref{subsec:The-case-of-neg}.

\subsection{\label{subsec:Dynamics}Dynamics }

We start with a general discussion, where $H^{(N)}$ should be thought
of as a generic function on a space $X^{(N)},$ before coming back
to the present setting, where the role of $H^{(N)}$ is played by
the Hamiltonian \ref{eq:def of H N weighted intro} and $N$ is the
number of points (``particles'') in $\C^{n}.$ The standard ``first
order'' approach for finding minimizers to a given function $H^{(N)}$
on a space $X^{(N)}$ is to follow the downward gradient flow of $H^{(N)}:$
\begin{equation}
\frac{d\boldsymbol{x}(t)}{dt}=-\nabla H^{(N)}(\boldsymbol{x}(t))\label{eq:gradient flow}
\end{equation}
 from appropriate initial data $\boldsymbol{x}(0)$ (assuming that
$H^{(N)}$ has been endowed with a Riemannian metric). However, for
numerical applications this approach is often infeasible, as the flow
will typically get trapped in local minimal of $H^{(N)}.$ Accordingly,
a common approach used in numerical computations, is to add a noise
term to the equations: 
\begin{equation}
d\boldsymbol{x}(t)=-\nabla H^{(N)}(\boldsymbol{x}(t))dt+\epsilon_{N}d\boldsymbol{B}(t),\label{eq:sde}
\end{equation}
 where $d\boldsymbol{B}(t)$ denotes Brownian motion on $X^{(N)}$
and $\epsilon_{N}$ is a parameter measuring the strength of the noise
term. From an analytic point of view the Stochastic Differential Equation
(SDE) above, may be described by a linear parabolic Partial Differential
Equation (PDE). Indeed, denoting by the law of the random variable
$\boldsymbol{x}(t),$ its density $\rho_{t}^{(N)}$ evolves according
to the\emph{ forward Kolmogorov equation}
\begin{equation}
\frac{\partial\rho_{t}^{(N)}}{\partial t}=\frac{1}{\beta_{N}}\Delta\rho_{t}^{(N)}+\nabla\cdot(\rho_{t}^{(N)}\nabla H^{(N)}),\label{eq:forward kolm eq}
\end{equation}
where $\epsilon_{N}=\sqrt{\frac{2}{\beta_{N}}}.$ This formulation
reveals that the stationary measure of the evolution is nothing but
the Gibbs measure \ref{eq:def of Gibbs measure intro}, corresponding
to the inverse temperature $\beta_{N}$ (when it is well-defined). 
\begin{rem}
In the statistical mechanical literature the SDE \ref{eq:sde} is
called the (overdamped) \emph{Langevin equation, }describing the microscopic
relaxation towards equilibrium (and the PDE is called the corresponding
point \emph{Fokker-Planck equation}). In the statistics such evolution
equations are extensively used (upon discretization) in the framework
of \emph{Markov Chain Monte Carlo (MCMC)} methods for sampling a given
probability distribution $\rho^{(N)}dV$ on a space $X^{(N)}.$ The
corresponding function $H^{(N)}:=-\log\rho^{(N)}$ is then often referred
to as ``self-information'' or ``surprisal'' (setting $\beta_{N}=1).$ 
\end{rem}

The previous discussion leads one to ask whether there is a dynamical
analog of Theorem \ref{thm:conv in law in C^n intro}? 
\begin{conjecture}
\label{conj:dynamic}Consider $\C^{n}$ endowed with its Euclidean
metric and $\phi$ a smooth function on $\C^{n}.$ Denote by $H_{\phi}^{(N_{k})}$
the corresponding Hamiltonian on $(\C^{n})^{N_{k}},$ defined by formula
\ref{eq:def of H N weighted intro}. If the initial data $x_{i}(0)$
for the SDE \ref{eq:sde} consists of independent and identically
distributed random vectors with law $\mu_{0}\in\mathcal{M}_{1}(\C^{n})$
(which equivalently, means that $\mu^{(N)}(0)=\mu_{0}^{\otimes N}$
in the PDE \ref{eq:forward kolm eq}), then, at any fixed later time
$t,$ the corresponding empirical measures 
\begin{equation}
\delta_{N}(t):=N^{-1}\sum_{i=1}^{N}\delta_{x_{i}(t)}\label{eq:empirical measure with time}
\end{equation}
converge in law, as $N\rightarrow\infty,$ to a measure $\mu_{t}=\rho_{t}dV$
on $\C^{n},$ which is a weak solution of the following equation on
$\C^{n}\times]0,\infty[:$ 
\begin{equation}
\frac{\partial\rho_{t}}{\partial t}=\frac{1}{\beta}\Delta\rho_{t}-\nabla\cdot(\rho_{t}\nabla\psi_{t})\label{eq:evolut eq in conj}
\end{equation}
where $\psi_{t}(z)\in\mathcal{L}(\C^{n})$ is a potential of $\rho_{t}dV,$
i.e.
\[
MA(\psi_{t})=\rho_{t}dV.
\]
\end{conjecture}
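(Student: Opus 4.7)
The plan is to recognize the $N$-particle Langevin dynamics as the $W_2$-gradient flow of the microscopic free energy $F_{\beta}^{(N)}(\mu^{(N)}):=\int H_{\phi}^{(N)}d\mu^{(N)}+\beta^{-1}\text{Ent}_{dV^{\otimes N}}(\mu^{(N)})$ on probability measures over $(\C^{n})^{N}$, to identify the proposed limit equation (7.4) as the $W_{2}$-gradient flow on $\mathcal{M}_{1}(\C^n)$ of the macroscopic free energy $F_{\beta}=E_{\phi}+\beta^{-1}D_{dV}$, and to pass from one to the other by combining the $\Gamma$-convergence already established in Theorem \ref{thm:Gamma conv polarized} with a Sandier--Serfaty style convergence of evolutionary variational inequalities. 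Concretely I would first show tightness of the laws of $t\mapsto\delta_N(t)$ in $C([0,T],\mathcal{M}_1(\C^n))$ by using the confining growth of $\phi$ to derive uniform-in-$N$ moment bounds of the form $\sup_N\mathbb{E}\sup_{t\leq T}\int\log(1+|z|^{2})\,d\delta_{N}(t)<\infty$, together with an Aldous-type criterion applied to the processes $t\mapsto\langle f,\delta_{N}(t)\rangle$ for $f\in C_{c}^{\infty}(\C^n)$.

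Next I would apply It\^o's formula to obtain, for each such $f$, a decomposition
\[
\langle f,\delta_N(t)\rangle-\langle f,\delta_N(0)\rangle=\int_0^t\Big(\tfrac{1}{\beta}\langle\Delta f,\delta_N(s)\rangle-\tfrac{1}{N}\sum_i\nabla f(z_i(s))\cdot\nabla_{z_i}H_\phi^{(N)}\Big)ds+M_t^{N,f},
\]
where $\langle M^{N,f}\rangle_T=O(1/N)$. The $\nabla\phi(z_i)$ piece of the drift is immediate; the crux is the Vandermonde piece $-k^{-1}\nabla_{z_i}\log|D^{(N_k)}|^2$. I would identify its limit in two equivalent ways. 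Variationally, $\sum_i\nabla f(z_i)\cdot\nabla_{z_i}H^{(N)}$ equals $\tfrac{d}{ds}\big|_{s=0}H^{(N)}(T_s\mathbf{z})$ for the flow $T_s(z)=z+s\nabla f(z)$, so by the $\Gamma$-convergence of $H^{(N)}/N$ to $E_\phi$ and the sub-gradient representation $dE_\phi|_\mu=\phi-\psi_\mu$ from Proposition \ref{prop:sub-gradient} and Remark \ref{rem:E on volume forms}, one heuristically obtains the required drift $\int\nabla f\cdot\nabla(\phi-\psi_{\mu_t})\,d\mu_t$ against the limit $\mu_t$. Alternatively, one can use the Bergman-kernel identity expressing $\nabla_{z_i}\log|D^{(N_k)}|^2$ in terms of the on-diagonal conditional reproducing kernel of $H^0(X,kL)$ with vanishing at $(z_j)_{j\neq i}$ and invoke the kernel asymptotics of \cite{berm 1 komma 5} to get $k^{-1}\log K_k^{(i)}(z,z)\to\psi_{\mu_t}(z)$. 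Finally, uniqueness of finite-energy weak solutions to (7.4), which I would derive from geodesic convexity of $F_{\beta}$ along displacement interpolants (in the spirit of the Ambrosio--Gigli--Savaré theory, building on the convexity of $E$ used in \cite{berm6,berm11}), upgrades subsequential convergence to convergence in law.

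The main obstacle is the identification of the drift in Step 3. In classical propagation-of-chaos (Sznitman, McKean) the interaction is a pair potential and the microscopic force is a genuine empirical average of a fixed kernel, for which the law of large numbers applies directly; here the force is the gradient of a log-determinant, a highly non-linear and non-local functional of the configuration whose effective one-body limit is the gradient of the solution of a complex Monge--Amp\`ere equation driven by the empirical measure. One therefore needs uniform-in-$N$ control on the regularity of the law of $\mathbf{z}(t)$ (via Fisher-information or log-Sobolev inequalities, exploiting the quasi-superharmonicity already used in Theorem \ref{thm:LDP subharm intro}) so as to justify replacing $\delta_N(t)$ by its limit inside $\psi_{\delta_N(t)}$, together with careful diagonal-truncation to handle the logarithmic singularity of the pair interaction. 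Absent a direct analogue of the LLN, the cleanest route is probably the variational one: to prove matching $\liminf$ inequalities for both the free-energy slope and the metric derivative of $t\mapsto\delta_N(t)$, following the Sandier--Serfaty--Serfaty scheme; then (7.4) is forced by the corresponding energy-dissipation identity at the limit.
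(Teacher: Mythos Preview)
The statement you are addressing is a \emph{conjecture}, placed in the paper's ``Outlook and open problems'' section. The paper does not prove it; on the contrary, immediately after the statement it says that ``the dynamic setting of the previous conjecture is technically considerably harder'' than the static LDP, that even the well-posedness of the $N$-particle PDE \eqref{eq:forward kolm eq} is non-trivial since $H_{\phi}^{(N)}$ is singular, and that existence and uniqueness of weak solutions of the limiting equation \eqref{eq:evolut eq in conj} ``appears to be rather challenging.'' So there is no ``paper's own proof'' to compare with; your proposal is an outline of a possible attack on an open problem.

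The paper does in fact discuss exactly the strategy you are proposing (the Otto/AGS gradient-flow picture combined with a Sandier--Serfaty scheme) in the paragraphs following the conjecture, and explains why it does not go through here. The result of \cite{ber-o} that you are implicitly invoking requires $H^{(N)}$ to be uniformly $\lambda$-convex on $(\R^{D})^{N}$; this is what makes the evolutionary $\Gamma$-convergence machinery of \cite{a-g-s} apply. In the tropical setting $H^{(N)}_{trop}$ is convex and the scheme works. In the complex setting, $H_{\phi}^{(N)}=-\tfrac{1}{k}\log\|D^{(N_k)}\|^{2}_{k\phi}$ is (pluri)superharmonic but certainly not convex (even for $n=1$ the pair potential $-\log|z-w|^{2}$ is not convex on $\C^{2}$), and the paper says explicitly: ``due to lack of convexity, new ideas are needed in the original complex setting.'' Your Step~3 acknowledges a difficulty but locates it in the identification of the drift; the more fundamental obstruction, as the paper stresses, is that neither the microscopic $H^{(N)}$ nor the macroscopic $E_{\omega}$ is known to be displacement ($W_{2}$-geodesically) convex, so the AGS uniqueness and the Sandier--Serfaty liminf inequalities for slope and metric derivative are not available.

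A related gap: you write that uniqueness of finite-energy weak solutions to \eqref{eq:evolut eq in conj} would follow from ``geodesic convexity of $F_{\beta}$ along displacement interpolants \ldots\ building on the convexity of $E$ used in \cite{berm6,berm11}.'' The convexity of $E_{\omega}$ established and used in those papers is ordinary (affine) convexity on $\mathcal{M}_{1}(X)$, coming from the Legendre-transform definition \eqref{eq:def of e as sup}. That is a different notion from displacement convexity, and for the pluricomplex energy on $\C^{n}$ (already for the logarithmic energy on $\C$) displacement convexity is not known; indeed, even for the planar logarithmic case the paper points to \cite{a-s,ma} for well-posedness of the limiting flow and notes that the $N\rightarrow\infty$ convergence remains open. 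So both pillars of your scheme---$\lambda$-convexity at the $N$-particle level and displacement convexity at the macroscopic level---are missing in the complex setting, which is precisely why the statement is recorded as a conjecture.
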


Some remarks on the conjecture:
\begin{itemize}
\item The form of the limiting evolution equation \ref{eq:evolut eq in conj}
is suggested by a dynamical analog of the mean field heuristics in
Section \ref{subsec:Mean-field-approximations}, which suggest the
following general evolution equation:
\begin{equation}
\frac{\partial\rho_{t}}{\partial t}=\frac{1}{\beta}\Delta\rho_{t}+\nabla\cdot(\rho_{t}\nabla\varphi_{t}),\,\,\,\varphi_{t}=dE_{|\mu_{t}}\label{eq:otto gradient flow}
\end{equation}
(which, according to Remark \ref{rem:E on volume forms} yields the
equation \ref{eq:evolut eq in conj} in the present setting). In the
kinetic theory literature drift-diffusion equations of the form \ref{eq:otto gradient flow}
are usually called \emph{McKean-Vlasov equation}s \cite{mc,da-g}.
\item In the terminology of Kac, if a deterministic limit $\mu_{t}$ exists,
as $N\rightarrow\infty,$ then \emph{propagation of chaos }is said
to bold. 
\item Formally, Theorem \ref{thm:conv in law in C^n intro} corresponds
to the case $t=\infty,$ since the density $\rho_{\beta}$ of $\mu_{\beta}$
is a stationary solution to the evolution equation \ref{eq:evolut eq in conj}. 
\item The conjecture also naturally extends to the case of a deterministic
evolution, where $\beta_{N}=\infty$ (and hence also $\beta=\infty)$
with random initial data. Alternatively, a purely deterministic formulation
is obtained by requiring that $\delta_{N}(0)\rightarrow\mu_{0}$ weakly
as $N\rightarrow\infty.$
\item A real analog of the conjecture is obtained when $\C^{n}$ is replaced
by Euclidean $\R^{n}$ (and hence $\mu_{0}$ is supported in $\R^{n}).$
Then the role of $\psi_{t}$ in equation \ref{eq:evolut eq in conj}
is played by the restriction $\psi_{t}(x)$ to $\R^{n}\Subset\C^{n}$
of the potential $\psi_{t}(z)$ of $\mu_{t},$ viewed as a probability
measure on $\C^{n},$ supported on $\R^{n}.$ 
\item Note that the conjecture makes sense also when $\phi=0,$ although
the corresponding stationary measure (i.e. the Gibbs measure) is not
a well-defined probability measure then. Heuristically, this reflects
the repulsive nature of $H_{0}^{(N)}:$ since there is no confining
potential $\phi$ an initial cloud of particles will repel each other
and all escape to spatial infinity as $t\rightarrow\infty.$
\end{itemize}
Unfortunately, the dynamic setting of the previous conjecture is technically
considerably harder than the situation in Theorem \ref{thm:conv in law in C^n intro}.
Indeed, it is even non-trivial to make sense of the linear PDE \ref{eq:forward kolm eq},
sense $H_{\phi}^{(N)}$ is singular. Moreover, the existence and uniqueness
of weak solutions to the non-linear equation \ref{eq:evolut eq in conj}
appears to be rather challenging.

In fact, even if $H^{(N)}$ is smooth and uniformly continuous wrt
$N,$ as in in the``regular case'' described in Section \ref{subsec:Mean-field-approximations},
the corresponding result appears to be open, in general. However,
assuming a uniform bound on the Hessian of $H^{(N)}:$ 
\[
C^{-1}\cdot I\leq\nabla H^{(N)}\leq C\cdot I
\]
the corresponding convergence result does follow from \cite{da-g}
(even in the stronger sense of large deviations). 

In the singular ``linear'' case and with the deterministic evolution
\ref{eq:gradient flow} (i.e when $\beta_{N}=\infty)$ general convergence
results have very recently been obtained in \cite{s2} for repulsive
``power-laws'' in $\R^{D}$ of long-range:
\begin{equation}
g(x,y)=\frac{1}{\alpha}|x-y|^{-\alpha},\,\,\,\alpha\in[0,D+1[,\label{eq:power-law}
\end{equation}
 under the regularity assumption that $\mu_{t}$ admits an $L^{\infty}-$density
(when $\alpha<D-1).$ The results in \cite{s2} cover the deterministic
version of Conjecture \ref{conj:dynamic}, when $n=1$ (i.e. $D=2)$
with $\phi=0,$ corresponding to the logarithmic case $\alpha=0$
(when combined with the $L^{\infty}-$estimates on $\rho_{t}$ in
\cite{l-z,s-v}). Interestingly, this case appears in the theory of
supraconductivity \cite{c-r-s} (and superfluidity \cite{e}), where
the role of the points $x_{1},...,x_{N}$ in $\C$ is played by vortices
(formed in a type-II supra conductor, in the realistic limit of large
Ginzburg-Landau parameter). 

However, when turning on a noise ($\beta_{N}<\infty)$ there seem
to be very few general results in the singular linear setting, except
in the case of the real line $\R$ (discussed in Remark \ref{rem:log case on real line and dyson}
below). Recently, a strong well-posedness result was obtain for the
SDE \ref{eq:sde} in the logarithmic setting of $\C$ (when $\phi(z)=|z|^{2})$
with a quantitative convergence towards the corresponding Gibbs measure,
as $t\rightarrow\infty.$ But the conjectural convergence in the limit
when $N\rightarrow\infty$ still appears to be open even in the logarithmic
setting of $\C.$ 

\subsubsection{Wasserstein gradient flows}

A new approach for establishing the convergence as $N\rightarrow\infty$
for the empirical measures of the SDEs \ref{eq:sde} was introduced
in \cite{ber-o}. The starting point is the important observation
of Otto \cite{ot}, that the evolution equation \ref{eq:otto gradient flow}
is the gradient flow of the functional $E$ on the space of all smooth
probability measures on $X,$ endowed with Otto's Riemannian metric.
The latter metric coincides with Wasserstein metric, which defines
a metric space structure on the space $\mathcal{P}_{2}(X)$ of all
probability measures on $X$ with finite second moments. According
to the general theory of gradient flows on metric spaces in \cite{a-g-s},
unique gradient flow solutions in $\mathcal{P}_{2}(X)$ exist if $E$
is assumed to be $\lambda-$convex on the metric space $\mathcal{P}_{2}(\R^{D})$
Using the theory in \cite{a-g-s} it was shown in \cite{ber-o} that
if $H^{(N)}$ is assumed uniformly $\lambda-$convex on $(\R^{D})^{N},$
i.e. if the Hessian of $H^{(N)}$ is uniformly bounded from below,
\[
\lambda\cdot I\leq\nabla H^{(N)}
\]
 then the corresponding empirical measures $\delta_{N}(t)$ indeed
converge in law towards the gradient flow $\mu_{t},$ as $N\rightarrow\infty.$
As shown in \cite{ber-o} this result applies, in particular, to the
``tropicalization'' of the complex setting in Theorem \ref{thm:conv in law in C^n intro},
in the case of negative temperature (discussed in Section \ref{subsec:Tropicalization-at-negative}). 

However, due to lack of convexity, new ideas are needed in the original
complex setting of Conjecture \ref{conj:dynamic}. In the case of
the complex plane, or more precisely when $X$ is a bounded domain
in $\C,$ gradient flow solutions $\mu_{t}$ of the corresponding
logarithmic energy $E$ were constructed in \cite{a-s}, motivated
by application to superconducitivity (when $\beta=\infty).$ Uniqueness
of $L^{\infty}-$solutions was then established in \cite{ma} (for
$X$ convex). However, the convergence as $N\rightarrow\infty$ in
Conjecture \ref{conj:dynamic} remains to be established even in the
case of complex plane (or a bounded domain in $\C).$
\begin{rem}
\label{rem:log case on real line and dyson}The approach in \cite{ber-o}
also applies in the ``linear'' setting with a lower-law \ref{eq:power-law}
when $D=1.$ This covers the logarithmic pair interaction on the real
line (corresponding to the one-dimensional case of the the real analogue
discussed in the fifth point after the statement of Conjecture \ref{conj:dynamic}).
This case has previously been studied extensively when $\phi(x)=|x|^{2}$
in connection to Dyson's Brownian motion on the space of Hermitian
random matrices. 
\end{rem}

Interestingly, the ``attractive'' analog of Conjecture \ref{conj:dynamic},
i.e. the case when $H^{(N)}$ is replaced by $-H^{(N)},$ which appears
in the negative temperature setting discussed below, has been studied
extensively in the case of the complex plane. The corresponding evolution
equation \ref{eq:evolut eq in conj}, obtained by replacing $\psi_{t}$
with $-\psi_{t},$ is then called the \emph{Keller-Segal equation}
in chemotaxis, where the role of the points $x_{1},..,x_{N}$ is played
by bacteria. In this setting a weaker form of the Conjecture \ref{conj:dynamic}
was recently established in \cite{fou}, saying that the result holds
up to passing to a subsequence of $\mu^{(N)}.$

\subsection{\label{subsec:The-case-of-neg}The case of negative temperature $(\beta<0)$
and integrability indices}

Assume given a convex body $P$ in $\R^{n},$ containing $0$ in its
interior and denote by $\mathcal{P}_{kP}(\C^{*n})$ the corresponding
space of Laurent polynomials of $\C^{*n}$ (introduced in Section
\ref{subsec:Tropicalization-at-negative}). Fixing $\beta<0$ and
a weight function $\phi\in\mathcal{L}_{P}(\C^{*n})$ we denote by
$\mu_{\beta}^{(N_{k})}$ the corresponding probability measure on
$(\C^{*n})^{*N_{k}}$ (when it if well-defined): 

\[
\mu_{\beta}^{(N_{k})}=\frac{1}{Z_{N,\beta}}|D^{(N_{k})}(z_{1},...,z_{N_{k}})|_{k\phi}^{2\beta/k}dV^{\otimes N},\,\,\,dV:=e^{-\phi}dV_{\C^{*n}}
\]
It can be shown that $\mu_{\beta}^{(N_{k})}$ is a well-defined probability
measure, i.e. $Z_{N,\beta}<\infty$ if $\beta$ is sufficiently close
to $0.$ The following conjecture is motivated by its tropical analog,
Theorem \ref{thm:tropical}:
\begin{conjecture}
\label{conj:neg}Let $P$ be a convex body in $\R^{n}$ containing
$0$ in its interior. 
\begin{itemize}
\item For $k$ sufficiently large, the corresponding Gibbs measure $\mu_{\beta}^{(N_{k})}$
is a well-defined probability measure iff $\beta>-R_{P},$ where $R_{P}\in]0,1]$
is the invariant of $P$ in formula \ref{eq:inv R of conv bod}. 
\item If $\beta>-R_{P},$ then the random measure $\delta_{N_{k}}$ on $(\C^{*n})^{N_{k}},\mu_{\beta}^{(N_{k})})$
converges in law, as $N\rightarrow\infty,$ towards $\mu_{\beta}=e^{\beta\psi_{\beta}}dV,$
where $\psi_{\beta}$ is the unique solution in $\mathcal{L}_{P,+}(\C^{*n})$
to the equation \ref{eq:ma eq with beta intro} on $\C^{*n}.$
\end{itemize}
\end{conjecture}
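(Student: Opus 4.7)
The plan is to reduce the conjecture to the setting of the toric compactification $X_P$ of $\C^{*n}$. For rational polytopes $P$ this is immediate: $X_P$ carries an ample line bundle whose global sections recover $\mathcal{P}_{kP}(\C^{*n})$, any $\phi\in\mathcal{L}_{P,+}(\C^{*n})$ extends to a positively-curved continuous metric, and the Vandermonde determinant $D^{(N_k)}$ is identified with a Slater determinant $\det\Psi^{(k)}$ on $X_P^{N_k}$ up to a non-zero constant (compare Example \ref{exa:Vandermonde homogen}). General convex bodies are then handled by approximation by rational ones. Through this dictionary, the conjecture is recast as a statement about a temperature-deformed determinantal point process on $X_P^{N_k}$ at negative inverse temperature, placing us inside the framework of Section \ref{sec:Proofs-for-the limit N}, except with the sign of $\beta$ reversed.

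For the first bullet, the plan is to compute the asymptotic integrability index of $\|\det\Psi^{(k)}\|_{k\phi}^{2\beta/k}$ against $dV^{\otimes N_k}$ when $\beta<0$. The vanishing locus of $\det\Psi^{(k)}$ decomposes into pairwise diagonals in $X_P^{N_k}$ and contributions from the toric boundary divisor $X_P\setminus\C^{*n}$; in $\C^{*n}$ itself only the diagonals appear, and integrability at infinity is controlled by the growth of $\phi$. Using the toric structure and the lattice-point description of $\mathcal{P}_{kP}(\C^{*n})$, the asymptotic log-canonical threshold as $k\to\infty$ can be computed by a Legendre-dual calculation on $P$. The dominant divergence should come from concentration of many particles near a single facet of $X_P$, and the threshold is expected to agree precisely with the invariant $R_P$ of formula \eqref{eq:inv R of conv bod}. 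This parallels the computation underlying Theorem \ref{thm:tropical}; in the rational case $R_P$ is known by Li's formula to coincide with the toric greatest-Ricci-lower-bound of $X_P$ and with Tian's $\alpha$-invariant.

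For the second bullet, the plan is to extend Theorem \ref{thm:LDP for temp deform on polar} to the range $\beta\in(-R_P,0)$. The Gamma-convergence $E_\omega^{(N_k)}\to E_\omega$ from Theorem \ref{thm:Gamma conv polarized} is insensitive to the sign of $\beta$ and identifies the candidate rate functional $F_\beta(\mu)=E_\omega(\mu)+\beta^{-1}D_{dV}(\mu)$. When $\beta>-R_P$, the $\alpha$-invariant estimate used in the proof of the first bullet translates into coercivity of $F_\beta$ on $\mathcal{M}_1(X_P)$; this yields existence and uniqueness of the minimizer $\mu_\beta=e^{\beta\psi_\beta}dV$ with $\psi_\beta\in\mathcal{L}_{P,+}(\C^{*n})$ solving the Monge-Amp\`ere equation. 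The upper bound of the desired LDP then follows by the standard Jensen/Sanov scheme, the required exponential integrability being provided precisely by the coercivity of $F_\beta$. For the lower bound, one aims to build recovery sequences of discrete configurations $\boldsymbol{x}^{(N_k)}$ with $\delta_{N_k}(\boldsymbol{x}^{(N_k)})\to\mu$ and to show that $\mu_\beta^{(N_k)}$ assigns mass comparable to $\exp(-\beta N_k F_\beta(\mu))$ to every small $W_2$-ball around $\mu$.

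The main obstacle is the LDP lower bound. In the positive-temperature regime of Theorem \ref{thm:LDP subharm intro}, the decisive input is that $\beta H^{(N)}$ is uniformly quasi-subharmonic, which unlocks the Li-Schoen sub-mean inequality on the Riemannian orbifold $X_P^{N_k}/S_{N_k}$. When $\beta$ is negative this reverses: $\beta H^{(N)}$ becomes quasi-superharmonic, and no analogous super-mean inequality holds in positive dimension. Bypassing this obstruction appears to require a genuinely different argument, most plausibly a direct coercivity/stability estimate for a twisted Mabuchi-type functional on $PSH(X_P,\omega)$ built from the $\alpha$-invariant characterization of $R_P$, combined with a quantitative stability estimate for the complex Monge-Amp\`ere operator. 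This is precisely the part of the Yau-Tian-Donaldson circle of ideas that has proved most delicate in the (log) Fano setting, and any complete proof of Conjecture \ref{conj:neg} is likely to draw heavily on that technology.
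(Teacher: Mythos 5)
This statement is labelled as a \emph{conjecture} in the paper, and the paper offers no proof of it --- only a discussion of partial results and of what a proof would require. Your proposal is therefore not competing against an extant argument; it is a proof \emph{plan}, and you yourself are candid that it does not close the gap. That said, your analysis tracks the paper's own framing of the problem quite closely: the reduction to the toric compactification $X_{P}$ (rational polytopes first, then approximation), the translation of finiteness of $Z_{N,\beta}$ into an asymptotic integrability-index computation, the identification of $R_{P}$ with the greatest Ricci lower bound via Chi Li's toric formula, and the observation that the Gamma-convergence input of Theorem \ref{thm:Gamma conv polarized} survives the sign change of $\beta$ while the LDP machinery does not.

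Two things to add. First, the paper records that the conjecture is already a \emph{theorem} when $n=1$, via the LDP in \cite{berm10} (and, in the bounded-domain setting, \cite{clmp,k2}); your proposal would be strengthened by flagging that the one-dimensional case serves as a consistency check on the predicted critical threshold and rate functional. Second, the paper recasts the first bullet in a sharper form than your ``Legendre-dual calculation'': finiteness of $Z_{N_{k},\beta}$ for $\beta>-R_{P}$ and large $k$ is shown to be \emph{equivalent} to asymptotic equality of the actual and tropical log-canonical thresholds of the Vandermonde divisor $\mathcal{D}_{k}$ in $X_{P}$, i.e.\ to equality in $\text{lct}(\mathcal{D}_{k})\leq\text{lct}(\mathcal{D}_{k})_{trop}$ for $k\gg1$. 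Your heuristic about concentration near a facet being the dominant divergence is in the right spirit but does not by itself establish this equality.

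On the central obstruction, you are essentially right, though the sign bookkeeping in your prose is slightly off. In Theorem \ref{thm:LDP subharm intro} the hypothesis is that $H^{(N)}$ is uniformly \emph{quasi-superharmonic} ($\Delta_{x_{i}}H^{(N)}\leq C$); for $\beta>0$ this makes $e^{-\beta H^{(N)}}$ log-quasi-subharmonic, which is precisely what the Li--Schoen sub-mean inequality on $X^{N}/S_{N}$ needs for the LDP lower bound. For $\beta<0$ the sign of the Laplacian bound flips in the exponent, the sub-mean inequality is unavailable, and the ``taking out'' step in the lower-bound argument cannot be rigorously justified by these methods. That is the genuine gap, and your instinct that any proof will have to import coercivity/stability technology from the (log) Fano Yau--Tian--Donaldson program rather than a purely Riemannian comparison argument is consistent with how the paper presents the problem. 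In short: the proposal is a reasonable road map aligned with the author's own discussion, but it is not a proof, and the paper does not claim one either.
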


As recalled in Section \ref{subsec:Tropicalization-at-negative},
the previous conjecture is motivated by Kähler-Einstein geometry on
toric varieties. Indeed, when $\beta=-1$ a solution $\psi_{\beta}$
of the corresponding equation \ref{eq:ma eq with beta intro}  is
the Kähler potential of a Kähler-Einstein metric $\omega$ on $\C^{*n}$,
which extends to the toric variety $X_{P},$ in the case when $P$
is rational polytope. 

The conjecture holds when $n=1,$ as follows from the stronger LDP
established in \cite{berm10} (applied to $X:=\P^{1}$). This is closely
related to \cite{clmp,k2}, which consider the case of bounded domains
in $\C,$ motivated by Onsager's statistical vortex model. 

Moreover, the necessary condition that $\beta>-R_{P}$ in the first
point of the conjecture follows from its tropical analog, i.e. the
first point in Theorem \ref{thm:tropical}. The question whether the
condition is sufficient can be formulated in terms of properties of
integrability indices of psh functions (known as log canonical thresholds
in the algebraic setting \cite{ko}). Briefly, if $\psi$ is in $\mathcal{L}_{P}(\C^{*m}),$
then its (global)\emph{ integrability index $c_{\psi}$ }is defined
as the positive number
\[
c_{\psi}:=\sup\left\{ c:\,\int_{\C^{*m}}e^{-c(\psi-\phi)}e^{-\phi}dV_{\C^{*n}}<\infty\right\} ,
\]
 for any fixed $\phi\in\mathcal{L}_{P,+}(\C^{*m}).$ When $\psi=k^{-1}\log|p_{k}|^{2}$
for $p\in\mathcal{P}_{kP}(\C^{*m})$ and the convex body $P$ is a
rational polytope the number $c_{\psi}$ coincides with $k^{-1}$
times the log canonical threshold $\text{lct \ensuremath{(\mathcal{D}_{k})}}$
of the divisor $\mathcal{D}_{k}$ in the toric variety $X_{P},$ cut-out
by the zero-locus of $p_{k}.$ Introducing the ``tropical log canonical
threshold'' 
\[
\text{lct}(\mathcal{D}_{k})_{trop}:=c_{\text{Log}^{*}\psi_{trop}},
\]
 where $\psi_{trop}$ is the tropicalization of $p_{k}$ and observing
that 
\begin{equation}
\text{lct\ensuremath{(\mathcal{D}_{k})}\ensuremath{\leq}lct}(\mathcal{D}_{k})_{trop}\label{eq:ineq lct and trop}
\end{equation}
 it is natural to ask for conditions ensuring that equality holds
in \ref{eq:ineq lct and trop}? Indeed, by the first point in Theorem
\ref{thm:tropical}, the first point in Conjecture \ref{conj:neg}
is equivalent to proving that equality holds in \ref{eq:ineq lct and trop},
for $k$ sufficiently large, when $p_{k}=D^{(N_{k})}$ 

More generally, for a given psh function $\psi\in\mathcal{L}_{P}(\C^{*n})$
one can ask for conditions ensuring that $c_{\psi}=c_{\text{Log}^{*}\psi_{trop}},$
where $\psi_{trop}$ is the convex function defined by the limit in
formula \ref{eq:trop as limit of scaling of psi}. The existence of
the limit, follows from \cite[Theorem 2]{ra} (in the terminology
of \cite{ra}, the psh function $\text{Log}^{*}\psi_{trop}=\Psi_{0}$\emph{
}is called the \emph{indicator }of $\psi$ at $0).$ 

\section{Appendix: Comparison with the Curie-Weiss model for magnetization }

It may be illuminating to compare the present statistical mechanical
setting with the classical example of the Curi-Weiss model, which
provided the first microscopic explanation of the macroscopic phenomenon
of spontaneous magnetization of certain materials (see \cite[Section IV.4]{el}
and references therein for more details on the mathematical setup).
It corresponds to the simplest possible statistical mechanical setup
where the space $X$ consists of merely two points.

\subsection{The Curie-Weiss model }

Let $X=\{1,-1\}$ (representing spin up/down) and consider the Hamiltonian

\[
H_{f}^{(N)}(x_{1},...,x_{N}):=-N^{-1}\frac{1}{2}\sum_{1\leq i,j\leq N}x_{i}\cdot x_{j},
\]
 representing the interaction between $N$ microscopic \emph{spins
}$x_{1},...x_{N}.$ The subscript $f$ indicates that the system is
\emph{ferromagnetic,} i.e. the corresponding Gibbs measure favors
ordered configuration of spins which are aligned. In particular, $H^{(N)}(x_{1},...,x_{N})$
has precisely two minimizers $(1,...,1)$ and $(-1,...-1)$ of completely
aligned spins, which are exchanged under the natural $\Z_{2}-$action
on $X^{N}$ induced by $x\mapsto-x.$ This is to be contrasted with
the anti-ferromagnetic setting corresponding to the Hamiltonian 
\[
H_{af}^{(N)}=-H_{f}^{(N)}
\]
 which favors ``disordered'' configurations of spins. 

In the setting of the Curie-Weiss model the ``Mean Field Approximation''
\ref{eq:mean field apprx heurstic form} is exact\footnote{However, in the physics literature the Hamiltonian $H^{(N)}$ is itself
viewed as a mean field approximation of more complicated spin models,
such as the Ising model, on a lattice $\Z^{D}$ (which improves as
the dimension $D$ is taken large \cite{b-c}). }:
\[
E_{f}^{(N)}(x_{1},...x_{N})=E_{f}(\delta_{N}),\,\,\,\,E_{f}(\mu)=-\frac{1}{2}\int x\cdot y\mu(x)\otimes\mu(y)
\]
Since $X$ has only two elements a probability measure $\mu\in\mathcal{M}_{1}(X)$
is uniquely determined by its first moment 
\[
m_{\mu}:=\int_{X}x\mu(x)\in[-1,1]
\]
(representing the \emph{mean magnetic moment} of the macroscopic spin
state $\mu).$ Accordingly, the empirical measure $\delta_{N}$ is
uniquely determined by the random variable 
\[
m_{N}:=m_{\delta_{N}}:=N^{-1}\sum_{i=1}^{N}x_{i}
\]
(representing the \emph{mean spin} of the microscopic state), taking
values in $[-1,1].$ Expressing $E$ in terms of $m$ gives, by separation
of variables,
\[
E_{f}(\mu)=-\frac{1}{2}m^{2}
\]

Hence, writing $I_{\beta}(m):=F_{\beta}(\mu)$ the corresponding rate
functional is given by 
\[
F_{\beta}(m):=-\frac{1}{2}m^{2}+\beta^{-1}\left(\frac{m+1}{2}\log\left(\frac{m+1}{2}\right)+\frac{1-m}{2}\log\left(\frac{1-m}{2}\right)\right)
\]
for $m\in[-1,1].$ The corresponding critical point equation yields
the following classical mean field equation for the magnetization
$m:$
\[
m=\tanh(\beta m)
\]
(which is consistent, as it much with equation \ref{eq:mean field eq in heur},
using that $\psi_{\mu}(x)=-x\cdot m_{\mu}).$ Inspection of the graph
of $\tanh s$ reveals that the $m=0$ is the unique solution when
$\beta\in[0,1]$ (and thus the unique minimizer of $F_{\beta}).$
However, when 
\[
\beta>\beta_{c}:=1
\]
(the ``critical inverse temperature'') there are two additional
solutions $m_{\pm}(\beta)$ which are exchanged by the $\Z_{2}-$symmetry
(i.e. $m_{-}(\beta)=-m_{+}(\beta)).$ The solutions $m_{\pm}(\beta)$
minimize of $F_{\beta},$ while $m=0$ is now a local maximize. Moreover,
$m_{\pm}(\beta)\rightarrow\pm1$ when $\beta\rightarrow\infty,$ coinciding
with the two minimizers of $E_{f}(m)$ (representing a macroscopic
state of all spins up, respectively down). 

The LDP for the laws of $m_{N}$ thus implies that
\[
\beta<\beta_{c}\implies m_{N}\rightarrow0,\,\,\,N\rightarrow\infty
\]
in law (which means that the system is not magnetized for sufficiently
large temperatures). However, when $\beta>\beta_{c}$ it follows,
from the LDP combined with the $\Z_{2}-$symmetry of $H_{f}^{(N)}$
that the laws of $m_{N}$ converge to $\delta_{\mu_{+}(\beta)}/2+\delta_{-\mu_{+}(\beta)}/2.$
Hence, $m_{N}$ does \emph{not }admit a deterministic limit, even
though the expectations of $m_{N}$ vanish identically for any $N$
(i.e. the law of large numbers fails in this case). This is the standard
example of a (first order) phase transition, appearing at the critical
inverse temperature $\beta=\beta_{c}.$ 

\subsubsection{Spontaneous symmetry breaking}

Determinism is restored in the presence of an exterior magnetic field,
which leads to the phenomenon of\emph{ spontaneous symmetry breaking.}
Indeed, turning on an exterior magnetic field $h\in\R$ amounts to
adding a linear exterior potential $\phi_{h}(x):=-x\cdot h$ to the
Hamiltonian $H_{f}^{(N)}.$ Then the corresponding rate functional
$I_{\beta,h}(m)$ has a unique minimizer $m(\beta,h)$ for any $h\neq0.$
Moreover, 
\[
\lim_{h\rightarrow0^{\pm}}m(\beta,h)=m_{\pm}(\beta)
\]
This means that the random variables $m_{N}$ on the corresponding
probability spaces $(X^{N},\mu_{\beta,h}^{(N)})$ do have a deterministic
limit:
\[
\beta>\beta_{c}\implies m_{N}\rightarrow m_{\pm}(\beta,h),\,\,\,N\rightarrow\infty
\]
in law, which is viewed as a microscopic explanation of the phenomenon
of spontaneous magnetization, observed in certain materials (such
as iron).

\subsection{Comparison with the present complex setting }

To make a comparison with the present setting of interpolation nodes
in $\C^{n}$ it is more convenient to start with the anti-ferromagnetic
Hamiltonian setting described by the Hamiltonian
\begin{equation}
H^{(N)}:=N^{-1}\frac{1}{2}\sum_{1\leq i,j\leq N}x_{i}\cdot x_{j}.\label{eq:cw af ham}
\end{equation}
Loosely speaking, the Hamiltonian $H^{(N)}$ is ``repulsive'', since
the ferromagnetic Hamiltonian $-H^{(N)}$ is ``attractive'' in the
sense that $H^{(N)}(x_{1},...,x_{N})$ is minimal precisely when all
$x_{i}$ coincide. In this setting the critical inverse temperature
$\beta_{c},$ where magnetization appears, is\emph{ negative,} $\beta_{c}=-1.$
The critical $\beta_{c}$ may abstractly be defined as 
\[
\beta_{c}:=\inf\left\{ \beta:\,F_{\beta}\,\text{has a unique minimizer}\right\} 
\]
Thus $\beta_{c}$ is analogous to the critical negative inverse temperature
$\beta_{c}$ for the repulsive determinantal Hamiltonian $H^{(N)}$
(formula \ref{eq:def of H N weighted intro}) which appears in the
present setting of $\C^{n},$ discussed in Section \ref{subsec:Tropicalization-at-negative}.
In the latter setting the role of symmetry group $\Z^{2}$ is played
by the group $\C^{*n}.$ This analogy becomes even more striking in
the tropical setting in $\R^{n},$ where the symmetry group is te
additive group $\R^{n}$ and where it can be shown that the corresponding
attractive Hamiltonian $H_{trop}^{(N)}(x_{1},...,x_{N})$ is regular
and minimal precisely when all points coincide, $x_{1}=....=x_{N}.$ 
\begin{rem}
Negative temperature states were first observed in nuclear spins in
the famous experiment \cite{h-n-v} (and then in \cite{h-n-v}, for
an antiferromagnetic model for silver, whose mean field Hamiltonian
coincides with \ref{eq:cw af ham}). However, the existence of negative
temperature states was originally proposed by Onsager for motional
degrees of freedoms in the vortex model (see the historical account
in \cite{e-s}). Incidentally, Onsager's Hamiltonian coincides with
the present one (formula \ref{eq:def of H N weighted intro}) in the
non-weighted setting of $\C$ (i.e. when $n=1).$ Note that the term
``negative temperature'' is somewhat misleading, though, as such
states are hotter then infinite temperature states. Indeed, the inverse
temperature $\beta$ interpolates continuous between positive and
negative temperature states, passing through the infinite temperature
state $\beta=0$ (see \cite{a-p} for a recent theoretical discussion
of negative temperature states in physics).
\end{rem}

\end{document}